\journalname{}
\newtheorem{assumption}{Assumption}
\newcommand{\innerp}[1]{\left\langle {#1} \right\rangle }
\newcommand{\norm}[1]{\left\| {#1} \right\| }
\newcommand{\abs}[1]{\left | {#1} \right |}
\newcommand{\zkh}[1]{\left[ {#1} \right]}
\newcommand{\hkh}[1]{\left\{ {#1} \right\}}
\newcommand{\kh}[1]{\left( {#1} \right)}
\newcommand{\expnumber}[2]{{#1}\mathrm{e}{#2}}
\DeclareMathOperator*{\argmin}{arg\,min}
\DeclareMathOperator{\diag}{diag}
\DeclareMathOperator{\rank}{rank}
\DeclareMathOperator{\conv}{conv}
\DeclareMathOperator{\vecspan}{span}
\DeclareMathOperator{\ima}{im}
\DeclareMathOperator{\dime}{dim}
\DeclareMathOperator{\kernel}{ker}
\newcommand{\boldt}[1]{\textbf{#1}}
\newcommand{\mdps}{\mathcal{S}}
\newcommand{\mdpa}{\mathcal{A}}
\newcommand{\probmatrix}{\mathrm{P}}
\newcommand{\projection}{\mathcal{P}}
\newcommand{\manifold}{\mathcal{M}}
\newcommand{\desing}{\mathcal{M}_{\mathrm{desing}}}
\newcommand{\banifold}{\mathcal{B}}
\newcommand{\aanifold}{\mathcal{A}}
\newcommand{\lanifold}{\mathcal{L}}
\newcommand{\wanifold}{\mathcal{W}}
\newcommand{\canifold}{\mathcal{C}}
\newcommand{\eanifold}{\mathcal{E}}
\newcommand{\transport}{\mathcal{T}}
\newcommand{\kanifold}{\mathcal{K}}
\newcommand{\xanifold}{\mathcal{X}}
\newcommand{\nanifold}{\mathcal{N}}
\newcommand{\sanifold}{\mathcal{S}}
\newcommand{\hanifold}{\mathcal{H}}
\newcommand{\boundedrank}{\mathbb{R}_{\leq r}^{m \times n}}
\newcommand{\lowrank}{\mathbb{R}_{s}^{m \times n}}
\newcommand{\stiefel}{\mathrm{St}}
\newcommand{\so}{\mathrm{SO}}
\newcommand{\grassmann}{\mathrm{Gr}}
\newcommand{\trace}{\mathrm{tr}}
\newcommand{\orth}{\mathcal{O}}
\newcommand{\oblique}{\mathrm{Ob}}
\newcommand{\diff}{\mathrm{D}}
\newcommand{\tangent}{\mathrm{T}}
\newcommand{\normal}{\mathrm{N}}
\newcommand{\retrac}{\mathrm{R}}
\newcommand{\tanL}{\mathrm{\bf L}}
\newcommand{\tanQ}{\mathrm{\bf Q}}
\newcommand{\myfig}{{Fig.\,}}
\newcommand{\complexity}{O}
\newcommand{\frob}{\mathrm{F}}
\newcommand{\frechet}{{}}
\newcommand{\bouli}{{}}
\newcommand{\Fnorm}[1]{\|#1\|_\mathrm{F}}
\newcommand{\sksym}{\mathrm{Skew}}
\newcommand{\symac}{\mathrm{sym}}
\newcommand{\sym}{\mathrm{Sym}}
\newcommand{\mbR}{\mathbb{R}}
\newcommand{\mbRmn}{\mathbb{R}^{m\times n}}
\definecolor{blue}{rgb}{0, 0.451, 1}
\definecolor{myblue}{rgb}{0,0,.5}
\definecolor{bblue}{rgb}{0,0,.85}
\definecolor{mygreen}{rgb}{0, 0.62, 0.38}
\definecolor{red}{rgb}{0.796, 0.059, 0.063}
\definecolor{myred}{rgb}{.5,0,0}
\definecolor{LightGray}{rgb}{0.98, 0.98, 0.98}
\definecolor{Gray}{gray}{0.85}
\colorlet{revisecolor}{red!0!black}
\newcommand{\revise}[1]{\textcolor{revisecolor}{#1}}
\definecolor{comblue}{RGB}{2,0,255}
\definecolor{comgreen}{HTML}{d0e1d1}
\definecolor{borderColor}{HTML}{135412}
\begin{document}

\title{A space-decoupling framework for optimization on bounded-rank matrices with orthogonally invariant constraints\thanks{This work was supported by the National Key R\&D Program of China (grant 2023YFA1009300). BG was supported by the Young Elite Scientist Sponsorship Program by CAST. YY was supported by the National Natural Science Foundation of China (grant No. 12288201).}}

\titlerunning{Low-rank optimization with orthogonally invariant constraints}

\author{Yan Yang  \and Bin Gao \and Ya-xiang Yuan}


\institute{
Yan Yang \at
State Key Laboratory of Mathematical Sciences, Academy of Mathematics and Systems Science, Chinese Academy of Sciences, and the University of Chinese Academy of Sciences, Beijing, China
\\
\email{yangyan@amss.ac.cn} 
\and
Bin Gao \and Ya-xiang Yuan \at
State Key Laboratory of Mathematical Sciences, Academy of Mathematics and Systems Science, Chinese Academy of Sciences, Beijing, China \\
\email{\{gaobin,yyx\}@lsec.cc.ac.cn; }
}

\date{Received: date / Accepted: date}

\maketitle
\begin{abstract}
Imposing additional constraints on low-rank optimization has garnered growing interest. However, the geometry of coupled constraints hampers the well-developed low-rank structure and makes the problem intricate. To this end, we propose a space-decoupling framework for optimization on bounded-rank matrices with orthogonally invariant constraints. The ``space-decoupling" is reflected in several ways. We show that the tangent cone of coupled constraints is the intersection of tangent cones of each constraint. Moreover, we decouple the intertwined bounded-rank and orthogonally invariant constraints into two spaces, leading to optimization on a smooth manifold. Implementing Riemannian algorithms on this manifold is painless as long as the geometry of additional constraints is known. In addition, we unveil the equivalence between the reformulated problem and the original problem. Numerical experiments on fruitful applications---spherical data fitting, graph similarity measuring, low-rank SDP, model reduction of Markov processes, reinforcement learning, and deep learning---validate the superiority of the proposed framework.

\keywords{Low-rank optimization \and orthogonal invariance \and tangent cone \and space decoupling \and Riemannian optimization}
\PACS{65K05 \and 90C30 \and 90C46}
\end{abstract}

\section{Introduction}\label{sec:intro}
Low-rank optimization, aiming to exploit the low-dimensional structure in matrix data for memory and computational efficiency, achieves success in a multitude of applications, e.g., factor analysis \cite{chu2005lowrankoblique}, system identification \cite{markovsky2008systemide,zhu2022learningmarkov}, large language models \cite{hu2022lora}, synchronization \cite{boumal2024orthsynch}. With extra equality constraints in addition to the low-rank requirement, this paper is concerned with the following matrix optimization problem,
\begin{equation}\label{eq:lowrank_orthinvar}\tag{P}
    \begin{aligned}
    \min_{X\in\mbR^{m\times n}}\ \ & f(X)\\[-1.5mm]
    \mathrm{s.\,t.}\ \ \ \ \,& \rank(X)\le r,
        \\
        & h(X)=0, 
    \end{aligned}
\end{equation}
where the rank parameter $r\le\min\{m,n\}$, $f:\mbR^{m\times n}\to\mbR$ is twice continuously differentiable, and $h:\,\mathbb{R}^{m\times n}\rightarrow \mathbb{R}^{q}$ is \emph{orthogonally invariant} in the sense that $ h(XQ) = h(X)$ for all $Q$ in the orthogonal group $\orth(n):=\{\tilde{Q}\in\mbR^{n\times n}:\tilde{Q}^\top \tilde{Q}=I_n\}$. We denote the set of bounded-rank matrices by
\[
\boundedrank := \hkh{X \in \mathbb{R}^{m \times n} : \rank(X) \leq r}
\]
and the level set of ~$h$ by
\[
\hanifold := \hkh{X \in \mathbb{R}^{m \times n} : h(X) = 0}.
\]
Consequently, the coupled feasible region can be written as $\boundedrank \cap \hanifold$, which is both nonsmooth and nonconvex. In the vanilla scenario $h(X)\equiv 0$, which implies $\hanifold=\mbR^{m\times n}$, problem \eqref{eq:lowrank_orthinvar} reduces to minimizing a function over bounded-rank matrices~\cite{schneider2015Lojaconvergence,levin2023remedy}. Throughout this paper, the following blanket assumption is imposed on $h$.

\begin{assumption}[Blanket]\label{assu:h} 
    The mapping $h:\,\mathbb{R}^{m\times n}\rightarrow \mathbb{R}^{q}$ is smooth and orthogonally invariant, and has full rank $q$ in the level set $\hanifold$, i.e., $\rank(\diff h_X)=q$ for {all} $X\in\hanifold$, where $\diff h$ denotes the Jacobian matrix.
\end{assumption}

\subsection{Applications}\label{sec:applications}
The general formulation~\eqref{eq:lowrank_orthinvar} satisfying Assumption~\ref{assu:h} encompasses an array of structured optimization problems with low-rank matrix variables. We now present a brief {overview} on representative applications.

\paragraph{\emph{\textbf{Spherical data fitting.}}} Finding a low-rank approximation of normalized data points lays a foundation for various problems, including concept mining, pattern classification, and information retrieval. Specifically, given a matrix $A\in\mbR^{m\times n}$ {of which} rows represent data points and {have} unit length, Chu et al. \cite{chu2005lowrankoblique} {proposed} the approximation task as follows,
\revise{
\begin{equation}\label{eq:sphericalfitting}
    \begin{aligned}
    \min_{X\in\mbR^{m\times n}}\ \ &\frac{1}{2} \norm{X-A}_{{\frob}}^2
    \\
    \mathrm{s.\,t.}\ \ \ \ \,&\rank(X)\le r,
        \\
        &\diag(XX^\top)-{\bf 1}=0.
    \end{aligned}
\end{equation}}
The {operator} $\diag(\cdot)$ extracts the diagonal elements from a square matrix, and ${\bf 1}$ denotes the all-ones vector. Note that problem~\eqref{eq:sphericalfitting} is an instance of~\eqref{eq:lowrank_orthinvar} with $h(X)=\diag(XX^\top)-{\bf 1}$, where $\hanifold$ defines the \emph{oblique manifold}, $\hanifold = \oblique(m,n):=\{X\in\mbR^{m\times n}:\,\diag(XX^\top)={\bf 1}\}$. {Specifically,} the interaction between $\boundedrank$ and $\hanifold$ {can} lead to a complicated geometry; {see an} illustration in \myfig\ref{fig:boundedcapoblique}.

\begin{figure}[htbp]
    \centering
    \includegraphics[scale=.95]{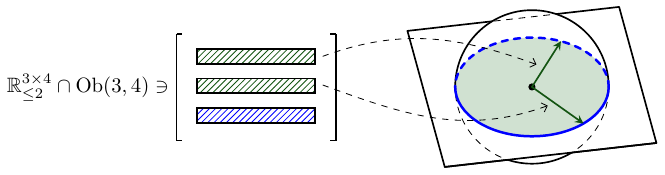}
    \caption{Illustration of $\boundedrank\cap\oblique(m,n)$ with $(m,n,r)=(3,4,2)$. Given a matrix $X\in\oblique(3,4)$, if the first two rows in the unit sphere span a plane, then the rank constraint $\rank(X)\le 2$ confines the third row in the intersection of this plane with the sphere.}
    \label{fig:boundedcapoblique}
\end{figure}

\paragraph{\emph{\textbf{Graph similarity measuring.}}} In chemical structure comparison, biological data analysis, and web searching, measuring the similarity of two graphs {is a fundamental task. Given $X\in\mbR^{m\times n}$ of which an} entry evaluates the connection between {two nodes in different graphs},
Blondel et al. \cite{blondel2004measuresimilarity} proposed an iterative process 
to measure the desired node-to-node similarity. Furthermore,
to explore the low-rank structure of the similarity matrix and to reduce the computational cost, Cason et al. \cite{cason2013iterative} proposed the measuring problem as follows,
\begin{equation}\label{eq:similarmeasure}
    \begin{aligned}
    \min_{X\in\mbR^{m\times n}}\ \ &-\trace(X^\top \lanifold\circ\lanifold(X))
    \\[-1.5mm]
    \mathrm{s.\,t.}\ \ \ \ \,&\rank(X)\le r,
        \\
        &\|X\|^2_{\mathrm{F}}-1=0,
    \end{aligned}
\end{equation}
where $\lanifold:\mbR^{m\times n}\to\mbR^{m\times n}$ is a linear operator and $\|\cdot\|_{\frob}$ denotes the Frobenius norm. Problem \eqref{eq:similarmeasure} falls into the scope of~\eqref{eq:lowrank_orthinvar} with $h(X)=\|X\|^2_{\mathrm{F}}-1$ and the associated \revise{$\hanifold = \mathrm{S}_{\mathrm{F}}(m,n):=\{X\in\mbR^{m\times n}:\,\|X\|^2_{\mathrm{F}}=1\}$.}

\paragraph{\emph{\textbf{Low-rank semidefinite programming (SDP).}}} Applications across various fields can be addressed by the semidefinite program,
\begin{equation}\label{eq:SDP}
    \min\ \innerp{C,Y},\quad\ \mathrm{s.\,t.}\ Y\in\sym(n),\,Y\succeq 0,\,\lanifold(Y)=b,
\end{equation}
where $\sym(n)$ is the set of $n\times n$ symmetric matrices, $\lanifold:\mbR^{n\times n}\to\mbR^q$ is a linear operator, and $b\in\mbR^{q}$. As shown in the paper of Pataki~\cite{pataki1998lowranksolution}, a low-rank solution for~\eqref{eq:SDP} exists when the feasible region is compact. Therefore, we can employ the Burer--Monteiro factorization $Y=XX^\top$ \cite{burer2003BM} and impose a rank constraint on~\eqref{eq:SDP}, resulting in an instance of~\eqref{eq:lowrank_orthinvar},
\begin{equation}\label{eq:lowrankSDP}
    \begin{aligned}
    \min_{X\in\mbR^{n\times n}}\ \ &\langle C,XX^\top \rangle
    \\[-1.5mm]
    \mathrm{s.\,t.}\ \ \ \ \,&\rank(X)\le r,
    \\
    &\lanifold(XX^\top)-b=0.
    \end{aligned}
\end{equation}
Under conditions given in the works~\cite{journee2010conesdp,boumal2016sdpBMmanifold}, the set $\hanifold=\{X\in\mbR^{n\times n}:\lanifold(XX^\top)=b\}$ is a smooth manifold. Several specific examples were considered including the oblique manifold $\oblique(n,n)$~\cite{goemans1995maxcutsdp}, the unit Frobenius sphere $\mathrm{S}_{\mathrm{F}}(n,n)$ \cite{journee2010conesdp}, and the stacked Stiefel manifold $\stiefel(n,p)^k:=\{[X_1~X_2\cdots X_k]^{\top}\in\mbR^{kp\times n}_{}: X_j\in\stiefel(n,p)\text{ for }j=1,\ldots,k\}$~\cite{boumal2015blockdiagonal}, where the \emph{Stiefel manifold} is defined by $\stiefel(n, p):=\{X \in \mbR^{n \times p}: X^{\top} X=I_p\}$, and $I_p$ is the identity matrix of size $p\times p$.

\paragraph{\emph{\textbf{Model reduction of Markov processes.}}} Numerous systems rely on Markov processes as the basic model, where each event's probability depends only on the previous state. However, a large state space $\mdps=\{s_1,s_2,\ldots,s_n\}$, usually appearing in complicated systems, confines the identification of the Markov model $\probmatrix\in\mbR^{\abs{\mdps}\times \abs{\mdps}}$ in which an entry $\probmatrix(s,s^\prime)$ represents the probability of the transition from state $s$ to state $s^\prime$. Therefore, we can resort to the low-rank property of the Markov process to reduce system complexity and enable tractable computations. Additionally, since the \emph{probability matrix} consists of nonnegative real numbers, with each row summing to one, it is reasonable to represent $\probmatrix$ using the \emph{Hadamard parameterization} \cite{li2023simplexHadamard}, $X\odot X,\,X\in\oblique(\abs{\mdps},\abs{\mdps})$. Consequently, imposing a rank constraint on $X$, we propose the following model to find reduced-dimension representations of Markov processes,
\begin{equation}\label{eq:markovaggregation}
    \begin{aligned}
    \min_{X\in\mbR^{\abs{\mdps}\times \abs{\mdps}}}\ \ &\frac{1}{2}\|X\odot X-\hat{\probmatrix}\|^2_\frob
    \\[-1.5mm]
    \mathrm{s.\,t.}\ \ \ \ \,&\rank(X)\le r,
        \\
        &\diag(XX^\top)-{\bf 1}=0,
    \end{aligned}
\end{equation}
where $\hat{\probmatrix}$ is an empirical estimate of the ground-truth $\probmatrix$. Formulation \eqref{eq:markovaggregation} provides fresh insights on identifying low-rankness in Markov processes, which will aid in extracting state features and thus facilitate various downstream tasks including state abstraction and reinforcement learning \cite{abel2018stateabstrac}.

\paragraph{\emph{\textbf{Reinforcement learning (RL).}}} To make sequential decisions in Markov processes, RL is an effective framework, which aims for the optimal policy maximizing the expected cumulative reward. Specifically, with $\mdpa$ representing the action space, a \emph{policy} $\pi\in\mathbb{R}^{\abs{\mdps}\times \abs{\mdpa}}$ serves as an action selection rule, i.e., $\pi(s,a)$ gives the probability of performing action~$a$ at state~$s$. Moreover, in a Markov decision process (MDP), a state-action pair $(s,a)\in\mdps\times\mdpa$ incurs a reward ${R}(s,a)$ and transitions to state $s^\prime\in\mdps$ with probability $\probmatrix_{\mathrm{d}}(s,a,s^\prime)$, where the tensor $\probmatrix_{\mathrm{d}}\in\mbR^{\abs{\mdps}\times\abs{\mdpa}\times\abs{\mdps}}$ denotes the transition dynamics. Consequently, given $\mu$ as the initial state distribution and $\gamma \in (0,1)$ as the discount factor, the central task of RL is to maximize the objective $J(\pi):=\mathbb{E}[\sum_{t=0}^\infty \gamma^t {R}(s_t,a_t)\ |\ s_0\sim\mu,\pi]$. However, the optimization problem suffers from the curse of dimensionality when the state space $\mdps$ and the action space $\mdpa$ are large \cite{sutton2018RL}. To alleviate this, exploiting the low-rank structure in RL is an advisable approach. Specifically, recognizing that a policy is inherently a probability matrix, we adopt the Hadamard parameterization $\pi(X):=X\odot X,\,X\in\oblique(\abs{\mdps},\abs{\mdpa})$, introduce a low-rank requirement on the variable $X$, and thus propose the following low-rank formulation for reinforcement learning,
\begin{equation}\label{eq:lowrankRL}
    \begin{aligned}
    \min_{X\in\mbR^{\abs{\mdps}\times\abs{\mdpa}}}\ \ & -J(\pi(X))
    \\[-1.5mm]
    \mathrm{s.\,t.}\ \ \ \ \ \,&\rank(X)\le r,
        \\
        &\diag(XX^\top)-{\bf 1}=0.
    \end{aligned}
\end{equation}

\paragraph{\emph{\textbf{Neural network training.}}} Recent success in deep learning has witnessed the critical role of neural network architectures in both training efficiency and inference performance. Among various designs, two principles receive increasing attention. Typically, one is \emph{weight normalization}, which expresses the weight matrix $W\in\mbR^{m\times n}$ by $W= (g{\bm 1}^\top)\odot X$ with $g\in\mbR^{m}$ and $X\in\oblique(m,n)$. Therefore, each row of $X$ encodes the direction, while $g$ is responsible for the scaling. It is reported that such a separation enhances training stability \cite{salimans2016WN}. Meanwhile, another principle, namely \emph{low-rank compression}, harnesses the inherent low-rankness of large-scale network weights, effectively circumventing parameter redundancy and preserving decent generalization \cite{idelbayev2020lowrankcompress}. Integrating the two principles---weight normalization and low-rank compression---presents an enlightening avenue for further development. Specifically, for the $i$-th layer in a neural network, we train the normalized weight $X_i$ subject to the low-rank constraint, together with scale parameters $g_i$, leading to the following training model,
\begin{equation}\label{eq:lowrankNN}
    \begin{aligned}
    \min_{\{X_i,g_i\}_{i=1}^l}\ \ &f(\{X_i,g_i\}_{i=1}^l)
    \\[-1.5mm]
    \mathrm{s.\,t.}\ \ \ \ \ \,&\rank(X_i)\le r_i,
    \\
    &\diag(X_iX_i^\top)-{\bf 1}=0,\ \text{for}\ i=1,2,\ldots,l,
    \end{aligned}
\end{equation}
where $f$ is the loss function and $l$ is the number of layers.

\subsection{Motivation and related work}

Apart from the aforementioned real-world applications, many problems exhibit the structure of low-rankness and orthogonal invariance. However, a unified framework to address problem \eqref{eq:lowrank_orthinvar} is lacking, which essentially suffers from three pains.
\begin{enumerate}[left=0pt]
\item The optimality conditions of problem \eqref{eq:lowrank_orthinvar} remain unexplored, largely due to the involved geometry of the coupled constraints. This unawareness poses an impediment to defining stationarity measures, thereby hindering the development of algorithms.
\item The feasible region inherits the nonsmoothness of $\boundedrank$, which gives rise to the undesirable phenomenon: there exist feasible sequences along which the stationarity measures tend to zero, but with limit points non-stationary \cite{levin2023remedy}.
\item Projections onto the coupled feasible set $\boundedrank\cap\hanifold$ are unclear for general $\hanifold$, making it intractable to translate existing techniques in low-rank optimization, such as the projected gradient descent framework adopted by previous works~\cite{schneider2015Lojaconvergence,olikier2022P2GDR,olikier2024PGD}.
\end{enumerate}

\paragraph{{\bf Optimality conditions.}} Research on optimality conditions gains momentum in low-rank optimization, where the variational geometry, including tangent and normal cones to the feasible set, plays a pivotal role. Typically, several kinds of cones---the Mordukhovich normal cone \cite{luke2013Mordukhovich}, the Bouligand tangent cone \cite{cason2013iterative,schneider2015Lojaconvergence}, and the Clarke tangent cone and the corresponding normal cone \cite{hosseini2019MordukhovichClarke,li2019optimalitylowrank}---to the bounded-rank set $\boundedrank$ have been well studied. 

However, an additional constraint set $\canifold$ coupled with $\boundedrank$ renders the geometry of the feasible region more complicated. Specifically, when $m=n$ and $\canifold =\sym(n)$, Tam et al. \cite{tam2017sparsesdp} derived the Mordukhovich normal cone to $\mathbb{R}^{n\times n}_{\le r}\cap\canifold$; and the Fr\'echet normal cones were formulated by Li et al. \cite{li2020jotaspectral} when $\canifold$ is the intersection of $\sym(n)$ with the closed unit Frobenius ball, the symmetric box or the spectrahedron. In essence, the above results stem from the symmetry of $\canifold$, which coincide with the preimages of some permutation-invariant sets $\canifold_{\mathrm{p}}\subseteq\mbR^n$ under the eigenvalue map. This observation reduces the low-rankness of matrices in $\canifold$ to the sparsity of vectors in~$\canifold_{\mathrm{p}}$. Therefore, computing the normal cone to $\{v\in\mbR^n:\|v\|_0\le r\}\cap\canifold_{\mathrm{p}}$ by \cite[Proposition 3.2]{pan2017restrictedLICQ} and applying \cite[Theorem 8.3]{lewis1996groupinvariance} reveal the results in \cite{tam2017sparsesdp,li2020jotaspectral}, where $\|\cdot\|_0$ denotes the cardinality of a vector. It is worth noting that these developments require the matrix variable to be square and symmetric; but when $m\neq n$ breaks the symmetry, existing results on sparsity optimization \cite{pan2017restrictedLICQ} no longer apply, which necessitates analysis tailored for the geometry of the coupled feasible region. 

Recently, Li and Luo \cite{li2023normalboundedaffine} characterized the normal cone to $\boundedrank\cap\canifold$ with $\canifold$ as an affine manifold. More relevantly, a closed-form expression of the tangent cone to $\boundedrank\cap\mathrm{S}_{\mathrm{F}}(m,n)$ was given in~\cite{cason2013iterative}, which turns out to be a special instance of the feasible region in {problem}~\eqref{eq:lowrank_orthinvar}. 

\paragraph{{\bf Optimization on bounded-rank matrices.}} In general, finding a global minimizer of a function solely subject to the bounded-rank constraint is NP-hard \cite{gillis2011NPlowrank}. Existing literature~\cite{schneider2015Lojaconvergence,levin2023remedy,olikier2023RFDR} reveals that it remains possible to compute a (first-order) \emph{B-stationary} point, a zero of the B-stationarity measure---norm of the projection of negative gradient onto the Bouligand tangent cone to~$\boundedrank$\revise{, denoted by $s(X):=\|\projection_{\tangent_X\boundedrank}(-\nabla f(X))\|_\frob$}. However, the nonsmooth and nonconvex nature of the bounded-rank set introduces the main obstacles. \revise{Typically, the phenomenon \emph{apocalypse} may occur: a convergent sequence $\{X_k\}$ satisfies $\lim_{k\to\infty}s(X_k)=0$ while $s(\lim_{k\to\infty}X_k)>0$~\cite{levin2023remedy}.} A line of algorithms is developed for the optimization on bounded-rank matrices, and they are categorized into three groups.

The first group is based on the projected gradient descent (PGD) framework, i.e., updating the iterate along an appropriate direction and then projecting it onto the bounded-rank set, which amounts to computing a truncated singular value decomposition (SVD) of a matrix. The PGD method in \cite{olikier2024PGD} has recently been proven to accumulate at B-stationary points. The well-known~$\mathrm{P^2GD}$~\cite{schneider2015Lojaconvergence}, additionally projecting negative gradient onto the tangent cone as the search direction, can result in the apocalypse; an explicit example was given in \cite{levin2023remedy}. To resolve this, Olikier et al.~\cite{olikier2022P2GDR} introduced a rank-adaptive strategy and obtained a modification $\mathrm{P^2GDR}$.

The second group evolves around the retraction-free principle. In detail, search directions are chosen from the so-called \emph{restricted tangent cone} to~$\boundedrank$ (see \cite{olikier2023RFDR}) such that updates are performed along straight lines, and thus the algorithm gets rid of expensive projections onto the bounded-rank set. The original retraction-free descent (RFD) method~\cite{schneider2015Lojaconvergence} may encounter the apocalypse, and the variant RFDR \cite{olikier2023RFDR} equipped with a rank reduction mechanism is guaranteed with the convergence to B-stationary points. Recently, Olikier and Absil~\cite{olikier2024ERFDR} proposed a more efficient version named ERFDR, which preserves the benign convergence property of RFDR. 

The third group resorts to the parameterization of the bounded-rank set, which constructs a smooth manifold $\manifold$ and an associated mapping $\phi$ such that $\phi(\manifold)=\boundedrank$. Subsequently, the original problem on the bounded-rank set can be reformulated as minimizing $\bar{f}:=f\circ \phi$ over $\manifold$, a Riemannian optimization problem. For instance, the LR lift \cite{park2018findingLR,ha2020equivalenceLR} uses $\phi(L,R)=LR^\top$ to factorize the low-rank matrix. Levin~et~al.~\cite{levin2024effectlift} proposed the parameterization $\phi(U,S,V)=USV^\top$ with symmetric $S$ and orthogonal~$U$ and~$V$. Moreover, viewing the bounded-rank set as the determinantal variety~\cite{harris1992algebraic}, Khrulkov and {Oseledets}~\cite{khrulkov2018desingularization} derived the \emph{desingularization}, which is a modified version of the classical Room--Kempf procedure~\cite{room1938,kempf1973}, and its geometry was further explored in a recent work~\cite{rebjock2024boundedrank}. 
A benefit of these parameterizations is the ability to circumvent the apocalypse; see~\cite{levin2024effectlift}.

\bigskip

Translating the existing methods to address problem~\eqref{eq:lowrank_orthinvar} appears challenging. In general, the PGD-based method stagnates due to the unknown projection onto the coupled feasible set. The retraction-free technique relies on 
the specific structure of $\boundedrank$ while the additional constraint $\hanifold$ distorts the landscape, presenting obstacles to promoting the technique. Moreover, finding a parameterization for $\boundedrank\cap\hanifold$ requires new insights, and in this scenario, the equivalence between the reformulated problem and the original problem is worth further investigation.

\subsection{Contributions} In this paper, we develop a space-decoupling framework by taking advantage of the geometry of the bounded-rank set and the extra orthogonally invariant constraints. The space-decoupling principle features the following aspects.

General properties of the orthogonally invariant mapping~$h$ are studied, which turn out to be closely related to rank information of the matrix variable. Specifically, Assumption~\ref{assu:h} reveals that $\hanifold$---the level set of $h$---is a smooth manifold. \revise{We further show that $h$ induces a sequence of manifolds $\hanifold^s$ embedded in lower-dimensional spaces $\mathbb{R}^{m\times s}$ ($s=1,2,\ldots,n$), which inherit the geometric properties from $\hanifold$. Then, Proposition~\ref{pro:decompose_geo_H} shows that when $\rank(X)=s$, the tangent space to $\hanifold$ at $X$ is decomposed into a space related to $\hanifold^s$ and a space orthogonal to~$X$.} The results build up a connection between $\boundedrank$ and $\hanifold$, paving the way for investigating the geometry of~$\boundedrank\cap\hanifold$.

We give a closed-form expression for the tangent cone to the coupled feasible set (Theorem~\ref{the:tangent_intersection_rule}), which essentially unveils the intersection rule---the tangent (resp. normal) cone to $\boundedrank\cap\hanifold$ can be decomposed as the intersection (resp. direct sum) of tangent (resp. normal) cones to each of them, i.e.,
\begin{align*}
\tangent_X\kh{\boundedrank\cap\hanifold} &= \tangent_X\boundedrank \cap \tangent_X\hanifold,
\\
\normal_X\kh{\boundedrank\cap\hanifold} &= \normal_X^\frechet\boundedrank \oplus \normal_X\hanifold.
\end{align*}
Moreover, the optimality conditions for \eqref{eq:lowrank_orthinvar} are developed.

\revise{Viewing the \emph{Grassmann manifold}~\cite{bendokat2024grassmann} as an embedded submanifold in $\sym(n)$, i.e., $\grassmann(n,s):=\{G\in\sym(n): G^2=G,\,\rank(G)=s\}$, we borrow the idea from the desingularization technique~\cite{khrulkov2018desingularization,rebjock2024boundedrank}, and propose the following \emph{space-decoupling parameterization} for the coupled constraints $\boundedrank\cap\hanifold$,}
\begin{equation*}
    \manifold_{h}:=\left\{(X, G) \in \mathbb{R}^{m \times n} \times \grassmann(n, n-r): X G=0,\ h(X)=0\right\}
\end{equation*}
with the smooth mapping $\phi:\mbR^{m\times n}\times\sym(n)\to\mbR^{m\times n}:(X,G)\mapsto X$ satisfying $\phi(\manifold_h)=\boundedrank\cap\hanifold$. We prove that the set $\manifold_h$ is a smooth manifold (Theorem~\ref{the:manifold_h}), and thus reformulate the original nonsmooth coupled-constrained optimization problem as a smooth Riemannian optimization problem,
\begin{align}
    \min_{(X,G) \in \manifold_h} \bar{f}(X,G):=f\circ \phi(X,G). \label{eq:G}\tag{P-${\manifold_h}$}
\end{align}
Specifically, an element $(X,G)$ in~$\manifold_h$ has two coordinates, with $X$ subject to the orthogonally invariant constraints and $G$ encoding the rank information. In this manner, the parameterization decouples the intertwined constraints into two spaces; see Fig.\,\ref{fig:diagram} for an illustrative diagram.

\begin{figure}[h]
    \newcommand{\ratio}{0.5}
    \newcommand{\customrectangle}[7]{
            \draw[densely dashed, draw=#6, fill=#7] 
                (#1-#3/2, #2-#4/2) rectangle 
                (#1+#3/2, #2+#4/2); 
            \node at (#1, #2) {#5}; 
        }
    \begin{minipage}{1\textwidth}
    \begin{center}
    \begin{tikzpicture}
        \newcommand{\nodegap}{1.3}
        \tikzset{
        node distance=\nodegap cm,
        post/.style={->,shorten >=2pt,shorten <=2pt,>={Stealth[round]},thick},
        space/.style={
          draw=none, 
          fill=none, 
          inner sep=0pt,
          minimum size=6mm
        },
        hookarrow/.style={{Hooks[left]}->}
        }
        
        \node[space] (Mh) {\normalsize $\manifold_h$};
        \node[space] (ori) [below=1.2cm of Mh] {\normalsize $\boundedrank\cap\hanifold$};
        \node[space] (R) [right=2.5cm of ori] {\normalsize $\mbR$};

        \node[space] (embedding) [left=3.5cm of Mh] {\normalsize $\mbR^{m\times n} \times \sym(n)$};

        \draw[post] (Mh) to node[midway, left] {\normalsize $\phi$} (ori);
        \draw[post] (Mh) to node[midway, above, xshift=16pt] {\normalsize $\bar{f}=f\circ \phi$} (R);
        \draw[->,shorten <=4pt,>={Stealth[round]},thick] (ori) to node[midway, below] {\normalsize $f$} (R);
        \draw[->,hookarrow,shorten <=4pt,shorten >=4pt, >={Stealth[round]},thick] (Mh) -- (embedding);

        \node (XG) at ([xshift=1.5cm, yshift=-1.85cm]embedding) {
        \begin{tikzpicture}
            \customrectangle{0}{0}{5.5*\ratio}{1*\ratio}{\normalsize rank information}{black}{gray!0}
        \end{tikzpicture}
        };

        \node (hX) at ([xshift=-1.3cm, yshift=-1.85cm]embedding) {
        \begin{tikzpicture}
            \customrectangle{0}{0}{3.6*\ratio}{1*\ratio}{\normalsize $h(X)=0$}{black}{gray!0}
        \end{tikzpicture}
        };
        
        \draw[-, thick] 
            ([shift={(0.6,0)}] embedding.south) 
            -- ([shift={(0,0.)}] XG.north); 
        \draw[-, thick] 
            ([shift={(-1,0)}] embedding.south) 
            -- ([shift={(0,0.)}] hX.north);

    \end{tikzpicture}
    \end{center}
    \end{minipage}
    \caption{Illustration of optimization via the space-decoupling parameterization.}
    \label{fig:diagram}
\end{figure}

To facilitate Riemannian optimization methods, we provide Riemannian derivatives, retractions, and vector transports on $\manifold_h$. It reveals that implementing Riemannian algorithms on $\manifold_h$ is painless if the geometry of $\hanifold$ is understood \revise{a priori}. Moreover, we identify the equivalence between the reformulated and the original optimization problems (Theorem~\ref{pro:11_21_Mh}): if $(X,G)$ is a second-order stationary point for \eqref{eq:G} then $X=\phi(X,G)$ is a first-order stationary point for \eqref{eq:lowrank_orthinvar}; and when $\rank(X)=r$, the first-order stationarity of $(X,G)$ for~\eqref{eq:G} concludes the first-order stationarity of $X$ for~\eqref{eq:lowrank_orthinvar}. Convergence guarantees for the algorithms on $\manifold_h$ are also provided.

The effectiveness and efficiency of the proposed framework are validated by fruitful numerical experiments across applications discussed in section~\ref{sec:applications}. Note that the proposed models---\eqref{eq:markovaggregation} for Markov process reduction, \eqref{eq:lowrankRL} for low-rank reinforcement learning, and \eqref{eq:lowrankNN} for deep learning---exhibit good performance and thus contribute to their respective fields. In practice, Riemannian algorithms generate a sequence $\{(X_k,G_k)\}$ on $\manifold_h$, ensuring that $\{X_k\}$  satisfies the constraints of~\eqref{eq:lowrank_orthinvar}. This fact is favorable not only to the early stopping of an algorithm when a feasible solution is required but also to some specific problems (e.g.,~\eqref{eq:lowrankRL}) where the objective cannot be defined outside of the feasible region.

\subsection{Notation} 
Let $\mbR^{m\times n}_s=\{X\in \mbR^{m\times n}: \rank(X)=s\}$ be the set of fixed-rank matrices, and $\sksym(n):=\{\Omega\in\mbR^{n\times n}:\,\Omega^\top=-\Omega\}$ be the set of skew-symmetric matrices. The symmetric part of a square matrix is defined by $\symac(M):=\frac{1}{2}(M+M^\top)$. Given a smooth manifold $\xanifold$, $\tangent\xanifold$ denotes the tangent bundle, and $\tangent_X\xanifold$ denotes the tangent space at $X\in\xanifold$. Given a mapping $F:\xanifold_1 \to \xanifold_2$ between two manifolds, $\diff F_{X}: \tangent_X\xanifold_1 \to \tangent_{F(X)}\xanifold_2$ denotes the differential of $F$ at $X\in\xanifold_1$. Given a matrix $V\in\stiefel(n,r)$, $V_{\bot}$ is an orthonormal completion of it in the sense of $[V\ V_\bot]\in\orth(n)$.  The standard inner product in an Euclidean space is given by $\innerp{X_1,X_2}:=\trace(X_1^\top X_2)$. Let $\projection_\xanifold $ denote the projection onto the set $\xanifold$. \revise{Throughout this paper, whenever the rank of $X\in\mbRmn$ is explicitly specified, e.g., $\rank(X)=s$ or $X\in\lowrank$, and the SVD $X=U\varSigma V^\top$ is considered, the factor sizes are by default taken as $U\in\stiefel(m,s)$, $\varSigma\in\mbR^{s\times s}$, and $V\in\stiefel(n,s)$.}

\subsection{Organization}
In section~\ref{sec:H}, we investigate the general properties of the orthogonally invariant mapping $h$ and the associated level set $\hanifold$. The tangent and normal cones to the coupled feasible region are identified, and the optimality conditions are also formulated in section~\ref{sec:OptCondition}. Section~\ref{sec:M_h} proposes the space-decoupling parameterization $\manifold_h$ and its Riemannian geometry. In section~\ref{sec:RiemannianOpt}, we specify the Riemannian derivatives, retractions, and vector transports on $\manifold_h$, analyze the equivalence between \eqref{eq:G} and \eqref{eq:lowrank_orthinvar}, and provide the convergence guarantees for Riemannian algorithms. Section~\ref{sec:experiments} reports the numerical performance of the proposed framework. Finally, we draw the conclusion in section~\ref{sec:conclusion}.

\section{Properties of the orthogonally invariant mapping}\label{sec:H}
This section uncovers the connection between the orthogonal invariance of the mapping $h$ and the rank information of the variable $X$, and thus bridges the geometry of $\hanifold$ and $\boundedrank$.

The \emph{Bouligand tangent cone} \cite{rockafellar2009variationalanalysis} to a set $\mathcal{X}$ at a point $X\in\xanifold$ is 
\begin{equation*}
    \tangent_X\mathcal{X} :=\left\{ \eta =\lim _{k\rightarrow \infty} \frac{\left( X_k-X \right)}{t_k}:\ X_k\in \mathcal{X} , t_k>0\ \text{for all}\,\,k, \lim_{k\rightarrow\infty}t_k = 0 \right\}.
\end{equation*}
A vector $\eta\in\tangent_X\xanifold$ is \emph{derivable} if it admits a curve $\gamma(t)$ on $\xanifold$ with $\gamma(0)=X$ and $\gamma^\prime(0)=\eta$, and the set $\xanifold$ is \emph{geometrically derivable} at $X$ if any vector in $\tangent_X\xanifold$ is~{derivable}. Taking the polar operation on $\tangent_X\mathcal{X}$ introduces the \emph{Fr\'echet normal cone}, $\normal_X\mathcal{X}:=\kh{\tangent_X\mathcal{X}}^\circ$. Specifically, if $\mathcal{X}$ is a smooth manifold, the tangent (resp. normal) cone reduces to a vector space, i.e., the tangent (resp. normal) space. 

\subsection{Geometry of $\hanifold$}
The full-rankness of $h$ in Assumption~\ref{assu:h} implies that the level set $\hanifold$ is a smooth manifold embedded in $\mbR^{m\times n}$; see~\cite[Corollary 5.14]{lee2012manifolds}. Moreover, the orthogonal invariance of $h$ sheds light on the fact that the tangent space $\tangent_X\hanifold$ contains a subspace that is independent of the choice of $h$; see the following proposition. 

\begin{proposition}\label{pro:kerDh}
    Given $X\in\hanifold$ with $\rank{(X)}=s$ and the SVD $X=U\varSigma V^\top$, it holds that
    \begin{equation}\label{eq:mT_X}
    \kanifold_X := \hkh{U\varSigma\Omega V^\top + BV_{\bot}^{\top}:\,\Omega\in\sksym(s),\,B\in\mbR^{m\times (n-s)}} \subseteq \tangent_X\hanifold.
    \end{equation}
\end{proposition}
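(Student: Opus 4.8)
The plan is to use the identification $\tangent_X\hanifold=\kernel(\diff h_X)$, which is valid because, under Assumption~\ref{assu:h}, $\hanifold=h^{-1}(0)$ is an embedded submanifold (regular value theorem) and its Bouligand tangent cone reduces to the classical tangent space. It then suffices to verify that $\diff h_X$ annihilates every element of $\kanifold_X$. Since both $\kanifold_X$ and $\tangent_X\hanifold$ are linear subspaces, it is enough to treat separately the two families generating $\kanifold_X$: matrices of the form $U\varSigma\Omega V^\top$ with $\Omega\in\sksym(s)$, and matrices of the form $BV_\bot^\top$ with $B\in\mbR^{m\times(n-s)}$.

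For the first family I would differentiate the orthogonal-invariance identity. Fix $\Omega\in\sksym(s)$ and set $\Xi:=V\Omega V^\top\in\sksym(n)$. The curve $t\mapsto X\exp(t\Xi)$ stays in the $\orth(n)$-orbit of $X$, so $h(X\exp(t\Xi))=h(X)=0$ for all $t$; differentiating at $t=0$ gives $\diff h_X(X\Xi)=0$. Since $X\Xi=U\varSigma V^\top(V\Omega V^\top)=U\varSigma\Omega V^\top$, this shows $U\varSigma\Omega V^\top\in\kernel(\diff h_X)=\tangent_X\hanifold$. (In fact the same computation yields $X\sksym(n)\subseteq\tangent_X\hanifold$.)

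The second family is the crux, and the key is a reflection symmetry. For $B\in\mbR^{m\times(n-s)}$ consider the Householder-type reflection $\Lambda:=I_n-2V_\bot V_\bot^\top\in\orth(n)$. It fixes $X$, because $XV_\bot=U\varSigma V^\top V_\bot=0$, and it negates $BV_\bot^\top$, because $V_\bot^\top V_\bot=I_{n-s}$. Hence the curve $\gamma(t):=X+tBV_\bot^\top$ satisfies $\gamma(t)\Lambda=X\Lambda+tBV_\bot^\top\Lambda=X-tBV_\bot^\top=\gamma(-t)$, so orthogonal invariance gives $h(\gamma(-t))=h(\gamma(t)\Lambda)=h(\gamma(t))$; that is, $t\mapsto h(\gamma(t))$ is an even $\mbR^q$-valued function and therefore has vanishing derivative at $t=0$. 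By the chain rule this derivative is $\diff h_X(BV_\bot^\top)$, so $BV_\bot^\top\in\kernel(\diff h_X)=\tangent_X\hanifold$. Adding the two families and using that $\tangent_X\hanifold$ is closed under addition yields $\kanifold_X\subseteq\tangent_X\hanifold$.

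I expect the only genuine difficulty to be finding the right symmetry for the $BV_\bot^\top$ directions: a naive attempt to exhibit a curve inside $\hanifold$ with velocity $BV_\bot^\top$ fails, since such a velocity need not be tangent to the $\orth(n)$-orbit of $X$ — indeed $(X+tBV_\bot^\top)(X+tBV_\bot^\top)^\top=XX^\top+t^2BB^\top$, so it alters $XX^\top$ only at second order — and the even-function argument sidesteps this by working with $\diff h_X$ directly rather than constructing curves on $\hanifold$. A minor point worth recording is that $X\neq0$ whenever $\hanifold\neq\emptyset$ and $q\geq1$: otherwise $\diff h_0$ would be right $\orth(n)$-invariant and hence zero, contradicting full rank; so the SVD conventions for $X$ are meaningful throughout.
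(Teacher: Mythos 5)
Your proof is correct. It rests on the same two pillars as the paper's: the identification $\tangent_X\hanifold=\kernel(\diff h_X)$ and the reflection $I_n-2V_\bot V_\bot^\top=[V\ -V_\bot][V\ V_\bot]^\top$, which is exactly the orthogonal matrix $Q$ the paper uses. The difference is in how you split $\kanifold_X$. The paper groups $U\varSigma\Omega V^\top+UB_1V_\bot^\top$ together and kills both at once by observing that $\tilde V\mapsto h(U\varSigma\tilde V^\top)$ is constant on $\stiefel(n,s)$, then feeding it the Stiefel tangent vector $-V\Omega+V_\bot B_1^\top\varSigma^{-1}$; it reserves the reflection argument only for the $U_\bot B_2V_\bot^\top$ component. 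You instead treat the skew part alone via the one-parameter subgroup $t\mapsto X\exp(tV\Omega V^\top)$ of the $\orth(n)$-orbit, and let the reflection handle the entire family $BV_\bot^\top$ (the even-function argument indeed needs only $XV_\bot=0$ and $V_\bot^\top V_\bot=I$, so it covers $UB_1V_\bot^\top$ as well). Your split is arguably cleaner: it avoids the Stiefel-manifold tangent-space computation and the implicit use of $\varSigma^{-1}$, and it makes transparent which orthogonal transformation is responsible for which directions. The paper's grouping, on the other hand, makes visible that the whole block $[U\ U_\bot]\bigl[\begin{smallmatrix}\varSigma\Omega & B_1\\ 0 & 0\end{smallmatrix}\bigr][V\ V_\bot]^\top$ is tangent to the orbit $\{U\varSigma\tilde V^\top\}$, a fact reused implicitly later. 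Your closing remark that $0\notin\hanifold$ when $q\ge 1$ is a correct and harmless aside.
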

\begin{proof}
    The fact $\tangent_X\hanifold=\kernel(\diff h_X)$ shifts the focus to prove $\kanifold_X\subseteq \kernel(\diff h_X)$. Notice that $\kanifold_X$ is a linear space and any $\eta\in\kanifold_X$ can be expressed in the form of $\eta = \eta_1 + \eta_2$ where $\eta_1 = U\varSigma \Omega V^{\top}+UB_{1}V_{\bot}^{\top}$ and $\eta_2 = U_{\bot}B_{2}V_{\bot}^{\top}$ with $\Omega\in\sksym\kh{s},\,B_1\in\mbR^{s\times \kh{n-s}}$, and $B_2\in \mbR^{\kh{m-s}\times \kh{n-s}}$.

    We first validate $\eta_1\in\kernel(\diff h_X)$. In view of the orthogonal invariance of $h$, the mapping $F:\stiefel(n,s) \rightarrow \mathbb{R}^{q}:\,\tilde{V}\mapsto h(U\varSigma \tilde{V}^\top)$ is constant since $F(\tilde{V})=h(U\varSigma\tilde{V}^\top)=h(XQ_{V}^{}Q_{\tilde{V}}^{\top})=h(X)$ where $Q_V:=[ {V}\,\,{V}_{\bot} ]\in\orth(n)$ and $Q_{\tilde{V}}:=[ \tilde{V}\,\,\tilde{V}_{\bot} ]\in\orth(n)$. Therefore, the differential $\diff F_{\tilde{V}}\equiv0$ for all $\tilde{V}\in\stiefel(n,s)$ including $V$. Taking into account the tangent space $\tangent_{{V}}\stiefel(n,s)$ (see~\cite[\S3.5]{absil2008optimization}) and given ${\xi}=-V\Omega+V_{\bot}B_1^{\top}\varSigma^{-1}\in\tangent_{{V}}\stiefel(n,s)$, it holds that $\diff {h}_{X}[\eta_1] = \diff {h}_{X}[U\varSigma{\xi}^{\top}] = \frac{\mathrm{d}}{\mathrm{d}t}h( X+tU\varSigma{\xi}^{\top} ) \big |_{t=0} = \frac{\mathrm{d}}{\mathrm{d}t}F(V+t{\xi}) \big |_{t=0} = \diff F_{{V}}[{\xi}] = 0$.

    Due to the orthogonal invariance of $h$, it is direct to obtain $h(X+t\eta_2)=h((X+t\eta_2)Q)=h(X-t\eta_2)$ for $t\in \mbR$ with $Q=[ {V}\,-\!{V}_{\bot} ][V\,V_\bot]^\top\in\orth(n)$. Thus, it implies $\diff h_X[\eta_2] = \frac{\mathrm{d}}{\mathrm{d}t}h\left( X+t\eta_2 \right) \big |_{t=0} = 0$.
\end{proof}

The following corollary reveals that any element orthogonal to $X$ falls into $\tangent_X\hanifold$, which plays an important role in subsequent developments.

\begin{corollary}\label{cor:orth_tangentX}
    Given $X\in\hanifold$, if $Y\in\mbR^{m\times n}$ satisfies $XY^\top=0$, it holds that $Y\in\tangent_X\hanifold=\kernel(\diff h_X)$.
\end{corollary}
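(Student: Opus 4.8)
The plan is to show that $Y$ in fact lies in the explicit linear subspace $\kanifold_X$ exhibited in Proposition~\ref{pro:kerDh}, which is already known to be contained in $\tangent_X\hanifold=\kernel(\diff h_X)$. So the whole task reduces to re-expressing the orthogonality hypothesis $XY^\top=0$ in terms of the SVD factors of $X$.

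Concretely, write $X=U\varSigma V^\top$ with $U\in\stiefel(m,s)$, $\varSigma\in\mbR^{s\times s}$ invertible, and $V\in\stiefel(n,s)$, where $s=\rank(X)$. From $XY^\top=U\varSigma V^\top Y^\top=0$ and the left-invertibility of $U\varSigma$ we get $V^\top Y^\top=0$, i.e. $YV=0$. Using the orthogonal decomposition of the identity $I_n=VV^\top+V_\bot V_\bot^\top$, this yields $Y=Y(VV^\top+V_\bot V_\bot^\top)=YV_\bot V_\bot^\top$. Hence $Y=BV_\bot^\top$ with $B:=YV_\bot\in\mbR^{m\times(n-s)}$, which is precisely an element of $\kanifold_X$ (take the skew-symmetric block $\Omega=0$ in the description~\eqref{eq:mT_X}). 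Therefore $Y\in\kanifold_X\subseteq\tangent_X\hanifold=\kernel(\diff h_X)$.

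There is essentially no obstacle here: the argument is a short computation that piggybacks on Proposition~\ref{pro:kerDh}, the only point requiring a little care being the passage from $XY^\top=0$ to $YV=0$, which uses that $\varSigma$ is invertible on the column space of $V$ so that the row space of $X$ equals that of $V^\top$. An alternative, self-contained route would be to verify $\diff h_X[Y]=0$ directly by the same orthogonal-invariance trick used for $\eta_1$ in the proof of Proposition~\ref{pro:kerDh}, but invoking the proposition is cleaner and makes the corollary immediate.
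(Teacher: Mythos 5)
Your proposal is correct and follows essentially the same route as the paper: deduce $Y=BV_\bot^\top$ from $XY^\top=0$ via the SVD of $X$, and then invoke Proposition~\ref{pro:kerDh} to place $Y$ in $\kanifold_X\subseteq\tangent_X\hanifold$. You simply spell out the intermediate step $YV=0$ that the paper leaves implicit.
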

\begin{proof}
    The SVD of $X=U\varSigma V$ and $X Y^\top=0$ imply that $Y=BV_\bot^\top$, which is in the subspace $\kanifold_X\subseteq\tangent_X\hanifold$ from Proposition~\ref{pro:kerDh}.
\end{proof}

Proposition~\ref{pro:kerDh} demonstrates that for any orthogonally invariant $h$, there is a subset $\kanifold_X\subseteq\tangent_X\hanifold$. Taking the polar operation on both sides yields $\normal_X\hanifold = \kh{\tangent_X\hanifold}^\circ \subseteq \kanifold_X^\circ$, which leads to the following lemma.

\begin{lemma}\label{lem:normal_H}
    Given $X\in\hanifold$ with $\rank{(X)}=s$ and the SVD $X=U\varSigma V^\top$, it holds that
    \begin{equation}\label{eq:N_XH_estimate}
        \normal_X\hanifold \subseteq \kanifold_X^\circ = \left\{ AX:\ A\in \mathbb{R} ^{m\times m}, \ U^\top AU\in \sym(s)\right\}.
    \end{equation}
\end{lemma}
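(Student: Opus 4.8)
The plan is to prove the equality $\kanifold_X^\circ = \{AX : A \in \mbR^{m \times m},\ U^\top A U \in \sym(s)\}$ by a direct computation of the polar cone, since the first inclusion $\normal_X\hanifold \subseteq \kanifold_X^\circ$ is already established in the text. First I would fix the SVD $X = U\varSigma V^\top$ with $U \in \stiefel(m,s)$, $V \in \stiefel(n,s)$, and $\varSigma \in \mbR^{s\times s}$ invertible diagonal, and recall from Proposition~\ref{pro:kerDh} that $\kanifold_X = \{U\varSigma\Omega V^\top + BV_\bot^\top : \Omega \in \sksym(s),\ B \in \mbR^{m\times(n-s)}\}$, which is a linear subspace of $\mbR^{m\times n}$. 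Because $\kanifold_X$ is a subspace, its polar cone is simply its orthogonal complement $\kanifold_X^\perp$, so the task reduces to computing $\kanifold_X^\perp$ with respect to the standard inner product $\langle X_1, X_2\rangle = \trace(X_1^\top X_2)$.

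The main computation is to characterize which matrices $N \in \mbR^{m\times n}$ satisfy $\langle N, U\varSigma\Omega V^\top + BV_\bot^\top\rangle = 0$ for all $\Omega \in \sksym(s)$ and all $B$. I would expand $N$ in the orthonormal basis given by $[U\ U_\bot]$ and $[V\ V_\bot]$, writing $N = U N_{11} V^\top + U N_{12} V_\bot^\top + U_\bot N_{21} V^\top + U_\bot N_{22} V_\bot^\top$. Pairing against $BV_\bot^\top$ (with $B$ arbitrary) forces $N_{12} = 0$ and $N_{22} = 0$, i.e.\ $N V_\bot = 0$, so $N = U N_{11} V^\top + U_\bot N_{21} V^\top = (U N_{11} + U_\bot N_{21}) V^\top$. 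Pairing the remaining part against $U\varSigma\Omega V^\top$ gives $\trace(\varSigma^\top N_{11}^\top \Omega) = 0$ for all skew $\Omega$, which is equivalent to $N_{11}\varSigma$ being symmetric; since $\varSigma$ is symmetric, this says $N_{11}\varSigma = \varSigma N_{11}^\top$. The remaining step is to rewrite this description in the claimed form $N = AX$ with $U^\top A U$ symmetric: given such an $N$, set $A = (U N_{11} + U_\bot N_{21})\varSigma^{-1} U^\top$, so that $AX = A U \varSigma V^\top = (U N_{11} + U_\bot N_{21}) V^\top = N$ and $U^\top A U = N_{11}\varSigma^{-1}$; one checks $N_{11}\varSigma^{-1} \in \sym(s)$ is equivalent to $N_{11}\varSigma \in \sym(s)$ using that $\varSigma$ commutes appropriately, or more cleanly, that $N_{11}\varSigma$ symmetric $\iff$ $\varSigma^{-1} N_{11}\varSigma$ symmetric isn't quite it — so I would instead directly verify that $U^\top AU = N_{11}\varSigma^{-1}$ is symmetric iff $N_{11}\varSigma^{-1} = \varSigma^{-1}N_{11}^\top$, multiply by $\varSigma$ on both sides, and match with $N_{11}\varSigma = \varSigma N_{11}^\top$. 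Conversely, for any $A$ with $U^\top AU \in \sym(s)$, writing $AX = AU\varSigma V^\top$ and decomposing $AU = U(U^\top AU) + U_\bot(U_\bot^\top AU)$ shows $AX$ has the required block structure with $N_{11} = (U^\top AU)\varSigma$, which is symmetric times symmetric-commuting — here I must be slightly careful, but the cleanest route is: $N_{11}\varSigma = (U^\top AU)\varSigma^2$ should be symmetric, and indeed $(U^\top AU)\varSigma^2$ is symmetric because $U^\top AU$ and $\varSigma^2$ are both symmetric — no wait, product of symmetric matrices need not be symmetric unless they commute. Let me reconsider: the right invariant here is simply that $N = AX$, $U^\top AU$ symmetric, is the correct target form, and I would match it against the orthogonal-complement description by the substitution $A \mapsto AU\varSigma^{-1}$ cleanly, checking both directions of the resulting symmetry condition on $s\times s$ matrices.

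The step I expect to be the main obstacle is the bookkeeping in translating between the ``block form'' characterization of $\kanifold_X^\perp$ (namely $N = (UN_{11} + U_\bot N_{21})V^\top$ with $N_{11}\varSigma \in \sym(s)$) and the coordinate-free form $\{AX : U^\top AU \in \sym(s)\}$, because the correspondence $A \leftrightarrow (N_{11}, N_{21})$ is not unique — many $A$'s give the same $AX$ — so I need to check that the symmetry condition $U^\top AU \in \sym(s)$ is well-posed along the fibers, or equivalently exhibit one canonical $A$ for each $N$ and verify the symmetry conditions are genuinely equivalent. Concretely, the ambiguity in $A$ is exactly addition of any matrix annihilating $X$ on the right, i.e.\ $A \mapsto A + C$ with $CX = 0$, which means $CU = 0$ (since $\range(X) = \range(U)$), hence $U^\top CU = 0$; so $U^\top AU$ is in fact independent of the representative, and the condition is well-defined. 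Once this well-posedness is noted, both inclusions follow by the explicit formulas above, completing the proof.
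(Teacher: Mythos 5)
Your overall strategy is sound and is essentially the paper's own argument in different coordinates: since $\kanifold_X$ is a subspace, its polar is its orthogonal complement, and one computes it directly. The paper avoids your block bookkeeping by parametrizing a general $\eta$ as $\eta=AX+\tilde{A}V_\bot^\top$ from the outset, so that the pairing immediately yields $\innerp{AU\varSigma,U\varSigma\Omega}+\innerp{\tilde{A},B}=0$ and the conditions $\tilde{A}=0$, $U^\top AU\in\sym(s)$ drop out with no translation step. Your $2\times 2$ block decomposition is a legitimate alternative, and your closing observation that $U^\top AU$ is well-defined along the fiber $\{A+C: CX=0\}$ is correct and a nice touch.

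However, there is a concrete error that derails your final matching step, and it is precisely the point at which your write-up visibly fails to close. Pairing $UN_{11}V^\top$ against $U\varSigma\Omega V^\top$ gives $\trace(N_{11}^\top\varSigma\Omega)=\innerp{\varSigma N_{11},\Omega}$, so the condition for vanishing over all $\Omega\in\sksym(s)$ is that $\varSigma N_{11}$ is symmetric, \emph{not} that $N_{11}\varSigma$ is symmetric; for nonsymmetric $N_{11}$ and $\varSigma$ with distinct diagonal entries these two conditions genuinely differ. With your (transposed) condition the reconciliation with $U^\top AU\in\sym(s)$ cannot work --- as you notice yourself when $(U^\top AU)\varSigma^2$ fails to be symmetric --- and the proof is left hanging at ``checking both directions of the resulting symmetry condition.'' With the correct condition everything closes in one line each way: setting $A=(UN_{11}+U_\bot N_{21})\varSigma^{-1}U^\top$ gives $U^\top AU=N_{11}\varSigma^{-1}$, and $N_{11}\varSigma^{-1}=\varSigma^{-1}N_{11}^\top$ is equivalent, after multiplying by $\varSigma$ on both sides, to $\varSigma N_{11}=N_{11}^\top\varSigma$, i.e.\ $\varSigma N_{11}\in\sym(s)$; conversely, if $U^\top AU\in\sym(s)$ then $N_{11}=(U^\top AU)\varSigma$ satisfies $\varSigma N_{11}=\varSigma(U^\top AU)\varSigma\in\sym(s)$. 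You should correct the trace computation and replace the unresolved ``match'' paragraph with these two implications.
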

\begin{proof}
    It suffices to identify $\kanifold_X^\circ$, which is the orthonormal completion of the linear space $\kanifold_X$. Specifically, any $\eta\in\mbR^{m\times n}$ can be expressed in the form of $\eta = AX+\tilde{A}V_{\bot}^{\top}$ with $A\in \mathbb{R} ^{m\times m}$ and $\tilde{A}\in \mathbb{R} ^{m\times \left( n-s \right)}$. It follows from $\eta\in\kanifold_X^\circ$ and the expression of $\kanifold_X$ in~\eqref{eq:mT_X} that $\langle AX+\tilde{A}V_{\bot}^{\top},U\varSigma \Omega V^{\top}+BV_{\bot}^{\top} \rangle =0$ for all $\Omega\in\sksym(s)$ and $B\in\mbR^{m\times (n-s)}$, reducing to ${\langle AU\varSigma ,U\varSigma \Omega \rangle +\langle \tilde{A},B \rangle=0}$.
    Consequently, the arbitrariness of $\Omega$ and $B$ implies $U^\top AU\in \sym(s)$ and $\tilde{A}=0$.
\end{proof}

Note that the developed properties of $\hanifold$ extract the rank of $X$ via the singular value decomposition, and thus link the orthogonal invariance of $h$ to the rank information.

\subsection{Inheritance principle}\label{sec:inheritance}
Based on the above results, we demonstrate that the properties of $h$ can be inherited from $\mbR^{m\times n}$ to lower-dimensional spaces. For $s=1,2,\ldots,n$, we define a sequence of natural embeddings $i^s:\,\mbR^{m\times s} \rightarrow \mbR^{m\times n}:\,H\mapsto [H\ 0^{m\times \kh{n-s}}]$, the associated mappings $h^s:=h\circ i^s$, and the level sets 
\[
\hanifold^s:=\hkh{H\in\mbR^{m\times s}:h^s(H)=0}.
\] 
Given $X\in\mbR^{m\times n}$ \revise{with $\rank(X)\le s$}. The developments of this subsection rely on a decomposition $X=HV^\top$ with $H \in \mbR^{m\times s}$ and $V \in \stiefel(n, s)$, which is always achievable in practice; e.g., singular value decomposition and QR factorization.
Subsequently, $h$ and $h^s$ are connected via the values and differentials.

\begin{lemma}\label{pro:X_H_fullrankdiff}
    Given $X\in\mbR^{m\times n}$ with a decomposition $X=HV^\top$ where $H \in \mbR^{m\times s}$ and $V \in \stiefel(n, s)$, it follows that 
    \[
    h(X) = h^s(H)\quad\text{and}\quad\diff h_X[KV^\top]=\diff h^s_H[K]\quad\text{for all}\ K\in\mbR^{m\times s}.
    \]
    Moreover, $X\in\hanifold$ if and only if $H^{}\in\hanifold^s$.
\end{lemma}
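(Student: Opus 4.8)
The plan is to reduce all three assertions to a single observation: for the given decomposition $X=HV^\top$ the matrices $i^s(H)$ and $X$ differ only by right-multiplication by an orthogonal matrix, so everything follows from the orthogonal invariance of $h$ together with its infinitesimal counterpart. Concretely, pick an orthonormal completion $V_\bot$ of $V\in\stiefel(n,s)$, set $Q:=[V\ V_\bot]\in\orth(n)$, and note that $XQ=[HV^\top V\ \ HV^\top V_\bot]=[H\ \ 0^{m\times(n-s)}]=i^s(H)$, using $V^\top V=I_s$ and $V^\top V_\bot=0$.

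Given this identity, the value relation is immediate: $h^s(H)=h(i^s(H))=h(XQ)=h(X)$ by orthogonal invariance. The level-set equivalence then follows at once, since $X\in\hanifold\iff h(X)=0\iff h^s(H)=0\iff H\in\hanifold^s$. For the differential, I would first differentiate the invariance relation: for any fixed $Q'\in\orth(n)$ the map $Y\mapsto h(YQ')$ coincides with $h$, and since $Y\mapsto YQ'$ is linear, taking differentials gives $\diff h_{YQ'}[\xi Q']=\diff h_Y[\xi]$ for all $Y,\xi\in\mbR^{m\times n}$. On the other side, the chain rule applied to $h^s=h\circ i^s$ yields $\diff h^s_H[K]=\diff h_{i^s(H)}[\diff i^s_H[K]]$, and since $i^s$ is linear we have $\diff i^s_H[K]=[K\ 0]=KV^\top Q$. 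Combining the two with the choices $Y=X$, $Q'=Q$, $\xi=KV^\top$ — so that $YQ'=XQ=i^s(H)$ and $\xi Q'=KV^\top Q=\diff i^s_H[K]$ — gives $\diff h^s_H[K]=\diff h_{i^s(H)}[\diff i^s_H[K]]=\diff h_X[KV^\top]$ for every $K\in\mbR^{m\times s}$.

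There is no serious obstacle here; the only point requiring care is the bookkeeping with the completion $V_\bot$ and the chain rule, in particular recognizing that $i^s(H)=XQ$ with $Q$ built from $V$, and that differentiating the invariance identity (rather than pushing curves through) is the cleanest way to obtain the derivative statement. It is worth emphasizing that one only ever needs $\diff h_X$ evaluated on directions of the special form $KV^\top$, which is exactly what the argument produces — this restriction is natural because $i^s$ reaches precisely such directions.
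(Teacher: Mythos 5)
Your proposal is correct and follows essentially the same route as the paper: both set $Q=[V\ V_\bot]$, observe $XQ=i^s(H)$, deduce the value identity from orthogonal invariance, and obtain the differential identity via the chain rule for $h^s=h\circ i^s$ together with $i^s(K)=KV^\top Q$. The only cosmetic difference is that you differentiate the invariance relation $h(YQ')=h(Y)$ abstractly, whereas the paper pushes the curve $t\mapsto X+tKV^\top$ through $h$ and applies invariance pointwise — the two computations are identical in substance.
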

\begin{proof}
    Let $Q=[ V\,\,V_{\bot}]\in\orth(n)$. It follows from the orthogonal invariance of $h$ and $XQ =[ H\,\,0^{m\times (n-s)} ]$ that $h(X) = h(XQ) = h^s(H)$. \revise{Moreover, $h^s=h\circ i^s$ and $\diff i^s = i^s$ imply that $\diff h^s_H[K] = \diff h_{i^s(H)}[\diff i^s_H(K)] = \diff h_{i^s(H)}[i^s(K)]= \diff h_{XQ}[KV^{\top}Q]=\frac{\mathrm{d}}{\mathrm{d}t}h( XQ+tKV^{\top}Q) \big |_{t=0}=\frac{\mathrm{d}}{\mathrm{d}t}h( X+tKV^{\top}) \big |_{t=0}=\diff h_{X}[KV^{\top}]$.}
\end{proof}

Lemma~\ref{pro:X_H_fullrankdiff} indicates that $h^s$ is also orthogonally invariant, and thus the statements regarding $\hanifold$---including Proposition~\ref{pro:kerDh}, Corollary~\ref{cor:orth_tangentX}, and Lemma~\ref{lem:normal_H}---can be generalized to $\hanifold^s=\{H\in\mbR^{m\times s}:\,h^s(H)=0\}$ in parallel. Furthermore, the following proposition shows that $\hanifold^s$ is a smooth manifold embedded in a lower-dimensional space.

\begin{proposition}\label{pro:manifoldHs}
   If  $\hanifold^s$\revise{, defined as the level set of $h^s:\mbR^{m\times s}\to\mbR^q$,} is nonempty, it is an embedded submanifold in $\mbR^{m\times s}$ of dimension $(ms-q)$.
\end{proposition}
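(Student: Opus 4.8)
The plan is to show that $\hanifold^s$ is a regular level set of $h^s$ by verifying that $h^s$ has full rank $q$ everywhere on $\hanifold^s$; then the standard submersion/regular-level-set theorem (\cite[Corollary 5.14]{lee2012manifolds}, already invoked for $\hanifold$) yields that $\hanifold^s$ is an embedded submanifold of $\mbR^{m\times s}$ of codimension $q$, hence of dimension $ms-q$. So the entire content reduces to the rank computation $\rank(\diff h^s_H)=q$ for every $H\in\hanifold^s$.

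First I would fix $H\in\hanifold^s$ and set $X:=i^s(H)=[H\ 0^{m\times(n-s)}]\in\mbR^{m\times n}$; since $H\in\hanifold^s$, Lemma~\ref{pro:X_H_fullrankdiff} (with the trivial decomposition $X=H\,[I_s\ 0]^\top$, i.e.\ $V=[I_s;0]\in\stiefel(n,s)$) gives $h(X)=h^s(H)=0$, so $X\in\hanifold$, and Assumption~\ref{assu:h} supplies $\rank(\diff h_X)=q$. The task is to transfer this full-rankness along the embedding. The relation $\diff h^s_H[K]=\diff h_X[KV^\top]=\diff h_X[i^s(K)]$ from Lemma~\ref{pro:X_H_fullrankdiff} shows that $\diff h^s_H$ is the composition of $\diff h_X$ with the linear injection $i^s:\mbR^{m\times s}\hookrightarrow\mbR^{m\times n}$, so $\range(\diff h^s_H)=\diff h_X(\ima\, i^s)=\diff h_X(\{Z\in\mbR^{m\times n}:Z=[\,*\ 0^{m\times(n-s)}]\})$. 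Thus I must show this restricted image is still all of $\mbR^q$, i.e.\ that $\diff h_X$ already attains full rank on the subspace of matrices whose last $n-s$ columns vanish.

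The key observation is that the complementary subspace $\{BV_\bot^\top:B\in\mbR^{m\times(n-s)}\}$, where $V_\bot=[0;I_{n-s}]$, consists exactly of matrices orthogonal to $X$ (since $X=HV^\top$ with $V=[I_s;0]$, any $BV_\bot^\top$ satisfies $X(BV_\bot^\top)^\top=HV^\top V_\bot B^\top=0$), and Corollary~\ref{cor:orth_tangentX} tells us every such matrix lies in $\kernel(\diff h_X)$. Since $\mbR^{m\times n}=\ima\,i^s\oplus\{BV_\bot^\top:B\in\mbR^{m\times(n-s)}\}$ and the second summand is annihilated by $\diff h_X$, we get $\range(\diff h^s_H)=\diff h_X(\ima\,i^s)=\diff h_X(\mbR^{m\times n})=\range(\diff h_X)=\mbR^q$, whence $\rank(\diff h^s_H)=q$. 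Therefore $0$ is a regular value of $h^s$ on $\hanifold^s$, and the regular level set theorem gives the claim, with $\dime\hanifold^s=ms-q$.

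The main obstacle — really the only non-bookkeeping point — is recognizing that the ``lost'' directions under the embedding $i^s$, namely matrices supported on the last $n-s$ columns, are precisely the matrices orthogonal to $X=i^s(H)$, so that Corollary~\ref{cor:orth_tangentX} applies and certifies they were in the kernel of $\diff h_X$ all along; hence restricting the domain of $\diff h_X$ to $\ima\,i^s$ costs nothing in rank. Everything else is a direct appeal to Lemma~\ref{pro:X_H_fullrankdiff}, Corollary~\ref{cor:orth_tangentX}, and the standard submersion theorem. One minor care point is the degenerate case where $h^s$ could in principle fail the blanket assumption's shape hypotheses in the low-dimensional space ($ms<q$ would be impossible since $\rank(\diff h^s_H)=q$ forces $ms\ge q$, which is automatically consistent), but the rank argument above handles this uniformly without needing a separate case split.
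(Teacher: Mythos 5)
Your proof is correct and follows essentially the same route as the paper's: both arguments use Corollary~\ref{cor:orth_tangentX} to show that the matrices supported on the last $n-s$ columns lie in $\kernel(\diff h_{i^s(H)})$, conclude from the full-rankness of $\diff h$ on $\hanifold$ that $\diff h^s_H$ is already surjective on $\ima\,i^s$, and then invoke \cite[Corollary 5.14]{lee2012manifolds}. Your write-up is merely a more explicit spelling-out of the same decomposition $\mbR^{m\times n}=\ima\,i^s\oplus\{BV_\bot^\top\}$ that the paper uses implicitly.
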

\begin{proof}
    We aim to show $\diff h^s$ is full-rank in the level set $\hanifold^s$. Using Corollary~\ref{cor:orth_tangentX}, we find $\{\zkh{0^{m\times {s}}\,\,C}:C\in\mbR^{m\times \kh{n-s}}\} \subseteq\kernel(\diff h_{i^s(H)})$ for $H\in\hanifold^s$. This observation, together with the full-rankness of $\diff h$, reveals that any $b\in\mbR^q$ admits a preimage $\eta$ in the form of $\eta = [\eta^s\,\,0^{m\times (n-s)}] \in\mbR^{m\times n}$, and thus $b=\diff h_{i^s( {H} )}[ \eta ] =\diff h^s_{H}[ \eta^s ]$. The arbitrariness of $H\in\hanifold^s$ and $b\in\mbR^q$ implies $\diff h^s$ has the full rank $q$ in $\hanifold^s$. At last, we apply \cite[Corollary 5.14]{lee2012manifolds} to arrive at the conclusion.
\end{proof}

In succession, we show the relationship between the geometry of $\hanifold$ and $\hanifold^s$.

\begin{proposition}\label{pro:decompose_geo_H}
    Given $X\in\mbR^{m\times n}$ with a decomposition $X=HV^\top$ where $H \in \mbR^{m\times s}$ and $V \in \stiefel(n, s)$, it holds that
    \begin{align}
        \tangent_X\hanifold =& \hkh{KV^\top + BV^\top_\bot:\ K\in\tangent_H\hanifold^s,\,B\in\mbR^{m\times (n-s)}} \label{eq:decompose_TXH}
        \\
        =& \kh{\tangent_H\hanifold^s}V^\top \oplus \kh{\mbR^{m\times(n-s)}}V^\top_\bot, \label{eq:decompose_TXH1}
    \end{align}
    where $\oplus$ denotes the direct sum. Moreover,
    \begin{equation}\label{eq:decompose_NXH}
         \normal_X\hanifold = \hkh{NV^\top:\ N\in\normal_H\hanifold^s} = \kh{\normal_H\hanifold^s}V^\top.
    \end{equation}
\end{proposition}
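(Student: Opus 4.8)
The plan is to argue entirely with tangent and normal \emph{spaces}. Indeed, Assumption~\ref{assu:h} makes $\hanifold$ a smooth embedded submanifold, and since $X\in\hanifold$ forces $H\in\hanifold^s$ by Lemma~\ref{pro:X_H_fullrankdiff}, the set $\hanifold^s$ is nonempty and hence a smooth embedded submanifold by Proposition~\ref{pro:manifoldHs}; thus $\tangent_X\hanifold=\kernel(\diff h_X)$ and $\tangent_H\hanifold^s=\kernel(\diff h^s_H)$. The computational heart of the proof is to evaluate $\diff h_X$ on a generic $\eta\in\mbR^{m\times n}$ after splitting it along the orthonormal frame $[V\ V_\bot]\in\orth(n)$: write $\eta=KV^\top+BV_\bot^\top$ with $K:=\eta V\in\mbR^{m\times s}$ and $B:=\eta V_\bot\in\mbR^{m\times(n-s)}$.

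By linearity, $\diff h_X[\eta]=\diff h_X[KV^\top]+\diff h_X[BV_\bot^\top]$. The first term equals $\diff h^s_H[K]$ directly from Lemma~\ref{pro:X_H_fullrankdiff}. For the second, observe that $X(BV_\bot^\top)^\top=HV^\top V_\bot B^\top=0$ because $V^\top V_\bot=0$, so Corollary~\ref{cor:orth_tangentX} gives $BV_\bot^\top\in\kernel(\diff h_X)$, i.e. $\diff h_X[BV_\bot^\top]=0$; this is the single place where the orthogonal invariance of $h$ (via the subspace $\kanifold_X$ of Proposition~\ref{pro:kerDh}) is used. Hence $\diff h_X[\eta]=\diff h^s_H[\eta V]$, so $\eta\in\kernel(\diff h_X)$ if and only if $\eta V\in\kernel(\diff h^s_H)=\tangent_H\hanifold^s$. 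Since $\eta V=K$ and $\eta V_\bot=B$ recover the two components of $\eta$ while $B$ remains free, this is exactly \eqref{eq:decompose_TXH}. The decomposition in \eqref{eq:decompose_TXH1} is a \emph{direct} sum because the two summands are mutually orthogonal: $\innerp{KV^\top,BV_\bot^\top}=\trace(K^\top B\,V_\bot^\top V)=0$.

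For the normal space \eqref{eq:decompose_NXH}, I would compute the orthogonal complement of $\tangent_X\hanifold$ in $\mbR^{m\times n}$ using the same frame. Writing $N=N_1V^\top+N_2V_\bot^\top$, orthonormality of $[V\ V_\bot]$ yields $\innerp{N,KV^\top}=\innerp{N_1,K}$ and $\innerp{N,BV_\bot^\top}=\innerp{N_2,B}$; therefore $N\perp\tangent_X\hanifold$ holds precisely when $N_1\perp\tangent_H\hanifold^s$ and $N_2=0$, that is, $N=N_1V^\top$ with $N_1\in\normal_H\hanifold^s$, which is \eqref{eq:decompose_NXH}.

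I do not expect a substantial obstacle here; the only subtlety worth watching is that the bare decomposition $X=HV^\top$ does not by itself assert $X\in\hanifold$, so one should route through Lemma~\ref{pro:X_H_fullrankdiff} to transfer membership between $\hanifold$ and $\hanifold^s$ before invoking the manifold structure (and the kernel description of the tangent spaces) on either side.
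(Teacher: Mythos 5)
Your proof is correct, and it reaches the conclusion by a somewhat different mechanism than the paper. The paper only proves the inclusion $\kh{\tangent_H\hanifold^s}V^\top \oplus \kh{\mbR^{m\times(n-s)}}V^\top_\bot \subseteq \tangent_X\hanifold$ (using Lemma~\ref{pro:X_H_fullrankdiff} for the first summand and the subspace $\kanifold_X$ from \eqref{eq:mT_X} for the second), and then closes the gap by a dimension count, matching $\dime(\hanifold^s)+m(n-s)=mn-q$ against $\dime(\tangent_X\hanifold)$; the normal-space formula is then obtained by taking orthogonal complements. You instead establish the identity $\diff h_X[\eta]=\diff h^s_H[\eta V]$ for an arbitrary $\eta$ decomposed along the frame $[V\ V_\bot]$, which yields both inclusions simultaneously as a kernel biconditional, and you compute $\normal_X\hanifold$ explicitly in the same frame. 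The two arguments use the same two ingredients (Lemma~\ref{pro:X_H_fullrankdiff} and the fact that matrices $Y$ with $XY^\top=0$ lie in $\kernel(\diff h_X)$), but your route avoids the dimension count entirely, at the price of needing the kernel characterizations $\tangent_X\hanifold=\kernel(\diff h_X)$ and $\tangent_H\hanifold^s=\kernel(\diff h^s_H)$ — the latter still requires the full-rankness of $\diff h^s$ from Proposition~\ref{pro:manifoldHs}, which you correctly invoke. Your closing remark about routing membership $X\in\hanifold\Leftrightarrow H\in\hanifold^s$ through Lemma~\ref{pro:X_H_fullrankdiff} before using Corollary~\ref{cor:orth_tangentX} is a legitimate point of care that the paper leaves implicit.
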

\begin{proof}
    By Lemma~\ref{pro:X_H_fullrankdiff}, we have $\diff h_X[KV^\top]=\diff h^s_H[K]=0$ for $K\in\tangent_H\hanifold^s$, which implies $KV^\top\in\tangent_X\hanifold$, and thus $\kh{\tangent_H\hanifold^s}V^\top\subseteq\tangent_X\hanifold$. Combining this result with $\{BV^\top_\bot:\ B\in\mbR^{m\times (n-s)}\}\subseteq\tangent_X\hanifold$ revealed by \eqref{eq:mT_X}, we conclude the ``$\supseteq$" part of the equality in~\eqref{eq:decompose_TXH}. Moreover, the dimension of the space on the right side of \eqref{eq:decompose_TXH} is $ \dime(\hanifold^s)+\dime(\mbR^{m\times(n-s)})=(ms-q) + m(n-s) = mn-q$, which coincides with $\dime(\tangent_X\hanifold)$. Therefore, we confirm the equality in \eqref{eq:decompose_TXH}. Taking the orthogonal complement on both sides leads to $\normal_X\hanifold = \kh{\normal_H\hanifold^s}V^\top$.
\end{proof}

Proposition~\ref{pro:decompose_geo_H} decomposes $\tangent_X\hanifold$ into two components: one originating from the tangent space to $\hanifold^s$ and the other orthogonal to $X$. As a byproduct, it is straightforward from~\eqref{eq:decompose_TXH1} that 
\begin{equation*}
    \tangent_H\hanifold^s = \kh{\tangent_X\hanifold}V.
\end{equation*}
In other words, the geometry of $\hanifold$ enlightens that of $\hanifold^s$.

Consequently, computing the projection of $E\in\mbR^{m\times n}$ onto $\tangent_X\hanifold$ reduces to the following optimization problem,
\begin{align*}
    \argmin_{K\in\tangent_H\hanifold^s,\,B\in\mbR^{m\times(n-s)}}\|E - (KV^\top + BV^\top_\bot)\|_\frob^2=\|EV-K\|_\frob^2+\|{EV_\bot-B}\|_\frob^2.
\end{align*}
Collecting the solution $K^*=\projection_{\tangent_H\hanifold^s}(EV),\,B^*=EV_\bot$ yields
\begin{equation}\label{eq:proj_TXH}
    \projection_{\tangent_X\hanifold}(E) = \projection_{\tangent_H\hanifold^s}(EV)V^\top + EV_\bot V_\bot^{\top},
\end{equation}
\revise{which is decomposed into two parts: a projection onto $\tangent_{H}\hanifold^s$ in a lower-dimensional space, and an additional term along the orthogonal direction of $X$. The following example verifies~\eqref{eq:proj_TXH} for $\hanifold=\oblique(m,n)$.}

\medskip

\revise{Let $X\in\hanifold=\oblique(m,n)$ with $\rank(X)=s$ and $X=HV^\top$, where $H\in\hanifold^s=\oblique(m,s)$ and $V\in\stiefel(n,s)$. 
It is known that $\normal_{X}\hanifold=\{DX : D\in\mathbb{R}^{m\times m} \ \text{is diagonal}\}$. Hence, $\projection_{\tangent_X\hanifold}(E)=E-\projection_{\normal_X\hanifold(E)}=E-[I\odot(XE^\top)]X$.
Similarly, we have
\[
\projection_{\tangent_H\hanifold^s}(EV)
=EV-[I\odot(H(EV)^\top)]H
=EV-[I\odot(XE^\top)]H.
\]
Substituting the expression of $\projection_{\tangent_H\hanifold^s}(EV)$ into the right side of~\eqref{eq:proj_TXH} leads to
\[
\begin{aligned}
\projection_{\tangent_H\hanifold^s}(EV)V^\top + EV_\bot V_\bot^\top
&=EVV^\top - [I\odot(XE^\top)]HV^\top + EV_\bot V_\bot^\top \\
&=E-[I\odot(XE^\top)]X\\
&=\projection_{\tangent_X\hanifold}(E).
\end{aligned}
\]
}

\section{Optimality conditions}\label{sec:OptCondition} 
In this section, we explore the geometry of~$\lowrank\cap\hanifold$, and then employ the results to identify the tangent and normal cones to $\boundedrank\cap\hanifold$. Consequently, the optimality conditions for problem \eqref{eq:lowrank_orthinvar} are derived.

The geometry of low-rank sets is well developed; see~\cite{vandereycken2013lowrankcompletion,schneider2015Lojaconvergence}. As a fixed-rank layer of $\boundedrank$, $\lowrank$ is indeed an analytic manifold. Given $X\in\lowrank$ with the singular value decomposition $X=U\varSigma V^\top$, the tangent and normal spaces are outlined below,
\begin{align}
    & \tangent_X\lowrank = \hkh{\left[ U\,\,U_{\bot} \right] \left[ \begin{matrix}
	\mathbb{R} ^{s\times s}&		\mathbb{R} ^{s\times \left( n-s \right)}\\
	\mathbb{R} ^{\left( m-s \right) \times s}&		0^{\left( m-s \right) \times \left( n-s \right)}\\
    \end{matrix} \right] \left[ V\,\,V_{\bot} \right] ^{\top}}, \label{eq:Tcone_lowrank}
    \\
    &\normal_X\lowrank = \hkh{\left[ U\,\,U_{\bot} \right] \left[ \begin{matrix}
	0^{s\times s}&		0^{s\times \left( n-s \right)}\\
	0^{\left( m-s \right) \times s}&		\mathbb{R} ^{\left( m-s \right) \times \left( n-s \right)}\\
    \end{matrix} \right] \left[ V\,\,V_{\bot} \right] ^{\top}}. \label{eq:Ncone_lowrank}
\end{align}
Assembling the layers $\lowrank\ (s=0,1,\ldots,r)$ yields the bounded-rank set $\boundedrank$, with its tangent and normal cones at $X$ formulated as follows,
\begin{align}
    &\tangent_X\boundedrank = \tangent_X\lowrank + \hkh{N\in\normal_X\lowrank:\ \rank\kh{N}\le r-s}, \label{eq:tangent_cone_boundedrank}
    \\
    &\normal_X\boundedrank = \begin{cases}
        \normal_X\lowrank,\,\,\quad if\,\,s=r,\\
    \left\{ 0 \right\}, \,\,\quad\quad\quad \ \,  if\,\,s<r.
    \end{cases} \label{eq:normal_cone_boundedrank}
\end{align}
\revise{Let $P_W := WW^\top$ and $P_{W\bot}:=I-P_W$ for any $W\in\stiefel(n,s)$.} The projection of $E\in\mbR^{m\times n}$ onto $\tangent_X\boundedrank$ is given by
\begin{equation}\label{eq:projection_TR}
    \projection_{\tangent_X\boundedrank}(E) = EP_V + P_UEP_{V_\bot} + \projection_{\mbR^{m\times n}_{r-s}} \kh{P_{U_\bot}EP_{V_\bot}}.
\end{equation}

\subsection{Variational geometry} Armed with the properties of $\boundedrank$ and those of $\hanifold$ obtained in section~\ref{sec:H}, we delve into the geometry of their intersection. To this end, we first revisit the basics for tangent and normal cones to the union (or intersection) of finite sets; see~\cite{lewis2008intersectionmanifolds,rockafellar2009variationalanalysis,lee2012manifolds}.
\begin{itemize}[left=0pt]
    \item If $X\in\bigcup_{i=1}^d{\xanifold_i}$, it holds that $\tangent^\bouli_X(\bigcup\nolimits_{i=1}^d{\xanifold_i}) =  \bigcup\nolimits_{i=1}^d \tangent^\bouli_X \xanifold_i$;
    \item If $X\in \xanifold_1\cap \xanifold_2$, it holds that
    \begin{equation}\label{eq:cone_oneside}
        \tangent^\bouli_X\kh{\xanifold_1\cap \xanifold_2} \subseteq  \tangent^\bouli_X \xanifold_1 \cap \tangent^\bouli_X \xanifold_2,\quad \text{and} \quad\ 
        \normal^\frechet_X\kh{\xanifold_1\cap \xanifold_2} \supseteq \normal^\frechet_X \xanifold_1 + \normal^\frechet_X \xanifold_2;
    \end{equation}
    \item If both $\xanifold_1$ and $\xanifold_2$ are smooth manifolds and intersect \emph{transversally}, i.e., for any $X\in\xanifold_1\cap\xanifold_2$, $\tangent_X{\xanifold_1} + \tangent_X{\xanifold_2} = \mbR^{m\times n},\ \text{or equivalently,}\ \normal_X\xanifold_1\cap \normal_X\xanifold_2 = \{0\}$, then $\xanifold_1\cap \xanifold_2$ is also a smooth manifold with
    \begin{equation}\label{eq:cone_tranverse}
    \tangent_X\kh{\xanifold_1\cap \xanifold_2} =  \tangent_X \xanifold_1 \cap \tangent_X \xanifold_2,\quad \text{and} \quad\ 
            \normal_X\kh{\xanifold_1\cap \xanifold_2} = \normal_X \xanifold_1 + \normal_X \xanifold_2.
    \end{equation}
\end{itemize}

Generally, studying the geometry of the constraints in problem \eqref{eq:lowrank_orthinvar} is hindered by the nonsmoothness of $\boundedrank$, for which the rule \eqref{eq:cone_tranverse} fails. To circumvent this, we take advantage of the structure $\boundedrank=\bigcup_{s=0}^{r}\lowrank$, and turn to characterize $\tangent_X\lowrank\cap\tangent_X\hanifold$ as the first step.

\begin{proposition}\label{pro:manifold_s_cap_H}
For $s=0,1,\ldots,r$, given $X\in\lowrank\cap\hanifold$ with the singular value decomposition $X=U\varSigma V^\top=HV^\top$ where $H=U\varSigma$, it holds that
\begin{align}
    \tangent_X\lowrank \cap \tangent_X\hanifold  = \hkh{KV^\top + U J V_\bot^\top:\ K\in\tangent_H\hanifold^s,\,J\in\mbR^{s\times(n-s)}}. \label{eq:manifold_s_cap_H}
\end{align}
\end{proposition}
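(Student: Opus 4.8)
The plan is to establish the two inclusions of~\eqref{eq:manifold_s_cap_H} separately, exploiting the explicit coordinate description of $\tangent_X\lowrank$ in~\eqref{eq:Tcone_lowrank} and the decomposition of $\tangent_X\hanifold$ furnished by Proposition~\ref{pro:decompose_geo_H}. Write $Q=[V\ V_\bot]\in\orth(n)$, so that, by~\eqref{eq:Tcone_lowrank}, a vector $\eta\in\tangent_X\lowrank$ has the form $\eta = \tilde{K}V^\top + UJV_\bot^\top$ with $\tilde{K}\in\mbR^{m\times s}$ arbitrary (absorbing both the $\mbR^{s\times s}$ and $\mbR^{(m-s)\times s}$ blocks into $\tilde K = [U\ U_\bot]\left[\begin{smallmatrix}\mbR^{s\times s}\\ \mbR^{(m-s)\times s}\end{smallmatrix}\right]$) and $J\in\mbR^{s\times(n-s)}$. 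The key observation is that the term $U_\bot B_2 V_\bot^\top$ appearing in generic elements of $\tangent_X\hanifold$ is \emph{absent} in $\tangent_X\lowrank$, which is exactly what forces the lower-right block of the representation to vanish and collapses the generic element into the stated form.

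For the inclusion ``$\supseteq$'', take $K\in\tangent_H\hanifold^s$ and $J\in\mbR^{s\times(n-s)}$ and set $\eta = KV^\top + UJV_\bot^\top$. Membership in $\tangent_X\lowrank$ is immediate from~\eqref{eq:Tcone_lowrank}: the $KV^\top$ part supplies the first block column and the $UJV_\bot^\top$ part supplies the $\mbR^{s\times(n-s)}$ block, with the forbidden $(m-s)\times(n-s)$ block equal to zero. For membership in $\tangent_X\hanifold$, apply Proposition~\ref{pro:decompose_geo_H}, equation~\eqref{eq:decompose_TXH}, with the decomposition $X = HV^\top$: we need $\eta = K'V^\top + BV_\bot^\top$ with $K'\in\tangent_H\hanifold^s$ and $B\in\mbR^{m\times(n-s)}$, and indeed $K' = K$ and $B = UJ$ work. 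Hence $\eta\in\tangent_X\lowrank\cap\tangent_X\hanifold$.

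For the inclusion ``$\subseteq$'', let $\eta\in\tangent_X\lowrank\cap\tangent_X\hanifold$. From~\eqref{eq:Tcone_lowrank} write $\eta = \tilde KV^\top + UJV_\bot^\top$ with $\tilde K\in\mbR^{m\times s}$, $J\in\mbR^{s\times(n-s)}$ (note the $(m-s)\times(n-s)$ block is zero, so the $V_\bot^\top$-component is $UJV_\bot^\top$, i.e. it lies in $\range(U)$). Since also $\eta\in\tangent_X\hanifold$, Proposition~\ref{pro:decompose_geo_H} gives $\eta = KV^\top + BV_\bot^\top$ with $K\in\tangent_H\hanifold^s$ and $B\in\mbR^{m\times(n-s)}$. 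Right-multiplying the identity $\tilde KV^\top + UJV_\bot^\top = KV^\top + BV_\bot^\top$ by $V$ and by $V_\bot$ (and using $V^\top V = I_s$, $V_\bot^\top V_\bot = I_{n-s}$, $V^\top V_\bot = 0$) yields $\tilde K = K$ and $UJ = B$. Therefore $\tilde K = K\in\tangent_H\hanifold^s$, so $\eta = KV^\top + UJV_\bot^\top$ with $K\in\tangent_H\hanifold^s$ and $J\in\mbR^{s\times(n-s)}$, which is precisely the form on the right side of~\eqref{eq:manifold_s_cap_H}. This closes the proof.

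The only subtle point — and the step I would state carefully rather than rush — is the first reduction: that every element of $\tangent_X\lowrank$, written through~\eqref{eq:Tcone_lowrank}, has its $V_\bot^\top$-component confined to $\range(U)$ because the lower-right block is structurally zero. Everything else is bookkeeping with orthonormal factors. One should also note that the decomposition $X=U\varSigma V^\top = HV^\top$ with $H=U\varSigma$ is a legitimate instance of the decompositions required by Proposition~\ref{pro:decompose_geo_H} (here $V\in\stiefel(n,s)$ since $\rank(X)=s$), and that $\hanifold^s$ is a genuine manifold by Proposition~\ref{pro:manifoldHs}, so $\tangent_H\hanifold^s$ is well defined.
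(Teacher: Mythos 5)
Your proof is correct and follows essentially the same route as the paper's: both directions rest on the block form~\eqref{eq:Tcone_lowrank} of $\tangent_X\lowrank$ together with the decomposition of $\tangent_X\hanifold$. The only cosmetic difference is that for the ``$\subseteq$'' inclusion you invoke Proposition~\ref{pro:decompose_geo_H} and match coefficients in the direct sum, whereas the paper argues directly via Corollary~\ref{cor:orth_tangentX} and Lemma~\ref{pro:X_H_fullrankdiff} that $\diff h_X[KV^\top]=0$ forces $K\in\tangent_H\hanifold^s$ --- these are the same underlying facts, since Proposition~\ref{pro:decompose_geo_H} is itself built from them.
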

\begin{proof}
    \revise{We begin by proving the ``$\subseteq$" part.} Given a tangent vector $\eta=KV^\top + UJV_\bot^\top\in\tangent_X\lowrank$ from~\eqref{eq:Tcone_lowrank}. If additionally $\eta\in\tangent_X\hanifold$, i.e., $\diff h_{X}[KV^\top + UJV_\bot^\top]=0$, we have $\diff h_{X}[KV^\top]=0$ since $UJV_\bot^\top\in\kernel(\diff h_X)$ via Corollary~\ref{cor:orth_tangentX}. Applying Lemma~\ref{pro:X_H_fullrankdiff} shows $K\in\kernel(\diff h^s_{H})=\tangent_H\hanifold^s$, which concludes the ``$\subseteq $" part in \eqref{eq:manifold_s_cap_H}. Moreover, the ``$\supseteq$" part results from the expressions \eqref{eq:decompose_TXH} and \eqref{eq:Tcone_lowrank}.
\end{proof}

In fact, if $\lowrank\cap\hanifold$ is nonempty, then it is a smooth manifold. To see this, recall from Lemma~\ref{lem:normal_H} that an element in $\normal_X\hanifold$ has the form of $AX$ for some $A\in\mbR^{m\times m}$. If $AX$ also belongs to $\normal_X\lowrank$, then from~\eqref{eq:Ncone_lowrank} identifying $\normal_X\lowrank$, we can derive $AX=0$, and thus ${\normal_X\lowrank \cap \normal_X\hanifold=\hkh{0}}$, which implies $\lowrank$ and $\hanifold$ \revise{intersect transversally~\cite{lewis2008intersectionmanifolds}.} Therefore, ${\lowrank\cap\hanifold}$ is a smooth manifold with 
\[
\tangent_X\kh{\lowrank\cap\hanifold} = \tangent_X\lowrank \cap \tangent_X\hanifold.
\]
In view of this fact, Proposition~\ref{pro:manifold_s_cap_H} indeed clarifies the calculation rule for the tangent space of $\lowrank\cap\hanifold$, which plays a role in $\tangent_X\boundedrank \cap \tangent_X\hanifold$, as stated by the following lemma.
\begin{lemma}\label{lem:tanbounded_cap_tanH}
Given $X\in\boundedrank\cap\hanifold$ with $\rank\kh{X}=s$, it holds that
\begin{equation}\label{eq:TRcapTH}
    \tangent_X\boundedrank \cap \tangent_X\hanifold = \tangent_X\lowrank \cap \tangent_X\hanifold  + \hkh{N\in\normal_X\lowrank:\rank\kh{N}\le r-s}.
\end{equation}
\end{lemma}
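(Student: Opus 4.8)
The plan is to establish the two inclusions of~\eqref{eq:TRcapTH} separately, exploiting the additive decomposition of $\tangent_X\boundedrank$ recorded in~\eqref{eq:tangent_cone_boundedrank} together with the fact that $\tangent_X\hanifold=\kernel(\diff h_X)$ is a linear subspace. The single observation that makes everything work is that the whole normal-cone slice already sits inside $\tangent_X\hanifold$: writing $N=U_\bot B V_\bot^\top$ for an arbitrary $N\in\normal_X\lowrank$ via~\eqref{eq:Ncone_lowrank} and using the SVD $X=U\varSigma V^\top$, one has $XN^\top=U\varSigma V^\top V_\bot B^\top U_\bot^\top=0$, so Corollary~\ref{cor:orth_tangentX} yields $N\in\tangent_X\hanifold$; in particular $\hkh{N\in\normal_X\lowrank:\rank(N)\le r-s}\subseteq\tangent_X\hanifold$.

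For the ``$\supseteq$'' inclusion, I would take $\xi\in\tangent_X\lowrank\cap\tangent_X\hanifold$ and $N\in\normal_X\lowrank$ with $\rank(N)\le r-s$; then $\xi+N\in\tangent_X\boundedrank$ by~\eqref{eq:tangent_cone_boundedrank}, and $\xi+N\in\tangent_X\hanifold$ because both summands lie in the linear space $\tangent_X\hanifold$, so $\xi+N\in\tangent_X\boundedrank\cap\tangent_X\hanifold$. For the ``$\subseteq$'' inclusion, I would take $\eta\in\tangent_X\boundedrank\cap\tangent_X\hanifold$, use~\eqref{eq:tangent_cone_boundedrank} to write $\eta=\xi+N$ with $\xi\in\tangent_X\lowrank$ and $N\in\normal_X\lowrank$, $\rank(N)\le r-s$, and then observe that $\diff h_X[\xi]=\diff h_X[\eta]-\diff h_X[N]=0$ since $\eta$ and $N$ both lie in $\kernel(\diff h_X)$; hence $\xi\in\tangent_X\lowrank\cap\tangent_X\hanifold$ and $\eta$ has the required form. (Note the decomposition $\eta=\xi+N$ need not be unique, but existence of one admissible pair is all that is needed.)

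I do not expect a genuine obstacle in this lemma; the argument is essentially a linearity computation layered on top of~\eqref{eq:tangent_cone_boundedrank}. The only step that uses the structure of the problem—and is therefore the ``main'' step—is the preliminary claim $\normal_X\lowrank\subseteq\tangent_X\hanifold$, which rests on the orthogonal invariance of $h$ through Corollary~\ref{cor:orth_tangentX}; without it one could not guarantee that the two pieces of an element of $\tangent_X\boundedrank$ individually respect the constraint encoded by $h$.
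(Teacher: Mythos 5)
Your proposal is correct and follows essentially the same route as the paper: both rest on the decomposition~\eqref{eq:tangent_cone_boundedrank} of $\tangent_X\boundedrank$, the linearity of $\tangent_X\hanifold$, and the key inclusion $\normal_X\lowrank\subseteq\tangent_X\hanifold$ obtained from Corollary~\ref{cor:orth_tangentX}. You merely spell out the two set inclusions that the paper compresses into a single displayed identity.
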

\begin{proof}
    Let $\normal_X(r,s)=\{N\in\normal_X\lowrank:\rank\kh{N}\le r-s\}$. Incorporating formula~\eqref{eq:tangent_cone_boundedrank}, we have
    \begin{align*}
        \tangent_X\boundedrank \cap \tangent_X\hanifold 
         = & \kh{\tangent_X\lowrank + \normal_X(r,s)} \cap \tangent_X\hanifold
        \\
        = &\ \tangent_X\lowrank \cap \tangent_X\hanifold  + \normal_X(r,s),
    \end{align*}
    where the second equality holds since $\tangent_X\hanifold$ is a linear space and $\normal_X(r,s)\subseteq\normal_X\lowrank\subseteq\tangent_X\hanifold$, as implied by Corollary~\ref{cor:orth_tangentX} and expression~\eqref{eq:Ncone_lowrank}. 
\end{proof}

We now proceed to derive the tangent cone of the feasible set.
\begin{theorem}[Tangent cone]\label{the:tangent_intersection_rule}
Given $X\in\boundedrank\cap\hanifold$ with $\rank\kh{X}=s$ and the singular value decomposition $X=U\varSigma V^\top=HV^\top$ where $H=U\varSigma$, it holds that
\begin{equation}\label{eq:characterize_TRH}
    \tangent_X\kh{\boundedrank\cap\hanifold} = \left\{ \begin{array}{c}
	    KV^\top + UJV_\bot^\top + U_\bot RV_\bot^\top:\ K\in\tangent_H\hanifold^s
            \\
           J\in \mbR^{s\times(n-s)},\ R\in\mbR^{(m-s)\times(n-s)},\ \rank(R)\le r-s\\
        \end{array} \right\}. 
\end{equation}
Moreover, $\boundedrank\cap\hanifold$ is geometrically derivable at each of its points.
\end{theorem}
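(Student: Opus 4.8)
The plan is to establish the two assertions of Theorem~\ref{the:tangent_intersection_rule} in turn: the closed-form expression~\eqref{eq:characterize_TRH} for the tangent cone, and then the geometric derivability. For the first assertion, the natural route is to prove two inclusions. The ``$\subseteq$'' inclusion follows from the one-sided rule~\eqref{eq:cone_oneside}, which gives $\tangent_X(\boundedrank\cap\hanifold)\subseteq\tangent_X\boundedrank\cap\tangent_X\hanifold$; then I would invoke Lemma~\ref{lem:tanbounded_cap_tanH} to rewrite the right-hand side as $\tangent_X\lowrank\cap\tangent_X\hanifold+\{N\in\normal_X\lowrank:\rank(N)\le r-s\}$, substitute the expression from Proposition~\ref{pro:manifold_s_cap_H} for the first summand and the block form from~\eqref{eq:Ncone_lowrank} for the second, and observe that the sum is exactly the set on the right of~\eqref{eq:characterize_TRH} (the $U_\bot R V_\bot^\top$ term absorbs the $\normal_X\lowrank$ part, with the rank bound $\rank(R)\le r-s$ carried along).

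The ``$\supseteq$'' inclusion is the substantive part, and this is where I expect the main obstacle to lie: I must show every element of the right-hand side of~\eqref{eq:characterize_TRH} is not merely in $\tangent_X\boundedrank\cap\tangent_X\hanifold$ but actually tangent to the \emph{intersection}, for which the one-sided rule is useless. The plan is to exhibit, for each candidate $\eta = KV^\top + UJV_\bot^\top + U_\bot R V_\bot^\top$ with $K\in\tangent_H\hanifold^s$ and $\rank(R)\le r-s$, an explicit curve $\gamma(t)$ lying in $\boundedrank\cap\hanifold$ with $\gamma(0)=X$ and $\gamma'(0)=\eta$. This also settles geometric derivability simultaneously. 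To build such a curve I would separate the ``$\hanifold$-part'' from the ``rank-increasing part'': since $K\in\tangent_H\hanifold^s$ and $\hanifold^s$ is a genuine smooth manifold (Proposition~\ref{pro:manifoldHs}), there is a smooth curve $H(t)$ in $\hanifold^s$ with $H(0)=H$, $H'(0)=K$; then $\tilde\gamma(t):=H(t)V^\top$ traces a curve in $\hanifold$ (by Lemma~\ref{pro:X_H_fullrankdiff}) with the right $KV^\top$ derivative. The terms $UJV_\bot^\top$ and $U_\bot R V_\bot^\top$ must be introduced as a perturbation of the form $W(t)$ supported in the $V_\bot$ block, i.e., a curve of the shape $\gamma(t) = H(t)V^\top + t(U J + U_\bot R)V_\bot^\top$ (or a slightly corrected version). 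I then need to check two things: (i) $\gamma(t)$ still satisfies $h(\gamma(t))=0$ — this should follow from Corollary~\ref{cor:orth_tangentX}, since the added term has the form (something)$\,V_\bot^\top$ and is therefore orthogonal to... actually, one must be careful: $h$ is only invariant, not constant, so I would instead argue that $\gamma(t) = \tilde H(t)\tilde V(t)^\top$ for a suitable rank-$(\le s$-or-more) decomposition and use invariance, or directly verify $h(\gamma(t))=h^{s'}(\cdot)=0$ after writing $\gamma(t)$ in the block form $[\,\cdot\ \ \cdot\,]\tilde Q^\top$; and (ii) $\rank(\gamma(t))\le r$ for small $t$ — here I would use that $\rank(R)\le r-s$, so the $V_\bot$-block contributes at most $r-s$ extra rank on top of the rank-$s$ (at most) leading block, matching the standard argument used to derive~\eqref{eq:tangent_cone_boundedrank}.

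A cleaner alternative for ``$\supseteq$'' — and the one I would actually pursue to avoid delicate curve bookkeeping — is to reuse the transversality already established just before Lemma~\ref{lem:tanbounded_cap_tanH}: $\lowrank\cap\hanifold$ is a smooth manifold, hence geometrically derivable, so every element of $\tangent_X(\lowrank\cap\hanifold)=\tangent_X\lowrank\cap\tangent_X\hanifold$ is realized by a curve in $\lowrank\cap\hanifold\subseteq\boundedrank\cap\hanifold$. For a general candidate $\eta=\eta_0+N$ with $\eta_0\in\tangent_X\lowrank\cap\tangent_X\hanifold$ and $N\in\normal_X\lowrank$, $\rank(N)\le r-s$, I would take the curve $\gamma_0(t)$ in $\lowrank\cap\hanifold$ realizing $\eta_0$ and perturb it by the rank-$(r-s)$ term, as above, checking feasibility via Corollary~\ref{cor:orth_tangentX} (the perturbation lives in the $V_\bot$-column space, hence stays in $\hanifold$) and the rank count via $\rank(N)\le r-s$. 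The main obstacle remains ensuring the perturbed curve stays exactly on $\hanifold$ for all small $t$, not just to first order; I expect this to go through because $h$ depends on $X$ only through $XX^\top$-type orthogonal invariants, so appending columns orthogonal to the current row space does not change $h$ — this is precisely the content of Corollary~\ref{cor:orth_tangentX} applied along the curve. Once every candidate has a feasible realizing curve, both the equality~\eqref{eq:characterize_TRH} and geometric derivability of $\boundedrank\cap\hanifold$ at $X$ follow, and since $X$ was arbitrary, $\boundedrank\cap\hanifold$ is geometrically derivable everywhere.
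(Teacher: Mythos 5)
Your ``$\subseteq$'' argument matches the paper exactly (one-sided rule, then Lemma~\ref{lem:tanbounded_cap_tanH} and Proposition~\ref{pro:manifold_s_cap_H}), and your overall plan for ``$\supseteq$'' --- realize each candidate vector by a curve in $\boundedrank\cap\hanifold$, which gives geometric derivability for free --- is also the paper's strategy. The gap is in how you keep the curve feasible. You claim that ``appending columns orthogonal to the current row space does not change $h$'' and that this is ``precisely the content of Corollary~\ref{cor:orth_tangentX}.'' It is not: that corollary is a first-order statement ($XY^\top=0$ implies $Y\in\kernel(\diff h_X)$), not an invariance of $h$ under the finite perturbation $X\mapsto X+Y$. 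Take $\hanifold=\mathrm{S}_{\mathrm{F}}(m,n)$ with $h(X)=\|X\|_\frob^2-1$ and $Y=BV_\bot^\top\neq 0$: then $h(X+Y)=\|Y\|_\frob^2\neq 0$, so both your curve $H(t)V^\top+t(UJ+U_\bot R)V_\bot^\top$ and the perturbed curve $\gamma_0(t)+tN$ in your ``cleaner alternative'' leave $\hanifold$ at order $t^2$. A second problem: with the frame $[V\ V_\bot]$ held fixed, the block $t(UJ+U_\bot R)$ has rank up to $\rank(J)+\rank(R)$, so $\rank(\gamma(t))$ can reach $s+\rank(J)+(r-s)>r$ when $J\neq 0$ (e.g.\ $m=n=2$, $s=r=1$ already gives a rank-$2$ curve); the $UJV_\bot^\top$ component must be absorbed into a rotation of the row space, not appended as new columns.

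The paper's proof is built precisely to avoid both failures: it realizes $\eta_1=KV^\top+UJV_\bot^\top$ by an analytic curve on $\lowrank$, extracts a moving frame $V_\gamma(t)$ via the analytic SVD, shows the velocity of the resulting $m\times s$ factor lies in $\tangent_H\hanifold^s$ (using~\eqref{eq:decompose_TXH1}), then lifts to a curve $\beta(t)$ on the manifold $\hanifold^r$ whose velocity is $[\theta_\gamma^\prime(0)\ \,U_\bot H_R]$ --- legitimate because $[0\ \,U_\bot H_R]$ is \emph{tangent} to $\hanifold^r$ at $[H\ 0]$, so a curve on $\hanifold^r$ (not an additive perturbation) realizes it --- and finally sets $\alpha(t)=\beta(t)[V_\gamma(t)\ V_{\gamma\bot}(t)V_R]^\top$. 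Feasibility in $\hanifold$ is then exact by Lemma~\ref{pro:X_H_fullrankdiff}, and $\rank(\alpha(t))\le r$ because the left factor is $m\times r$. Until you replace your additive perturbation by a construction of this kind, the ``$\supseteq$'' inclusion (and hence the derivability claim) is not established.
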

\begin{proof}
    Notice that $\tangent_X(\boundedrank\cap\hanifold)\subseteq\tangent_X\boundedrank \cap \tangent_X\hanifold$ holds true, directly following from~\eqref{eq:cone_oneside}. We then substitute \eqref{eq:manifold_s_cap_H} into \eqref{eq:TRcapTH} and verify that $\tangent_X\boundedrank \cap \tangent_X\hanifold$ equals the set on the right side of \eqref{eq:characterize_TRH}. Therefore, the ``$\subseteq$" part of \eqref{eq:characterize_TRH} is confirmed. 
    
    Conversely, given any $\eta$ belonging to the right side of \eqref{eq:characterize_TRH}, we aim to construct a smooth curve on $\boundedrank\cap\hanifold$ passing through $X$ with the tangent direction $\eta$. Specifically, given $\eta = KV^\top + UJV_\bot^\top + U_\bot R V_{\bot}^\top$, for some $K\in\tangent_{H}\hanifold^s,\,J\in\mbR^{s\times(n-s)}$, and $R\in\mbR^{(m-s)\times(n-s)}$ with $\rank(R)\le r-s$, which implies the matrix $R$ admits a decomposition as follows,
    \begin{equation}\label{eq:R_decompose}
        R=H^{}_RV_{R}^{\top}\ \ \text{with}\ \ H_R\in\mbR^{(m-s)\times(r-s)}\ \ \text{and}\ \ V_R\in\stiefel(n-s,r-s).
    \end{equation}
    Expression~\eqref{eq:Tcone_lowrank} indicates that $\eta_1:=KV^\top + UJV_\bot^\top$ is a tangent vector to the analytic manifold $\lowrank$ at $X$, and thus there exists an analytic curve $\gamma(t)$ on $\lowrank$ with $\gamma(0)=X$ and $\gamma^\prime(0)=\eta_1$. Subsequently, \cite[Theorem 1]{bunse1991analyticSVD} reveals that $\gamma(t)$ has an analytic singular value decomposition, i.e.,
    \[
        \gamma(t)=\zkh{U_\gamma(t)\ U_{\gamma\bot}(t)}\left[ \begin{matrix}
    \varSigma _{\gamma}\left( t \right)&		0\\
    0&		\varSigma _{\gamma\bot}\left( t \right)\\
        \end{matrix} \right] \zkh{V_\gamma(t)\ V_{\gamma \bot}(t)}^\top.    
    \]
    \revise{Without loss of generality, suppose $U_\gamma(0) = U,\ \varSigma_\gamma(0)=\varSigma,\ \text{and}\ V_\gamma(0)=V$. Since $\rank(\gamma(t))\equiv s$ and $\rank(\varSigma_{\revise{\gamma}}(0))=s$, we can find an $\varepsilon>0$ and the interval $\kh{-\varepsilon,+\varepsilon}$ such that $\varSigma_{\revise{{\gamma \bot}}}(t)\equiv 0$ for all $t\in(-\varepsilon,+\varepsilon)$, , which means $\gamma(t)=U_\gamma(t)\varSigma_\gamma(t)V^\top_\gamma(t)$.}
    
    \revise{Let $\rho_\gamma(t)=U_\gamma(t)\varSigma_\gamma(t)$ and then we aim at showing that $\rho^\prime_\gamma(0)\in\tangent_H\hanifold^s$. To this end, consider the differential of $\gamma(t)$ at $t=0$,
    \begin{equation}\label{eq:eta_1rho_gamma}
        \eta_1 = \gamma^\prime(0) \overset{(i)}{=}  \rho_\gamma^\prime(0)V^\top + U\varSigma {V_\gamma^{\prime \top} (0)} \overset{(ii)}{=}  \rho_\gamma^\prime(0)V^\top + U\varSigma \Omega V^\top + BV^\top_\bot,
    \end{equation}    
    for some $\Omega\in\sksym(s)$ and $B\in\mbR^{m\times(n-s)}$, where ($i$) comes from the differentiation rule on $\gamma(t)=\rho_\gamma(t)V^\top_\gamma(t)$, and ($ii$) holds from $V_\gamma(t)\in\stiefel(n,s)$ and $V_\gamma^\prime(0)\in\tangent_{V}\stiefel(n,s)=\{-V\tilde{\Omega}+V_\bot \tilde{B}^\top: \tilde{\Omega}\in\sksym(s),\tilde{B}\in\mbR^{s\times(n-s)}\}$. On the one hand, expression~\eqref{eq:mT_X} reveals that $U\varSigma \Omega V^\top+BV^\top_\bot\in\tangent_X\hanifold$. On the other hand, expression~\eqref{eq:decompose_TXH1} reveals that $\eta_1=KV^\top + UJV^\top_\bot$ with $K\in\tangent_{H}\hanifold^s$ also belongs to $\tangent_{X}\hanifold$. By the linearity of the tangent space and the computation~\eqref{eq:eta_1rho_gamma}, we have $\rho_\gamma^\prime(0)V^\top = \eta_1-(U\varSigma \Omega V^\top+BV^\top_\bot)\in\tangent_X\hanifold$, and thus $\rho^\prime_\gamma(0)\in\tangent_H\hanifold^s$ according to expression~\eqref{eq:decompose_TXH1}.}

    Moreover, note that for the embedded manifold $\hanifold^s$, the projection $\projection_{\hanifold^s}$ is well-defined and smooth in a neighborhood of $H$ and its differential at $H$ coincides with $\projection_{\tangent_H\hanifold^s}$  \cite[Lemma~4]{absil2012projectionlike}. Therefore, defining $\theta_\gamma \left( t \right) :=\projection_{\hanifold^s}(\rho_\gamma(t))\in\mbR^{m\times s}$, we have
    \begin{equation}\label{eq:theta1}
         \eta_1 = \theta_\gamma^\prime(0)V^\top + U\varSigma {V_\gamma^{\prime \top} (0)}.
    \end{equation}
    We then view $\theta_\gamma(t)$ as a curve on $\hanifold^r$ by considering the natural embedding $[\theta_\gamma(t)\ 0^{m\times(r-s)}]$, which concludes $[\theta^\prime_\gamma(0)\ 0^{m\times(r-s)}]\in\tangent_{[H\,0]}\hanifold^r$. \revise{Additionally,
    according to the discussion in section~\ref{sec:inheritance}, a parallel statement of Corollary~\ref{cor:orth_tangentX} also holds for the induced $\hanifold^r=\{X\in\mbR^{m\times r}:h^r(X)=0\}$. That is, by checking that $[H\,0^{m\times (r-s)}]\cdot [0^{m\times s}\ U_\bot H_R]^\top=0$, we have $[0^{m\times s}\ U_\bot H_R]\in\tangent_{[H\,0]}\hanifold^r$, and thus the linear combination $[\theta^\prime_\gamma(0)\ \,U_\bot H_R]\in\tangent_{[H\,0]}\hanifold^r$.} Consequently, there exists a smooth curve $\beta(t):=\zkh{\beta_1(t)\ \beta_2(t)}\in\hanifold^r$ such that
    \begin{equation}\label{eq:curvebeta}
    \zkh{\beta_1(0)\ \beta_2(0)}=\zkh{H\ 0^{m\times (r-s)}}\ \text{and}\  \zkh{\beta^\prime_1(0)\ \beta^\prime_2(0)}=\zkh{\theta^\prime_\gamma(0)\ \,U_\bot H_R}.
    \end{equation}
    Multiplying $\beta(t)$ by the transpose of $\zkh{V_\gamma(t)\ V_{\gamma\bot}(t)V_R}\in\stiefel(n,r)$, we obtain 
    \begin{equation}\label{eq:curvealpha}
         \alpha(t):=\beta(t)\zkh{V_\gamma(t)\ V_{\gamma\bot}(t)V_R}^\top=\beta_1(t)V^\top_\gamma(t) + \beta_2(t)V_R^\top V_{\gamma\bot}^\top(t),
    \end{equation}
    which is a smooth curve on $\boundedrank\cap\hanifold$ by Lemma~\ref{pro:X_H_fullrankdiff}. Note that $\alpha(0)=X$ and consider the differential of $\alpha(t)$ at $t=0$,
    \begin{align*}
        \alpha^\prime(0) \overset{\phantom{(1)}}{=}&\ \beta_1^\prime(0)V_\gamma^\top(0)+\beta_1(0)V_\gamma^{\prime\top}(0) +\beta_2^\prime (0)V_R^\top V_{\gamma\bot}^\top(0) + \beta_2 (0)V_R^\top V_{\gamma\bot}^{\prime\top}(0)
        \\
        \overset{(i)}{=}&\ \theta^\prime_\gamma(0)V^\top + U\varSigma {V_\gamma^{\prime \top} (0)} + U^{}_\bot H^{}_RV^\top_R V_\bot^\top + 0\cdot V_R^\top V_{\gamma\bot}^{\prime\top}(0)
        \\
        \overset{(ii)}{=}&\ \eta_1 + U^{}_\bot R V_\bot^\top
        \\
        \overset{\phantom{(1)}}{=}&\ \eta,
    \end{align*}
    where substituting $\zkh{V_\gamma(0)\ V_{\gamma\bot}(0)}=\zkh{V\ V_\bot}$ and initial values \eqref{eq:curvebeta} for $\beta(t)$ yields~$(i)$, and $(ii)$~follows from equations \eqref{eq:R_decompose} and \eqref{eq:theta1}. 
    
    Finally, we conclude that given any $\eta$ belonging to the right side of \eqref{eq:characterize_TRH}, there exists a smooth curve as~\eqref{eq:curvealpha} on $\boundedrank\cap\hanifold$ passing through $X$ with the tangent direction $\eta$. Therefore, the ``$\supseteq$" part in \eqref{eq:characterize_TRH} holds true, and $\boundedrank\cap\hanifold$ is geometrically derivable at~$X$. 
\end{proof}

Although the proof of Theorem~\ref{the:tangent_intersection_rule} is technical, it gives a clue that the orthogonal invariance of $h$ influences the space with respect to $V^{\top}$ in the tangent cone. By comparing expressions \eqref{eq:tangent_cone_boundedrank} and \eqref{eq:characterize_TRH}, we observe that the difference between the tangent cones to $\boundedrank$ and $\boundedrank\cap\hanifold$ is reflected in the first term~of 
\[
\eta = KV^\top + UJV_\bot^\top + U_\bot RV_\bot^\top,
\]
where $K\in\mbR^{m\times s}$ corresponds to $\tangent_X\boundedrank$  while $K$ is restricted to $\tangent_H\hanifold^s$ for $\tangent_X(\boundedrank\cap\hanifold)$. 

The expression~\eqref{eq:characterize_TRH} provides a closed-form characterization for the tangent cone $\tangent_X(\boundedrank\cap\hanifold)$, which recovers the tangent cone in~\cite[Theorem 6.1]{cason2013iterative} when $\hanifold=\mathrm{S}_{\mathrm{F}}(m,n)$. In addition, we can compute the projection onto the tangent cone as follows,
\begin{equation}\label{eq:projection_TRH}
    \projection_{\tangent_X(\boundedrank\cap\hanifold)}(E) = \kh{\projection_{\tangent_H\hanifold^s}\kh{EV}}V^\top + P_UEP_{V_\bot} + \projection_{\mbR^{m\times n}_{r-s}} \kh{P_{U_\bot}EP_{V_\bot}}.
\end{equation}

In essence, Theorem~\ref{the:tangent_intersection_rule} points to the following calculation rules for tangent and normal cones.

\begin{corollary}[Intersection rule]
Given $X\in\boundedrank\cap\hanifold$. The tangent cone to $\boundedrank\cap\hanifold$ equals to the intersection of tangent cones to $\boundedrank$ and $\hanifold$, i.e.,
\begin{align}
\tangent_X\kh{\boundedrank\cap\hanifold} = \tangent_X\boundedrank \cap \tangent_X\hanifold.     \label{eq:tangent_intersection_rule}
\end{align}
The normal cone to $\boundedrank\cap\hanifold$ decomposes as the direct sum of two orthogonal spaces, i.e.,
\begin{align}
    \normal_X\kh{\boundedrank\cap\hanifold} = \normal_X^\frechet\boundedrank \oplus \normal_X\hanifold.    \label{eq:normal_intersection_rule}
\end{align}
\end{corollary}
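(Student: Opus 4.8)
The tangent-cone identity~\eqref{eq:tangent_intersection_rule} is essentially already in hand, so I would dispatch it first. The inclusion ``$\subseteq$'' is the general one-sided rule~\eqref{eq:cone_oneside}. For ``$\supseteq$'', note that the proof of Theorem~\ref{the:tangent_intersection_rule} already establishes, via Lemma~\ref{lem:tanbounded_cap_tanH} and the substitution of~\eqref{eq:manifold_s_cap_H} and the block form~\eqref{eq:Ncone_lowrank} of $\normal_X\lowrank$, that $\tangent_X\boundedrank\cap\tangent_X\hanifold$ coincides with the right-hand side of~\eqref{eq:characterize_TRH}, which Theorem~\ref{the:tangent_intersection_rule} identifies with $\tangent_X(\boundedrank\cap\hanifold)$. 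Hence the two sides agree and~\eqref{eq:tangent_intersection_rule} holds.

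For the normal-cone identity~\eqref{eq:normal_intersection_rule} the plan is to compute the polar of the closed-form tangent cone~\eqref{eq:characterize_TRH} directly, using $\normal_X\mathcal{X}=(\tangent_X\mathcal{X})^\circ$. First I would split the embedding space as $\mbR^{m\times n}=\vanifold_1\oplus\vanifold_2\oplus\vanifold_3$ with $\vanifold_1=\hkh{KV^\top:K\in\mbR^{m\times s}}$, $\vanifold_2=\hkh{UJV_\bot^\top:J\in\mbR^{s\times(n-s)}}$, and $\vanifold_3=\hkh{U_\bot RV_\bot^\top:R\in\mbR^{(m-s)\times(n-s)}}$; a one-line check using $V^\top V_\bot=0$ and $U^\top U_\bot=0$ shows these subspaces are mutually orthogonal, and their dimensions sum to $mn$. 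In this decomposition the tangent cone~\eqref{eq:characterize_TRH} is a Cartesian product of cones over the three factors: $\tangent_H\hanifold^s$ inside $\vanifold_1$ (under the isometry $K\mapsto KV^\top$), all of $\vanifold_2$, and the bounded-rank cone $\hkh{R\in\mbR^{(m-s)\times(n-s)}:\rank(R)\le r-s}$ inside $\vanifold_3$. Since each factor contains $0$ and the factors lie in pairwise-orthogonal subspaces, the polar distributes as the orthogonal direct sum of the three factor polars.

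It then remains to evaluate those three polars. Because $\hanifold^s$ is a smooth manifold (Proposition~\ref{pro:manifoldHs}), $\tangent_H\hanifold^s$ is a linear subspace, so its polar in $\mbR^{m\times s}$ is $\normal_H\hanifold^s$; transporting back by $K\mapsto KV^\top$ and invoking~\eqref{eq:decompose_NXH} gives the first summand $(\normal_H\hanifold^s)V^\top=\normal_X\hanifold$. The polar of the full subspace $\vanifold_2$ is $\hkh{0}$. For the last factor: if $r>s$, rank-$\le(r-s)$ matrices span $\mbR^{(m-s)\times(n-s)}$, so its polar is $\hkh{0}$, which matches $\normal_X^\frechet\boundedrank=\hkh{0}$ from~\eqref{eq:normal_cone_boundedrank}; if $r=s$, the factor is $\hkh{0}$ and its polar is all of $\vanifold_3=U_\bot\mbR^{(m-r)\times(n-r)}V_\bot^\top$, which by~\eqref{eq:Ncone_lowrank} and~\eqref{eq:normal_cone_boundedrank} equals $\normal_X\lowrank=\normal_X^\frechet\boundedrank$. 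In both cases the third summand is $\normal_X^\frechet\boundedrank$. Finally, $\normal_X\hanifold\subseteq\vanifold_1$ and $\normal_X^\frechet\boundedrank\subseteq\vanifold_3$ are orthogonal, so the sum is a genuine orthogonal direct sum, yielding~\eqref{eq:normal_intersection_rule}.

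\textbf{Main obstacle.} The slogan ``polar of an intersection $=$ sum of polars'' is a \emph{convex} fact, and $\tangent_X\boundedrank$ is not convex, so one cannot simply dualize~\eqref{eq:tangent_intersection_rule} by generic polar calculus (the one-sided rule in~\eqref{eq:cone_oneside} only gives one containment). The key structural point that rescues the argument is that, in the block coordinates of Theorem~\ref{the:tangent_intersection_rule}, all the nonconvexity is confined to the single orthogonal factor $\vanifold_3$, where the cone is the bounded-rank cone --- a \emph{symmetric} cone equal to its own negative --- whose polar is therefore immediate. The only care required is the $r=s$ versus $r>s$ case split when evaluating that factor's polar, plus the (routine but necessary) verification that $\vanifold_1,\vanifold_2,\vanifold_3$ are mutually orthogonal so that the polar genuinely distributes over the product.
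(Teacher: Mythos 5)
Your proposal is correct and follows essentially the same route as the paper: the tangent-cone identity is read off from the proof of Theorem~\ref{the:tangent_intersection_rule}, and the normal-cone identity is obtained by polarizing the closed-form expression~\eqref{eq:characterize_TRH}. The only difference is that you make explicit the orthogonal three-block splitting and the case analysis on the rank factor that justify why the polar distributes, a step the paper's proof leaves implicit when it passes from~\eqref{eq:characterize_TRH} to~\eqref{eq:N_XRcapH_bypro}.
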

\begin{proof}
    The intersection rule~\eqref{eq:tangent_intersection_rule} comes from the proof of Theorem~\ref{the:tangent_intersection_rule}. To see the orthogonal decomposition \eqref{eq:normal_intersection_rule}, let $X\in\lowrank$ admit the singular value decomposition $X=U\varSigma V^\top=HV^\top$ where $H=U\varSigma$, and taking the polar operation on both sides on \eqref{eq:characterize_TRH}, we obtain
    \begin{align}
        \normal_X\kh{\boundedrank\cap\hanifold} = \begin{cases}
        \normal_X\lowrank \oplus \kh{\normal_H\hanifold^s}V^\top,\ \ \text{if}\ s=r,
        \\
        \kh{\normal_H\hanifold^s}V^\top,\quad\quad\quad\quad\quad\ \  \ \text{if}\ s<r.
        \end{cases} \label{eq:N_XRcapH_bypro}
    \end{align}
    where the ``$\oplus$" stems from $\normal_X\lowrank = \{U_\bot BV^\top_\bot:\ B\in\mbR^{\kh{m-s}\times\kh{n-s}}\}$. Incorporating \eqref{eq:decompose_NXH} and \eqref{eq:normal_cone_boundedrank} into \eqref{eq:N_XRcapH_bypro} gives~\eqref{eq:normal_intersection_rule}.
\end{proof}

Note that the normal cone to $\boundedrank\cap\hanifold$ is indeed a linear space, as revealed by~\eqref{eq:normal_intersection_rule}, and the projection onto it boils down to projections onto the two subspaces.

\subsection{First-order optimality} 
We investigate the first-order optimality conditions for~\eqref{eq:lowrank_orthinvar}, enriching the study of bounded-rank optimization and laying a foundation for the analysis in section~\ref{sec:RiemannianOpt}. 
\begin{definition}\label{def:crictical}
    A point $X\in\boundedrank\cap\hanifold$ is called \emph{stationary} for problem \eqref{eq:lowrank_orthinvar} if $\innerp{\nabla f(X),\eta}\ge 0$ for all $\eta\in\tangent_X(\boundedrank\cap\hanifold)$, i.e., $-\nabla f(X)\in\normal_X(\boundedrank\cap\hanifold)$, or equivalently, the projected negative gradient vanishes, i.e., ${\projection_{\tangent_X(\boundedrank\cap\hanifold)}(-\nabla f(X))}=0$.
\end{definition}
The stationary condition is necessary for $X$ to be locally optimal according to \cite[Theorem 6.12]{rockafellar2009variationalanalysis}. In practice, we can evaluate the stationarity measure by incorporating $E=-\nabla f(X)$ into~\eqref{eq:projection_TRH}  once the geometry of $\hanifold$ is known, which involves basic matrix operations. More specifically, when $r \ll \min\{m, n\}$, taking $P_{U_\bot}=I - UU^\top$ and $P_{V_\bot}=I - VV^\top$ is apt to computational efficiency. Furthermore, when the point of interest is not of rank~$r$, by taking advantage of the structure of $\boundedrank\cap\hanifold$, one can consider the following simplified stationarity measure.
\begin{proposition}\label{pro:optcondition}
Given $X\in\boundedrank\cap\hanifold$ with $\rank\kh{X}=s<r$ and the singular value decomposition $X=U\varSigma V^\top=HV^\top$ where $H=U\varSigma$. If $X$ is a stationary point of problem \eqref{eq:lowrank_orthinvar}, it holds that $-\nabla f(X) \in\normal_{X}\hanifold$.
\end{proposition}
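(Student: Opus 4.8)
The plan is to read off the claim directly from the intersection rule for normal cones established in~\eqref{eq:normal_intersection_rule}, combined with the explicit description of $\normal_X\boundedrank$ in~\eqref{eq:normal_cone_boundedrank}. By Definition~\ref{def:crictical}, the stationarity of $X$ for~\eqref{eq:lowrank_orthinvar} means exactly $-\nabla f(X)\in\normal_X(\boundedrank\cap\hanifold)$; hence it suffices to show that, under the hypothesis $\rank(X)=s<r$, the normal cone $\normal_X(\boundedrank\cap\hanifold)$ collapses to $\normal_X\hanifold$.

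First I would invoke~\eqref{eq:normal_intersection_rule}, which gives $\normal_X(\boundedrank\cap\hanifold)=\normal_X^\frechet\boundedrank\oplus\normal_X\hanifold$. Then, since $s<r$, the second branch of~\eqref{eq:normal_cone_boundedrank} yields $\normal_X^\frechet\boundedrank=\{0\}$ (intuitively, $X$ sits in the smooth stratum $\lowrank$ strictly inside $\boundedrank$, so there is no rank-type normal direction). Consequently the direct sum degenerates and $\normal_X(\boundedrank\cap\hanifold)=\normal_X\hanifold$, whence $-\nabla f(X)\in\normal_X\hanifold$, which is the assertion. An equivalent route bypasses the intersection rule and uses~\eqref{eq:N_XRcapH_bypro} directly: for $s<r$ it already reads $\normal_X(\boundedrank\cap\hanifold)=(\normal_H\hanifold^s)V^\top$, and by~\eqref{eq:decompose_NXH} this is precisely $\normal_X\hanifold$, so the same conclusion follows.

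There is no genuine obstacle here, since both arguments rely only on results proved earlier in this section; the one subtlety worth flagging in the write-up is that the hypothesis $s<r$ is used exactly to annihilate the $\boundedrank$-component of the normal cone, and that for $s=r$ the extra summand $\normal_X\lowrank$ persists, so the statement would no longer hold. I would also append a short remark that, as a by-product, when $\rank(X)<r$ the stationarity measure $\projection_{\tangent_X(\boundedrank\cap\hanifold)}(-\nabla f(X))$ of~\eqref{eq:projection_TRH} simplifies to $\projection_{\tangent_X\hanifold}(-\nabla f(X))$, which can be evaluated through~\eqref{eq:proj_TXH} once the geometry of $\hanifold^s$ is available.
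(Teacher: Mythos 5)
Your argument is correct and is essentially the paper's own proof: the paper likewise combines Definition~\ref{def:crictical} with the intersection rule~\eqref{eq:normal_intersection_rule} and the vanishing of $\normal_X\boundedrank$ for $s<r$ from~\eqref{eq:normal_cone_boundedrank}. Your alternative route via~\eqref{eq:N_XRcapH_bypro} and~\eqref{eq:decompose_NXH} is just the same computation unpacked, so there is nothing substantively different to compare.
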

\begin{proof}
Recalling Definition~\ref{def:crictical}, \eqref{eq:normal_cone_boundedrank} and~\eqref{eq:normal_intersection_rule} yields the result.
\end{proof}

It is worth noting that Proposition~\ref{pro:optcondition} gives an insight into the landscape of~\eqref{eq:lowrank_orthinvar}: the optimality is only attributed to the geometry of $\hanifold$ when a stationary point is of rank $s<r$. In addition, we observe that when $h$ reduces to the zero mapping, correspondingly $\hanifold=\mbR^{m\times n}$ and $\normal_X\hanifold=\{0\}$, Proposition~\ref{pro:optcondition} recovers the results in~\cite[Corollary 3.4]{schneider2015Lojaconvergence} for optimization on bounded-rank matrices.

\section{A space-decoupling parameterization}\label{sec:M_h}
The developed geometry in section~\ref{sec:OptCondition} enhances the theory of optimization on bounded-rank matrices. However, the nonsmooth structure of $\boundedrank\cap\hanifold$ and the unclear projection onto it still pose impediments to addressing problem \eqref{eq:lowrank_orthinvar}. To this end, borrowing the idea from the desingularization technique~\cite{khrulkov2018desingularization,rebjock2024boundedrank},  we propose to parametrize the feasible set by a smooth manifold. Specifically, we consider the following \emph{space-decoupling parameterization},
\begin{equation}\label{eq:spacedecoupling_para}
    \manifold_{h}=\left\{(X, G) \in \mathbb{R}^{m \times n} \times \grassmann(n, n-r): X G=0,\ h(X)=0\right\},
\end{equation}
where the smooth mapping $\phi:\mbR^{m\times n}\times\sym(n)\to\mbR^{m\times n}:(X,G)\mapsto X$ satisfies $\phi(\manifold_h)=\boundedrank\cap\hanifold$, and the Grassmann manifold $\grassmann(n,s)=\{G\in\sym(n): G^2=G,\,\rank(G)=s\}$ is an embedded submanifold in $\sym(n)$ \cite{bendokat2024grassmann}. The parameterization $\manifold_h$ decouples the feasible region of~\eqref{eq:lowrank_orthinvar} defined by two constraints into two spaces: the rank information $\rank(X)\le r$ and the orthogonally invariant constraint $h(X)=0$ are encoded in $\grassmann(n,n-r)$ and $\mbR^{m\times n}$, respectively. 

\revise{We then investigate the geometry of the proposed $\manifold_h$, and the results developed in sections~\ref{sec:embedgeoMh},~\ref{sec:RiegeoMh},~\ref{sec:riederivMh}, and~\ref{sec:retracMh} extend the analysis in~\cite{khrulkov2018desingularization,rebjock2024boundedrank}, which studied the desingularization 
\begin{equation*}
    \desing:=\{(X, G) \in \mathbb{R}^{m \times n} \times \grassmann(n, n-r): X G=0\}.
\end{equation*} 
It is worth noting that $\manifold_h$ can be regarded as an extension of $\desing$ to accommodate the additional constraint $h(X)=0$, and conversely, $\desing$ serves as a special case of $\manifold_h$ when $h\equiv 0$.}

\subsection{Embedded geometry}\label{sec:embedgeoMh}
In this subsection, we prove that $\manifold_{h}$ is an embedded submanifold in $\mathbb{R}^{m \times n} \times \grassmann(n, n-r)$, and then characterize its tangent space. The rationale behind the proof is illustrated in~\myfig\ref{fig:proof_embedded}.

\begin{figure}[h]
    \newcommand{\ratio}{0.5}
    \newcommand{\spacegap}{1.7}
    \newcommand{\customrectangle}[7]{
            \draw[thick, dashed, draw=#6, fill=#7] 
                (#1-#3/2, #2-#4/2) rectangle 
                (#1+#3/2, #2+#4/2); 
            \node at (#1, #2) {#5};  
        }
    \begin{minipage}{1\textwidth}
    \begin{center}
    \begin{tikzpicture}
        \newcommand{\nodegap}{1.3}
        \tikzset{
        node distance=\nodegap cm,
        post/.style={->,shorten >=2pt,shorten <=2pt,>={Stealth[round]},thick},
        space/.style={
          draw=none, 
          fill=none, 
          inner sep=0pt,
          minimum size=6mm
        },
        rhookarrow/.style={{Hooks[right]}->},
        lhookarrow/.style={{Hooks[left]}->},
        }
        
        \node[space] (Sh) {\normalsize $\sanifold_{h}(n-r)$};;
        \node[space] (Mh) [below=\spacegap*0.6 cm of Sh] {\normalsize $\sanifold_{h}(n-r)/\orth(n-r)$};
        \node[space] (Mh1) [left=0.1 cm of Mh] {\normalsize $\manifold_h\cong$};
        \node[space] (embedding) [right=\spacegap*1.4 cm of Mh] {\normalsize $\mbR^{m\times n} \times \grassmann(n,n-r)$};
        \node[space] (Rst)  at (embedding|-Sh) {\normalsize $\mbR^{m\times n}\times \stiefel(n,n-r)$};

        \draw[post] (Sh) to node[midway, left] {\normalsize $\pi$} (Mh);
        \draw[post] (Rst) to node[midway, right] {\normalsize $\pi$} (embedding);
        \draw[->,rhookarrow,shorten <=3pt,shorten >=4pt, >={Stealth[round]},thick] (Mh) -- (embedding) node[pos=0.5, sloped, above,shift={(0,0.1)}] {Theorem~\ref{the:manifold_h}};
        \draw[->,rhookarrow, >={Stealth[round]},thick,shorten <=3pt,shorten >=4pt,] (Sh) -- (Rst)  node[pos=0.5, sloped, above,shift={(0,0.1)}] {Lemma~\ref{lem:sOb}};

        \coordinate (LemmaS) at ($(Sh)!0.5!(Rst)$);
        \coordinate (PropoM) at ($(Mh)!0.5!(embedding)$);

        \node (lem) at ([xshift=0cm, yshift=-0cm]LemmaS) {};
    
    \end{tikzpicture}
    \end{center}
    \end{minipage}
    \caption{Relationship among the manifolds and the embedded spaces, where $\pi$ is the quotient mapping induced by the group action of $\orth(n-r)$.}
    \label{fig:proof_embedded}
\end{figure}

\begin{lemma}\label{lem:sOb}
    For $s=0,1,\ldots,n$, if the set $ \sanifold_{h}(s) :=\{(X, W) \in \mathbb{R}^{m \times n} \times \stiefel(n, s): X W=0,\ h(X)=0\}$ is nonempty, it is a smooth embedded submanifold in $\mathbb{R}^{m \times n} \times \stiefel(n, s)$ of dimension $m(n-s) + ns - \dim(\sym(s)) - q$.
\end{lemma}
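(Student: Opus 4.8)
The plan is to present $\sanifold_h(s)$ as a regular level set. I would introduce the smooth map
\[
F\colon\ \mbR^{m\times n}\times\stiefel(n,s)\ \longrightarrow\ \mbR^{m\times s}\times\mbR^{q},\qquad F(X,W):=(XW,\ h(X)),
\]
so that $\sanifold_h(s)=F^{-1}(0,0)$, and then verify that $\diff F_{(X,W)}$ is surjective for every $(X,W)\in\sanifold_h(s)$. Granting this, \cite[Corollary~5.14]{lee2012manifolds} gives that $\sanifold_h(s)$ is an embedded submanifold of $\mbR^{m\times n}\times\stiefel(n,s)$ of dimension $\dime(\mbR^{m\times n}\times\stiefel(n,s))-(ms+q)=\big(mn+ns-\dime(\sym(s))\big)-ms-q=m(n-s)+ns-\dime(\sym(s))-q$, which is the claimed value; smoothness of $F$ is clear since $h$ is smooth and $(X,W)\mapsto XW$ is polynomial. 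The differential to analyze is $\diff F_{(X,W)}[\dot X,\dot W]=(\dot X W+X\dot W,\ \diff h_X[\dot X])$ with $\dot X\in\mbR^{m\times n}$ and $\dot W\in\tangent_W\stiefel(n,s)$.

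For the surjectivity, I would first exploit the constraint $XW=0$: completing $W$ to $[W\ W_\bot]\in\orth(n)$ with $W_\bot\in\stiefel(n,n-s)$ yields the decomposition $X=HW_\bot^\top$ with $H:=XW_\bot\in\mbR^{m\times(n-s)}$. This is precisely the situation of the inheritance principle (Lemma~\ref{pro:X_H_fullrankdiff} and Proposition~\ref{pro:manifoldHs}) with index $n-s$: from $h(X)=0$ we get $H\in\hanifold^{n-s}$, hence $\hanifold^{n-s}$ is nonempty and $\diff h^{n-s}_H$ has full rank $q$, and moreover $\diff h_X[KW_\bot^\top]=\diff h^{n-s}_H[K]$ for all $K\in\mbR^{m\times(n-s)}$. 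It then suffices to work with the slice $\dot W=0$. On one hand, given $b\in\mbR^q$, choosing $K$ with $\diff h^{n-s}_H[K]=b$ and setting $\dot X=KW_\bot^\top$ gives $\dot X W=KW_\bot^\top W=0$ and $\diff h_X[\dot X]=b$, so $\diff F_{(X,W)}[\dot X,0]=(0,b)$. On the other hand, given $A\in\mbR^{m\times s}$, setting $\dot X=AW^\top$ gives $\dot X W=A$, and since $X(AW^\top)^\top=XWA^\top=0$, Corollary~\ref{cor:orth_tangentX} yields $AW^\top\in\kernel(\diff h_X)$, so $\diff F_{(X,W)}[\dot X,0]=(A,0)$. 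Summing these, every $(A,b)$ is attained, so $\diff F_{(X,W)}$ is onto.

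I expect the only real subtlety to be the decoupling carried out in the second paragraph: the two defining equations $XW=0$ and $h(X)=0$ a priori entangle the $X$- and $W$-directions, and the point is that, after writing $X=HW_\bot^\top$, the orthogonal invariance of $h$ (through Lemma~\ref{pro:X_H_fullrankdiff} and Corollary~\ref{cor:orth_tangentX}) makes $\diff h_X$ act only on the $W_\bot$-block of a perturbation while annihilating its $W$-block --- which is exactly what lets the $\dot W=0$ slice already cover the whole target. Everything else is dimension bookkeeping, using $\dime(\stiefel(n,s))=ns-\dime(\sym(s))$; the degenerate indices $s=0$ (where $\sanifold_h(0)=\hanifold$) and $s=n$ (where $XW=0$ forces $X=0$) are consistent with the formula and can be checked by hand.
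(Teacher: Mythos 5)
Your proposal is correct and follows essentially the same route as the paper: both exhibit $\sanifold_h(s)$ as the zero level set of $F(X,W)=(XW,\,h(X))$, prove surjectivity of $\diff F_{(X,W)}$ using the orthogonal invariance of $h$ (via Corollary~\ref{cor:orth_tangentX}), and invoke \cite[Corollary 5.14]{lee2012manifolds} for the dimension count. The only cosmetic difference is that you split the target into $(A,0)$ and $(0,b)$ and reach $(0,b)$ through the induced map $h^{n-s}$, whereas the paper hits $(A,a)$ in one step by taking any preimage $\tilde\eta$ of $a$ under $\diff h_X$ and correcting it with the term $(A-\tilde\eta W)W^\top$, which lies in $\kernel(\diff h_X)$ by the same corollary.
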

\begin{proof}
    Consider the mapping 
    \[F:\, \mathbb{R}^{m \times n} \times \stiefel(n, s) \rightarrow \mathbb{R}^{m \times s}\times \mbR^{q}:\,(X,W)\mapsto (XW, h(X)).
    \]
    Note that $\sanifold_{h}(s)=F^{-1}(0)$, and the differential $\diff F_{(X,W)}\zkh{\eta,\zeta} = (\eta W+X \zeta, \diff h_X[\eta])$, for $\eta\in\mbR^{m\times n}$ and $\zeta\in\tangent_W\stiefel(n,s)=\{W\Omega+W_\bot B:\,\Omega\in\sksym(s), B\in\mbR^{(n-s)\times s}\}$. We then show that $\diff F$ is full-rank on~$\sanifold_{h}(s)$. 
    
    To this end, given a tangent vector $(A,a)\in \mathbb{R}^{m \times s}\times \mbR^{q}$, one can find an $\tilde{\eta}\in\mbR^{m\times n}$ such that $\diff h_X\zkh{\tilde{\eta}}=a$ since $\diff h_X$ is full-rank at $X\in\hanifold$. Subsequently, by the orthogonality of $W$, choosing $\eta =\tilde{\eta}+( A-\tilde{\eta}W ) W^{\top}$ and $\zeta =0$ yields ${\eta} W+X\zeta =A$. According to Corollary~\ref{cor:orth_tangentX} and $X( ( A-\tilde{\eta}W ) W^{\top} ) ^{\top}=0$, it holds that $(A-\tilde{\eta}W ) W^{\top} \in \kernel(\diff h_X)$, and therefore $\diff h_X[\eta ] = \diff h_X [ \tilde{\eta}+( A-\tilde{\eta}W) W^{\top} ] = \diff h_X[ \tilde{\eta} ] =a$. Consequently, we obtain $\diff F_{(X, W)}\zkh{\eta, \zeta} = \kh{A,a}$, which implies the differential $\diff F_{(X,W)}$ is surjective. By the arbitrariness of $(X,W)\in\sanifold_{h}(s)$, we conclude that $\diff F$ is full-rank on~$\sanifold_{h}(s)=F^{-1}(0)$. It follows from \cite[Corollary 5.14]{lee2012manifolds} that $\sanifold_{h}(s)$ is an embedded submanifold of dimension $m n+\dim(\stiefel(n, s))-(ms+q)$.
\end{proof}

Notably, the orthogonal invariance of $h$ plays a crucial role in the above proof, which allows the constructed mapping $F$ to inherit the full-rankness from $h$, thereby ensuring $\sanifold_{h}(s)$ is a smooth manifold.

\begin{theorem}\label{the:manifold_h}
   The set $\manifold_{h}$ is an embedded submanifold in $\mathbb{R}^{m \times n} \times \grassmann(n, n-r)$ of dimension $(m+n-r)r-q$.
\end{theorem}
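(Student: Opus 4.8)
The plan is to identify $\manifold_h$ with the quotient of the smooth manifold $\sanifold_{h}(n-r)$ furnished by Lemma~\ref{lem:sOb} under the action of $\orth(n-r)$, and to transport the embedded structure along the canonical bundle $\mbR^{m\times n}\times\stiefel(n,n-r)\to\mbR^{m\times n}\times\grassmann(n,n-r)$, as depicted in \myfig\ref{fig:proof_embedded}. Let $\pi\colon\mbR^{m\times n}\times\stiefel(n,n-r)\to\mbR^{m\times n}\times\grassmann(n,n-r)$ be $(X,W)\mapsto(X,WW^\top)$, i.e.\ the identity on $\mbR^{m\times n}$ times the canonical projection of the Stiefel manifold onto the Grassmann manifold; the latter is a surjective submersion (indeed a principal $\orth(n-r)$-bundle, with action $W\mapsto WQ$), hence so is $\pi$.

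The first step is to check that $\pi^{-1}(\manifold_h)=\sanifold_{h}(n-r)$ and that the latter is $\pi$-saturated. Since $W$ has orthonormal columns, $XWW^\top=0$ is equivalent to $XW=0$ (right-multiply by $W$), and $WW^\top\in\grassmann(n,n-r)$ automatically; thus $(X,W)\in\pi^{-1}(\manifold_h)$ iff $XW=0$ and $h(X)=0$, i.e.\ $\pi^{-1}(\manifold_h)=\sanifold_{h}(n-r)$. Saturation is immediate: if $\pi(X,W)=\pi(X',W')$ with $(X,W)\in\sanifold_{h}(n-r)$, then $X'=X$ and $W'=WQ$ for some $Q\in\orth(n-r)$, whence $X'W'=XWQ=0$ and $h(X')=0$. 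So $\sanifold_{h}(n-r)$ is an embedded submanifold of $\mbR^{m\times n}\times\stiefel(n,n-r)$ that is a union of fibers of $\pi$.

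The crux is the descent statement: the image under a surjective submersion of a saturated embedded submanifold is again an embedded submanifold, of the same codimension. I would establish this by a local-section argument: near any point of $\manifold_h=\pi(\sanifold_{h}(n-r))$ choose a smooth local section $\sigma\colon V\to\mbR^{m\times n}\times\stiefel(n,n-r)$ of $\pi$; by saturation $\manifold_h\cap V=\sigma^{-1}(\sanifold_{h}(n-r))$, and $\sigma$ is transverse to $\sanifold_{h}(n-r)$ because the image of $\diff\sigma$ is complementary to the vertical distribution $\kernel\diff\pi$, which is tangent to $\sanifold_{h}(n-r)$ by saturation; hence $\manifold_h\cap V$ is an embedded submanifold of $V$ of codimension $\mathrm{codim}\,\sanifold_{h}(n-r)$. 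Equivalently, one invokes the quotient manifold theorem for the free and proper action of $\orth(n-r)$ on $\sanifold_{h}(n-r)$ (properness automatic for a compact group, freeness from $WQ=W\Rightarrow Q=I_{n-r}$), obtaining the manifold $\sanifold_{h}(n-r)/\orth(n-r)$, and checks that the induced injection into $\mbR^{m\times n}\times\grassmann(n,n-r)$ with image $\manifold_h$ is an embedding. Reconciling these two smooth structures---the one inherited from the quotient and the one as a subset of $\mbR^{m\times n}\times\grassmann(n,n-r)$---is the only delicate point; everything else is routine.

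It remains to count dimensions. Lemma~\ref{lem:sOb} with $s=n-r$ gives $\dim\sanifold_{h}(n-r)=mr+n(n-r)-\dim(\sym(n-r))-q$, and since $\dim\stiefel(n,n-r)=n(n-r)-\dim(\sym(n-r))$ we get $\mathrm{codim}\,\sanifold_{h}(n-r)=\dim\bigl(\mbR^{m\times n}\times\stiefel(n,n-r)\bigr)-\dim\sanifold_{h}(n-r)=m(n-r)+q$. As $\pi$ is a submersion this codimension is preserved, so, using $\dim\grassmann(n,n-r)=(n-r)r$,
\begin{align*}
\dim\manifold_h
&=\dim\bigl(\mbR^{m\times n}\times\grassmann(n,n-r)\bigr)-m(n-r)-q\\
&=mn+(n-r)r-m(n-r)-q=(m+n-r)r-q.
\end{align*}
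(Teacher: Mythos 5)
Your proposal is correct and follows the same skeleton as the paper's proof: both rest on Lemma~\ref{lem:sOb} with $s=n-r$ and on the $\orth(n-r)$-bundle structure $\mbR^{m\times n}\times\stiefel(n,n-r)\to\mbR^{m\times n}\times\grassmann(n,n-r)$ depicted in \myfig\ref{fig:proof_embedded}, and the dimension count is identical. Where you genuinely diverge is in the descent step. The paper forms the quotient $\sanifold_h(n-r)/\orth(n-r)$ via the quotient manifold theorem, then argues that the induced immersion $(X,W)\mapsto(X,WW^\top)$ is a homeomorphism onto $\manifold_h$ and hence a smooth embedding, so that $\manifold_h$ inherits the embedded structure from the quotient. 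Your primary route instead stays entirely downstairs: you verify that $\sanifold_h(n-r)=\pi^{-1}(\manifold_h)$ is $\pi$-saturated, pick a local section $\sigma$ of the submersion $\pi$, and observe that $\sigma$ is automatically transverse to the saturated submanifold (since $\kernel\diff\pi$ is tangent to it and $\ima\diff\sigma$ complements $\kernel\diff\pi$), so $\manifold_h=\sigma^{-1}(\sanifold_h(n-r))$ is locally an embedded submanifold of the stated codimension. This buys you something concrete: it sidesteps the one delicate point in the paper's argument, namely reconciling the quotient smooth structure with the subset smooth structure (the paper handles this by asserting the immersion is a homeomorphism onto its image without detailed justification), and it delivers the embedded-submanifold property directly in the ambient space. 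The price is that you must supply the standard but unproved descent lemma (image of a saturated embedded submanifold under a surjective submersion is embedded of the same codimension); your local-section sketch of it is sound and complete enough to count as a proof.
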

\begin{proof}
    Taking $s=n-r$ in Lemma~\ref{lem:sOb}, we have $\sanifold_{h}(n-r)$ as a smooth embedded submanifold in $\mathbb{R}^{m \times n} \times \stiefel(n, n-r)$ of dimension~$\kh{mr+n(n-r)-\dime\sym(n-r)-q}$. Define the group action $\orth(n-r)$ on $\sanifold_{h}(n-r)$ as $Q\cdot(X, W):=(X, W Q)$, which is smooth, free (from the orthogonality of $W\in\stiefel(n,n-r)$), and proper (from the compactness of $\orth(n-r)$). It is deduced from \cite[Theorem 21.10]{lee2012manifolds} that the quotient set $\sanifold_{h}(n-r) / \orth\kh{n-r}$ is a quotient manifold of dimension $(m+n-r)r-q$. Consider the smooth immersion $\iota: \sanifold_{h}(n-r) / \orth\kh{n-r} \rightarrow \wanifold=\mathbb{R}^{m \times n} \times \grassmann(n, n-r):\,(X, W)\mapsto(X, W W^{\top})$. In fact, $\iota$ is a homeomorphism onto $\iota\kh{\sanifold_{h}(n-r) / \orth(n-r)}=\mathcal{M}_{h}$, implying that it is a smooth embedding. Therefore, by \cite[Proposition 5.2]{lee2012manifolds}, $\mathcal{M}_{h}$ turns out to be an embedded submanifold in $\wanifold$ of dimension $(m+n-r)r-q$. 
\end{proof}

Based on Theorem~\ref{the:manifold_h}, viewing $\grassmann(n,n-r)$ as an embedded submanifold in $\sym(n)$ reveals that $\manifold_{h}$ is indeed an embedded submanifold in the Euclidean space $\mbR^{m\times n}\times\sym(n)$. 

\begin{lemma}\label{lem:repreMob}
    Given $(X, G) \in \manifold_{h}$, there exist $H \in \hanifold^r$ and $V \in \stiefel(n, r)$ such that $X= HV^{\top}$, $G=I-V V^{\top}$.
\end{lemma}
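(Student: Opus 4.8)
The plan is to unpack the definition of $\manifold_h$ at the given point $(X,G)$ and translate the Grassmann condition $G\in\grassmann(n,n-r)$ into a concrete factorization, then read off what the remaining conditions $XG=0$ and $h(X)=0$ say about $X$. First I would use the fact that $G\in\grassmann(n,n-r)=\{G\in\sym(n):G^2=G,\ \rank(G)=n-r\}$ is an orthogonal projector of rank $n-r$; by the spectral theorem it can be written as $G=WW^\top$ for some $W\in\stiefel(n,n-r)$, and equivalently $G=I-VV^\top$ where $V\in\stiefel(n,r)$ is an orthonormal completion, i.e.\ $[V\ W]=[V\ V_\bot]\in\orth(n)$. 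This already delivers the claimed form $G=I-VV^\top$; it remains to produce the $H$.

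Next I would exploit $XG=0$. With $G=I-VV^\top$, this reads $X(I-VV^\top)=0$, i.e.\ $X=XVV^\top=(XV)V^\top$. Setting $H:=XV\in\mbR^{m\times n}$ — wait, more precisely $H:=XV\in\mbR^{m\times r}$ since $V\in\stiefel(n,r)$ — gives exactly $X=HV^\top$ with $H\in\mbR^{m\times r}$ and $V\in\stiefel(n,r)$, which is the decomposition appearing throughout section~\ref{sec:inheritance} and in Proposition~\ref{pro:decompose_geo_H}. (Note this also shows $\rank(X)\le r$, consistent with $\phi(\manifold_h)\subseteq\boundedrank$.)

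Finally I would verify $H\in\hanifold^r$. We have $h(X)=0$ by the defining condition of $\manifold_h$, and $X=HV^\top$ with $V\in\stiefel(n,r)$; Lemma~\ref{pro:X_H_fullrankdiff} (applied with $s=r$) states precisely that $h(X)=h^r(H)$, and moreover that $X\in\hanifold$ if and only if $H\in\hanifold^r$. Hence $h^r(H)=0$, i.e.\ $H\in\hanifold^r$, completing the proof. The only mild subtlety — hardly an obstacle — is making sure the choice of orthonormal completion $V$ is consistent across the three conditions: one fixes $V$ once and for all by the eigendecomposition of $G$ (so $\range(V)=\range(I-G)=\kernel(G)$), and then $XG=0$ forces $\range(X^\top)\subseteq\range(V)$, which is exactly what makes $X=(XV)V^\top$ hold. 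Everything else is a direct invocation of Lemma~\ref{pro:X_H_fullrankdiff}.
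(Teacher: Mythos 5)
Your proposal is correct and follows essentially the same route as the paper: write $G=I-VV^\top$ with $V\in\stiefel(n,r)$ spanning $\kernel(G)$, use $XG=0$ to get $X=(XV)V^\top$, and invoke Lemma~\ref{pro:X_H_fullrankdiff} to place $H$ in $\hanifold^r$. The only cosmetic difference is that the paper allows an arbitrary factorization $XW=HQ^\top$ with $Q\in\orth(r)$ before setting $V=WQ$, whereas you take $H=XV$ directly; both are valid.
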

\begin{proof}
    The projection matrix $G\in\grassmann(n,n-r)$ can be expressed by $W \in \operatorname{St}(n, r)$ in the sense that $G=I-W W^{\top}$, and thus the orthogonality $XG=0$ implies $X=X W W^{\top}$. Substituting the decomposition $X W= H Q^\top$ with $H\in\mbR^{m\times r}$ and $Q\in\orth(r)$, we obtain $X= H V^{\top}$ and $G=I-V V^{\top}$, where $V=W Q$. Furthermore, Lemma~\ref{pro:X_H_fullrankdiff} yields $H\in \hanifold^r$ since $X=HV^\top\in\hanifold$.
\end{proof}

Lemma~\ref{lem:repreMob} demonstrates that for any point $(X,G)$ in $\manifold_h$, we can find a \emph{representation} $(H,V)$ from $\mbR^{m\times r}\times \stiefel\kh{n,r}$ which satisfies $(X,G)=(HV^\top, I-VV^\top)$. In the next proposition, we identify the tangent space to $\manifold_h$. 

\begin{proposition}[Tangent space]\label{pro:Mh_tangent_space}
    Given $(X, G) \in \manifold_h$ with the representation $(H, V)$, the tangent space to $\manifold_h$ at $(X, G)$ is expressed as follows,
    \begin{equation}\label{eq:Mh_tangent_space}
        \tangent_{(X, G)} \manifold_h=\big\{ (K V^{\top}\!\!+HV_p^{\top},-V_p V^{\top}\!\!-V V_p^{\top})\!: K\in\tangent_H\hanifold^r,\,  V_p\in\mbR^{n\times r},\,V^{\top} V_p=0\big\}.
    \end{equation}
\end{proposition}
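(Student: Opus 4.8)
The plan is to compute the tangent space by differentiating the defining conditions of $\manifold_h$ through the representation map, and then to verify the dimension count against Theorem~\ref{the:manifold_h}. First I would fix $(X,G)\in\manifold_h$ with a representation $(H,V)$ as in Lemma~\ref{lem:repreMob}, so that $X=HV^\top$ and $G=I-VV^\top$. Consider a smooth curve $t\mapsto(X(t),G(t))$ in $\manifold_h$ through $(X,G)$. Writing $G(t)=I-V(t)V(t)^\top$ with $V(t)\in\stiefel(n,r)$ and $V(0)=V$, the curve on the Grassmann manifold has derivative $\dot G(0)=-\dot V(0)V^\top-V\dot V(0)^\top$; decomposing $\dot V(0)\in\tangent_V\stiefel(n,r)$ as $V\Omega+V_\bot B$ with $\Omega\in\sksym(r)$, the symmetric combination kills the $V\Omega V^\top$ part, so without loss of generality one may take $V_p:=\dot V(0)$ to satisfy $V^\top V_p=0$ (this is the standard horizontal representative for $\tangent_G\grassmann$). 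Likewise, from $X(t)=X(t)G(t)^\perp$-type reasoning (equivalently $X(t)V_\bot(t)=0$ rewritten as $X(t)=H(t)V(t)^\top$ for a curve $H(t)$ with $H(0)=H$), differentiating $X(t)=H(t)V(t)^\top$ gives $\dot X(0)=\dot H(0)V^\top+H\dot V(0)^\top=KV^\top+HV_p^\top$ where $K:=\dot H(0)$.

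The key constraint-differentiation step is that $h(X(t))=0$ forces $K\in\tangent_H\hanifold^r$: indeed $h(X(t))=h^r(H(t))$ by Lemma~\ref{pro:X_H_fullrankdiff} (since $X(t)=H(t)V(t)^\top$ with $V(t)\in\stiefel(n,r)$), so $\diff h^r_H[K]=0$, i.e.\ $K\in\tangent_H\hanifold^r=\kernel(\diff h^r_H)$. This shows the ``$\subseteq$'' inclusion: every tangent vector has the claimed form. For ``$\supseteq$'', I would check that the right-hand side of~\eqref{eq:Mh_tangent_space} is a linear subspace contained in the tangent space — either by explicitly constructing curves (take $H(t)$ a curve in $\hanifold^r$ with $\dot H(0)=K$, available since $\hanifold^r$ is an embedded manifold by Proposition~\ref{pro:manifoldHs}, and take $V(t)=(V+tV_p)(\,\cdot\,)$ suitably orthonormalized so that $\dot V(0)=V_p$), and verifying the resulting $(X(t),G(t))=(H(t)V(t)^\top,\,I-V(t)V(t)^\top)$ stays in $\manifold_h$ to first order — or, more cheaply, by a dimension argument.

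The cleanest route is the dimension argument: the map $(K,V_p)\mapsto(KV^\top+HV_p^\top,\,-V_pV^\top-VV_p^\top)$ from $\tangent_H\hanifold^r\times\{V_p\in\mbR^{n\times r}:V^\top V_p=0\}$ into $\mbR^{m\times n}\times\sym(n)$ is linear, and I claim it is injective: if $KV^\top+HV_p^\top=0$ and $V_pV^\top+VV_p^\top=0$, then right-multiplying the second relation by $V$ and using $V^\top V_p=0$ gives $V_p=0$, whence $KV^\top=0$ and so $K=0$. Hence the right-hand side of~\eqref{eq:Mh_tangent_space} has dimension $\dim\hanifold^r+\dim\{V_p:V^\top V_p=0\}=(mr-q)+(n-r)r=(m+n-r)r-q$, using Proposition~\ref{pro:manifoldHs}. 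This matches $\dim\manifold_h$ from Theorem~\ref{the:manifold_h}, so the inclusion ``$\subseteq$'' together with equal dimensions forces equality. The main obstacle I anticipate is the bookkeeping in the first step: correctly extracting the horizontal representative $V_p$ with $V^\top V_p=0$ from an arbitrary curve on $\grassmann(n,n-r)$ and confirming that the $V\Omega V^\top$ ambiguity genuinely drops out of both $\dot X(0)$ and $\dot G(0)$ (absorbing $H\Omega V^\top$ into the $KV^\top$ term is fine precisely because $\Omega\in\sksym(r)$ and such $H\Omega$ lie in the correct space — this is exactly where the parallel of Proposition~\ref{pro:kerDh} for $\hanifold^r$ enters). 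Everything else is a routine curve construction plus the dimension count.
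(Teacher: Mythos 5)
Your proposal is correct, and it rests on the same two pillars as the paper's proof: realizing tangent vectors through curves of the form $(H(t)V(t)^\top,\,I-V(t)V(t)^\top)$, and closing the argument with the dimension count against Theorem~\ref{the:manifold_h}. The difference is which inclusion you prove directly. The paper proves the easy ``$\supseteq$'' direction: given $(K,V_p)$, it builds curves $H(t)\in\hanifold^r$ and $V(t)\in\stiefel(n,r)$ with the prescribed derivatives and observes that the assembled curve lies in $\manifold_h$; equality then follows because the right-hand side is a linear space of dimension $\dim(\hanifold^r)+(n-r)r=(m+n-r)r-q=\dim\manifold_h$. You instead prove the harder ``$\subseteq$'' direction by differentiating an arbitrary curve in $\manifold_h$, which obliges you to justify a local smooth lifting $G(t)=I-V(t)V(t)^\top$, $X(t)=H(t)V(t)^\top$ (standard, but you only assert it) and to dispose of the $V\Omega$ gauge freedom by absorbing $H\Omega$ into $K$ via the orthogonal invariance of $h^r$ --- bookkeeping the paper's direction avoids entirely. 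Your route does buy something: it makes explicit where the restriction $K\in\tangent_H\hanifold^r$ comes from (Lemma~\ref{pro:X_H_fullrankdiff} applied along the curve), and your injectivity check of the map $(K,V_p)\mapsto(KV^\top+HV_p^\top,\,-V_pV^\top-VV_p^\top)$ makes the dimension claim for the right-hand side fully rigorous, a point the paper leaves implicit. Both arguments are valid; the paper's is the leaner of the two.
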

\begin{proof}
    As in Proposition~\ref{pro:manifoldHs}, the set $\hanifold^r$ is a manifold of dimension $(mr-q)$. Therefore, given any pair of tangent vectors $(K,V_p)\in\tangent_H \hanifold^r\times \tangent_V\stiefel(n,r)$ with $V^\top V_p=0$, there exist smooth curves $H(t)\in\hanifold^r$ and $V(t)\in\stiefel(n,r)$ such that $H(0)=H,\ H^\prime(0) = K$ and $V(0)=V,\ V^\prime(0)=V_p$. Assembling these two curves produces $( X( t ) ,G( t ) ) :=( H( t ) V^{\top}( t ) ,I-V( t ) V^{\top}( t ) )$ on $\manifold_h$, which satisfies $( X^{\prime}( 0 ) ,P^{\prime}( 0 ) ) =( KV^{\top}+HV_p,-V_pV^{\top}-VV_{p}^{\top} )$. This confirms the ``$\supseteq$" part of~\eqref{eq:Mh_tangent_space}. Moreover, note that the dimension of the linear space on the right side of \eqref{eq:Mh_tangent_space} equals to $\dime\kh{\hanifold^r}+\kh{n-r}r=\kh{m+n-r}r-q$, and it coincides with the dimension of~$\manifold_h$ obtained in Theorem~\ref{the:manifold_h}, which leads to the conclusion.
\end{proof}

The expression~\eqref{eq:Mh_tangent_space} sheds light on representing the tangent space to $\manifold_h$ by lower-dimensional matrices. Specifically, given any $(\eta,\zeta)\in\tangent_{(X,G)}\manifold_h$, there exists $(K,V_p)\in\tangent_H\hanifold^r\times \mathbb{R}^{n \times r}$ with $V^\top V_p=0$ representing it in the sense that 
\begin{equation}\label{eq:represent_eta}
    (\eta,\zeta) = (K V^{\top}+HV_p^{\top},-V_p V^{\top}-V V_p^{\top}),
\end{equation}
which results in a space complexity of $(m+n)r$.

\medskip

\revise{We remark that by resorting to the results from~\cite{khrulkov2018desingularization,rebjock2024boundedrank}, the set $\manifold_h$ can in fact be characterized as a transverse intersection of two smooth manifolds in $\mathbb{R}^{m\times n}\times\sym(n)$. This perspective essentially points to the main results, Theorem~\ref{the:manifold_h} and Proposition~\ref{pro:Mh_tangent_space}. Nevertheless, to keep the exposition self-contained, we implement a direct analysis on $\manifold_h$ in this work.} 

\revise{In the end, for completeness and better understanding of the geometry of $\manifold_h$, we specify the transverse intersection to conclude the discussion. Taking $h$ as the zero mapping, the developed results of $\manifold_h$ reduce to those of $\desing$ in~\cite{rebjock2024boundedrank}, including the formula for the tangent space:
\begin{equation*}
    \tangent_{(X, G)} \desing=\big\{ (K V^{\top}\!\!+HV_p^{\top},-V_p V^{\top}\!\!-V V_p^{\top})\!: K\in\mbR^{m\times r},\,  V_p\in\mbR^{n\times r},\,V^{\top} V_p=0\big\},
\end{equation*}
which can also be obtained by substituting $\hanifold^r=\mbR^{m\times r}$ into~\eqref{eq:Mh_tangent_space}. More generally, for a nontrivial $h:\mbRmn\to\mbR^q$, $\manifold_h$ can be treated as the intersection of $\desing$ and the product manifold $\hanifold\times\sym(n)$ in the ambient space $\mbRmn\times\sym(n)$, i.e.,
\begin{equation*}
    \manifold_h = \desing \cap (\hanifold\times\sym(n)) \subseteq \mbRmn\times \sym(n).
\end{equation*}
We then show that this is a transverse intersection. According to Proposition~\ref{pro:kerDh}, we have $\{BV_{\bot}^{\top}:B\in\mbR^{m\times (n-r)}\} \subseteq \tangent_X\hanifold$. Hence, for any $(E,Z)\in\tangent_{(X,G)}(\mathbb{R}^{m\times n}\times \sym(n))$, one can decompose
\begin{equation*}
    (E,Z)= ((EV)V^\top, 0)  + ((EV_{\bot})V_{\bot}^\top,Z)\in \tangent_{(X,G)}\desing + \tangent_{(X,G)} (\hanifold\times \sym(n)),
\end{equation*}
which indicates that $\desing$ intersects transversally with $\hanifold\times \sym(n)$. Hence, we can also conclude that $\manifold_h$ is a smooth manifold (i.e., Theorem~\ref{the:manifold_h}), and give the calculation rule for the tangent space to $\manifold_h$,
\begin{equation*}
    \tangent_{(X,G)}\manifold_h = \tangent_{(X,G)}\desing \cap \tangent_{(X,G)}(\hanifold\times\sym(n)),
\end{equation*}
which is equivalent to the characterization in Proposition~\ref{pro:Mh_tangent_space}.}

\subsection{Riemannian geometry}\label{sec:RiegeoMh}
As an embedded submanifold, $\manifold_h$ naturally inherits the Riemannian metric from the ambient Euclidean space $\eanifold:=\mbR^{m\times n}\times\sym(n)$, i.e.,
\begin{align}\label{eq:rie_geometry}
    \left< \cdot ,\cdot \right>_\omega:\ \eanifold\times\eanifold \longrightarrow \mathbb{R}:\quad \kh{(E_1,Z_1),(E_2,Z_2)}\longmapsto \innerp{E_1,E_2} + \omega\innerp{Z_1,Z_2}.
\end{align}
Note that the weight $\omega> 0$, and the subscript $\omega$ will be omitted if there is no ambiguity. 

Taking into account the representations---$(H,V)$, $(K_1,V_{p,1})$ and $(K_2,V_{p,2})$---for $(X,G)\in\manifold_h$ and $(\eta_1,\zeta_1),\,(\eta_2,\zeta_2)\in\tangent_{(X,G)}\manifold_h$, the metric can be computed by
\begin{equation}\label{eq:inner_product}
        \innerp{(\eta_1,\zeta_1),(\eta_2,\zeta_2)} = \innerp{K_1,K_2} + \innerp{V_{p,1},V_{p,2}M_{H,\omega}},
\end{equation}
where $M_{H,\omega}:=2\omega I+H^\top H$.

The following proposition gives the projection of $(E,Z)\in\eanifold$ onto the tangent space.

\begin{proposition}\label{pro:projection_to_TMh}
Given $(X,G)\in\manifold_h$ with the representation $(H,V)$, the projection of $(E,Z)\in\eanifold$ onto $\tangent_{(X, G)} \manifold_{h}$ can be represented by
\begin{equation}\label{eq:projection_eq}
    \bar{K}=\projection_{\tangent_H \hanifold^r}\kh{EV},\quad\quad \bar{V}_p= G(E^\top H -2\omega ZV)M_{H,\omega}^{-1}\,.
\end{equation}
\end{proposition}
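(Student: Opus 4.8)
The plan is to characterize the tangent-space projection by the standard variational principle: $\projection_{\tangent_{(X,G)}\manifold_h}(E,Z)$ is the unique element $(\eta,\zeta)\in\tangent_{(X,G)}\manifold_h$ minimizing $\|(E,Z)-(\eta,\zeta)\|_\omega^2$ over the tangent space, equivalently the unique tangent vector such that $(E,Z)-(\eta,\zeta)$ is orthogonal (in the weighted metric $\langle\cdot,\cdot\rangle_\omega$) to every tangent vector. I will parametrize the tangent vector by its representation $(K,V_p)$ with $K\in\tangent_H\hanifold^r$ and $V^\top V_p=0$, via the formula \eqref{eq:represent_eta}, namely $\eta=KV^\top+HV_p^\top$ and $\zeta=-V_pV^\top-VV_p^\top$; then I minimize the squared weighted distance as a function of $(K,V_p)$ and read off the minimizer.

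First I would expand the objective $\|E-\eta\|_\frob^2+\omega\|Z-\zeta\|_\frob^2$ using the orthogonal decomposition of $\mbR^{m\times n}$ into the ranges of $P_V$ and $P_{V_\bot}=G$. Write $E=EP_V+EG$. Since $\eta V^\top$-component is $KV^\top$ (lying in $\range$ of $P_V$ on the right) and $HV_p^\top$ has $V_p^\top=V_p^\top G$ (because $V^\top V_p=0$ forces $V_p=GV_p$), the two pieces $KV^\top$ and $HV_p^\top$ are $\frob$-orthogonal, so $\|E-\eta\|_\frob^2=\|EV-K\|_\frob^2+\|EG-HV_p^\top\|_\frob^2$ after also using $\|K V^\top\|_\frob^2 = \|K\|_\frob^2$. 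The $K$-dependence is isolated: minimizing $\|EV-K\|_\frob^2$ over $K\in\tangent_H\hanifold^r$ gives immediately $\bar K=\projection_{\tangent_H\hanifold^r}(EV)$, the first claimed formula. For the remaining terms, I expand $\|EG-HV_p^\top\|_\frob^2$ and $\omega\|Z-\zeta\|_\frob^2=\omega\|Z+V_pV^\top+VV_p^\top\|_\frob^2$, collect all terms quadratic and linear in $V_p$, impose the constraint $GV_p=V_p$ (equivalently $V^\top V_p=0$) throughout, use symmetry of $Z$ and cyclicity of the trace, and set the gradient with respect to $V_p$ equal to zero. The linear term produces $E^\top H$ from the first block and $-2\omega ZV$ from the second (the cross terms involving $V^\top V_p=0$ drop, and $\langle Z,VV_p^\top\rangle=\langle V^\top ZV_p,I\rangle$-type terms combine using $Z^\top=Z$), while the quadratic term produces $V_p(2\omega I+H^\top H)=V_p M_{H,\omega}$; projecting the stationarity equation onto the constraint surface via left-multiplication by $G$ yields $GV_pM_{H,\omega}=G(E^\top H-2\omega ZV)$, hence $\bar V_p=G(E^\top H-2\omega ZV)M_{H,\omega}^{-1}$ after noting $GV_p=V_p$. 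Invertibility of $M_{H,\omega}=2\omega I+H^\top H$ is clear since $\omega>0$ and $H^\top H\succeq 0$.

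The main obstacle I anticipate is bookkeeping the cross terms correctly: the representation \eqref{eq:represent_eta} is not an orthogonal parametrization in the weighted metric (as \eqref{eq:inner_product} with $M_{H,\omega}$ shows), so one must carefully track how $\langle HV_p^\top,\cdot\rangle$ interacts with the $Z$-block and verify that exactly the combination $2\omega I+H^\top H$ appears as the effective Gram matrix, while the constraint $V^\top V_p=0$ must be carried through every step so that the final projection lands in $\tangent_{(X,G)}\manifold_h$. A clean way to organize this is to first compute, for arbitrary $(K,V_p)$ and the test direction $(K',V_p')$, the weighted inner product $\langle (E,Z)-(\eta,\zeta),(\eta',\zeta')\rangle_\omega$ and set it to zero for all admissible $(K',V_p')$; the $K'$-variation gives the first formula and the $V_p'$-variation (with $V^\top V_p'=0$) gives a normal equation $\langle \bar V_p M_{H,\omega}-G(E^\top H-2\omega ZV),\,V_p'\rangle=0$ for all $V_p'$ with $GV_p'=V_p'$, which forces $G(\bar V_p M_{H,\omega}-(E^\top H-2\omega ZV))=0$, i.e. the second formula. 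Finally I would note uniqueness: the projection onto a subspace is unique, so the stationary point found is the projection, completing the proof.
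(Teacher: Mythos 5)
Your proposal is correct and follows essentially the same route as the paper's proof: set up the weighted least-squares problem over the tangent space parametrized by $(K,V_p)$ via \eqref{eq:represent_eta}, use $V^\top V_p=0$ to decouple the $K$-part (minimized by $\projection_{\tangent_H\hanifold^r}(EV)$) from the $V_p$-part, and solve the normal equation for $V_p$ by projecting the stationarity condition with $G$, which produces the Gram matrix $M_{H,\omega}=2\omega I+H^\top H$. The bookkeeping you flag as the main risk works out exactly as you describe, matching the paper's computation.
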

\begin{proof}
    Computing the projection leads to the following optimization problem,
    \[
    \min_{K\in\tangent_H\hanifold^r,\,V^\top V_p=0}\,c(K,V_p):=\|E-(KV^\top+HV_p^\top)\|_\frob^2 + \omega \|Z+V_pV^\top + VV^\top_p\|_\frob^2.
    \]
    Rearranging the expression of the cost function $c$, we have
    \[
        c(K,V_p)=\Fnorm{E-KV^\top}^2 - 2\langle E,HV^\top_p\rangle + \Fnorm{HV^\top_p}^2 + \omega \Fnorm{Z+V_pV^\top + VV^\top_p}^2.
    \]
    It suffices to choose $\bar{K}=\projection_{\tangent_H\hanifold^r}(EV)$ to minimize the first term. Additionally, taking the partial derivative with respect to $V_p$ and letting it be orthogonal to the space $\{\tilde{V}_p\in\mbR^{n\times r}:\,{V}^\top \tilde{V}_p=0\}$, it holds that $G(-2E^\top H + 2V_pH^\top H + 4\omega (ZV + V_p)) = 0$, which reveals that the solution $\bar{V}_p=G(E^\top H-2\omega ZV)M^{-1}_{H,\omega}$.
\end{proof}

\section{Optimization on the manifold $\manifold_h$}\label{sec:RiemannianOpt} 
By using the parameterization $(\manifold_h,\phi)$ with $\phi(\manifold_h)={\boundedrank\cap\hanifold}$, we consider the minimization of $\bar{f}:=f\circ\phi$ on the manifold~$\manifold_h$, and thus reformulates the nonsmooth constrained optimization problem \eqref{eq:lowrank_orthinvar} as the smooth Riemannian optimization problem~\eqref{eq:G}
\begin{align*}
    \min_{(X,G) \in \manifold_h} \bar{f}(X,G):=f\circ \phi(X,G);
\end{align*}
see~\myfig\ref{fig:diagram} for illustration. Specifically, the two problems share the same optimal value, while the formulation \eqref{eq:G} offers a smooth remedy for \eqref{eq:lowrank_orthinvar}, allowing us to draw upon existing theoretical and algorithmic techniques on Riemannian optimization. 

Riemannian optimization aims to solve problems on smooth manifolds by exploiting the Riemannian geometry of manifolds; see~\cite{absil2008optimization,boumal2023introduction} for an overview. To clarify the discussion, given a curve $\gamma(t)$ on a manifold, we adopt $\ddot{\gamma}(t)$ to denote extrinsic acceleration on the embedding Euclidean space, and $\gamma^{\prime\prime}(t)$ to denote intrinsic acceleration on the embedded Riemannian manifold \cite{boumal2023introduction}. For problem \eqref{eq:G}, a point $(X,G)\in\manifold_h$ is called \emph{first-order stationary} if $\diff \bar{f}_{(X,G)}[\eta,\zeta]= 0$ for all $(\eta,\zeta)\in\tangent_{(X,G)}\manifold_h$; and called \emph{second-order stationary} if it additionally satisfies ${(\bar{f}\circ\gamma)^{\prime\prime}(0)\geq 0}$ for all curves $\gamma(t)$ on $\manifold_h$ with $\gamma(0)=(X,G)$.

In this section, we provide computations for the Riemannian derivatives, retractions, and vector transports, which serve as a cornerstone for implementing Riemannian optimization algorithms on $\manifold_h$. Subsequently, we unveil the relationship between stationary points of~\eqref{eq:lowrank_orthinvar} and \eqref{eq:G}, and give the convergence properties\revise{, drawing on and extending the analysis in~\cite{levin2023remedy,levin2024effectlift}.}

\subsection{Riemannian derivatives on $\manifold_h$}\label{sec:riederivMh}
The developed geometry of $\manifold_h$ enables computing Riemannian gradients and Hessians, which assists in choosing appropriate directions in an algorithm. For clarity, we retain the symbol $\nabla$ to denote the Euclidean derivative. When referring to the Riemannian counterparts, we adopt an additional subscript, e.g., $\nabla_{\manifold_h}$ and $\nabla_\hanifold$ for derivatives on $\manifold_h$ and $\hanifold$, respectively. Furthermore, projection onto the tangent space $\tangent_{(X,G)}\manifold_h$ is abbreviated as $\projection_{(X,G)}$.
\begin{proposition}[Riemannian gradient]\label{pro:1st_Riegrad}
    Given $(X,G)\in\manifold_h$ with the representation $(H,V)$, the Riemannian gradient of $\bar{f}$, $\nabla_{\manifold_h} \bar{f}(X,G)$, can be represented by
\begin{equation}\label{eq:1st_Riegrad}
    \bar{K}=\projection_{\tangent_H \hanifold^r}\kh{\nabla f(X) V},\quad\quad \bar{V}_p= G\nabla f(X) ^\top H M_{H,\omega}^{-1}\,.
\end{equation}
\end{proposition}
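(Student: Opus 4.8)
The plan is to compute the Riemannian gradient of $\bar{f}=f\circ\phi$ on the embedded submanifold $\manifold_h$ by first computing the Euclidean gradient of $\bar{f}$ in the ambient space $\eanifold=\mbR^{m\times n}\times\sym(n)$, and then projecting it onto the tangent space $\tangent_{(X,G)}\manifold_h$ using the formula~\eqref{eq:projection_eq} from Proposition~\ref{pro:projection_to_TMh}. Since $\phi(X,G)=X$ is linear, its differential is $\diff\phi_{(X,G)}[\eta,\zeta]=\eta$, so the chain rule gives $\diff\bar{f}_{(X,G)}[\eta,\zeta]=\innerp{\nabla f(X),\eta}=\innerp{(\nabla f(X),0),(\eta,\zeta)}$. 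Hence the Euclidean gradient of $\bar{f}$ with respect to the metric $\innerp{\cdot,\cdot}_\omega$ on $\eanifold$ is $\nabla\bar{f}(X,G)=(\nabla f(X),0)$; note the $\omega$-weighting does not affect this because the second component is zero.

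The second step is to invoke the standard fact that for an embedded Riemannian submanifold, the Riemannian gradient is the orthogonal projection (with respect to the ambient metric) of the Euclidean gradient onto the tangent space, i.e. $\nabla_{\manifold_h}\bar{f}(X,G)=\projection_{(X,G)}(\nabla\bar{f}(X,G))=\projection_{(X,G)}(\nabla f(X),0)$; see~\cite{boumal2023introduction}. Now I would apply Proposition~\ref{pro:projection_to_TMh} with the input $(E,Z)=(\nabla f(X),0)$. Substituting $E=\nabla f(X)$ and $Z=0$ into~\eqref{eq:projection_eq} immediately yields the representation $\bar{K}=\projection_{\tangent_H\hanifold^r}(\nabla f(X)V)$ and $\bar{V}_p=G(\nabla f(X)^\top H-2\omega\cdot 0\cdot V)M_{H,\omega}^{-1}=G\nabla f(X)^\top H M_{H,\omega}^{-1}$, which is exactly~\eqref{eq:1st_Riegrad}.

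There is essentially no main obstacle here; the result is a direct composition of the chain rule, the projection characterization of Riemannian gradients on embedded submanifolds, and the already-established projection formula of Proposition~\ref{pro:projection_to_TMh}. The only point requiring a line of care is confirming that $\nabla\bar{f}(X,G)=(\nabla f(X),0)$ is the gradient with respect to the \emph{weighted} metric $\innerp{\cdot,\cdot}_\omega$ rather than the standard one: since $\diff\bar{f}_{(X,G)}[\eta,\zeta]$ does not depend on $\zeta$, the identity $\diff\bar{f}_{(X,G)}[\eta,\zeta]=\innerp{(G_1,G_2),(\eta,\zeta)}_\omega=\innerp{G_1,\eta}+\omega\innerp{G_2,\zeta}$ forces $G_1=\nabla f(X)$ and $G_2=0$ regardless of $\omega$, so the weighted and unweighted Euclidean gradients of $\bar{f}$ coincide. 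After that, the representation $(\bar{K},\bar{V}_p)$ is read off from Proposition~\ref{pro:projection_to_TMh}, completing the proof.
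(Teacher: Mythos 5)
Your proposal is correct and follows essentially the same route as the paper, which simply takes $(E,Z)=\nabla\bar f(X,G)=(\nabla f(X),0)$ in Proposition~\ref{pro:projection_to_TMh}. Your extra check that the $\omega$-weighted metric does not alter the ambient gradient (because the second component vanishes) is a worthwhile detail the paper leaves implicit, but it does not change the argument.
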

\begin{proof}
    It suffices to take $(E,Z)=\nabla \bar{f}(X,G)=(\nabla f(X), 0)$ in Proposition~\ref{pro:projection_to_TMh}.
\end{proof}

An ensuing product is the first-order optimality condition of problem~\eqref{eq:G}.
\begin{corollary}\label{cor:1st_Mh}
    Given $(X,G)\in\manifold_h$ with the representation $(H,V)$, then it is a first-order stationary point if and only if ${\nabla f(X) V}\in\normal_H\hanifold^r$ and $G\nabla f(X) ^\top H = 0$.
\end{corollary}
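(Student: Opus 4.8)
The plan is to apply Proposition~\ref{pro:1st_Riegrad}, which tells us that $\nabla_{\manifold_h}\bar{f}(X,G)$ is represented by the pair $(\bar K,\bar V_p)$ with $\bar K=\projection_{\tangent_H\hanifold^r}(\nabla f(X)V)$ and $\bar V_p=G\nabla f(X)^\top H M_{H,\omega}^{-1}$. By definition, $(X,G)$ is first-order stationary for~\eqref{eq:G} if and only if $\nabla_{\manifold_h}\bar{f}(X,G)=0$, which, since the representation $(K,V_p)\mapsto(KV^\top+HV_p^\top,-V_pV^\top-VV_p^\top)$ is injective on the relevant subspace (recall the tangent space expression~\eqref{eq:Mh_tangent_space} and the space-complexity remark following~\eqref{eq:represent_eta}), is equivalent to $\bar K=0$ and $\bar V_p=0$.

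Next I would unwind each of the two conditions. For the first, $\bar K=\projection_{\tangent_H\hanifold^r}(\nabla f(X)V)=0$ holds precisely when $\nabla f(X)V$ is orthogonal to $\tangent_H\hanifold^r$, i.e.\ $\nabla f(X)V\in(\tangent_H\hanifold^r)^\circ=\normal_H\hanifold^r$, since $\hanifold^r$ is a smooth manifold (Proposition~\ref{pro:manifoldHs}) and hence its normal "cone" is the orthogonal complement of the tangent space. For the second, since $M_{H,\omega}=2\omega I+H^\top H$ is symmetric positive definite and therefore invertible, $\bar V_p=G\nabla f(X)^\top H M_{H,\omega}^{-1}=0$ is equivalent to $G\nabla f(X)^\top H=0$. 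Combining the two equivalences yields the stated characterization.

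I do not expect any genuine obstacle here; the only point requiring a line of care is the injectivity of the representation map, i.e.\ that $(\bar K,\bar V_p)=0$ really is equivalent to the tangent vector $\nabla_{\manifold_h}\bar f(X,G)$ itself vanishing rather than merely to one particular representation vanishing. This is immediate from~\eqref{eq:Mh_tangent_space}: if $KV^\top+HV_p^\top=0$ with $V^\top V_p=0$, then right-multiplying by $V$ gives $K=0$ (as $V^\top V=I_r$), and then $HV_p^\top=0$; while the second component $-V_pV^\top-VV_p^\top=0$ together with $V^\top V_p=0$ forces $V_p=0$ (multiply on the right by $V$). Hence the map $(K,V_p)\mapsto(\eta,\zeta)$ is injective, so the stationarity of $(X,G)$ is equivalent to $\bar K=0$ and $\bar V_p=0$, completing the argument.
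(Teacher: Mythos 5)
Your argument is correct and follows exactly the paper's route: the paper's proof is the one-liner ``set $\bar K=0$ and $\bar V_p=0$ in Proposition~\ref{pro:1st_Riegrad},'' and you simply spell out the details it leaves implicit (injectivity of the representation map and invertibility of $M_{H,\omega}$), both of which you handle correctly.
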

\begin{proof}
    Letting $\bar{K}=0$ and $\bar{V}_p=0$ in Proposition~\ref{pro:1st_Riegrad} obtains the result.
\end{proof}

Proposition~\ref{pro:1st_Riegrad} obtains a closed-form expression for $\nabla_{\manifold_h}\bar{f}$. In order to exploit second-order information, we delve into the calculation of the Riemannian Hessian $\nabla^2_{\manifold_h}\bar{f}$, which appears more involved. Specifically, given $(\eta,\zeta)\in\tangent_{(X,G)}\manifold_h$, it is formulated in \cite[\S5.11]{boumal2023introduction} that $\nabla^2_{\manifold_h} \bar{f}(X,G)[\eta,\zeta]$ can be decomposed into two terms:
\begin{equation}\label{eq:RieHess_2terms}
    \nabla^2_{\manifold_h} \bar{f}(X,G)[\eta,\zeta] = \projection_{(X,G)}\kh{\mathfrak{D}(\nabla \bar{f}(X,G))} + \projection_{(X,G)}\kh{\nabla^2 \bar{f}(X,G)[\eta,\zeta]},
\end{equation}
where the associated mapping $\mathfrak{D}$ is defined as
\begin{equation*}
    \mathfrak{D}:\quad \eanifold\longrightarrow\eanifold:\quad (E,Z)\longmapsto \diff\kh{(\tilde{X},\tilde{G})\mapsto \projection_{(\tilde{X},\tilde{G})}(E,Z)}(X,G)[\eta,\zeta].
\end{equation*}
After the detailed calculations, we obtain the following proposition for computing the Riemannian Hessian.

\begin{proposition}[Riemannian Hessian]\label{pro:2rd_hessian}
    Given $(X,G)\in\manifold_h$ and $(\eta,\zeta)\in\tangent_{(X,G)}\manifold_h$ with representations $(H,V)$ and $(K,V_p)$, respectively, then $\nabla^2_{\manifold_h} \bar{f}(X,G)[\eta,\zeta]$ can be represented by
\begin{equation}\label{eq:hessian_expression}
    \begin{aligned}
        &\bar{K} = \nabla^2_\hanifold f(X)[\eta]V + \projection_{\tangent_H{\hanifold^r}}\kh{W_{H,\omega}\nabla f(X)V_p},
        \\
        &\bar{V}_p= G\kh{-V_p[\projection_{\normal_H\hanifold^r}(\nabla f(X)V)]^\top H+(\nabla^2 f(X)[\eta])^\top H + \nabla f(X)^\top W_{H,\omega} K}M^{-1}_{H,\omega},
    \end{aligned}
\end{equation}
where $W_{H,\omega}:=I-HM^{-1}_{H,\omega}H^\top$.
\end{proposition}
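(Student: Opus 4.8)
The plan is to carry out the two-term decomposition in \eqref{eq:RieHess_2terms} explicitly. The second term, $\projection_{(X,G)}(\nabla^2\bar f(X,G)[\eta,\zeta])$, is comparatively routine: since $\bar f=f\circ\phi$ with $\phi$ linear (projection onto the first coordinate), the Euclidean Hessian of $\bar f$ is $(\nabla^2 f(X)[\eta],0)$, and applying Proposition~\ref{pro:projection_to_TMh} with $(E,Z)=(\nabla^2 f(X)[\eta],0)$ produces the contributions $\projection_{\tangent_H\hanifold^r}(\nabla^2 f(X)[\eta]V)$ to $\bar K$ and $G(\nabla^2 f(X)[\eta])^\top H M_{H,\omega}^{-1}$ to $\bar V_p$. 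The real work is the first term, which requires differentiating the map $(\tilde X,\tilde G)\mapsto\projection_{(\tilde X,\tilde G)}(\nabla f(X),0)$ along the curve induced by $(\eta,\zeta)$. Here I would use the representation curve $t\mapsto(H(t)V(t)^\top,\,I-V(t)V(t)^\top)$ with $H(0)=H$, $H'(0)=K$, $V(0)=V$, $V'(0)=V_p$, and plug into the closed-form projection \eqref{eq:projection_eq} with $(E,Z)=(\nabla f(X),0)$: namely $\bar K(t)=\projection_{\tangent_{H(t)}\hanifold^r}(\nabla f(X)V(t))$ and $\bar V_p(t)=G(t)\nabla f(X)^\top H(t)\,M_{H(t),\omega}^{-1}$. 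Differentiating each at $t=0$ and then re-projecting onto $\tangent_{(X,G)}\manifold_h$ (which via the representation means applying $\projection_{\tangent_H\hanifold^r}$ to the $K$-slot and $G(\cdot)M_{H,\omega}^{-1}$ to the $V_p$-slot) yields the remaining pieces.

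The $\bar V_p$-slot of $\mathfrak D$ is the bookkeeping-heavy part. Differentiating $G(t)\nabla f(X)^\top H(t)M_{H(t),\omega}^{-1}$ by the product rule gives three contributions: $\zeta\,\nabla f(X)^\top H M_{H,\omega}^{-1}$ from $G'(0)=\zeta=-V_pV^\top-VV_p^\top$; $G\nabla f(X)^\top K M_{H,\omega}^{-1}$ from $H'(0)=K$; and $G\nabla f(X)^\top H\cdot\frac{d}{dt}M_{H(t),\omega}^{-1}\big|_0$ from the inverse, where $\frac{d}{dt}M_{H(t),\omega}^{-1}=-M_{H,\omega}^{-1}(K^\top H+H^\top K)M_{H,\omega}^{-1}$. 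After left-multiplying by $G$ and right-multiplying by $M_{H,\omega}^{-1}$ to re-project, one simplifies using $GH=0$ (which kills the term from $-VV_p^\top$ in $\zeta$ after noting $G V=0$, leaving only the $-V_p V^\top$ piece, and also annihilates the third term entirely since it already carries a right factor $H^\top$ acted on by $M_{H,\omega}^{-1}$ only after $G\nabla f(X)^\top H$—care is needed here), $GV_p=V_p$ (since $V^\top V_p=0$), and $V^\top H=H^\top V\ldots$ wait, rather $X=HV^\top$ so $XV=H$. The goal is to recognize the surviving terms as $-GV_p[\projection_{\normal_H\hanifold^r}(\nabla f(X)V)]^\top H M_{H,\omega}^{-1}$ plus $G\nabla f(X)^\top W_{H,\omega}K M_{H,\omega}^{-1}$, using the identity $G\nabla f(X)^\top K - G\nabla f(X)^\top H M_{H,\omega}^{-1}H^\top K = G\nabla f(X)^\top(I-HM_{H,\omega}^{-1}H^\top)K = G\nabla f(X)^\top W_{H,\omega}K$, which absorbs the $M_{H,\omega}^{-1}$-derivative term and the bare $K$ term into a single $W_{H,\omega}$-weighted expression.

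For the $\bar K$-slot of $\mathfrak D$, differentiating $\projection_{\tangent_{H(t)}\hanifold^r}(\nabla f(X)V(t))$ splits into the term where the argument varies, $\projection_{\tangent_H\hanifold^r}(\nabla f(X)V_p)$, plus the term where the projection operator itself varies along $H(t)$. The first, combined with the $W_{H,\omega}$-weighting that will emerge from re-projection consistency with the $\bar V_p$ slot, should give $\projection_{\tangent_H\hanifold^r}(W_{H,\omega}\nabla f(X)V_p)$; the second, the derivative of the projector $\projection_{\tangent_{H(t)}\hanifold^r}$ applied to $\nabla f(X)V$, is exactly what assembles—together with the Euclidean-Hessian term $\projection_{\tangent_H\hanifold^r}(\nabla^2 f(X)[\eta]V)$—into the \emph{intrinsic} Riemannian Hessian of $f$ on $\hanifold$ evaluated in the ambient variable, i.e. $\nabla^2_\hanifold f(X)[\eta]V$, using the Gauss-type formula for the Hessian on the embedded submanifold $\hanifold$ (equivalently $\hanifold^r$ lifted) exactly as in \cite[\S5.11]{boumal2023introduction} and the fact $\diff(\projection_{\hanifold^s})_H=\projection_{\tangent_H\hanifold^s}$ from \cite{absil2012projectionlike}. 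I expect the main obstacle to be precisely this identification: correctly matching the second-fundamental-form term of $\hanifold^r$ with the derivative of the projector so that the ambient pieces collapse to $\nabla^2_\hanifold f(X)[\eta]$, and simultaneously keeping track of which $M_{H,\omega}^{-1}$ and $G$ factors survive after re-projection so that the $W_{H,\omega}$ weights land in the right slots. Once the algebra is organized by first handling the $V_p$-slot (using $GH=0$, $GV_p=V_p$, $GV=0$) and then the $K$-slot (using $\diff\projection_{\hanifold^r}$), collecting terms gives exactly \eqref{eq:hessian_expression}.
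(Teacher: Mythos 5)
Your overall strategy coincides with the paper's: both start from the two-term formula \eqref{eq:RieHess_2terms}, handle the Euclidean-Hessian term by a direct application of Proposition~\ref{pro:projection_to_TMh}, and obtain the hard part by differentiating the projection and identifying the ambient pieces with $\nabla^2_\hanifold f(X)[\eta]$ via $\diff(\projection_{\hanifold^r})_H=\projection_{\tangent_H\hanifold^r}$. You also correctly anticipate where $W_{H,\omega}$ and the $\projection_{\normal_H\hanifold^r}$ term come from. The difference is organizational: the paper first rewrites the projection in the ambient variables $(X,G)$ through the functions $a_0,\ldots,a_3$ of \eqref{eq:4a}--\eqref{eq:4a_express} (Lemma~\ref{lem:compute_D}), differentiates those, and only then projects; you propose to differentiate the representation-level formula \eqref{eq:projection_eq} along the curve $(H(t),V(t))$.

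As described, your re-projection step would fail. First, $\projection_{(X,G)}$ does not act slot-by-slot on the representation: by \eqref{eq:projection_eq} the $\sym(n)$-component $Z$ of an ambient vector enters $\bar V_p$ through $-2\omega ZV$, so the $G$-part of $\mathfrak D(\nabla\bar f)$ (the paper's $D_2$) contributes to $\bar V_p$; in the paper's computation the term $-2\omega D_2V$ supplies exactly the $2\omega\,\diff a_2(X)[\eta]V$ piece without which $\diff a_1(X)[\eta]H+2\omega\,\diff a_2(X)[\eta]V$ does not collapse to $2\omega a_0(X)K$ and hence to $W_{H,\omega}K$. Second, differentiating $(\bar K(t),\bar V_p(t))$ is not the same as computing the ambient differential $\mathfrak D$, because the representation map $(\bar K,\bar V_p)\mapsto(\bar K V^\top+H\bar V_p^\top,\,-\bar V_pV^\top-V\bar V_p^\top)$ itself depends on $t$; the cross terms such as $\bar K(0)V_p^\top+K\bar V_p(0)^\top$ must be retained. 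Relatedly, your own bookkeeping of the $\tfrac{d}{dt}M_{H(t),\omega}^{-1}$ term is incomplete: only the $-HM_{H,\omega}^{-1}H^\top K M_{H,\omega}^{-1}$ half is absorbed into $W_{H,\omega}K$, and the leftover $-HM_{H,\omega}^{-1}K^\top HM_{H,\omega}^{-1}$ piece must cancel against the cross terms you dropped (you assert in one breath that this term is ``annihilated'' and in the next that it is ``absorbed''). These are repairable bookkeeping errors rather than a wrong idea, but a correct write-up must either track all of them or, as the paper does, work entirely in the ambient variables so that the cancellations occur at the level of the identities $\diff a_1(X)[\eta]H+2\omega\,\diff a_2(X)[\eta]V=2\omega a_0(X)K$ and $a_1(X)H-H+2\omega a_2(X)V=0$, together with the two-ways evaluation of $Y'(0)$ in \eqref{eq:Yt_1}--\eqref{eq:Yt_2} that isolates $\nabla^2_\hanifold f(X)[\eta]$.
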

\begin{proof}
    See Appendix~\ref{app: Riemannian_hessian}.
\end{proof}

As revealed by~\eqref{eq:hessian_expression}, the Riemannian Hessian on $\manifold_h$ can be built from that on $\hanifold$ and the Euclidean derivatives, along with basic matrix operations. Consequently, the second-order optimality condition for problem \eqref{eq:G} is deduced.

\begin{corollary}
    Given $(X,G)\in\manifold_h$ and $(\eta,\zeta)\in\tangent_{(X,G)}\manifold_h$ with representations $(H,V)$ and $(K,V_p)$, respectively, then the second-order optimality condition is that for all $(\eta,\zeta)\in\tangent_{(X,G)}\manifold_h$
    \[
        \begin{cases}
            {\nabla f(X) V}\in\normal_H\hanifold^r,\ G\nabla f(X) ^\top H = 0,
            \\
             \langle K, \nabla^2_\hanifold f(X)[\eta]V\rangle+\langle H, \nabla^2 f(X)[\eta]V_p\rangle 
             \\
             \quad\quad\quad\quad\quad \quad\quad \quad\,-\ \langle V_p,V_pV^\top \nabla f(X)^\top H\rangle + 2\innerp{K, \nabla f(X)V_p} \ge 0.
        \end{cases}
    \]
\end{corollary}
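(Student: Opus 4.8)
The plan is to read off the first-order part from Corollary~\ref{cor:1st_Mh} and to obtain the Hessian inequality by evaluating the Riemannian Hessian quadratic form of Proposition~\ref{pro:2rd_hessian} against the metric~\eqref{eq:inner_product}, then simplifying with the first-order stationarity relations. \textbf{Reduction.} Since $\manifold_h$ is an embedded Riemannian submanifold of $(\eanifold,\langle\cdot,\cdot\rangle_\omega)$, at a first-order stationary point $(X,G)$ one has $(\bar{f}\circ\gamma)''(0)=\langle\gamma'(0),\nabla^2_{\manifold_h}\bar{f}(X,G)[\gamma'(0)]\rangle_\omega$ for every curve $\gamma$ on $\manifold_h$ with $\gamma(0)=(X,G)$, with $\gamma'(0)$ ranging over all of $\tangent_{(X,G)}\manifold_h$. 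Hence $(X,G)$ is second-order stationary if and only if it is first-order stationary --- which is the first line of the claimed system, by Corollary~\ref{cor:1st_Mh} --- and $\langle(\eta,\zeta),\nabla^2_{\manifold_h}\bar{f}(X,G)[\eta,\zeta]\rangle_\omega\ge 0$ for all $(\eta,\zeta)\in\tangent_{(X,G)}\manifold_h$.

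\textbf{Evaluating the quadratic form.} For $(\eta,\zeta)$ with representation $(K,V_p)$, I would substitute the representation $(\bar{K},\bar{V}_p)$ of $\nabla^2_{\manifold_h}\bar{f}(X,G)[\eta,\zeta]$ given by~\eqref{eq:hessian_expression} into~\eqref{eq:inner_product}, so that the quadratic form becomes $\langle K,\bar{K}\rangle+\langle V_p,\bar{V}_pM_{H,\omega}\rangle$. Expanding this requires only the self-adjointness of $\projection_{\tangent_H\hanifold^r}$ (which fixes $K\in\tangent_H\hanifold^r$), the symmetry of $M_{H,\omega}$ and of $W_{H,\omega}=I-HM_{H,\omega}^{-1}H^\top$, the identity $GV_p=V_p$ coming from $V^\top V_p=0$, and $\trace(A)=\trace(A^\top)$ to rewrite $\langle V_p,(\nabla^2 f(X)[\eta])^\top H\rangle=\langle H,\nabla^2 f(X)[\eta]V_p\rangle$. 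The outcome matches the target except that $\nabla f(X)V$ enters through $\projection_{\normal_H\hanifold^r}(\nabla f(X)V)$ and the cross term appears as $2\langle W_{H,\omega}K,\nabla f(X)V_p\rangle$ instead of $2\langle K,\nabla f(X)V_p\rangle$.

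\textbf{Closing with first-order stationarity.} These two discrepancies are eliminated by the first-order conditions. From $\nabla f(X)V\in\normal_H\hanifold^r$ the normal-space projection acts as the identity on $\nabla f(X)V$, turning the relevant term into $-\langle V_p,V_pV^\top\nabla f(X)^\top H\rangle$. From $G\nabla f(X)^\top H=0$, i.e., $\nabla f(X)^\top H=VV^\top\nabla f(X)^\top H$, together with $V^\top V_p=0$, I would deduce $H^\top\nabla f(X)V_p=0$; hence $\langle HM_{H,\omega}^{-1}H^\top K,\nabla f(X)V_p\rangle=\trace(K^\top HM_{H,\omega}^{-1}H^\top\nabla f(X)V_p)=0$ and thus $\langle W_{H,\omega}K,\nabla f(X)V_p\rangle=\langle K,\nabla f(X)V_p\rangle$. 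Substituting both simplifications gives precisely the inequality in the second line of the claimed system, and imposing it for all tangent $(\eta,\zeta)$ completes the equivalence.

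\textbf{Main obstacle.} The only genuinely non-routine point is the last one: one must notice that first-order stationarity, through the condition $G\nabla f(X)^\top H=0$, forces $H^\top\nabla f(X)V_p=0$ for every admissible $V_p$, which is exactly what makes the weighting matrix $W_{H,\omega}$ drop out of the cross term. Everything else is bookkeeping with the metric weight $\omega$ (absorbed into $M_{H,\omega}$) and with trace transpositions; it is also worth keeping in mind that the Hessian expression~\eqref{eq:hessian_expression} holds at an arbitrary point of $\manifold_h$ and only collapses to the stated form once the first-order relations are invoked.
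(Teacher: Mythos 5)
Your proposal is correct and follows essentially the same route as the paper: substitute the representation $(\bar{K},\bar{V}_p)$ from Proposition~\ref{pro:2rd_hessian} into the metric~\eqref{eq:inner_product}, then use the first-order conditions of Corollary~\ref{cor:1st_Mh} to replace $\projection_{\normal_H\hanifold^r}(\nabla f(X)V)$ by $\nabla f(X)V$ and to deduce $H^\top\nabla f(X)V_p=0$ (via $GV_p=V_p$), which makes $W_{H,\omega}$ drop out of the cross term. The paper closes by citing \cite[Proposition 6.3]{boumal2023introduction} for the equivalence of second-order stationarity with positive semidefiniteness of the Riemannian Hessian at a critical point, which is exactly the reduction you state at the outset.
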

\begin{proof}
    Represent $\nabla^2_{\manifold_h} \bar{f}(X,G)[\eta,\zeta]$ with $(\bar{K},\bar{V}_p)$ as in \eqref{eq:hessian_expression}. Substituting $(K,V_p)$ and $(\bar{K},\bar{V}_p)$ into~\eqref{eq:inner_product} yields
    \[
    \begin{aligned}
        \innerp{\!(\eta,\zeta),\nabla^2_{\manifold_h} \bar{f}(X,G)[\eta,\zeta]\!}\!=& \innerp{\!K,\nabla^2_\hanifold f(X)[\eta]V + \projection_{\tangent_H{\hanifold^r}}(W_{H,\omega}\nabla f(X)V_p)\!} 
        \\
        &-\innerp{V_p,V_p[\projection_{\normal_H\hanifold^r}(\nabla f(X)V)]^\top H}
        \\        
        &+ \innerp{V_p, {(\nabla^2 f(X)[\eta])^\top H + \nabla f(X)^\top W_{H,\omega}K}}
        \\
        =& \innerp{ K, \nabla^2_\hanifold f(X)[\eta]V}+\innerp{ H, \nabla^2 f(X)[\eta]V_p}
        \\
        &-\innerp{V_p,V_pV^\top \nabla f(X)^\top H} + 2\innerp{K,W_{H,\omega}\nabla f(X)V_p}.
        \\
        \revise{=}&\ \revise{\innerp{ K, \nabla^2_\hanifold f(X)[\eta]V}+\innerp{ H, \nabla^2 f(X)[\eta]V_p}}
        \\
        &\revise{-\innerp{V_p,V_pV^\top \nabla f(X)^\top H} + 2\innerp{K,\nabla f(X)V_p},}
    \end{aligned}
    \]
    \revise{where the last equality comes from $H^\top\nabla f(X)V_p=0$ implied by the first-order optimality in Corollary~\ref{cor:1st_Mh}, $GV_p=V_p$, $W_{H,\omega}=I-HM^{-1}_{H,\omega}H^\top$, and the computation
    \begin{equation*}
        \innerp{K,W_{H,\omega}\nabla f(X)V_p}\!=\!\innerp{K, \nabla f(X)V_p} - \Big\langle K, HM^{-1}_{H,\omega}\underbrace{H^\top\nabla f(X)GV_p}_{0} \Big\rangle\!=\!\innerp{K, \nabla f(X)V_p}.
        \vspace{-2mm}
    \end{equation*}
    Consequently, applying \cite[Proposition 6.3]{boumal2023introduction} concludes the second-order optimality condition.}
\end{proof}

\subsection{Retractions on $\manifold_h$}\label{sec:retracMh}
The next step is to investigate the {retraction} on $\manifold_h$, a geometric tool guiding the movement from the current point along a tangent vector. Specifically, a smooth mapping $\retrac: \tangent\manifold \rightarrow \manifold$, defined on the tangent bundle, is called a \emph{retraction} on the manifold $\manifold$ if for any $X\in\manifold,\xi\in\tangent_Y\manifold$, the curve $\gamma(t):=\retrac_X(t\xi)$ satisfies $\gamma(0)=X$ and $\gamma^\prime(0)=\xi$, where $\retrac_X$ denotes the restriction of $\retrac$ on $\tangent_X\manifold$.

\begin{definition}\label{def:orth_homo}
    Let $\manifold\subseteq\mbR^{m\times n}$ be a manifold with the property that if $X\in\manifold$, then $XQ\in\manifold$ for all $Q\in \orth(n)$. The mapping $\retrac:\,\tangent\manifold \rightarrow\manifold$ is \emph{orthogonally homogeneous} if $\retrac_X (\xi)Q = \retrac_{XQ} (\xi Q)$ for any $Q\in \orth(n)$ and $(X,\xi)\in\tangent\manifold$.
\end{definition}

The motivations for introducing this homogeneity are two-fold: 1) it generalizes the ``consistency requirement" for retractions on the Stiefel manifold (see \cite{boumal2015Grassmann}). Specifically, note that both the Cayley transform \cite{nishimori2005cayley} and the polar retraction \cite{absil2008optimization} are orthogonally homogeneous; 2) the projection-like retraction on $\hanifold$ is orthogonally homogeneous, that is, $[\projection_\hanifold(X+\eta)]Q=\projection_\hanifold(XQ+\eta Q)$. Extending these spirits, we demonstrate that homogeneous retractions on $\stiefel(n,r)$ and $\hanifold^r$ can produce a retraction on~$\manifold_h$.

\begin{proposition}[First-order retraction]\label{pro:1st_retrac}
   Suppose $\retrac^{\mathrm{St}}:\,\tangent\stiefel(n,r)\rightarrow \stiefel(n,r)$ is a retraction on $\stiefel(n,r)$ and $\retrac^{\hanifold}:\,\tangent\hanifold^r \rightarrow \hanifold^r$ is a retraction on $\hanifold^r$. If both $\retrac^{\mathrm{St}}$ and $\retrac^{\hanifold}$ are orthogonally homogeneous, then the assembled mapping $\retrac$ defined as follows is a retraction on $\manifold_h$
    \begin{equation}\label{eq:1st_retrac}
        \retrac_{(X,G)}(\eta,\zeta) := \kh{\retrac^{\hanifold}_H(K)(\retrac^{\mathrm{St}}_V(V_p))^\top, I-\retrac^{\mathrm{St}}_V(V_p)(\retrac^{\mathrm{St}}_V(V_p))^\top},
    \end{equation}
    where $(H,V)$ and $\kh{K,V_p}$ are representations of $(X,G)$ and $(\eta,\zeta)$, respectively.
\end{proposition}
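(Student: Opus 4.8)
The plan is to verify the two defining properties of a retraction directly: that $\retrac$ is a smooth mapping from $\tangent\manifold_h$ into $\manifold_h$, and that for each $(X,G)\in\manifold_h$ and $(\eta,\zeta)\in\tangent_{(X,G)}\manifold_h$, the curve $\gamma(t):=\retrac_{(X,G)}(t\eta,t\zeta)$ satisfies $\gamma(0)=(X,G)$ and $\gamma'(0)=(\eta,\zeta)$. The orthogonal homogeneity of $\retrac^{\stiefel}$ and $\retrac^{\hanifold}$ is exactly what is needed to guarantee that formula~\eqref{eq:1st_retrac} does not depend on the choice of representation $(H,V)$ of $(X,G)$ and $(K,V_p)$ of $(\eta,\zeta)$; this well-definedness is the first thing I would establish.

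First I would show the output lands in $\manifold_h$. Write $\tilde V=\retrac^{\stiefel}_V(tV_p)\in\stiefel(n,r)$ and $\tilde H=\retrac^{\hanifold}_H(tK)\in\hanifold^r$. Then $\tilde X:=\tilde H\tilde V^\top$ satisfies $h(\tilde X)=h^r(\tilde H)=0$ by Lemma~\ref{pro:X_H_fullrankdiff}, and $\tilde G:=I-\tilde V\tilde V^\top\in\grassmann(n,n-r)$ with $\tilde X\tilde G=\tilde H\tilde V^\top(I-\tilde V\tilde V^\top)=0$; hence $(\tilde X,\tilde G)\in\manifold_h$. Smoothness of $\retrac$ follows from smoothness of $\retrac^{\stiefel}$, $\retrac^{\hanifold}$, and the polynomial assembly maps, composed with a smooth local choice of representation $(H,V)$, which exists because $\sanifold_h(n-r)\to\manifold_h$ is a submersion (Theorem~\ref{the:manifold_h}); alternatively one appeals to the well-definedness just proved to patch local choices.

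Next, for the curve condition: at $t=0$ we get $\tilde V=V$, $\tilde H=H$, so $\gamma(0)=(HV^\top,I-VV^\top)=(X,G)$. Differentiating at $t=0$, the retraction axioms on $\stiefel(n,r)$ and $\hanifold^r$ give $\frac{d}{dt}\big|_{0}\retrac^{\stiefel}_V(tV_p)=V_p$ and $\frac{d}{dt}\big|_{0}\retrac^{\hanifold}_H(tK)=K$. By the product rule applied to $t\mapsto(\tilde H(t)\tilde V(t)^\top,\,I-\tilde V(t)\tilde V(t)^\top)$, we obtain $\gamma'(0)=(KV^\top+HV_p^\top,\,-V_pV^\top-VV_p^\top)$, which is precisely the representation~\eqref{eq:represent_eta} of $(\eta,\zeta)$. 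Hence $\gamma'(0)=(\eta,\zeta)$.

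The main obstacle I expect is the well-definedness and smoothness issue: unlike $\stiefel(n,r)$, a point of $\manifold_h$ does not carry a canonical representation $(H,V)$, so one must argue that~\eqref{eq:1st_retrac} is invariant under the ambiguity $(H,V)\mapsto(HQ,VQ)$, $(K,V_p)\mapsto(KQ,V_pQ)$ for $Q\in\orth(r)$ — and this is exactly where orthogonal homogeneity of both component retractions enters, via $\retrac^{\hanifold}_{HQ}(KQ)=\retrac^{\hanifold}_H(K)Q$ and $\retrac^{\stiefel}_{VQ}(V_pQ)=\retrac^{\stiefel}_V(V_p)Q$, which makes $\tilde H\tilde V^\top$ and $I-\tilde V\tilde V^\top$ insensitive to $Q$. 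Once this invariance is in hand, smoothness reduces to composing with any smooth local section of $\sanifold_h(n-r)\twoheadrightarrow\manifold_h$, and the remaining verifications are routine differentiations.
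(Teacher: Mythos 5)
Your proposal is correct and follows essentially the same route as the paper's proof: establish well-definedness of~\eqref{eq:1st_retrac} under the representation ambiguity $(H,V,K,V_p)\mapsto(HQ,VQ,KQ,V_pQ)$ via the orthogonal homogeneity of $\retrac^{\mathrm{St}}$ and $\retrac^{\hanifold}$, then verify $\gamma(0)=(X,G)$ and $\gamma'(0)=(\eta,\zeta)$ by the product rule. Your additional explicit checks that the image lies in $\manifold_h$ and that smoothness follows from local sections are sensible elaborations of points the paper leaves implicit.
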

\begin{proof}
    First, we show that $\retrac_{(X,G)}$ is well-defined. Given another representation $\kh{H_0,V_0}$, we have $\kh{H_0,V_0}=(HQ,VQ)$ for some $Q\in\orth(r)$ since $G=I-VV^\top=I-V_0V_0^\top\ \text{and}\  X=HV^\top=H_0V_0^\top$. Therefore, based on the representation $\kh{H_0,V_0}$, the tangent vector $(\eta,\zeta)$ is parameterized by $(K_0,V_{p,0})=(KQ,V_pQ)$, which together with the orthogonally homogeneous property of $\retrac^{\mathrm{St}}$ and $\retrac^{\hanifold}$ leads to
    \begin{align*}
    \retrac^{\hanifold}_H(K)(\retrac^{\mathrm{St}}_V(V_p))^\top &= \retrac^{\hanifold}_{H_0}(K_0)(\retrac^{\mathrm{St}}_{V_0}(V_{p,0}))^\top, 
    \\
    \retrac^{\mathrm{St}}_V(V_p)(\retrac^{\mathrm{St}}_V(V_p))^\top &= \retrac^{\mathrm{St}}_{V_0}(V_{p,0})(\retrac^{\mathrm{St}}_{V_0}(V_{p,0}))^\top.
    \end{align*}   
    This means the value of $\retrac_{(X,G)}(\eta,\zeta)$ is independent of the choice of representations, i.e., the retraction \eqref{eq:1st_retrac} is well-defined. 
    
    Next, consider the following curve 
    \[
        \gamma(t) := \retrac_{(X,G)}(t\eta,t\zeta) = \kh{\retrac^{\hanifold}_H(tK)(\retrac^{\mathrm{St}}_V(tV_p))^\top, I-\retrac^{\mathrm{St}}_V(tV_p)(\retrac^{\mathrm{St}}_V(tV_p)^\top}.    
    \]
    It immediately holds that $\gamma(0)=(HV^\top,I-VV^\top)=(X,G)$. Regarding the first-order derivative, we derive $\gamma^\prime(0)=(KV^\top+HV^\top_p,-V_pV^\top-VV_p^\top)=(\eta,\zeta)$ since $\retrac^{\mathrm{St}}$ and $\retrac^{\hanifold}$ are retractions on $\stiefel(n,r)$ and $\hanifold^r$ respectively. Therefore, we confirm that $\retrac$ defined in \eqref{eq:1st_retrac} is a retraction on $\manifold_h$.
\end{proof}

Proposition~\ref{pro:1st_retrac} decouples the construction of the retraction on $\manifold_h$ into two components: $\retrac^{\mathrm{St}}$ and $\retrac^{\hanifold}$, facilitating the implementation of first-order optimization algorithms on $\manifold_h$ such as the Riemannian gradient descent method. 

Furthermore, to unlock the potential of second-order methods, e.g., the Riemannian Newton method and the Riemannian trust-region method, it is advantageous to explore the \emph{second-order retraction} on $\manifold_h$. Specifically, a retraction $\retrac: \tangent\manifold \rightarrow \manifold$ on the manifold $\manifold$ is second-order if for any $X\in\manifold,\ \xi\in\tangent_Y\manifold$, the curve $\gamma(t):=\retrac_X(t\xi)$ satisfies $\gamma^{\prime\prime}(0)=0$.

According to \cite{absil2012projectionlike}, a second-order retraction can be built by taking a tangent step in the embedding Euclidean space and then projecting it onto the embedded manifold. Therefore, the space-decoupling structure of $\manifold_h$ encourages us to assign the retraction computation to projections onto $\hanifold$ and $\grassmann(n,n-r)$. However, directly applying respective projections to construct a retraction on $\manifold_h$ will break the relation $XG=0$ for $(X,G)\in\manifold_h$ in general, and as a result,
\[\kh{\projection_\hanifold(X+\eta), \projection_{\grassmann(n,n-r)}(G+\zeta)}\notin\manifold_h.
\]
To circumvent this obstacle while preserving the benign properties of the projection, we exploit the specific structure of $\hanifold$ and construct a second-order retraction tailored for $\manifold_h$.

\begin{proposition}[Second-order retraction]\label{pro:2rd_retrac}
    Given $(X,G)\in\manifold_h$ and tangent vector $(\eta,\zeta)\in\tangent_{(X,G)}\manifold_h$ with representations $(H,V)$ and $(K,V_p)$. The mapping $\retrac$ defined as follows is a second-order retraction on $\manifold_h$,
    \begin{equation}\label{eq:2rd_retrac}
    \retrac_{(X,G)}\kh{\eta,\zeta}:=\kh{\zkh{\projection_{\hanifold^r}\kh{(X+\eta)W}}W^\top,I-WW^\top}\,\text{with}\ W=L(L^\top L)^{-\frac{1}{2}},
    \end{equation}
    where $L=V+V_p(I-K^\top HM^{-1}_{H,\omega})$.
\end{proposition}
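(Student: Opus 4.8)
The plan is to verify the three defining properties of a second-order retraction for the curve $\gamma(t):=\retrac_{(X,G)}(t\eta,t\zeta)$, in this order: (a) $\retrac$ is well-defined and takes values in $\manifold_h$; (b) $\gamma(0)=(X,G)$ and $\gamma'(0)=(\eta,\zeta)$; (c) $\gamma''(0)=0$, the intrinsic acceleration being the one induced on the Riemannian submanifold $\manifold_h\subseteq(\eanifold,\langle\cdot,\cdot\rangle_\omega)$ from~\eqref{eq:rie_geometry}. For (a): using $V^\top V_p=0$ one checks $L^\top L=I+(V_pB)^\top(V_pB)$ with $B:=I-K^\top HM_{H,\omega}^{-1}$, so $L^\top L\succeq I$ and $W=L(L^\top L)^{-1/2}\in\stiefel(n,r)$ is well-defined for every tangent vector. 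Independence of the representation follows because replacing $(H,V,K,V_p)$ by $(HQ,VQ,KQ,V_pQ)$, $Q\in\orth(r)$, gives $M_{HQ,\omega}=Q^\top M_{H,\omega}Q$, hence $L\mapsto LQ$ and $W\mapsto WQ$; combined with the orthogonal homogeneity $\projection_{\hanifold^r}(MQ)=\projection_{\hanifold^r}(M)Q$ (which holds since $h^r$ is orthogonally invariant), both components of $\retrac_{(X,G)}(\eta,\zeta)$ are unchanged. Feasibility of the output is immediate: $XG=0$ reduces to $W^\top W=I$, and writing the first component as $\tilde HW^\top$ with $\tilde H:=\projection_{\hanifold^r}((X+\eta)W)\in\hanifold^r$, Lemma~\ref{pro:X_H_fullrankdiff} gives $h(\tilde HW^\top)=h^r(\tilde H)=0$.

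For (b) and (c), substitute $(tK,tV_p)$ into the formula: $L(t)=V+tV_p-t^2V_pK^\top HM_{H,\omega}^{-1}$, and since $V^\top V_p=0$ the polar factor expands as $W(t)=V+tV_p+t^2\bigl(-V_pK^\top HM_{H,\omega}^{-1}-\tfrac12VV_p^\top V_p\bigr)+O(t^3)$. With $A(t):=(X+t\eta)W(t)$, $\tilde H(t):=\projection_{\hanifold^r}(A(t))$ and $\gamma(t)=(\tilde H(t)W(t)^\top,\,I-W(t)W(t)^\top)$, one computes $\dot A(0)=K$, $\ddot A(0)=HV_p^\top V_p$, and, using $\diff\projection_{\hanifold^r}(H)=\projection_{\tangent_H\hanifold^r}$ together with $K\in\tangent_H\hanifold^r$, that $\dot{\tilde H}(0)=K$. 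Assembling these gives $\gamma(0)=(X,G)$ and $\gamma'(0)=(\eta,\zeta)$, so $\retrac$ is at least a first-order retraction.

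To prove (c), the Gauss formula gives $\gamma''(0)=\projection_{(X,G)}(\ddot\gamma(0))$, so it suffices to show $\ddot\gamma(0)\in\normal_{(X,G)}\manifold_h$, i.e., $\langle\ddot\gamma(0),(\eta',\zeta')\rangle_\omega=0$ for every $(\eta',\zeta')=(K'V^\top+HV_p'^\top,\,-V_p'V^\top-VV_p'^\top)$ with $K'\in\tangent_H\hanifold^r$ and $V^\top V_p'=0$ (Proposition~\ref{pro:Mh_tangent_space}); by linearity it is enough to treat $V_p'=0$ and $K'=0$ separately. From the expansions one reads off $\ddot W(0)=-2V_pK^\top HM_{H,\omega}^{-1}-VV_p^\top V_p$, $\ddot X(0)=\ddot{\tilde H}(0)V^\top+2KV_p^\top+H\ddot W(0)^\top$ and $\ddot G(0)=-\ddot W(0)V^\top-2V_pV_p^\top-V\ddot W(0)^\top$, where $\ddot{\tilde H}(0)=\projection_{\tangent_H\hanifold^r}(\ddot A(0))+\diff^2\projection_{\hanifold^r}(H)[K,K]$ and the second term lies in $\normal_H\hanifold^r$ since the projective retraction $\xi\mapsto\projection_{\hanifold^r}(H+\xi)$ on $\hanifold^r$ is second order~\cite{absil2012projectionlike}. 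The $V_p'=0$ pairing collapses to $0$ using $V^\top V_p=0$ and $\langle\diff^2\projection_{\hanifold^r}(H)[K,K],K'\rangle=0$. The $K'=0$ pairing is where the weight $\omega$ and the matrix $M_{H,\omega}=2\omega I+H^\top H$ enter: via the identity $I-M_{H,\omega}^{-1}H^\top H=2\omega M_{H,\omega}^{-1}$ — which is precisely why $L$ carries the factor $I-K^\top HM_{H,\omega}^{-1}$ — the $\mbR^{m\times n}$-part of $\langle\ddot\gamma(0),(HV_p'^\top,-V_p'V^\top-VV_p'^\top)\rangle_\omega$ equals $4\omega\langle V_p',V_pK^\top HM_{H,\omega}^{-1}\rangle$ while the $\sym(n)$-part equals $-4\langle V_p',V_pK^\top HM_{H,\omega}^{-1}\rangle$, so the $\omega$-weighted sum vanishes.

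The main obstacle is step (c): one must carefully propagate the coupling between the $\projection_{\hanifold^r}$-factor and the polar factor through $A(t)$, keep track of the normal-space contribution of $\diff^2\projection_{\hanifold^r}$, and organize $\ddot\gamma(0)$ so that the $M_{H,\omega}$-correction in $L$ produces the stated cancellation; everything in (a)--(b) is routine once the earlier results are available. A useful sanity check is the case $h\equiv 0$: then $\hanifold^r=\mbR^{m\times r}$, $\projection_{\hanifold^r}=\mathrm{id}$, and the construction reduces to a second-order retraction on $\desing$ of the type studied in~\cite{rebjock2024boundedrank}. The detailed computations are deferred to the appendix.
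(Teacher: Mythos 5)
Your proposal is correct and follows essentially the same route as the paper's Appendix~B proof: well-definedness via the $Q$-equivariance of $L$ and $W$, Taylor expansion of the polar factor to get $\dot W(0)=V_p$ and $\ddot W(0)=-2V_pK^\top HM^{-1}_{H,\omega}-VV_p^\top V_p$, and vanishing of the tangential part of the extrinsic acceleration. The only (immaterial) differences are that the paper verifies $\projection_{(X,G)}(\ddot X(0),\ddot G(0))=0$ by computing its representation $(\bar K,\bar V_p)$ through the explicit projection formula of Proposition~\ref{pro:projection_to_TMh} rather than by pairing against arbitrary tangent vectors, and it extracts $\projection_{\tangent_H\hanifold^r}(\ddot\alpha(0))=\projection_{\tangent_H\hanifold^r}(\ddot\beta(0))$ by twice differentiating $\projection(t)(\beta(t)-\alpha(t))=0$ instead of invoking the second-order property of the projective retraction on $\hanifold^r$.
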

\begin{proof}
    See Appendix~\ref{app:2rd_retrac}.
\end{proof}

Proposition~\ref{pro:2rd_retrac} demonstrates that combining the projection onto $\hanifold^r$ and the polar retraction on $\stiefel(n,r)$ produces a second-order retraction on $\manifold_h$. In practice, when the manifold $\hanifold$ defined by $h$ is the oblique manifold or the Frobenius sphere, the projection $\projection_{\hanifold^r}$ corresponds to the row-wise normalization or the $\mathrm{F}$-norm normalization, respectively. For a general~$h$, it admits a neighborhood around $H$ in which $\projection_{\hanifold^r}$ is well-defined and smooth; and thus the retraction \eqref{eq:2rd_retrac} satisfies the local smoothness around the point $(X,G)$.

\subsection{Vector transports on $\manifold_h$}
Vector transport provides an approach for transporting tangent vectors from one tangent space to another. It enables the comparison of first-order information at distinct points on the manifold, and thus stands at the core of many algorithms, including the Riemannian BFGS method and the Riemannian conjugate gradient method. 

Specifically, with $\tangent\manifold\oplus\tangent\manifold:=\{(X,\tilde{\xi},\xi):X\in\manifold,\,\tilde{\xi},\xi\in\tangent_X\manifold\}$, a smooth mapping 
\[
\tangent\manifold\oplus\tangent\manifold\to\tangent\manifold:\, (X,\tilde{\xi},\xi)\mapsto \transport_{\tilde{\xi}}(\xi),
\]
is called a \emph{vector transport} on the manifold $\manifold$ if there exists an associated retraction $\retrac$ such that $\transport_{\tilde{\xi}}(\xi)\in\tangent_{\retrac_X(\tilde{\xi})}\manifold$ for any point $X\in\manifold$ and the tangent vectors $\tilde{\xi},\xi\in\tangent_X\manifold$; and it holds that $\transport_{0}(\xi)=\xi$, and the mapping $\xi\mapsto\transport_{\tilde{\xi}}(\xi)$ is linear. Note that a natural construction for embedded submanifolds is the \emph{projection-based transport}, i.e., $\transport_{\tilde{\xi}}(\xi):=\projection_{\retrac_{X}(\tilde{\xi})}(\xi)$. Moreover, $\transport$ is called \emph{isometric} if 
\[
\big\langle{\transport_{\tilde{\xi}}(\xi),\transport_{\tilde{\xi}}(\xi)}\big\rangle_{\retrac_X(\tilde{\xi})}= \innerp{\xi,\xi}_X\ \ \text{for any}\ \ (X,\tilde{\xi},\xi)\in\tangent\manifold\oplus\tangent\manifold.
\]

A projection-based transport on $\manifold_h$ can be constructed as follows.

\begin{proposition}\label{pro:vector_transport_1}
    Given a retraction $\retrac$ on $\manifold_h$, and $\kh{(X,G),(\tilde{\eta},\tilde{\zeta}),(\eta,\zeta)}\in\manifold_h\oplus\manifold_h$, $(H,V)$ and $(K,V_p)$ are representations of  $(X,G)$ and $(\eta,\zeta)$ respectively. The vector transport $\transport_{(\tilde{\eta},\tilde{\zeta})}(\eta,\zeta)$ can be computed by
    \begin{equation}\label{eq:vector_transport_1}
        \begin{aligned}
            &\bar{K} = \projection_{\tangent_{\tilde{H}}\hanifold^r}\kh{KV^\top \tilde{V}+HV_p^\top\tilde{V}},
            \\
            &\bar{V}_p = (I-\tilde{V}\tilde{V}^\top)\kh{VK^\top\tilde{H}+V_pH^\top\tilde{H}+2\omega(VV_p^\top\tilde{V}+V_pV^\top\tilde{V})}M^{-1}_{\tilde{H},\omega},
        \end{aligned}        
    \end{equation}
    where we use $(\tilde{H},\tilde{V})$ to represent $\retrac_{(X,G)}(\tilde{\eta},\tilde{\zeta})$.
\end{proposition}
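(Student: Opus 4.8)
The plan is to treat $\transport$ as the projection-based vector transport $\transport_{(\tilde\eta,\tilde\zeta)}(\eta,\zeta):=\projection_{\retrac_{(X,G)}(\tilde\eta,\tilde\zeta)}(\eta,\zeta)$ associated with the given retraction $\retrac$ (which may be taken to be the first-order retraction of Proposition~\ref{pro:1st_retrac} or the second-order retraction of Proposition~\ref{pro:2rd_retrac}), and to reduce the claimed closed form to Proposition~\ref{pro:projection_to_TMh}. First I would record that this map is indeed a vector transport on the embedded submanifold $\manifold_h$: smoothness follows from the smoothness of $\retrac$ together with that of the orthogonal projection onto the tangent spaces of $\manifold_h$; the identity $\transport_{0}(\eta,\zeta)=\projection_{(X,G)}(\eta,\zeta)=(\eta,\zeta)$ holds because $(\eta,\zeta)$ already lies in $\tangent_{(X,G)}\manifold_h$; linearity of $(\eta,\zeta)\mapsto\transport_{(\tilde\eta,\tilde\zeta)}(\eta,\zeta)$ is inherited from linearity of the projection; and $\transport_{(\tilde\eta,\tilde\zeta)}(\eta,\zeta)\in\tangent_{\retrac_{(X,G)}(\tilde\eta,\tilde\zeta)}\manifold_h$ by construction.

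Next I would derive the explicit formula. Writing $(\tilde{H},\tilde{V})$ for a representation of $\retrac_{(X,G)}(\tilde\eta,\tilde\zeta)$, so that the associated projector is $\tilde{G}=I-\tilde{V}\tilde{V}^\top$ and $M_{\tilde{H},\omega}=2\omega I+\tilde{H}^\top\tilde{H}$, I apply Proposition~\ref{pro:projection_to_TMh} with ambient element $(E,Z):=(\eta,\zeta)\in\eanifold$ and base point $\retrac_{(X,G)}(\tilde\eta,\tilde\zeta)$. This yields the representation
\begin{equation*}
    \bar{K}=\projection_{\tangent_{\tilde{H}}\hanifold^r}\kh{\eta\tilde{V}},\qquad \bar{V}_p=\tilde{G}\kh{\eta^\top\tilde{H}-2\omega\,\zeta\tilde{V}}M_{\tilde{H},\omega}^{-1}.
\end{equation*}
It then remains to substitute the ambient representation $(\eta,\zeta)=\kh{KV^\top+HV_p^\top,\ -V_pV^\top-VV_p^\top}$ from~\eqref{eq:represent_eta} and expand: $\eta\tilde{V}=KV^\top\tilde{V}+HV_p^\top\tilde{V}$; $\eta^\top\tilde{H}=VK^\top\tilde{H}+V_pH^\top\tilde{H}$; $\zeta\tilde{V}=-V_pV^\top\tilde{V}-VV_p^\top\tilde{V}$; hence $\eta^\top\tilde{H}-2\omega\,\zeta\tilde{V}=VK^\top\tilde{H}+V_pH^\top\tilde{H}+2\omega\kh{VV_p^\top\tilde{V}+V_pV^\top\tilde{V}}$. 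Recalling $\tilde{G}=I-\tilde{V}\tilde{V}^\top$, these coincide exactly with the right-hand sides in~\eqref{eq:vector_transport_1}.

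There is no deep obstacle here; the only care required is bookkeeping — passing to the ambient coordinates of $(\eta,\zeta)$ before projecting, and attaching the weight-$\omega$ metric factor $M_{\tilde{H},\omega}$ at the image point $(\tilde{H},\tilde{V})$ rather than at $(H,V)$. For completeness I would also note well-definedness: two representations of a point of $\manifold_h$ differ by right multiplication by an element of $\orth(r)$, and since $\projection$ and $\retrac$ are themselves representation-free, the pair $(\bar{K},\bar{V}_p)$ produced by~\eqref{eq:vector_transport_1} transforms consistently under this $\orth(r)$-action, so it genuinely represents $\transport_{(\tilde\eta,\tilde\zeta)}(\eta,\zeta)$ relative to the chosen representation of the base point.
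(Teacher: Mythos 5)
Your proposal is correct and follows essentially the same route as the paper: interpret $\transport$ as the projection-based transport associated with $\retrac$, recover the ambient representation $(\eta,\zeta)=(KV^\top+HV_p^\top,\,-V_pV^\top-VV_p^\top)$, and apply Proposition~\ref{pro:projection_to_TMh} at the point $\retrac_{(X,G)}(\tilde\eta,\tilde\zeta)$ represented by $(\tilde H,\tilde V)$. The extra remarks on well-definedness and the transport axioms are fine but not needed beyond what the paper records.
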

\begin{proof}
    Let $\tilde{\xi}=(\tilde{\eta},\tilde{\zeta})$, $\xi=(\eta,\zeta)$, and $(\tilde{X},\tilde{G})=\retrac_{(X,G)}(\tilde{\xi})$. It is revealed from \cite[section~8.1.3]{absil2008optimization} that $\retrac$ is associated with the transport $\transport$ by defining $\transport_{\tilde{\xi}}(\xi)$ as the projection of $\xi$ onto the tangent space to $\manifold_h$ at $(\tilde{X},\tilde{G})$. To this end, we recover the tangent vector $\xi = (K V^{\top}+HV_p^{\top},-V_p V^{\top}-V V_p^{\top})$ and employ Proposition~\ref{pro:projection_to_TMh} to project it onto $\tangent_{(\tilde{X},\tilde{G})}\manifold_h$.
\end{proof}

Thanks to the low-rank structure of $\manifold_h$, computing the vector transport~\eqref{eq:vector_transport_1} has a complexity of $\complexity((m+n)r^2)$. However, expression \eqref{eq:vector_transport_1} still remains involved, as it requires the evaluation of several terms including $V^\top\tilde{V}$, $V_p^\top \tilde{V}$ and $H^\top\tilde{H}$. To circumvent this, we extend the ``space-decoupling" spirit, and obtain a vector transport on $\manifold_h$. 

In preparation, we introduce the following property resonating with Definition~\ref{def:orth_homo}. Let $\manifold\subseteq\mbR^{m\times n}$ be a manifold with the property that if $X\in\manifold$, then $XQ\in\manifold$ for all $Q\in \orth(n)$. Given a vector transport $\transport$ on $\manifold$ and its associated retraction $\retrac$ which is orthogonally homogeneous, then they are said to be \emph{compatible} if for any $Q\in\orth(n)$, $X\in\manifold$, $\tilde{\xi},\xi\in\tangent_X\manifold$, and $\tilde{\xi}Q,\xi Q\in\tangent_{XQ}\manifold$, it holds that $\transport_{\tilde{\xi}}(\xi)Q=\transport_{\tilde{\xi}Q}(\xi Q)$. In fact, this compatibility requirement is commonly satisfied on the manifold defined as the level set of an orthogonally invariant mapping, e.g., the Stiefel manifold and the more general $\hanifold$. For instance, given an orthogonally homogeneous retraction $\retrac$, it can be verified that $\transport$ constructed as the projection-based transport, or via the differentiated retraction (see \cite[\S8.1]{absil2008optimization} for details) is compatible with $\retrac$.

\begin{proposition}[Vector transport]\label{pro:vector_transport_2}
    Suppose the orthogonally homogeneous retractions $\retrac^{\hanifold}$ and $\retrac^{\mathrm{St}}$ are associated with their compatible vector transports $\transport^\hanifold$ and $\transport^{\mathrm{St}}$, and the retraction $\retrac$ on $\manifold_h$ is given as in \eqref{eq:1st_retrac}. For $\kh{(X,G),(\tilde{\eta},\tilde{\zeta}),(\eta,\zeta)}\in\manifold_h\oplus\manifold_h$, with representations $(H,V)$, $(\tilde{K},\tilde{V}_p)$, and $(K,V_p)$, respectively, let $\tilde{H}=\retrac^\hanifold_H(\tilde{K})$ and $\tilde{V}=\retrac^{\mathrm{St}}_{V}(\tilde{V_p})$. Then, the vector transport $\transport_{(\tilde{\eta},\tilde{\zeta})}(\eta,\zeta)$ on $\manifold_h$ can be built by
    \begin{equation}\label{eq:vector_transport_2}
        \bar{K}=\transport^{\hanifold}_{\tilde{K}}(K)\ \ \text{and}\ \ \bar{V}_p=(I-\tilde{V}\tilde{V}^\top)\transport^{\mathrm{St}}_{\tilde{V}_p}(V_p).
    \end{equation}
\end{proposition}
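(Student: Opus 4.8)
The plan is to follow the template of the proof of Proposition~\ref{pro:1st_retrac}: since the hypotheses supply orthogonally homogeneous $\retrac^{\hanifold}$ and $\retrac^{\mathrm{St}}$, that proposition already gives that the assembled map~\eqref{eq:1st_retrac} is a well-defined retraction $\retrac$ on $\manifold_h$, and I will take this $\retrac$ as the retraction associated with $\transport$. It then remains to (i) check that~\eqref{eq:vector_transport_2} does not depend on the chosen representations, and (ii) verify the defining properties of a vector transport, namely that $\transport_{(\tilde\eta,\tilde\zeta)}(\eta,\zeta)$ lies in $\tangent_{\retrac_{(X,G)}(\tilde\eta,\tilde\zeta)}\manifold_h$, that $\transport_{(0,0)}=\mathrm{id}$, that $\xi\mapsto\transport_{(\tilde\eta,\tilde\zeta)}(\xi)$ is linear, and that $\transport$ is smooth.

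For the well-definedness in (i), I recall from the proof of Proposition~\ref{pro:1st_retrac} (together with Lemma~\ref{lem:repreMob}) that any two representations of $(X,G)$ differ by some $Q\in\orth(r)$, say $(H_0,V_0)=(HQ,VQ)$, and that under this change the representations of $(\tilde\eta,\tilde\zeta)$ and $(\eta,\zeta)$ become $(\tilde KQ,\tilde V_pQ)$ and $(KQ,V_pQ)$. I will then track every term of~\eqref{eq:vector_transport_2} under this substitution. Orthogonal homogeneity of $\retrac^{\hanifold}$ and $\retrac^{\mathrm{St}}$---which applies because $\hanifold^r$ and $\stiefel(n,r)$ are stable under right multiplication by $\orth(r)$, the former by the orthogonal invariance of $h^r$ from Lemma~\ref{pro:X_H_fullrankdiff}---yields $\tilde H=\retrac^{\hanifold}_H(\tilde K)\mapsto\tilde HQ$ and $\tilde V=\retrac^{\mathrm{St}}_V(\tilde V_p)\mapsto\tilde VQ$, while compatibility of $\transport^{\hanifold}$ and $\transport^{\mathrm{St}}$ with their retractions yields $\bar K\mapsto\transport^{\hanifold}_{\tilde KQ}(KQ)=\bar KQ$ and $\bar V_p\mapsto(I-\tilde VQ(\tilde VQ)^\top)\transport^{\mathrm{St}}_{\tilde V_pQ}(V_pQ)=\bar V_pQ$. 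Since $(\tilde H,\tilde V)$ is a representation of $\retrac_{(X,G)}(\tilde\eta,\tilde\zeta)$ by~\eqref{eq:1st_retrac}, and since---by the computation already carried out in the proof of Proposition~\ref{pro:1st_retrac} together with the parametrization in Proposition~\ref{pro:Mh_tangent_space}---the tangent vector assembled from a point-representation and a tangent-representation is unchanged when both are right-translated by the same $Q$, the output $(\bar K\tilde V^\top+\tilde H\bar V_p^\top,-\bar V_p\tilde V^\top-\tilde V\bar V_p^\top)$ is independent of the chosen representation.

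For (ii): the pair $(\bar K,\bar V_p)$ does represent a tangent vector at $\retrac_{(X,G)}(\tilde\eta,\tilde\zeta)$, because $\bar K=\transport^{\hanifold}_{\tilde K}(K)\in\tangent_{\tilde H}\hanifold^r$ by definition of $\transport^{\hanifold}$, while $\tilde V^\top\bar V_p=\tilde V^\top(I-\tilde V\tilde V^\top)\transport^{\mathrm{St}}_{\tilde V_p}(V_p)=0$, so Proposition~\ref{pro:Mh_tangent_space} applies with $(\tilde H,\tilde V)$ as representation of $\retrac_{(X,G)}(\tilde\eta,\tilde\zeta)$. When $(\tilde\eta,\tilde\zeta)=(0,0)$ we have $(\tilde K,\tilde V_p)=(0,0)$, hence $\tilde H=\retrac^{\hanifold}_H(0)=H$ and $\tilde V=\retrac^{\mathrm{St}}_V(0)=V$, and then $\bar K=\transport^{\hanifold}_0(K)=K$ and $\bar V_p=(I-VV^\top)\transport^{\mathrm{St}}_0(V_p)=(I-VV^\top)V_p=V_p$ using $V^\top V_p=0$, so $\transport_{(0,0)}(\eta,\zeta)=(\eta,\zeta)$. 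Linearity in $(\eta,\zeta)$ follows by composing the linear correspondence $(\eta,\zeta)\leftrightarrow(K,V_p)$ (explicitly $K=\eta V$, $V_p=-\zeta V$), the linear maps $K\mapsto\transport^{\hanifold}_{\tilde K}(K)$ and $V_p\mapsto(I-\tilde V\tilde V^\top)\transport^{\mathrm{St}}_{\tilde V_p}(V_p)$, and the linear reassembly into a tangent vector. Smoothness is obtained by working, near any point of $\tangent\manifold_h\oplus\tangent\manifold_h$, with a smooth local section of representations (available since the quotient map $\sanifold_h(n-r)\to\manifold_h$ from the proof of Theorem~\ref{the:manifold_h} is a smooth submersion), along which all of the above operations are smooth; well-definedness then upgrades this to global smoothness.

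I expect the only genuine difficulty to be the equivariance bookkeeping in step~(i): one must match precisely the $\orth(r)$-actions produced by orthogonal homogeneity and by compatibility so that the factor $Q$ cancels consistently in every term of~\eqref{eq:vector_transport_2}. Once that is in place, the properties in (ii) are routine given Proposition~\ref{pro:Mh_tangent_space} and the explicit retraction~\eqref{eq:1st_retrac}.
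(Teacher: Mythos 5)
Your proposal is correct and follows essentially the same route as the paper's proof: well-definedness via the compatibility of $(\retrac^\hanifold,\transport^\hanifold)$ and $(\retrac^{\mathrm{St}},\transport^{\mathrm{St}})$ under the $\orth(r)$-change of representation (mirroring the argument for Proposition~\ref{pro:1st_retrac}), tangency of $(\bar K,\bar V_p)$ at the retracted point via Proposition~\ref{pro:Mh_tangent_space}, and the identity-at-zero and linearity checks. You merely supply more detail than the paper (the explicit $Q$-bookkeeping, the correspondence $K=\eta V$, $V_p=-\zeta V$, and a smoothness argument the paper leaves implicit), which is consistent with its argument.
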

\begin{proof}
    The compatibility requirement for $(\retrac^\hanifold,\transport^\hanifold)$ and $(\retrac^{\mathrm{St}},\transport^{\mathrm{St}})$ plays a role in checking that $\transport$ is well-defined, with the proof analogous to that of Proposition~\ref{pro:1st_retrac}. To show $\transport$ is a vector transport, Proposition~\ref{pro:Mh_tangent_space} reveals that $(\bar{K},\bar{V}_p)$ indeed gives a vector tangent to $\manifold_h$ at the point $\retrac_{(X,G)}(\tilde{\eta},\tilde{\zeta})$ represented by $(\tilde{H},\tilde{V})$. Moreover, fixing the direction $\tilde{\xi}:=(\tilde{\eta},\tilde{\zeta})$, the mapping $\xi\mapsto\transport_{\tilde{\xi}}(\xi)$ is linear, and it reduces to the identity on $\tangent_{(X,G)}\manifold_h$ when $\tilde{\xi}=0$ by the definition~\eqref{eq:vector_transport_2}.
\end{proof}

Consequently, transporting vectors on $\manifold_h$ boils down to assembling transports on $\hanifold^r$ and $\stiefel(n,r)$.
Interestingly, taking $V^\top\tilde{V}\approx I$, $V_p^\top \tilde{V}\approx 0$, and $H^\top\tilde{H}\approx \tilde{H}^\top\tilde{H}$, the expression~\eqref{eq:vector_transport_1} turns out to be
\[
    \bar{K}=\projection_{\tangent_{\tilde{H}}\hanifold^r}(K)\ \ \text{and}\ \ \bar{V}_p=(I-\tilde{V}\tilde{V}^\top)(V_p),
\]
which coincides with~\eqref{eq:vector_transport_2} when $\transport^\hanifold$ and $\transport^{\mathrm{St}}$ are chosen as projection-based transports on the respective manifolds.
In view of this, Proposition~\ref{pro:vector_transport_2} indeed simplifies the tedious computation in~\eqref{eq:vector_transport_1}, providing a decent construction of the vector transport on $\manifold_h$ via the ``space-decoupling" principle.

More importantly, expression~\eqref{eq:vector_transport_2} inspires us to develop an isometric $\transport$ on~$\manifold_h$, which is crucial for theoretical analysis of the Riemannian methods \cite{ring2012RCG_BFGS}. We begin by revisiting an isometric transport on the Stiefel manifold~\cite{zhu2017stifelcayley}. For $V\in\stiefel(n,r)$ and $V_p\in\tangent_V\stiefel(n,r)$, let $W(V,V_p)=(I-\frac{1}{2}VV^\top)V_pV^\top-VV_p^\top(I-\frac{1}{2}VV^\top)$ and $\varPi(V,V_p)=\kh{I-\frac{1}{2}W(V,V_p)}^{-1}\kh{I+\frac{1}{2}W(V,V_p)}$. The Cayley transform, which is an orthogonally homogeneous retraction on $\stiefel(n,r)$, and its associated vector isometric transport are given by
\begin{equation}\label{eq:Cayley_pair}
    \retrac^{\mathrm{St}}_V(\tilde{V}_p):=\varPi(V,\tilde{V}_p)V\ \ \text{and}\ \ \transport^{\mathrm{St}}_{\tilde{V}_p}(V_p):=\varPi(V,\tilde{V}_p)V_p.
\end{equation}
It can be verified that $\retrac^{\mathrm{St}}$ and $\transport^{\mathrm{St}}$ are compatible.

\begin{proposition}\label{pro:vector_transport_3}
    Suppose the orthogonally homogeneous retraction $\retrac^{\hanifold}$ is associated with an isometric vector transport $\transport^\hanifold$ on $\hanifold^r$, and they are compatible. For the element $\kh{(X,G),(\tilde{\eta},\tilde{\zeta}),(\eta,\zeta)}\in\manifold_h\oplus\manifold_h$, with representations $(H,V)$, $(\tilde{K},\tilde{V}_p)$, and $(K,V_p)$, respectively, let $\tilde{H}=\retrac^\hanifold_H(\tilde{K})$ and $\tilde{V}=\retrac^{\mathrm{St}}_{V}(\tilde{V_p})$. Then, an isometric vector transport $\transport_{(\tilde{\eta},\tilde{\zeta})}(\eta,\zeta)$ on $\manifold_h$ can be built by
    \begin{equation}\label{eq:vector_transport_3}
        \bar{K}=\transport^{\hanifold}_{\tilde{K}}(K)\ \ \text{and}\ \ \bar{V}^{}_p=\transport^{\mathrm{St}}_{\tilde{V}_p}(V_pM^{\frac{1}{2}}_{H,\omega})M_{\tilde{H},\omega}^{-\frac{1}{2}},
    \end{equation}
    where the $(\retrac^{\mathrm{St}},\transport^{\mathrm{St}})$ is given by \eqref{eq:Cayley_pair}.
\end{proposition}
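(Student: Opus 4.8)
The plan is to follow the template of Proposition~\ref{pro:vector_transport_2}, verifying in turn that the assignment~\eqref{eq:vector_transport_3} is well-defined, that it is a vector transport, and that it is isometric. For \emph{well-definedness}, suppose $(X,G)$ also has the representation $(HQ,VQ)$ with $Q\in\orth(r)$; then the representations $(\tilde K,\tilde V_p)$ and $(K,V_p)$ become $(\tilde K Q,\tilde V_p Q)$ and $(KQ,V_p Q)$, and since $M_{HQ,\omega}=2\omega I+Q^\top H^\top HQ=Q^\top M_{H,\omega}Q$, uniqueness of the positive-definite square root gives $M_{HQ,\omega}^{\pm 1/2}=Q^\top M_{H,\omega}^{\pm 1/2}Q$. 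Invoking the orthogonal homogeneity of $\retrac^\hanifold$ and $\retrac^{\mathrm{St}}$ (so $\tilde H\mapsto\tilde H Q$, $\tilde V\mapsto\tilde V Q$) together with the compatibility of $(\retrac^\hanifold,\transport^\hanifold)$ and $(\retrac^{\mathrm{St}},\transport^{\mathrm{St}})$, one checks that $(\bar K,\bar V_p)$ transforms to $(\bar K Q,\bar V_p Q)$, which represents the same tangent vector; hence $\transport_{(\tilde\eta,\tilde\zeta)}(\eta,\zeta)$ does not depend on the chosen representation. Setting $(\tilde\eta,\tilde\zeta)=0$ forces $\tilde H=H$, $\tilde V=V$, and hence $\bar K=\transport^\hanifold_0(K)=K$ and $\bar V_p=\transport^{\mathrm{St}}_0(V_pM_{H,\omega}^{1/2})M_{H,\omega}^{-1/2}=V_p$, so $\transport_0=\mathrm{id}$; linearity of $\xi\mapsto\transport_{(\tilde\eta,\tilde\zeta)}(\xi)$ is inherited from that of $\transport^\hanifold_{\tilde K}$, of $\transport^{\mathrm{St}}_{\tilde V_p}$, and of right multiplication by $M_{H,\omega}^{1/2}$ and $M_{\tilde H,\omega}^{-1/2}$.

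The delicate axiom is \emph{tangency}: by Proposition~\ref{pro:Mh_tangent_space}, $(\bar K,\bar V_p)$ must represent a tangent vector to $\manifold_h$ at $\retrac_{(X,G)}(\tilde\eta,\tilde\zeta)$, which has representation $(\tilde H,\tilde V)$ by Proposition~\ref{pro:1st_retrac}; that is, one needs $\bar K\in\tangent_{\tilde H}\hanifold^r$ and $\tilde V^\top\bar V_p=0$. The first is immediate since $\transport^\hanifold$ is a vector transport on $\hanifold^r$ with associated retraction $\retrac^\hanifold$ and $\tilde H=\retrac^\hanifold_H(\tilde K)$. For the second, observe that $V^\top(V_pM_{H,\omega}^{1/2})=0$, so $V_pM_{H,\omega}^{1/2}\in\tangent_V\stiefel(n,r)$ and $\transport^{\mathrm{St}}_{\tilde V_p}$ may be applied to it; then, using the explicit Cayley pair~\eqref{eq:Cayley_pair}, $\transport^{\mathrm{St}}_{\tilde V_p}(A)=\varPi(V,\tilde V_p)A$ with $\varPi(V,\tilde V_p)\in\orth(n)$ and $\tilde V=\retrac^{\mathrm{St}}_V(\tilde V_p)=\varPi(V,\tilde V_p)V$, whence $\tilde V^\top\transport^{\mathrm{St}}_{\tilde V_p}(A)=V^\top A$; taking $A=V_pM_{H,\omega}^{1/2}$ yields $\tilde V^\top\bar V_p=(V^\top V_p)M_{H,\omega}^{1/2}M_{\tilde H,\omega}^{-1/2}=0$. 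This is precisely where the Cayley transport is essential: it carries the ``horizontal'' subspace $\{A:V^\top A=0\}$ into its counterpart at $\tilde V$, so---in contrast to~\eqref{eq:vector_transport_2}---no extra projector $(I-\tilde V\tilde V^\top)$ is needed, and no norm distortion is thereby introduced.

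For \emph{isometry}, fix two tangent vectors $\xi_i=(\eta_i,\zeta_i)$ at $(X,G)$ with representations $(K_i,V_{p,i})$, and evaluate the metric~\eqref{eq:inner_product} at the point represented by $(\tilde H,\tilde V)$. The $\hanifold^r$-block contributes $\innerp{\transport^\hanifold_{\tilde K}(K_1),\transport^\hanifold_{\tilde K}(K_2)}=\innerp{K_1,K_2}$ by the isometry of $\transport^\hanifold$. For the remaining block, put $T_i:=\transport^{\mathrm{St}}_{\tilde V_p}(V_{p,i}M_{H,\omega}^{1/2})$, so $\bar V_{p,i}=T_iM_{\tilde H,\omega}^{-1/2}$, and compute
\begin{align*}
\innerp{\bar V_{p,1},\bar V_{p,2}M_{\tilde H,\omega}}
&=\trace\!\big(M_{\tilde H,\omega}^{-1/2}\,T_1^\top T_2\,M_{\tilde H,\omega}^{1/2}\big)
=\trace(T_1^\top T_2) \\
&=\innerp{V_{p,1}M_{H,\omega}^{1/2},\,V_{p,2}M_{H,\omega}^{1/2}}
=\innerp{V_{p,1},\,V_{p,2}M_{H,\omega}},
\end{align*}
where the second equality uses cyclicity of the trace and the third the isometry of $\transport^{\mathrm{St}}$. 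Adding the two blocks gives $\innerp{\transport_{(\tilde\eta,\tilde\zeta)}(\xi_1),\transport_{(\tilde\eta,\tilde\zeta)}(\xi_2)}_{\retrac_{(X,G)}(\tilde\eta,\tilde\zeta)}=\innerp{\xi_1,\xi_2}_{(X,G)}$, which is the claimed isometry. I expect the main obstacle to be the tangency step---realizing that the Cayley transport preserves $\{A:V^\top A=0\}$, so that the formula can dispense with a projector---together with the observation, underlying the whole construction, that conjugating the $V_p$-block by $M_{H,\omega}^{1/2}$ ``flattens'' the base-point-dependent weight $M_{H,\omega}$ into the Euclidean inner product, after which the $M_{\tilde H,\omega}^{\pm 1/2}$ factors cancel by cyclicity of the trace.
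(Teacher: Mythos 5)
Your proposal is correct and follows essentially the same route as the paper's proof: tangency of $\bar V_p$ via the orthogonality of the Cayley factor $\varPi(V,\tilde V_p)$ (so that $V^\top V_p=0$ forces $\tilde V^\top\bar V_p=0$), well-definedness via compatibility of the transports together with the transformation rule $M_{HQ,\omega}^{\pm1/2}=Q^\top M_{H,\omega}^{\pm1/2}Q$, and isometry by cyclicity of the trace combined with the isometry of $\transport^\hanifold$ and $\transport^{\mathrm{St}}$. You supply more detail than the paper (which defers well-definedness to the analogue of Proposition~\ref{pro:vector_transport_2} and states the tangency check in one line), but there is no substantive difference in approach.
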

\begin{proof}
    Note that by the formula~\eqref{eq:Cayley_pair}, it can be verified that $V_p^\top V=0$ indicates $\bar{V}_p^\top \tilde{V}=0$. Similar to Proposition~\ref{pro:vector_transport_2}, we can prove the constructed $\transport$ is well-defined. Additionally, considering the Riemannian metric~\eqref{eq:inner_product} on $\manifold_h$ leads to
    \begin{align*}
        \innerp{\bar{K},\bar{K}} + \innerp{\bar{V}_p,\bar{V}_pM_{\tilde{H},\omega}} =&\ \innerp{\transport^{\hanifold}_{\tilde{K}}(K),\transport^{\hanifold}_{\tilde{K}}(K)} + \innerp{\transport^{\mathrm{St}}_{\tilde{V}_p}(V_pM^{\frac{1}{2}}_{H,\omega}),\transport^{\mathrm{St}}_{\tilde{V}_p}(V_pM^{\frac{1}{2}}_{H,\omega})}
        \\
        =&\innerp{K,K} + \innerp{V_pM^{\frac{1}{2}}_{H,\omega},V_pM^{\frac{1}{2}}_{H,\omega}}
        \\
        =& \innerp{K,K} + \innerp{V_p,V_pM_{H,\omega}},
    \end{align*}
    which implies the $\transport$ is an isometric vector transport on $\manifold_h$.
\end{proof}

In summary, we have provided important ingredients to tackle problem \eqref{eq:G} via Riemannian optimization. Notably, if the geometry of $\hanifold$, e.g., $\nabla_\hanifold^2$ in \eqref{eq:hessian_expression} and~$\projection_{\hanifold^r}$ in \eqref{eq:2rd_retrac}, is understood a priori, then implementing Riemannian optimization algorithms on $\manifold_h$ becomes convenient.

\subsection{Relationship of stationary points} 
As pointed out in \cite{levin2024effectlift}, nonlinear parameterizations may introduce spurious stationary points, which means, in our context, the image of a stationary point for the reformulated problem \eqref{eq:G} through the mapping $\phi$ is not stationary for the original problem \eqref{eq:lowrank_orthinvar}. Therefore, to establish the equivalence between the two problems, it is worth investigating the relationships of their stationary points. We say that the parameterization $(\manifold_h,\phi)$ satisfies ``${k\Rightarrow 1}$" $(k=1,2)$ at $(X,G)$, if for any objective function $f$, $(X,G)$ being a $k$-th-order stationary point for problem \eqref{eq:G} implies that $\phi(X,G)$ is a first-order stationary point for problem~\eqref{eq:lowrank_orthinvar}.

In the following theorem, we give an analysis for the ``$k\Rightarrow 1$'' property of $(\manifold_h,\phi)$. Specifically, based on the developed geometry in Theorem~\ref{the:tangent_intersection_rule}, we verify the sufficient and necessary condition for ``$1\Rightarrow 1$'', and then confirm that $(\manifold_h,\phi)$ satisfies ``$2\Rightarrow 1$''.

\begin{theorem}\label{pro:11_21_Mh}
   The space-decoupling parameterization $(\manifold_h,\phi)$ satisfies
   \begin{enumerate}
      \item[\emph{(i)}] ``\,$1\Rightarrow 1$" holds at $\kh{X,G}$ if and only if $\rank\kh{X}=r$;
      \item[\emph{(ii)}] ``\,$2\Rightarrow 1$" holds everywhere on $\manifold_h$.
    \end{enumerate}
\end{theorem}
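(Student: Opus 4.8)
The plan is to translate stationarity on $\manifold_h$ back to stationarity on $\boundedrank\cap\hanifold$ using the closed-form tangent cone from Theorem~\ref{the:tangent_intersection_rule} and the first- and second-order stationarity characterizations on $\manifold_h$ from Corollary~\ref{cor:1st_Mh} and Proposition~\ref{pro:2rd_hessian}. Throughout, fix $(X,G)\in\manifold_h$ with representation $(H,V)$, write $\rank(X)=s\le r$ and the SVD $X=U\varSigma V_X^\top=HV^\top$, and abbreviate $g:=\nabla f(X)$. Recall from Definition~\ref{def:crictical} that $X$ is stationary for~\eqref{eq:lowrank_orthinvar} iff $\innerp{g,\eta}\ge 0$ for every $\eta$ in the tangent cone~\eqref{eq:characterize_TRH}, i.e.\ iff $-g\in\normal_X^\frechet\boundedrank\oplus\normal_X\hanifold$ by the intersection rule~\eqref{eq:normal_intersection_rule}.

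For part~(i), first suppose $\rank(X)=r$. Then $s=r$, $V=V_X$ (up to the rotation absorbed in $H$), $V_\bot$ is empty on the right, and the tangent cone~\eqref{eq:characterize_TRH} collapses to $\tangent_X(\boundedrank\cap\hanifold)=\{KV^\top+UJV_\bot^\top:K\in\tangent_H\hanifold^r,\ J\in\mbR^{r\times(n-r)}\}$, which is exactly $\tangent_X\lowrank\cap\tangent_X\hanifold$ and is a linear space. One then checks that $\innerp{g,KV^\top+UJV_\bot^\top}=0$ for all such $K,J$ is equivalent to the two conditions in Corollary~\ref{cor:1st_Mh}: the $J$-direction gives $U^\top gV_\bot=0$, equivalently $G g^\top H=0$ after unwinding $G=I-VV^\top$ and $H=U\varSigma$ (the column space of $G$ is $\range(V_\bot)$ on the $n$-side, but here the contraction is on the domain of $g^\top$, so one uses $g^\top H=g^\top U\varSigma$ and $Gg^\top H=0 \Leftrightarrow$ the part of $g^\top U$ orthogonal to... — care needed, see below); and the $K$-direction gives $\projection_{\tangent_H\hanifold^r}(gV)=0$, i.e.\ $gV\in\normal_H\hanifold^r$. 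Since $\phi(X,G)=X$, this shows first-order stationarity of $(X,G)$ for~\eqref{eq:G} is equivalent to stationarity of $X$ for~\eqref{eq:lowrank_orthinvar} when $\rank(X)=r$, giving ``$1\Rightarrow1$''. Conversely, if $\rank(X)=s<r$, the tangent cone~\eqref{eq:characterize_TRH} contains the genuinely nonlinear piece $U_\bot RV_\bot^\top$ with $\rank(R)\le r-s$, and Corollary~\ref{cor:1st_Mh} imposes no constraint controlling $\innerp{g,U_\bot RV_\bot^\top}=\innerp{U_\bot^\top g V_\bot,R}$; so I must exhibit a concrete $f$ (e.g.\ a linear $f$ with $\nabla f$ chosen so that $U_\bot^\top\nabla f(X)V_\bot$ is a rank-one matrix pointing ``downhill'') making $(X,G)$ first-order stationary on $\manifold_h$ while $X$ is not stationary for~\eqref{eq:lowrank_orthinvar}. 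This is the construction that shows the ``only if''.

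For part~(ii), assume $(X,G)$ is second-order stationary for~\eqref{eq:G}; by part~(i) the only case needing work is $\rank(X)=s<r$, where we already know $gV\in\normal_H\hanifold^r$ and $Gg^\top H=0$ from first-order stationarity, and it remains to show $\innerp{g,U_\bot RV_\bot^\top}\ge 0$ for every $R$ with $\rank(R)\le r-s$. The idea is to feed into the Riemannian Hessian the specific tangent direction $(\eta,\zeta)$ whose representation has $K=0$ and $V_p$ encoding the column space into which $R$ maps — concretely, writing $R=H_RV_R^\top$ as in~\eqref{eq:R_decompose}, one picks $V_p\in\mbR^{n\times r}$ supported on the $V_\bot$-block so that $HV_p^\top$ and $V_pV^\top+VV_p^\top$ reproduce the motion toward rank $r$, mirroring the desingularization curve $\alpha(t)$ built in the proof of Theorem~\ref{the:tangent_intersection_rule}. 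Plugging this $(\eta,\zeta)=(HV_p^\top,-V_pV^\top-VV_p^\top)$ into the second-order optimality inequality from the Corollary after Proposition~\ref{pro:2rd_hessian}, and using $K=0$ together with the first-order relations (so the $\nabla^2 f$ term and the $\langle V_p,V_pV^\top g^\top H\rangle$ term simplify), the inequality reduces — after identifying $g^\top$ restricted to the relevant subspaces with $U_\bot^\top gV_\bot$ — to $\innerp{U_\bot^\top gV_\bot,R}\ge 0$, which is precisely what stationarity of $X$ for~\eqref{eq:lowrank_orthinvar} requires on the nonlinear part of the cone. Combined with the already-established conditions on the linear part, this yields $-g\in\normal_X(\boundedrank\cap\hanifold)$, i.e.\ $X=\phi(X,G)$ is first-order stationary for~\eqref{eq:lowrank_orthinvar}.

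The main obstacle I anticipate is the bookkeeping in part~(ii): correctly choosing the representation $(K,V_p)$ of the test direction so that the Riemannian Hessian quadratic form~\eqref{eq:hessian_expression} evaluates to a clean multiple of $\innerp{U_\bot^\top\nabla f(X)V_\bot,R}$, and in particular verifying that the cross terms involving $W_{H,\omega}$, $M_{H,\omega}^{-1}$, and $\projection_{\normal_H\hanifold^r}$ either vanish or combine favorably once the first-order conditions are imposed. A secondary subtlety is handling the two SVD's in play — the ``true'' SVD $X=U\varSigma V_X^\top$ with $U\in\stiefel(m,s)$ versus the representation $X=HV^\top$ with $V\in\stiefel(n,r)$ — and making sure the orthogonal-completion subspaces ($U_\bot$, $V_\bot$, and the part of $V$ beyond the first $s$ columns) are matched consistently when passing between Theorem~\ref{the:tangent_intersection_rule} and Corollary~\ref{cor:1st_Mh}; an appeal to the inheritance results (Lemma~\ref{pro:X_H_fullrankdiff} and Proposition~\ref{pro:decompose_geo_H}) relating $\hanifold^r$ and $\hanifold^s$ will be needed to reconcile the two viewpoints. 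The full details are deferred to the appendix.
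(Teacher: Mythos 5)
Part (i) of your proposal is essentially sound and amounts to doing by hand what the paper delegates to the criterion $\ima(\diff\phi_{(X,G)})=\tangent_X(\boundedrank\cap\hanifold)$ from \cite[Theorem 2.4]{levin2024effectlift}: for $s=r$ the two stationarity notions match because the cone \eqref{eq:characterize_TRH} is the linear space $\ima(\diff\phi_{(X,G)})$, and for $s<r$ a linear objective whose gradient lives in the $U_\bot(\cdot)V_\bot^\top$ block (with the row-space vector chosen orthogonal to $\range(V)$, so that both conditions of Corollary~\ref{cor:1st_Mh} hold trivially) furnishes the required counterexample. The unresolved ``care needed'' parenthetical about $Gg^\top H=0\Leftrightarrow U^\top gV_\bot=0$ does go through because $\varSigma$ is invertible when $s=r$.

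Part (ii), however, has a genuine gap, and it is exactly at the step you flag as ``bookkeeping.'' Your test direction has $K=0$ and $\eta=HV_p^\top$. Since $\rank(H)=s$ and $\range(H)=\range(U)$, this $\eta$ satisfies $U_\bot^\top\eta=0$: the curve $t\mapsto\phi(\gamma(t))=HV(t)^\top$ never leaves the column space of $X$, so no amount of second-order information along it can see the rank-increasing directions $U_\bot RV_\bot^\top$. Concretely, in the quadratic form of the Corollary after Proposition~\ref{pro:2rd_hessian}, the \emph{only} term capable of producing $\innerp{\nabla f(X),\theta w^\top}$ with $\theta\in\range(U_\bot)$ is the cross term $2\innerp{K,\nabla f(X)V_p}$, and setting $K=0$ kills it; what survives is $\innerp{H,\nabla^2f(X)[\eta]V_p}-\innerp{V_p,V_pV^\top\nabla f(X)^\top H}$, which involves the Hessian of $f$ and the block $U^\top\nabla f(X)V$, not $U_\bot^\top\nabla f(X)V_\bot$. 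The repair requires the reciprocal blow-up that is the heart of the paper's argument: choose $z$ with $Hz=0$, $\|z\|=1$, $w\perp\range(V)$, and test with $K_\varepsilon=\tfrac{\varepsilon}{2}\theta z^\top\in\tangent_H\hanifold^r$, $V_{p,\varepsilon}=\tfrac{1}{\varepsilon}wz^\top$. Then $HV_{p,\varepsilon}^\top=0$, the Hessian terms are $O(\varepsilon^2)$ or vanish, the cross term equals $\innerp{\nabla f(X),\theta w^\top}$ independently of $\varepsilon$, and letting $\varepsilon\to0$ yields $\nabla f(X)V_\bot=0$ and hence $-\nabla f(X)\in\normal_X\hanifold=\normal_X(\boundedrank\cap\hanifold)$ via Proposition~\ref{pro:decompose_geo_H} and \eqref{eq:normal_intersection_rule}. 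This is precisely the sequence $v_\varepsilon=(\tfrac{\varepsilon}{2}\theta z^\top,\tfrac{1}{\varepsilon}wz^\top)$ in \eqref{eq:limittanL}--\eqref{eq:limittanQ} of the paper's proof (there phrased through the sets $\tanQ$, $\banifold_{(H,V)}$ and \cite[Theorem 3.23]{levin2024effectlift}); such a direct Hessian computation could yield a somewhat more self-contained proof than the paper's, but as written your proposal omits its essential ingredient.
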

\begin{proof}
    Given $(X,G)\in\manifold_h$ with $\rank\kh{X}=s\le r$ and the SVD $X=U\varSigma V^\top=HV^\top$ where $H=U\varSigma$, and we take $U\in\stiefel(m,r)$, $\varSigma\in\mbR^{r\times r}$, and $V\in\stiefel(n,r)$.

    (i) Consider the image of $\tangent_{(X,G)}\manifold_h$ under the mapping $\diff \phi_{(X,G)}$, i.e., 
    \begin{align}
    \ima(\diff \phi_{(X,G)}):=&\ \diff\phi_{(X,G)}[\tangent_{(X,G)}\manifold_h] \nonumber
    \\
    =&\ \{ K V^{\top}+HV_p^{\top} : K\in\tangent_H \hanifold^r,\,V_p\in\mbR^{n\times r},\,  V^{\top} V_p=0\},  \label{eq:image_Dphi}
    \end{align}
    which results from \eqref{eq:Mh_tangent_space}. As revealed by \cite[Theorem 2.4]{levin2024effectlift}, $\ima(\diff \phi_{(X,G)})=\tangent_X(\boundedrank\cap\hanifold)$ is the sufficient and necessary condition for ``$1\Rightarrow 1$".
    
    If $s=r$, Theorem~\ref{the:tangent_intersection_rule} shows $\tangent_X(\boundedrank\cap\hanifold) = \{KV^\top + UJV_\bot^\top :\ K\in\tangent_{H}\hanifold^r,\ J\in \mbR^{r\times(n-r)}\}$. Comparing it with~\eqref{eq:image_Dphi}, we have $\ima(\diff \phi_{(X,G)})=\tangent_X(\boundedrank\cap\hanifold)$ since $H$ is full-rank in this case, and thus ``$1\Rightarrow 1$'' holds. 
    
    On the other hand, when $s<r$, the expression \eqref{eq:characterize_TRH} implies the tangent cone to $\boundedrank\cap\hanifold$ at $X$ is not a linear space, which concludes $\ima(\diff \phi_{(X,G)})\neq\tangent_X(\boundedrank\cap\hanifold)$, and thus ``$1\Rightarrow 1$'' fails.
    
    (ii) Define a smooth mapping $l:\mbR^{m\times r}\times \mbR^{n\times r} \rightarrow \mbR^q\times \sym(r):(\tilde{H},\tilde{V})\mapsto (h^r(\tilde{H}), \tilde{V}^\top \tilde{V} -I)$. We note that the embedded manifold $\hanifold^r \times \stiefel(n, r)=l^{-1}(0)\subseteq\mbR^{m\times r}\times \mbR^{n\times r}$ and consider the smooth mapping $\varphi:\hanifold^r\times \stiefel(n, r) \rightarrow \manifold_h:(\tilde{H},\tilde{V})\mapsto (\tilde{H}\tilde{V}^\top, I-\tilde{V}\tilde{V}^\top)$, which is a surjection onto $\manifold_h$. Subsequently, we introduce the composition \revise{${\psi}:=\phi\circ\varphi$}, and slightly abuse the notation of ${\psi}$ and its extension to the whole Euclidean spaces, i.e., ${\psi}:\,\mbR^{m\times r}\times \mbR^{n\times r} \rightarrow \mbR^{m\times n}:\, (\tilde{H},\tilde{V})\mapsto \tilde{H}\tilde{V}^\top$. We aim to show the ``$2\Rightarrow 1$'' property of $(\hanifold^r \times \stiefel(n, r),\psi)$,\footnote{Given a parameterization $(\manifold,{\psi})$ such that $\psi(\manifold)=\boundedrank\cap\hanifold$, the ``$k\Rightarrow 1$" properties are generalized similarly based on the stationarity of the smooth problem  $\min_{x\in\manifold}f\circ \psi(x)$ and the original problem~\eqref{eq:lowrank_orthinvar}.} which implies the ``$2\Rightarrow 1$'' property of $(\manifold_h,\phi)$, as supported by \cite[Proposition~3.27]{levin2024effectlift}. 
    
    Recall that the considered $X=HV^\top$ and let the tangent vector of the product manifold at the point $(H,V)$ be
        \begin{align}
        v:=\kh{K,\beta}\in&\ \tangent_{\kh{H,V}}\kh{\hanifold^r\times \stiefel(n,r)}    \nonumber
        \\
        =& \hkh{(\tilde{K},V\Omega+V_\bot \tilde{B}):\ \tilde{K}\in\tangent_H\hanifold^r, \Omega\in\sksym(r), \tilde{B}\in\mbR^{(n-r)\times r}}.       \label{eq:tangent_psi}
        \end{align}
        We then give the associated $u_v:=(u_K,u_\beta)\in\mbR^{m\times r}\times \mbR^{n\times r}$ such that $\diff^2 l_{(H,V)}[v,v] + \diff l_{(H,V)}[u_v] = 0$, which, by calculating the derivatives of $l$, can be specified as
        \begin{equation} \label{eq:tm2_condition}
            ( \diff h^r_H[ u_K ] ,V^{\top}u_\beta+u_{\beta}^{\top}V ) = -( D^2 h^r_H[ K,K ] ,2\beta ^{\top}\beta ).
        \end{equation}
         Subsequently, mappings $\tanL_{(H,V)},\tanQ_{(H,V)}:\,\tangent_{\kh{H,V}}\kh{\hanifold^r\times \stiefel(n,r)} \to \tangent_{X}(\boundedrank\cap\hanifold)$ are defined as follows,
        \begin{align}
            &\tanL_{\kh{H,V}}(v) := \diff{\psi}_{(H,V)}\zkh{v} = KV^\top + H\beta^\top, \label{eq:tangentL}
            \\
            &\tanQ_{\kh{H,V}}(v) := \diff^2{\psi}_{(H,V)}[v,v] + \diff{\psi}_{(H,V)}[u_v] = 2K\beta^\top + u_KV^\top + Hu_\beta^\top,   \label{eq:tangentQ}
        \end{align}
        for $(u_K,u_\beta)$ satisfying \eqref{eq:tm2_condition}.\footnote{Different choices of $(u_K, u_\beta)$ yield equivalent $\tanQ_{\kh{H,V}}(v)$, modulo $\ima(\tanL_{(H,V)})$, ensuring no ambiguity in the definition of $\banifold_{(H,V)}$.}
        
       Next, we turn to prove that the set 
        \begin{equation}\label{eq:identify_By}
            \banifold_{(H,V)} := \bigcup_{\kh{v_i}_{i\ge 1}:\tanL_{(H,V)}(v_i)\to 0}\lim_{i\to\infty}\kh{\tanQ_{(H,V)}(v_i) + \ima(\tanL_{(H,V)})}
        \end{equation}
        satisfies $ \banifold_{(H,V)}^\circ\subseteq(\tangent_X(\boundedrank\cap\hanifold))^\circ$, which is sufficient for the ``$2\Rightarrow 1$'' property of $(\hanifold^r\times \stiefel(n,r),\psi)$ \cite[Theorem~3.23]{levin2024effectlift}.
        
        Let $\vecspan(M)$ denote the linear space spanned by the columns of the matrix~$M$. We first note that $\vecspan(H)$ admits an orthogonal basis $Q_H\in\mbR^{m\times s}$ since $\rank(H)=\rank(X)=s<r$, and correspondingly, $Q_{H_\bot}\in\mbR^{m\times (m-s)}$ denotes the orthogonal complement of $Q_H$. In this view, taking into account the definition~\eqref{eq:tangentL} of $\tanL_{(H,V)}$ and its domain \eqref{eq:tangent_psi} yields
        \begin{equation}\label{eq:imageL}
            \hkh{\tilde{K}V^\top+Q_HBV^\top_\bot:\ \tilde{K}\in\tangent_H\hanifold^r,B\in\mbR^{s\times(n-r)}} \subseteq \ima\tanL_{\kh{H,V}}.
        \end{equation}
         
        Next, given arbitrary vectors $\theta$ and $w$ in $\vecspan(Q_{H_\bot})$ and $\vecspan(V_\bot)$, respectively, we aim to construct a sequence $\{v_\varepsilon\}_{\epsilon>0}$ such that $ \tanL_{\kh{H,V}}\kh{v_\varepsilon}\to 0$ while $\tanQ_{\kh{H,V}}(v_\varepsilon)\to \theta w^\top$. To this end, choose a vector $z\in\mbR^{r}$ such that $\norm{z}_2=1$ and $Hz=0$. Setting $v_\varepsilon = (K_\varepsilon,\beta _{\varepsilon})=(\frac{\varepsilon}{2}\theta z^{\top}, \frac{1}{\varepsilon}wz^{\top})\in\tangent_{(H,V)}(\hanifold^r\times \stiefel(n,r))$ for $\varepsilon>0$ produces
        \begin{equation}\label{eq:limittanL}
            \tanL_{\kh{H,V}}\kh{v_\varepsilon}=\frac{\varepsilon}{2}\theta z^{\top}V^{\top}+\frac{1}{\varepsilon}Hzw^{\top} = \frac{\varepsilon}{2}\theta z^{\top}V^{\top} \xlongrightarrow{\ \ \varepsilon \to 0\ \ } 0.
        \end{equation}
        Regarding the mapping $\tanQ_{(H,V)}$, we incorporate $(K_\varepsilon,\beta_\varepsilon)$ on the right side of~\eqref{eq:tm2_condition} to find an $u_{\varepsilon} := (u_{K\varepsilon},u_{\beta\varepsilon})$ as a solution for the following equation
        \begin{align}
            ( \diff h^r_H[ u_{K\varepsilon} ] ,V^{\top}u_{\beta\varepsilon}+u_{\beta\varepsilon}^{\top}V ) = -( \diff^2h^r_H[ K_\varepsilon,K_\varepsilon ] ,2\beta_\varepsilon^\top\beta_\varepsilon ).      \label{eq:equation_uv}
        \end{align}
        Specifically when $\varepsilon=1$, there exits a smooth curve $\gamma_{K1}(t)$ on $\hanifold^r$ with $\gamma_{K1}(0)=H$ and $\gamma_{K1}^\prime(0)=K_1$. Twice differentiating $h^r(\gamma_{K1}(t))$ at $t=0$ yields $\diff h^r_H[ u_{K1}] + \diff^2 h^r_H[ K_1,K_1] =0$, where we take $u_{K1}=\ddot{\gamma}_{K1}(0)$. Then, let $u_{\beta 1}=-\| w \| ^2Vzz^{\top}$, and thus it follows that $V_{}^{\top}u^{}_{\beta 1}+u_{\beta 1}^{\top}V^{}_{} = -2\| w \| ^2zz^{\top}=-2\beta_1^\top\beta_1$. Therefore, we obtain $u_{\varepsilon} = (u_{K\varepsilon},u_{\beta\varepsilon}) = (\frac{\varepsilon^2}{4}u_{K1}, \frac{2}{\varepsilon^2}u_{\beta 1})$ satisfying \eqref{eq:equation_uv}. Substituting the constructed $v_\varepsilon$ and $u_{\varepsilon}$ into \eqref{eq:tangentQ} produces
        \begin{align}
            \tanQ_{\kh{H,V}}\kh{v_\varepsilon} = 2K_\varepsilon\beta_{\varepsilon}^\top + u_{K\varepsilon}V^\top + Hu_{\beta\varepsilon}^\top    =\theta w^\top + \frac{\varepsilon^2}{4}u_{K1}V^\top \xrightarrow{\ \varepsilon \to 0\ } \theta w^\top. \label{eq:limittanQ}
        \end{align}
        Combining \eqref{eq:limittanL} with \eqref{eq:limittanQ}, we conclude that for any $\theta\in\vecspan\kh{Q_{H_\bot}}$ and $w\in\vecspan\kh{V_\bot}$, there exist $\hkh{v_\varepsilon}_{\varepsilon>0}$ and associated $\hkh{u_{\varepsilon}}_{\varepsilon>0}$ such that
        \[
        \lim_{\varepsilon\to 0}\tanL_{\kh{H,V}}\kh{v_\varepsilon} = 0.\ \ \text{and}\ \ 
        \lim_{\varepsilon\to 0}\kh{\tanQ_{\kh{H,V}}(v_\varepsilon) + \ima\tanL_{\kh{H,V}}} = \theta w^\top + \ima\tanL_{\kh{H,V}}.
        \]
        Therefore, taking the above result and the estimation \eqref{eq:imageL} into the definition~\eqref{eq:identify_By} of $\banifold_{\kh{H,V}}$ reveals that the following set
        \begin{equation}\label{eq:mathcalA}
            \begin{aligned}
                \aanifold_{\kh{H,V}} := \bigg \{ &\tilde{K}V^\top+Q_HBV^\top_\bot + \theta w^\top:\ \tilde{K}\in\tangent_H\hanifold^r,
                \\
                 &B\in\mbR^{s\times(n-r)}, \,\theta\in\vecspan\kh{Q_{H_\bot}}, \,w\in\vecspan\kh{V_\bot} \bigg\}
            \end{aligned}            
        \end{equation}
        satisfies $\aanifold_{(H,V)} \subseteq \banifold_{(H,V)}$. Given any $Y\in\tangent_X\hanifold$, Proposition~\ref{pro:decompose_geo_H} implies $YV\in \tangent_H\hanifold^r$, and we have the following decomposition of $Y$ according to the orthogonality of $[V\,\,V_\bot]$ and $[Q_H\,\,Q_{H_\bot}]$,
        \[
            Y = (YV) V^\top + Q^{}_H(Q_H^\top Y^{}{V_\bot})V^\top_\bot + Q^{}_{H_\bot}(Q_{H_\bot}^\top Y^{}{V_\bot})V^\top_\bot.
        \]
        Comparing the above expression with \eqref{eq:mathcalA} reveals $Y\in\conv(\aanifold_{(H,W)})$, where $\conv(\cdot)$ denotes the convex hull of a set. By the arbitrariness of $Y\in\tangent_X\hanifold$, it holds that $\tangent_X\hanifold \subseteq \conv(\aanifold_{(H,W)})$. Consequently,
        \[
            \banifold_{\kh{H,V}}^\circ \subseteq \aanifold_{\kh{H,V}}^\circ = \kh{\conv(\aanifold_{(H,V)})}^\circ \subseteq (\tangent_X \hanifold)^\circ \subseteq (\tangent_{X}(\boundedrank\cap\hanifold))^\circ,
        \]
        where the last ``$\subseteq$'' comes from \eqref{eq:normal_intersection_rule}.
        \end{proof}
        
    Theorem~\ref{pro:11_21_Mh} relates the landscapes of the proposed problem \eqref{eq:G} and the original problem \eqref{eq:lowrank_orthinvar}. The result shows that finding a second-order stationary point for \eqref{eq:G} is sufficient to obtain a first-order stationary point for \eqref{eq:lowrank_orthinvar}. Specifically, when the point of interest is of rank $r$, the first-order stationarity for \eqref{eq:G} is adequate.

\subsection{Convergence properties} 
This subsection develops the convergence results for Riemannian algorithms on the proposed $\manifold_h$. Specifically, monotone algorithms admit at least one accumulation point, and more importantly, iterates generated by second-order algorithms can find first-order stationary points for problem \eqref{eq:lowrank_orthinvar}, which illustrates that the proposed parameterization $(\manifold_h,\phi)$ circumvents the undesirable apocalypse. 

First, we show that $\manifold_h$ is complete and $\phi$ is proper.

\begin{proposition}\label{pro:complete}
    The manifold $\manifold_h$ is complete.
\end{proposition}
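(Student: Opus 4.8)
The plan is to establish that $\manifold_h$, equipped with the Riemannian distance $\dist_{\manifold_h}$ induced by the metric $\langle\cdot,\cdot\rangle_\omega$ in~\eqref{eq:rie_geometry}, is a complete metric space; geodesic completeness then follows from the Hopf--Rinow theorem. The crux is the observation that $\manifold_h$ is a \emph{closed} embedded submanifold of the ambient space $\eanifold=\mbR^{m\times n}\times\sym(n)$, which is itself a (rescaled) finite-dimensional Euclidean space and hence complete.

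First I would check that $\manifold_h$ is closed in $\eanifold$. The set $\grassmann(n,n-r)=\{G\in\sym(n):G^2=G,\ \trace(G)=n-r\}$ is the preimage of a point under the continuous map $G\mapsto(G^2-G,\trace(G))$ (using that for a symmetric idempotent rank equals trace), hence closed in $\sym(n)$; and $\manifold_h$ is cut out inside $\mbR^{m\times n}\times\grassmann(n,n-r)$ by the further closed conditions $XG=0$ and $h(X)=0$, the latter because $h$ is smooth by Assumption~\ref{assu:h}. Thus $\manifold_h$ is an intersection of closed subsets of $\eanifold$.

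Next I would exploit that $\manifold_h$ carries the induced Riemannian metric: the length of any admissible curve in $\manifold_h$ equals its length as a curve in $\eanifold$, so taking infima over joining curves gives $\|p-q\|_\omega\le\dist_{\manifold_h}(p,q)$ for all $p,q\in\manifold_h$, where $\|\cdot\|_\omega$ is the norm of $\langle\cdot,\cdot\rangle_\omega$. Hence every $\dist_{\manifold_h}$-bounded subset of $\manifold_h$ is $\|\cdot\|_\omega$-bounded in $\eanifold$; combined with the closedness from the previous step, any $\dist_{\manifold_h}$-closed and $\dist_{\manifold_h}$-bounded subset of $\manifold_h$ is closed and bounded in the finite-dimensional space $\eanifold$, hence compact there, hence compact in $\manifold_h$ (the manifold topology of the embedded submanifold $\manifold_h$ agrees with the subspace topology from $\eanifold$, which agrees with the topology induced by $\dist_{\manifold_h}$). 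A metric space in which closed bounded sets are compact is complete --- a Cauchy sequence is bounded, its closure is compact, so it has a convergent subsequence and therefore converges --- which gives completeness of $(\manifold_h,\dist_{\manifold_h})$, upgraded to geodesic completeness by Hopf--Rinow.

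The only delicate point, and the step I expect to require care, is the comparison between the intrinsic and ambient distances: the inequality $\|p-q\|_\omega\le\dist_{\manifold_h}(p,q)$ is immediate, but the reverse control --- needed to pass from $\|\cdot\|_\omega$-convergence of a Cauchy sequence to $\dist_{\manifold_h}$-convergence --- is precisely where \emph{embeddedness} (rather than mere immersion) of $\manifold_h$ in $\eanifold$ is essential, and it is packaged into the identification of topologies used above (equivalently, near any point of $\manifold_h$ the two distances are bi-Lipschitz equivalent on a small neighbourhood). A slightly longer alternative that bypasses the Heine--Borel characterization: take a $\dist_{\manifold_h}$-Cauchy sequence $\{p_k\}$, use $\|p-q\|_\omega\le\dist_{\manifold_h}(p,q)$ and completeness of $\eanifold$ to obtain a $\|\cdot\|_\omega$-limit $p_\star$, note $p_\star\in\manifold_h$ by closedness, and conclude $\dist_{\manifold_h}(p_k,p_\star)\to0$ by working inside a normal ball around $p_\star$ on which $\dist_{\manifold_h}$ and $\|\cdot\|_\omega$ are comparable and which eventually contains the tail of $\{p_k\}$.
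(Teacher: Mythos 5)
Your proposal is correct and follows essentially the same route as the paper: both arguments reduce completeness to the fact that $\manifold_h$ is a closed embedded submanifold of the Euclidean space $\eanifold=\mbR^{m\times n}\times\sym(n)$ with the induced metric, the paper citing this implication directly while you spell out the standard Heine--Borel/Hopf--Rinow argument behind it. Your verification of closedness (in particular the observation that the rank condition in $\grassmann(n,n-r)$ can be replaced by the continuous trace condition for symmetric idempotents) is in fact slightly more careful than the paper's one-line appeal to the closedness of $\boundedrank\cap\hanifold$ and the continuity of $\phi$.
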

\begin{proof}
    The closeness of $\boundedrank\cap\hanifold$ and the continuity of $\phi$ show that $\manifold_h=\phi^{-1}(\boundedrank\cap\hanifold)$ is closed in $\eanifold=\mbR^{m\times n}\times\sym(n)$. Therefore, endowed with the Riemannian metric \eqref{eq:rie_geometry}, $\manifold_h$ is a closed submanifold of the Euclidean space $\eanifold$, which reveals the completeness of $\manifold_h$.
\end{proof}

\begin{proposition}\label{pro:proper}
    The mapping $\phi$ is proper, i.e., the preimage of any compact set~$\canifold\subseteq\boundedrank\cap\hanifold$ is compact.
\end{proposition}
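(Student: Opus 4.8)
The plan is to exploit the compactness of the Grassmannian factor. Recall that $\phi$ is the restriction to $\manifold_h$ of the canonical projection $\mathrm{pr}_1:\mbRmn\times\grassmann(n,n-r)\to\mbRmn$, and that $\grassmann(n,n-r)$ is compact. Fix a compact set $\canifold\subseteq\boundedrank\cap\hanifold$. The first observation is that
\[
    \phi^{-1}(\canifold) = \manifold_h\cap\bigl(\canifold\times\grassmann(n,n-r)\bigr),
\]
and that $\canifold\times\grassmann(n,n-r)$ is compact, being a product of two compact sets. Next, I would argue that $\manifold_h$ is closed in $\mbRmn\times\grassmann(n,n-r)$ (indeed in $\eanifold$): it is the common zero set of the continuous maps $(X,G)\mapsto XG$ and $(X,G)\mapsto h(X)$, and $\grassmann(n,n-r)$ is itself closed in $\sym(n)$. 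Hence $\phi^{-1}(\canifold)$ is a closed subset of a compact set, and is therefore compact, which is exactly the claim.

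Alternatively, one can argue directly by sequences: given $(X_k,G_k)\in\phi^{-1}(\canifold)$, the points $X_k$ lie in the compact set $\canifold$ and the $G_k$ lie in the compact set $\grassmann(n,n-r)$, so after passing to a subsequence $X_k\to X$ and $G_k\to G$ with $X\in\canifold$ and $G\in\grassmann(n,n-r)$. Passing to the limit in $X_kG_k=0$ and $h(X_k)=0$ yields $XG=0$ and $h(X)=0$, so $(X,G)\in\manifold_h$ and thus $(X,G)\in\phi^{-1}(\canifold)$; this shows $\phi^{-1}(\canifold)$ is sequentially, hence topologically, compact.

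There is essentially no serious obstacle here; the only point requiring a line of care is that $\manifold_h$ (equivalently, the preimage $\phi^{-1}(\canifold)$) is closed, which follows from continuity of $h$ and of matrix multiplication together with the compactness—hence closedness—of $\grassmann(n,n-r)$ inside $\sym(n)$. Once that is in place, properness is immediate from the compactness of the Grassmann factor, exactly as in the completeness argument of Proposition~\ref{pro:complete}.
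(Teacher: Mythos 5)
Your proof is correct and follows essentially the same route as the paper: both identify $\phi^{-1}(\canifold)$ as a closed subset of the compact set $\canifold\times\grassmann(n,n-r)$ and conclude compactness from there. Your version is in fact slightly more careful than the paper's, since you spell out why $\manifold_h$ (hence the preimage) is closed in the ambient space rather than attributing it only to continuity of $\phi$.
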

\begin{proof}
    By the continuity of $\phi$, the preimage $\phi^{-1}(\canifold)$ is a closed subset of $\canifold\times\grassmann(n,n-r)$ which is compact in $\mbR^{m\times n}\times \mbR^{n\times n}$. Therefore, $\phi^{-1}(\canifold)$ is also compact.
\end{proof}

Proposition~\ref{pro:proper} reveals that the reformulated problem \eqref{eq:G} preserves the compactness of the sublevel sets, that is, for any $c\in\mbR$, if $f^{-1}((-\infty,c])$ is compact, then $\bar{f}^{-1}((-\infty,c])=\phi^{-1}\circ f^{-1}((-\infty,c])$ is also compact. 

Subsequently, we can derive the general convergence results for algorithms on $\manifold_h$ as follows\revise{, which extends the results of~\cite[Theorem~1.1]{levin2023remedy}.}

\begin{theorem}\label{the:convergence}
    Given an initialization $(X_0,G_0)\in\manifold_h$ with the sublevel set $\{X:\,f(X)\le f(X_0)\}$ being compact, the sequence $\{(X_k,G_k)\}$ generated by any monotone algorithms, i.e., $\bar{f}(X_{k+1},G_{k+1})\le \bar{f}(X_{k},G_{k})$ has at least one accumulation point. Moreover, if the accumulation point $(X^*,G^*)$ is second-order stationary for $\eqref{eq:G}$, then $X^*$ is first-order stationary for \eqref{eq:lowrank_orthinvar}.
\end{theorem}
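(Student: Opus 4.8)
The plan is to split the theorem into its two assertions and dispatch each with tools already assembled in the paper. For the first assertion---existence of an accumulation point---I would argue as follows. By Proposition~\ref{pro:proper} the mapping $\phi$ is proper, so the preimage of the compact sublevel set $\{X:\,f(X)\le f(X_0)\}\cap(\boundedrank\cap\hanifold)$ under $\phi$ is compact; this preimage is exactly the sublevel set $\bar{f}^{-1}((-\infty,\bar{f}(X_0,G_0)])$ inside $\manifold_h$, as noted in the remark after Proposition~\ref{pro:proper}. Since the algorithm is monotone, $\bar{f}(X_{k+1},G_{k+1})\le\bar{f}(X_k,G_k)\le\bar{f}(X_0,G_0)$, so the whole sequence $\{(X_k,G_k)\}$ lies in this compact set, hence admits a convergent subsequence, i.e., at least one accumulation point $(X^*,G^*)\in\manifold_h$. (Completeness of $\manifold_h$ from Proposition~\ref{pro:complete} guarantees the limit stays on the manifold, though compactness already does.)

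For the second assertion I would simply chain together two facts. First, $(X^*,G^*)$ is second-order stationary for problem~\eqref{eq:G} by hypothesis. Second, Theorem~\ref{pro:11_21_Mh}(ii) states that the space-decoupling parameterization $(\manifold_h,\phi)$ satisfies ``$2\Rightarrow 1$'' everywhere on $\manifold_h$; by the very definition of the ``$2\Rightarrow 1$'' property---for any objective $f$, a second-order stationary point of \eqref{eq:G} maps under $\phi$ to a first-order stationary point of \eqref{eq:lowrank_orthinvar}---it follows immediately that $X^*=\phi(X^*,G^*)$ is first-order stationary for~\eqref{eq:lowrank_orthinvar}. This is where the real content lives, but it has already been proved; here I only need to invoke it cleanly.

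The only mild subtlety, and the step I would be most careful about, is the bookkeeping in the first part: making sure that the sublevel set in $X$-space being compact translates correctly to a compact sublevel set in $\manifold_h$. One must note that $\bar{f}^{-1}((-\infty,c])=\phi^{-1}\big(f^{-1}((-\infty,c])\cap(\boundedrank\cap\hanifold)\big)$, that $f^{-1}((-\infty,c])\cap(\boundedrank\cap\hanifold)$ is a closed subset of the compact set $f^{-1}((-\infty,c])$ hence compact, and then apply properness of $\phi$ (Proposition~\ref{pro:proper}) to conclude compactness of the preimage. Everything else---monotonicity trapping the iterates, Bolzano--Weierstrass for the subsequence, and the appeal to Theorem~\ref{pro:11_21_Mh}---is routine. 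I would not expect any genuine obstacle; the theorem is essentially a packaging of Propositions~\ref{pro:complete},~\ref{pro:proper} and Theorem~\ref{pro:11_21_Mh}.
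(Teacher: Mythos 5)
Your proposal is correct and follows essentially the same route as the paper: monotonicity traps the iterates in the sublevel set of $\bar{f}$, which is compact by the properness of $\phi$ (Proposition~\ref{pro:proper}) together with the compactness of $\{X : f(X)\le f(X_0)\}$, and the second assertion is a direct invocation of the ``$2\Rightarrow 1$'' property in Theorem~\ref{pro:11_21_Mh}. Your extra care in noting that one should intersect $f^{-1}((-\infty,c])$ with $\boundedrank\cap\hanifold$ before applying properness is a harmless refinement of the same argument.
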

\begin{proof}
    By the monotone property, the sequence lies in the set $\{(X,G):\,\bar{f}(X,G)\le\bar{f}(X_0,G_0)\}$, which is compact due to the compactness of $\{X:\,f(X)\le f(X_0)\}$. Therefore, the sequence admits at least one accumulation point. Additionally, applying Theorem~\ref{pro:11_21_Mh} implies $X^*$ is first-order stationary for \eqref{eq:lowrank_orthinvar}.
\end{proof}

\section{Numerical experiments}\label{sec:experiments}
Recasting the coupled-constrained problem \eqref{eq:lowrank_orthinvar} into the Riemannian optimization problem \eqref{eq:G}, we aim to numerically validate the performance of Riemannian algorithms on the proposed manifold $\manifold_h$. 

Specifically, we solve \eqref{eq:G} using the Riemannian gradient descent (RGD) and Riemannian trust-region (RTR) methods to obtain $x\in\manifold_h$, and then map it to $X=\phi(x)\in\boundedrank\cap\hanifold$ as a solution for \eqref{eq:lowrank_orthinvar}. In the implementations, RGD utilizes the first-order retraction~\eqref{eq:1st_retrac} with $\retrac^{\mathrm{St}}$ as the polar retraction on Stiefel manifold, and RTR employs the second-order retraction~\eqref{eq:2rd_retrac}. For experiments in sections~\ref{sec:LR_obliquedata}-\ref{sec:LR_Markov}, algorithms on $\manifold_h$ are abbreviated in the fashion of ``$\manifold_h$-RGD" and ``$\manifold_h$-RTR"; while for reinforcement learning and deep learning experiments (sections~\ref{sec:LRRL} and \ref{sec:LRNN}), our approach is named with task-specific terms to additionally identify the contributions to the modeling for respective applications. 

The experiments are produced on a workstation that consists of two Intel(R) Xeon(R) Gold 6330 CPUs (at $2.00$GHz$\times28$, $42$M Cache), 512GB RAM, and one NVIDIA A800 (80GB memory) GPU. The reinforcement learning and deep learning experiments (sections~\ref{sec:LRRL} and \ref{sec:LRNN}) run in Python (Release 3.8.10) on the GPU, while all other experiments (sections~\ref{sec:LR_obliquedata}-\ref{sec:LR_Markov}) are carried out in MATLAB (Release {9.7.0}) on the CPUs, adopting the \texttt{Manopt} toolbox~\cite{boumal2014manopt}. The codes of the proposed framework are available at \href{https://github.com/UCAS-YanYang}{https://github.com/UCAS-YanYang}.

\subsection{Low-rank approximation of spherical data}\label{sec:LR_obliquedata}
In this experiment, we test RGD and RTR on the proposed $\manifold_h$ with the associated $h(X)=XX^\top-{\bm 1}$ to approximate data points on the sphere. Reformulating~\eqref{eq:sphericalfitting} and additionally introducing a sampling scheme for broader applicability, we concentrate on the following model
\begin{equation}\label{eq:sphericalfitting_Mh}
    \min_{x\in\manifold_h}\ \ \ \bar{f}(x):=\frac{1}{2} \norm{\projection_\Omega (\phi(x)-A)}_\frob^2,
\end{equation}
where $A\in\oblique(m,n)$ represents $m$ data points in $\mbR^n$, $\Omega\subseteq \{1,2,\ldots,m\}\times\{1,2,\ldots,n\}$ is an index set, and $\projection_\Omega$ is the projection operator onto $\Omega$, i.e., $\projection_\Omega(X)(i,j)=X(i,j)$ if $(i,j)\in\Omega$, otherwise $\projection_\Omega(X)(i,j)=0$. Given the rank $r^*$, we generate a synthetic low-rank data matrix $A\in\oblique(m,n)$ by $A=\projection_{\oblique(m,r^*)}(U^*\varSigma^*)(V^*)^\top$,
where $U^*\in\stiefel(m,r^*),\,V^*\in\stiefel(n,r^*)$ are sampled from the standard normal distribution $\nanifold(0,1)$ and then orthogonalized by the QR factorization; and the nonzero elements of the diagonal matrix $\varSigma^*\in\mbR^{r^*\times r^*}$ are sampled from the standard uniform distribution on $(0,1)$. \revise{The \emph{oversampling factor} ($\mathrm{OS}$) for a rank-$r^*$ matrix is defined as follows, $$\mathrm{OS}:=\frac{|\varOmega|}{r^*\times (m+n-r^*)}.$$}

We configure the experiment by $(m, n) = (5000, 6000)$ and $r^* = 6$, and test algorithms with rank parameters $r\ge r^*$. For the initial guess $x_0\in\manifold_h$ represented by $(H_0,V_0)$, the orthogonal component $V_0$ is generated in the same manner as $V^*$, while $H_0$ is initialized as $H_0=\projection_{\oblique(m,n)}(\tilde{H}_0)$, where $\tilde{H}_0$ is sampled from $\nanifold(0,1)$ when $r=r^*$, and is constructed by randomly selecting $r$ columns from $A$ when $r>r^*$.

We examine the performance of Riemannian algorithms with varying metric parameters $\omega$ in \eqref{eq:rie_geometry}, and the algorithm is terminated if 1) the Riemannian gradient satisfies $\|\nabla_{\manifold_h} \bar{f}(x_k)\|\le 10^{-10}$ for RGD or $\|\nabla_{\manifold_h} \bar{f}(x_k)\|\le 10^{-13}$ for RTR; 2) it reaches the maximum iteration $500$; 3) runtime exceeds $200$ seconds. The performance of algorithms is assessed by the test error ${\|\projection_\varGamma(\phi(x)-A)\|_\frob}/{\|\projection_\varGamma(A)\|_\frob}$
with the test set $\varGamma$ ($|\varGamma|=|\Omega|$).

\begin{figure}[htbp]
\hspace{1mm}
\begin{minipage}{1\textwidth}
    \centering
    \includegraphics[width=1.1\linewidth]{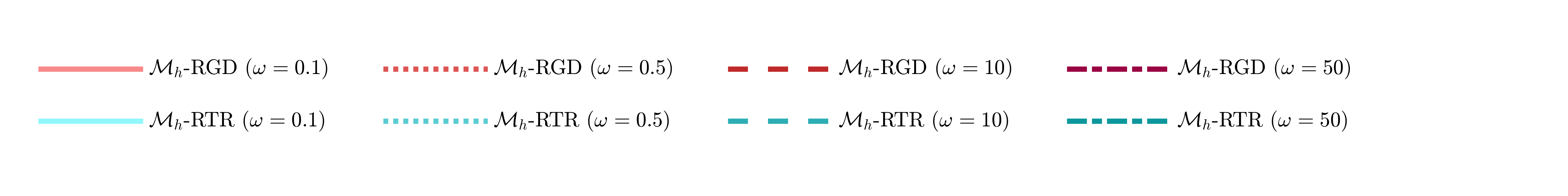}
\end{minipage}
\\[-8.5mm]
\begin{center}
\begin{minipage}{0.48\textwidth}
    \centering
    \includegraphics[width=1\linewidth]{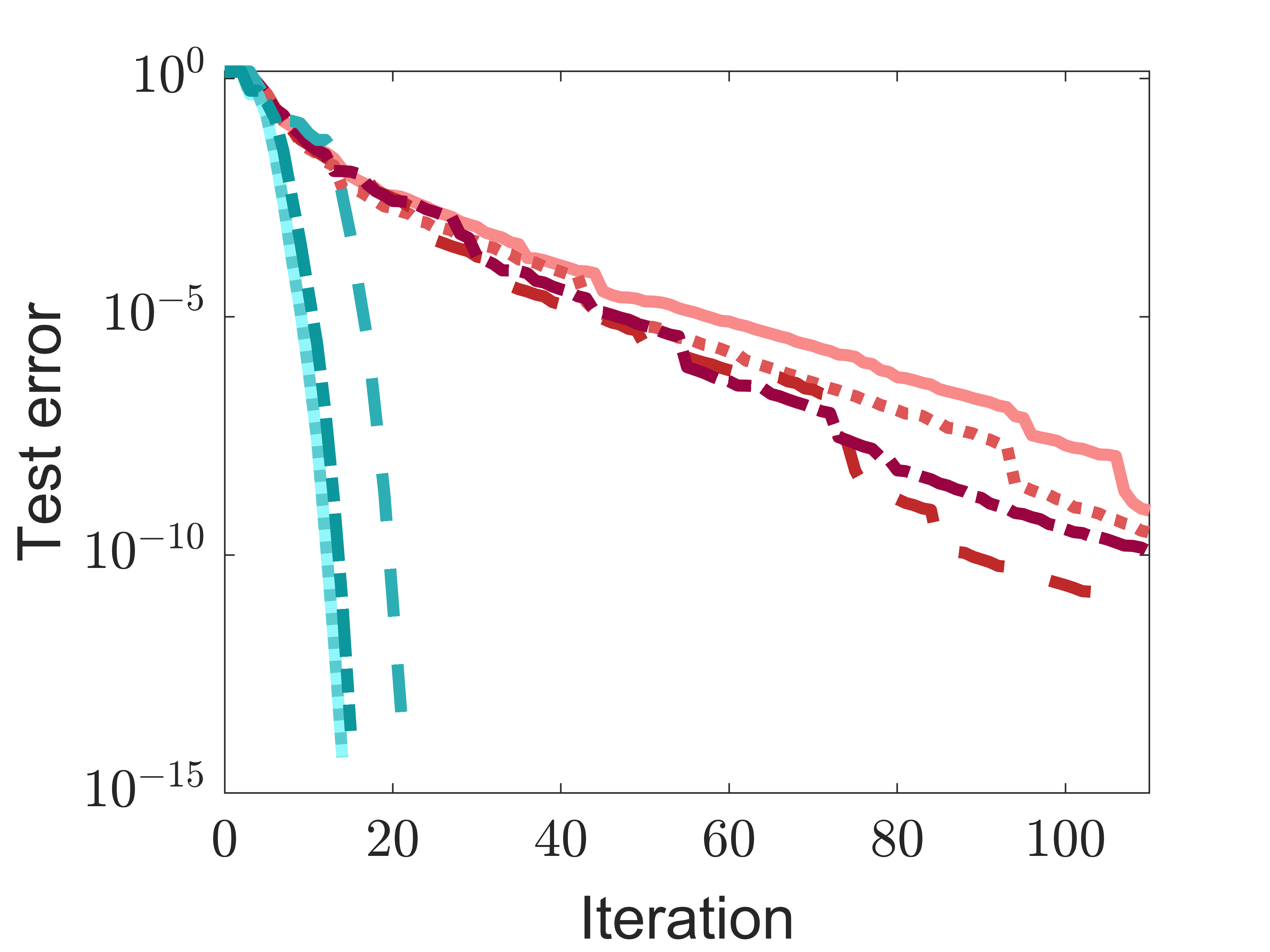}
\end{minipage}
\!\!\!
\begin{minipage}{0.48\textwidth}
    \centering
    \includegraphics[width=1\linewidth]{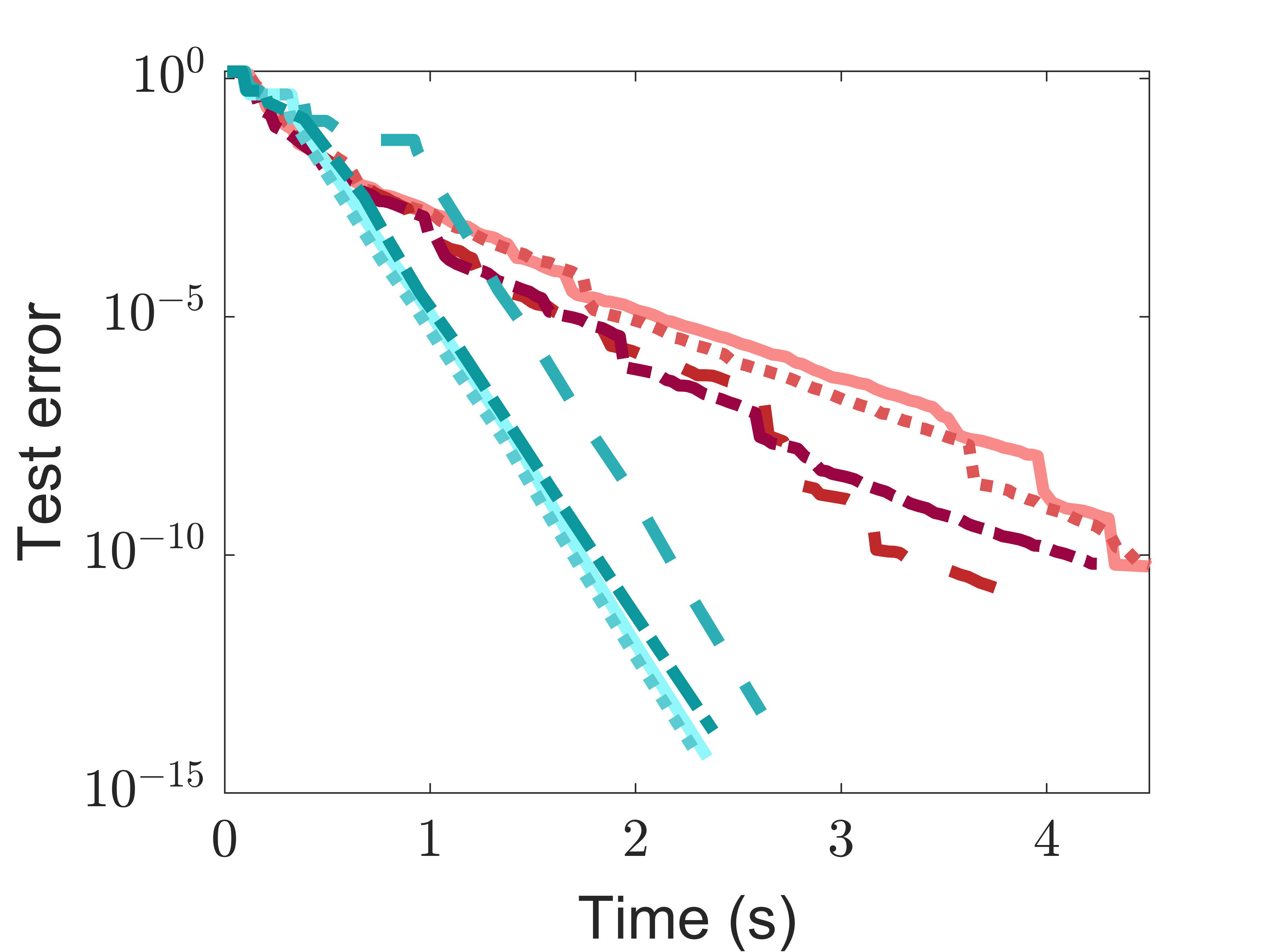}
\end{minipage}
\end{center}
\begingroup
\color{revisecolor} 
\caption{\revise{Spherical data fitting problem with the oversampling factor $\mathrm{OS}=5$ and the unbiased rank parameter $r=r^*=6$.}}
\label{fig:exact_fitting}
\endgroup
\end{figure}

\revise{
\emph{Test with unbiased rank parameter.}
The algorithms are evaluated with the unbiased rank parameter $r = r^*$. Test errors under the oversampling factor $\mathrm{OS}=5$ are reported in \myfig\ref{fig:exact_fitting}. Both RGD and RTR methods show robustness concerning the parameter $\omega$ by successfully recovering the underlying low-rank matrix $A\in\oblique(m,n)$, as evidenced by the test errors. 
}

\revise{
\emph{Test with over-estimated rank parameter.} The algorithms are evaluated under over-estimated rank parameters $r =7,8,9,10>r^*$ and the oversampling factor $\mathrm{OS}=5$. After conducting initial tests, we observe that $\omega=0.5$ for RGD and $\omega=10$ for RTR provide robust performance, for which they will serve as the default settings for all the remaining experiments across this paper. Table~\ref{tab:overestimate_fitting} reveals that the algorithms reconstruct the true data matrix~$A$ across all the selected rank parameters.
}

\begin{table*}[htbp]
\begingroup
\color{revisecolor} 
\caption{\revise{Spherical data fitting problem with the oversampling factor $\mathrm{OS}=5$ and over-estimated rank parameters $r=7,8,9,10>r^*$.}}
\begin{center}
\setlength{\tabcolsep}{4pt}
\begin{tabular}{lcccccccc}
\toprule
\multicolumn{1}{l}{\multirow{2}{*}{Algorithm}} & \multicolumn{2}{c}{$r=7$} & \multicolumn{2}{c}{$r=8$} & \multicolumn{2}{c}{$r=9$}& \multicolumn{2}{c}{$r=10$}\\ \cmidrule(l{2pt}r{2pt}){2-3} \cmidrule(l{2pt}r{2pt}){4-5} \cmidrule(l{2pt}r{2pt}){6-7} \cmidrule(l{2pt}r{2pt}){8-9}
& \multicolumn{1}{c}{Test err.} & \multicolumn{1}{c}{Time} & \multicolumn{1}{c}{Test err.} & \multicolumn{1}{c}{Time}& \multicolumn{1}{c}{Test err.} & \multicolumn{1}{c}{Time}& \multicolumn{1}{c}{Test err.} & \multicolumn{1}{c}{Time} \\
\midrule
$\manifold_h$-RGD & $\expnumber{1.97}{-11}$ & $10.23$ & $\expnumber{1.69}{-10}$ & $11.22$ & $\expnumber{1.49}{-11}$ & $9.56$ &$\expnumber{1.08}{-9}$ & $\phantom{0}7.30$\\
$\manifold_h$-RTR & $\expnumber{2.81}{-15}$  & $\phantom{0}6.03$ & $\expnumber{3.97}{-15}$ & $15.34$ & $\expnumber{2.41}{-14}$ & $4.51$ &$\expnumber{6.77}{-14}$ & $17.08$ \\
\bottomrule
\end{tabular}
\end{center}
\label{tab:overestimate_fitting}
\endgroup
\end{table*}


\subsection{Low-rank approximation of graph similarity matrices}\label{sec:LR_graph}
Given two graphs $G_A$ and $G_B$ with $m$ and $n$ nodes, respectively, we denote their adjacency matrices as $A\in\mbR^{m\times m}$ and $B\in\mbR^{n\times n}$. For $G_A$, $A(i,j)=1$ indicates the existence of an edge from node $i$ to node $j$, while $A(i,j) = 0$ otherwise. Moreover, the sets of children and parents of node $i$ in $G_A$ are denoted by $C_A(i)$ and $P_A(i)$, respectively. The same notation applies for the graph $G_B$. 

\revise{Given a matrix $X\in \mathrm{S}_{\mathrm{F}}(m,n)$, adopt $X(i,j)$ to estimate the similarity between node $i$ in $G_A$ and node $j$ in $G_B$, and the basic principle is that $i$ and $j$ should be considered similar if their respective neighbors are also similar. To aggregate the similarity between neighbors, Blondel et al.~\cite{blondel2004measuresimilarity} introduced a linear operator $\lanifold:\,\mbR^{m\times n}\to\mbR^{m\times n}$ defined by
$${[\lanifold(X)](i,j):=\sum_{\substack{s \in C_A(i) \\ t\in C_B(j)}}X(s,t) + \sum_{\substack{s \in P_A(i) \\ t\in P_B(j)}}X(s,t),}$$
for $i=1,2,\ldots,m$ and $j=1,2,\ldots,n$, and proposed a update scheme as follows,
\begin{equation}\label{eq:graph_full_rule}
X_{k+1}=\frac{\lanifold(X_k)}{\|\lanifold(X_k)\|_\frob}.  
\end{equation}
It was proved in \cite[\S3]{blondel2004measuresimilarity} that given any initialization $X_0>0$, where the ``$>$'' is understood component-wise, the subsequences $\{X_{2k}\}$ and $\{X_{2k+1}\}$ converge, respectively. Consequently, the \emph{similarity matrix} is defined by $X^*:=\lim_{k\to+\infty} X_{2k}$ with $X_0={{\bm 1}^{}_m {\bm 1}^\top_n}/{\|{\bm 1}^{}_m {\bm 1}^\top_n\|_\frob}$, where ${\bm 1}_{m}\in\mbR^m$ denotes the all-ones vector. Furthermore, Cason et al. \cite{cason2013iterative} showed that solving problem \eqref{eq:similarmeasure} yields a matrix of rank at most $r$ to approximate the similarity matrix between $G_A$ and~$G_B$.}

In light of the developments in this paper, we address the following problem,
\begin{equation}\label{eq:similarmeasure_Mh}
    \min_{x\in\manifold_h}\ \ -\trace\kh{\phi(x)^\top \lanifold\circ\lanifold(\phi(x))},
\end{equation}
with the rank parameter $r$ and the associated manifold $\hanifold= \mathrm{S}_{\mathrm{F}}(m,n)$; and then map the solution $x\in\manifold_h$ to $\phi(x)\in\mbR^{m\times n}$ as the approximate similarity matrix.

We carry out RGD and RTR on the proposed $\manifold_h$, and implement the ``Iterative method" \cite{cason2013iterative} by ourselves as the compared method. We initialize $x_0\in\manifold_h$ for our methods and $X_0\in\mbR^{m\times n}$ for the ``Iterative method" such that $\phi(x_0)=X_0={{\bm 1}^{}_m {\bm 1}^\top_n}/{\|{\bm 1}^{}_m {\bm 1}^\top_n\|_\frob}$. The performance is assessed based on the relative errors, ${\|\phi(x)-X^*\|_\frob}/{\|X^*\|_\frob}$ and ${\|X-X^*\|_\frob}/{\|X^*\|_\frob}$, 
where $X^*$ is the ground-truth similarity matrix. The algorithm is terminated if the relative error achieves $10^{-6}$ or the iteration exceeds $200$. We consider two scenarios: the solution $X^*$ is low-rank or full-rank.

\emph{Test on low-rank solution.} We construct $G_A$ of $m=2000$ vertices, which form a single cycle with all the edges oriented in the same direction. The graph $G_B$ is generated following the \emph{binomial random graph model} where each edge is included in the graph with the probability $p=0.0005$. In this manner, the similarity matrix $X^*$ has the rank $r^*=1$, as proved by~\cite{blondel2004measuresimilarity}. We test the method with various rank parameters $r=1,10,50,100$. The results in \myfig~\ref{fig:cycle_graph} show that our method, which is proposed for solving the general problem~\eqref{eq:lowrank_orthinvar}, successfully recovers the true similarity matrix across all the settings, exhibiting robustness to the rank parameter. Furthermore, its performance is comparable to the ``Iterative method".

\begin{figure}[htbp]
\begin{center}
\begin{minipage}{0.47\textwidth}
    \centering
    \includegraphics[width=1\linewidth]{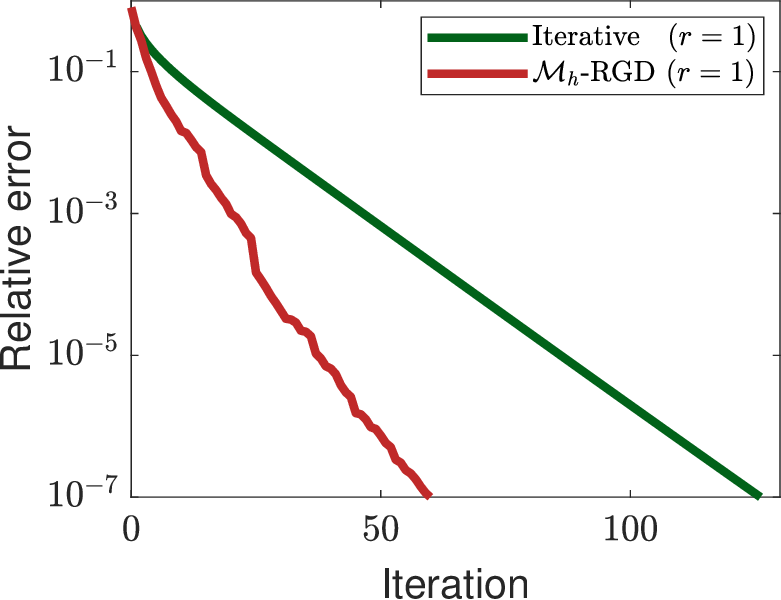}
\end{minipage}
\ \ 
\begin{minipage}{0.47\textwidth}
    \centering
    \includegraphics[width=1\linewidth]{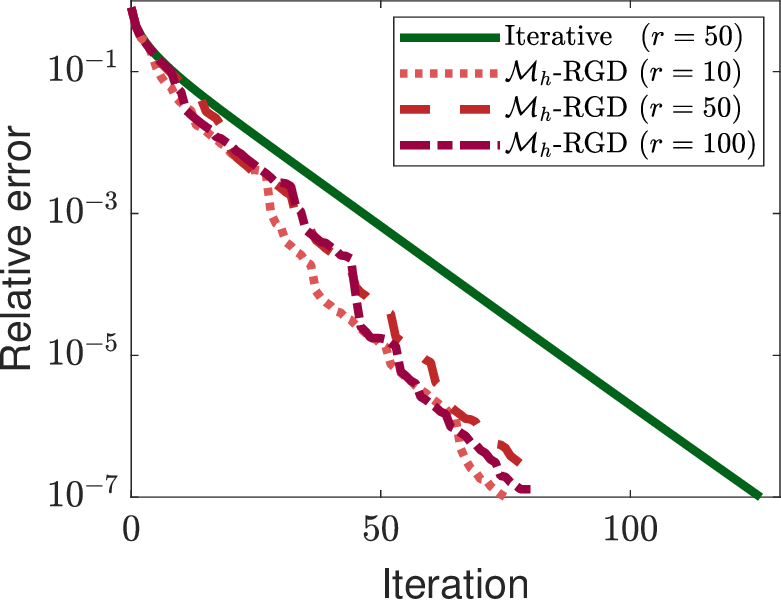}
\end{minipage}
\end{center}
\caption{Graph similarity measuring problem when the solution is low-rank. {\bf Left:} test with the unbiased rank parameter $r=r^*=1$; {\bf Right:} test with over-estimated rank parameters $r=10,50,100>r^*$.}\
\label{fig:cycle_graph}
\end{figure}

\emph{Test on full-rank solution.} We set $m=n$ and generate both $G_A$ and $G_B$ following the binomial random graph model with $p=10/m$, which means the average number of outgoing edges of a node is $10$. It is observed that the solution $X^*$ is full-rank. Numerical results of ``$\manifold_h$-RTR" for different problem sizes and rank parameters are presented in Table~\ref{tab:two_graph}. Specifically, given a small rank $r$, the method produces a decent low-rank approximation of $X^*$, and it recovers the true similarity matrix effectively as $r$ approaches the full rank.

\begin{table*}[htbp]
\caption{Graph similarity measuring problem when the solution is full-rank.}
\begin{center}
\setlength{\tabcolsep}{4pt}
\begin{tabular}{ccccccccc}
\toprule
\multicolumn{1}{l}{\multirow{2}{*}{Dimension $m$}} & \multicolumn{2}{c}{$r=10\%\times m$} & \multicolumn{2}{c}{$r=50\%\times m$} & \multicolumn{2}{c}{$r=75\%\times m$}& \multicolumn{2}{c}{$r= 100\%\times m$}\\ \cmidrule(l{2pt}r{2pt}){2-3} \cmidrule(l{2pt}r{2pt}){4-5} \cmidrule(l{2pt}r{2pt}){6-7} \cmidrule(l{2pt}r{2pt}){8-9}
& \multicolumn{1}{c}{Rel. err.} & \multicolumn{1}{c}{Iter.} & \multicolumn{1}{c}{Rel. err.} & \multicolumn{1}{c}{Iter.} & \multicolumn{1}{c}{Rel. err.} & \multicolumn{1}{c}{Iter.} & \multicolumn{1}{c}{Rel. err.} & \multicolumn{1}{c}{Iter.} \\
\midrule
$200$ & $\expnumber{7.82}{-7}$ & $38$ & $\expnumber{1.25}{-7}$ & $31$ & $\expnumber{4.76}{-8}$ & $26$ &$\expnumber{1.43}{-11}$ & $4$\\
$400$ & $\expnumber{8.23}{-7}$  & $32$ & $\expnumber{1.85}{-7}$ & $30$ & $\expnumber{5.92}{-8}$ & $26$ &$\expnumber{1.07}{-11}$ & $5$ \\
$600$ & $\expnumber{7.61}{-8}$ & $39$ & $\expnumber{5.73}{-7}$ & $27$ & $\expnumber{3.12}{-7}$ & $28$ &$\expnumber{8.71}{-12}$ & $5$\\
$800$ & $\expnumber{4.82}{-7}$  & $37$ & $\expnumber{5.79}{-7}$ & $28$ & $\expnumber{1.45}{-7}$ & $21$ &$\expnumber{9.31}{-12}$ & $5$ \\
$1000$ & $\expnumber{9.98}{-7}$ & $37$ & $\expnumber{7.13}{-7}$ & $31$ & $\expnumber{3.85}{-7}$ & $28$ &$\expnumber{8.02}{-12}$ & $5$\\
\bottomrule
\end{tabular}
\end{center}
\label{tab:two_graph}
\end{table*}

\subsection{Synchronization problem}\label{sec:LR_synchron}
Synchronization refers to the problem of determining absolute rotations in $\so(3):=\{R\in\orth(3):\,\det(R)=1\}$ with respect to a shared coordinate system, employing the relative rotation measurements. Concretely, given $n$ cameras and a set of relative rotations $\{\hat{R}_{ij}:(i,j)\in\mathscr{E}\}$, the element $\hat{R}_{ij}$ measures the orientation difference between the $i$-th and $j$-th cameras sharing overlapping view fields. The objective is to reconstruct the absolute rotations $\{R_i\}_{i=1}^n$ defining the individual orientations of the cameras such that $\hat{R}_{ij}\approx R_jR_i^\top$.

The synchronization problem enjoys an SDP relaxation \cite{wang2013rotationsynchronization}, and in view of the reformulation \eqref{eq:lowrankSDP} in this paper, we consider
\begin{equation}\label{eq:lowrank_Syn_Mh}
    \min_{x\in\manifold_h} \bar{f}(x)=\langle C, \phi(x)\phi(x)^\top \rangle,
\end{equation}
where the associated $\hanifold=\stiefel(3n,3)^n$, the rank parameter for $\manifold_h$ is $r=3$, and the measurement matrix $C\in\mbR^{3n\times 3n}$ is defined by $C_{ij}=\hat{R}_{ij}^\top$ for $(i,j)\in\mathscr{E}$, and $C_{ij}=0$ otherwise. To recover rotations $\{R_i\}_{i=1}^n$ from the solution $x\in\manifold_h$ of problem \eqref{eq:lowrank_Syn_Mh}, we utilize the representation $(H,V)$. In detail, noticing that $H=[H^{1}\,H^2\,\ldots\,H^n]^\top\in\stiefel(3n,3)$, which implies all the $3\times 3$ blocks $H^i$ are orthogonal, we can extract them as the desired rotations, i.e., letting $R_i=(H^i)^\top$, provided their determinants are positive. Note that $\so(3)$ is a connected component of $\orth(3)$ and the adopted retractions on $\manifold_h$ are continuous. Therefore, algorithms on $\manifold_h$ initialized with $H^i\in\so(3)$ $(i=1,2,\ldots,n)$ will always generate iterates $x_k$ capable of producing $n$ rotations.

The ``Stanford bunny" and the ``Spotted cow" datasets\footnote{Available from The Stanford 3D Scanning Repository at \url{https://graphics.stanford.edu/data/3Dscanrep/} and Keenan's 3D Model Repository at \url{https://www.cs.cmu.edu/~kmcrane/Projects/ModelRepository/}.} are employed for our test; see \myfig\ref{fig:bunny_cow}. In preparation, we randomly generate $360$ camera positions around the original mesh. For each camera, points in the visible portion are sampled to simulate point cloud scanning. Running the automatic Iterative Closest Point algorithm~\cite{rusinkiewicz2001ICP}, we obtain $1858$ and $1883$ relative rotations for ``Stanford bunny" and ``Spotted cow", respectively. The algorithm ``$\manifold_h$-RTR" with $\omega=10$ is carried out and it is terminated if the Riemannian gradient satisfies $\|\nabla_{\manifold_h} \bar{f}(x_k)\|\le 10^{-13}$. For the initial guess $x_0\in\manifold_h$ represented by $(H_0,V_0)$, $H_0=[H^{1}_0\,H^2_0\,\ldots\,H^n_0]^\top$ is generated by randomly sampling $H^i_0$ on $\so(3)$, and $V_0$ is randomly generated on~$\stiefel(3n,3)$.

The reconstructed point clouds of the two 3D models are visualized in \myfig\ref{fig:bunny_cow}, and the errors are quantified in \myfig\ref{fig:reconstructed_errors}. The results demonstrate a decent reconstruction quality both visually and numerically, validating the effectiveness of the proposed approach on $\manifold_h$ for solving problem \eqref{eq:lowrank_Syn_Mh}.

\begin{figure*}[htbp]
\vspace{-6mm}
\begin{center}
        \begin{minipage}{0.28\textwidth}
		\centering
		\includegraphics[width=1\linewidth]{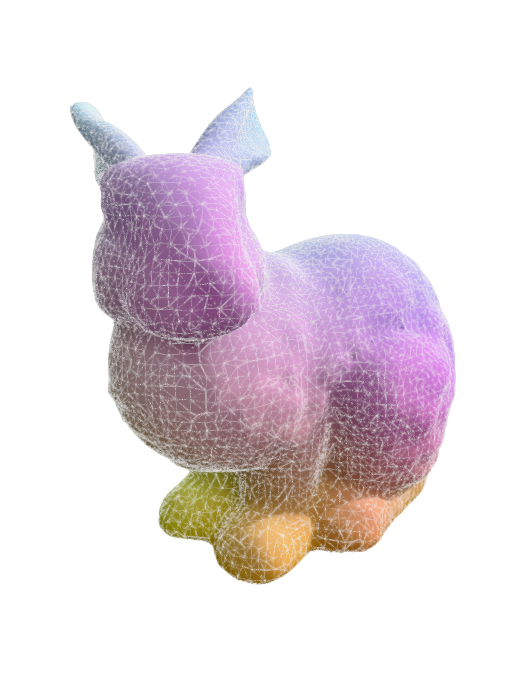}
	\end{minipage}
        \ \ \ \ 
	\begin{minipage}{0.35\textwidth}
		\centering
            \vspace{4mm}
		\includegraphics[width=1\linewidth]{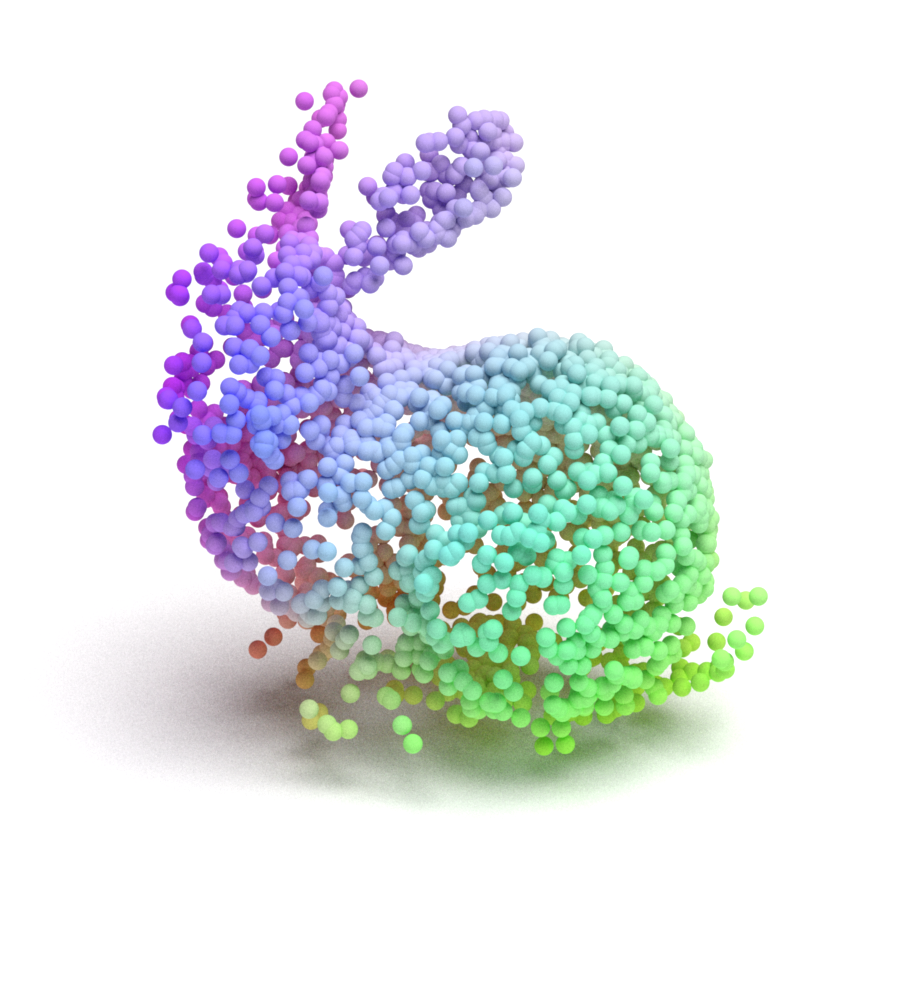}
	\end{minipage}
        \!\!\!\!\!\!\!\!
	\begin{minipage}{0.35\textwidth}
		\centering
            \vspace{3mm}
		\includegraphics[width=1\linewidth]{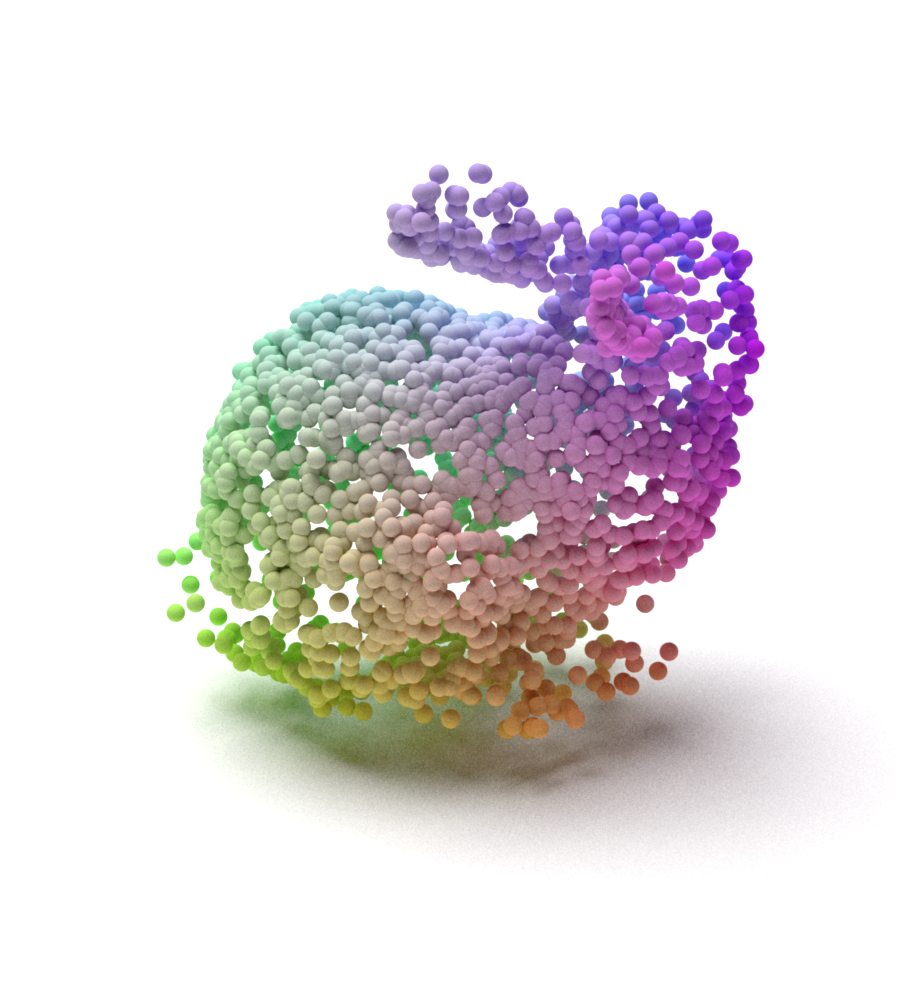}
	\end{minipage}
	\end{center}
        \vspace{-12mm}
        \begin{center}
        \begin{minipage}{0.29\textwidth}
		\centering
		\includegraphics[width=1\linewidth]{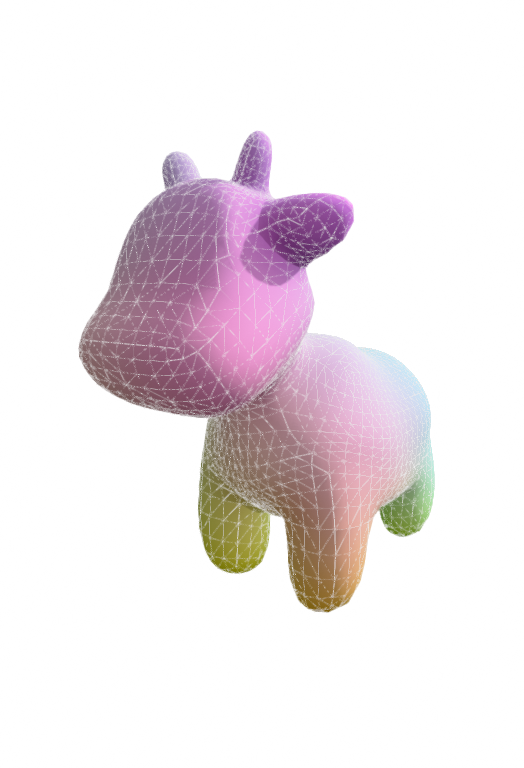}
	\end{minipage}
        \ \ \ \ 
	\begin{minipage}{0.33\textwidth}
		\centering
		\includegraphics[width=1\linewidth]{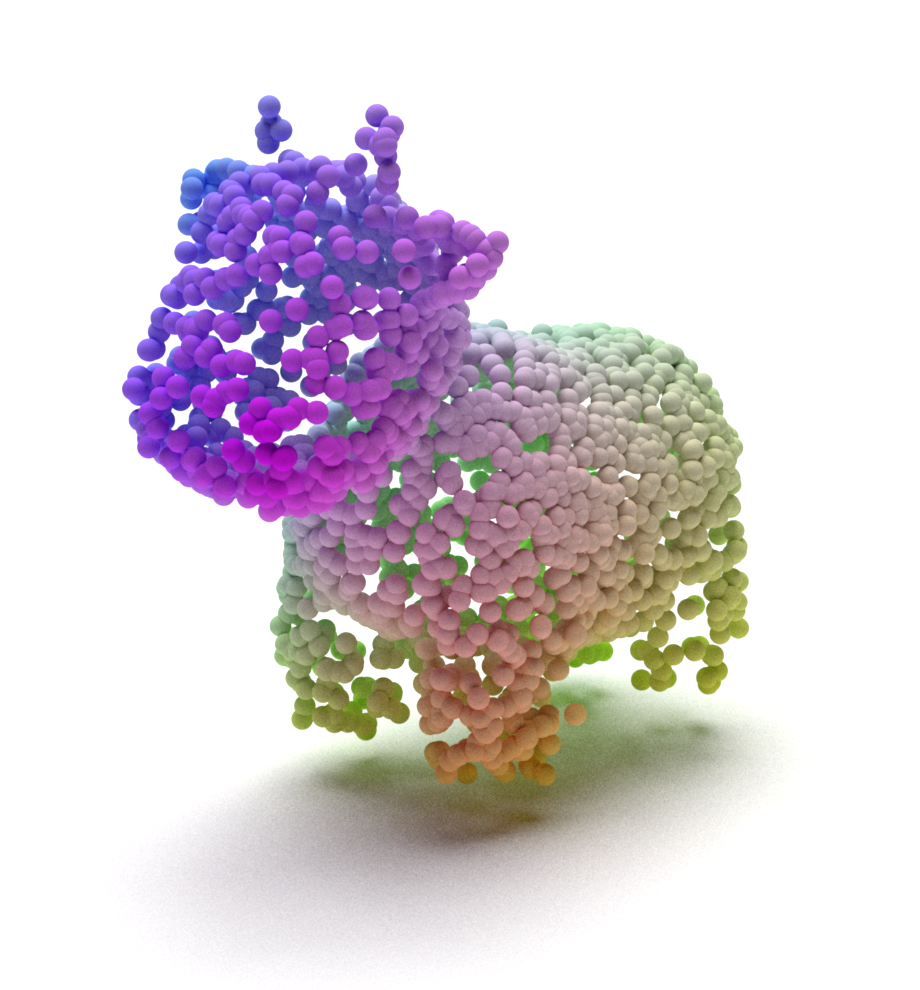}
	\end{minipage}
        \!\!\!\!
	\begin{minipage}{0.33\textwidth}
		\centering
            \vspace{3mm}
		\includegraphics[width=1\linewidth]{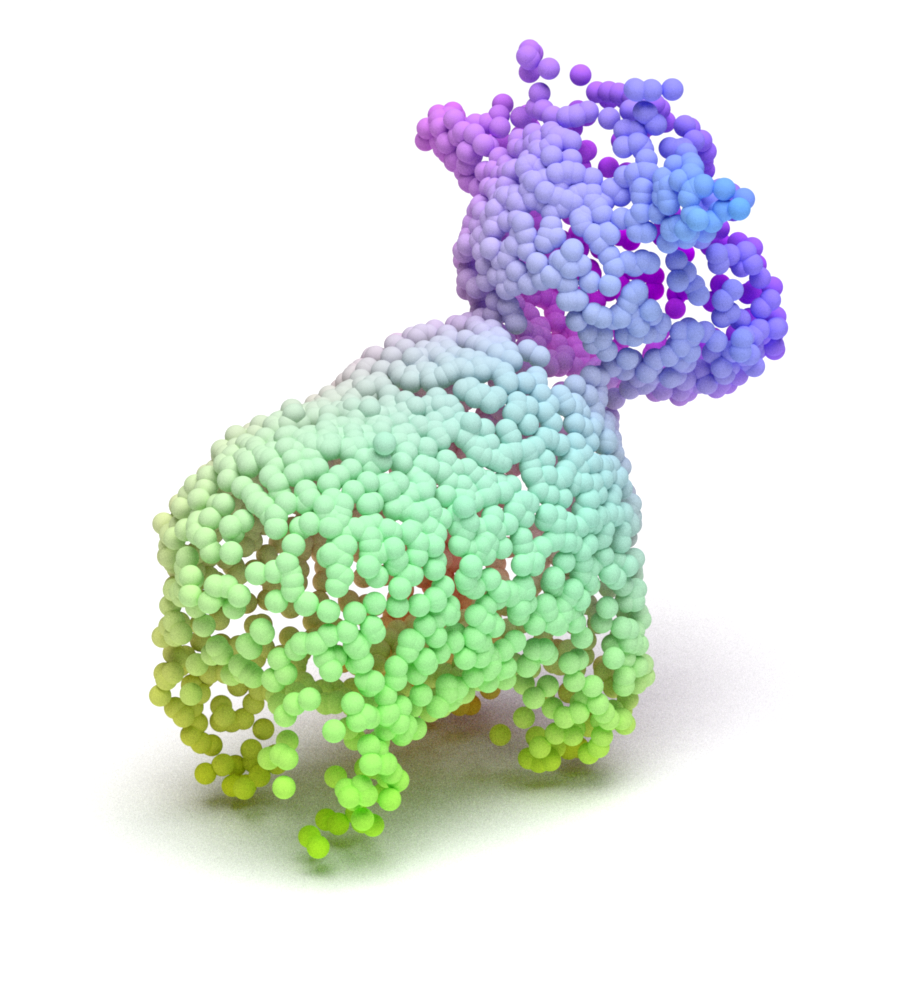}
	\end{minipage}
	\caption{Visualization of ``Stanford bunny" and ``Spotted cow" in the synchronization problem. \textbf{Left:} original meshes; \textbf{Middle:} front view of the reconstructed point clouds; \textbf{Right:} back view of the reconstructed point clouds.}
	\label{fig:bunny_cow}
\end{center}
\end{figure*}
\begin{figure*}[h]
    \vspace{-8mm}
    \begin{center}
	\begin{minipage}{0.43\textwidth}
		\centering
		\includegraphics[width=1\linewidth]{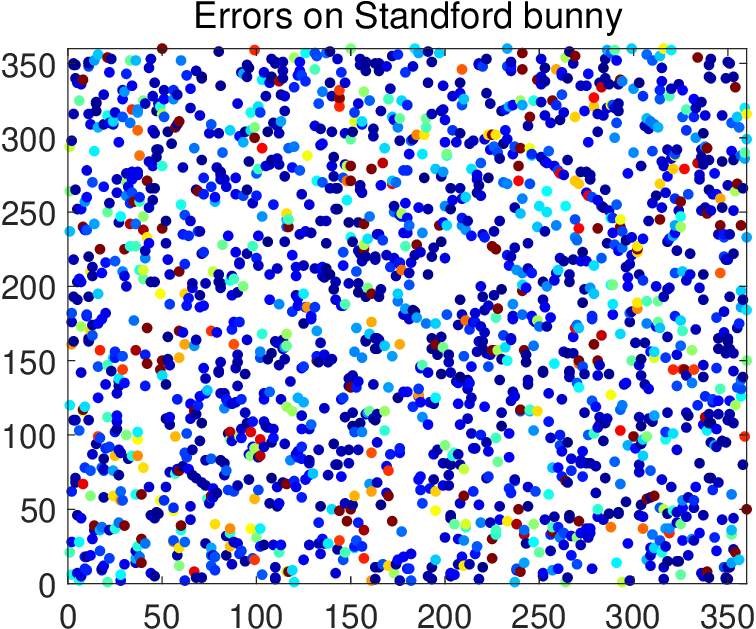}
	\end{minipage}
        \ \ \ 
	\begin{minipage}{0.43\textwidth}
		\centering
		\includegraphics[width=1\linewidth]{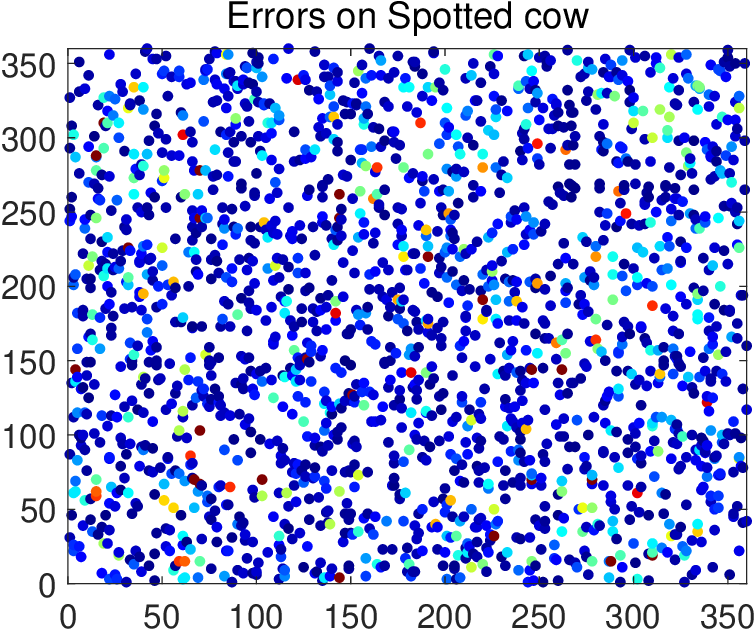}
	\end{minipage}
        \ \ \ 
	\begin{minipage}{0.08\textwidth}
		\centering
            \vspace{2.5mm}
		\includegraphics[width=1\linewidth]{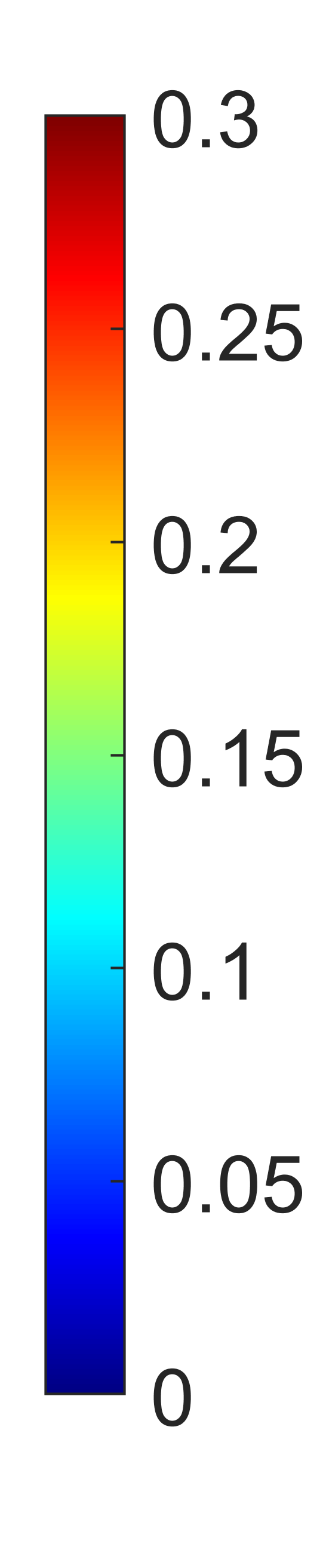}
	\end{minipage}
        \caption{Reconstructed errors in the synchronization problem, which are evaluated by $\|R_iR_j^\top-\hat{R}_{ij}\|_\frob$ for $(i,j)$ pairs. The measurements are sparse since only around $1800$ relative rotations are provided for $360$ three-dimensional scans.}
	\label{fig:reconstructed_errors}        
    \end{center}
    \vspace{-5mm}
\end{figure*} 

\newpage
\subsection{State compression of a city-wide Markov process}\label{sec:LR_Markov}
We consider the following problem with the associated $\hanifold=\oblique(\abs{\mdps},\abs{\mdps})$,
\begin{equation}\label{eq:markovaggregation_Mh}
    \min_{x\in\manifold_h}\ \ \frac{1}{2}\|\phi(x)\odot \phi(x)-\hat{\probmatrix}\|^2_\frob
\end{equation}
as a parameterized counterpart of \eqref{eq:markovaggregation}, to examine the low-rankness of the Manhattan transportation network. The experiment is based on a real-life dataset of $1.1\times10^7$ NYC Yellow cab trips in January 2016 \cite{TLC2017}, where each entry includes the pick-up and drop-off locations of one trip. We assume that taxi transitions are nearly memoryless and, therefore, characterize the transportation dynamics as a Markov process. In this view, the map is discretized such that locations in the same grid cell are aggregated into a single state, and the recorded trips are seen as transitions between states.

The grid is of the size $0.001^\circ\times 0.001^\circ$, and states falling outside the region of $80.07^\circ\mathrm{W}$ to $60.92^\circ\mathrm{W}$ and $30.69^\circ\mathrm{N}$ to $50.85^\circ\mathrm{N}$ or those with an occurrence frequency below $10^{-4}$ are excluded. Consequently, we collect $2017$ valid states $\mdps=\{1,2,\ldots,{2017}\}$, and about $7.5\times 10^6$ transition indexed by $\kh{s^{\text{pickup}}_t,s^{\text{dropoff}}_t}$ with $t=1,2,\ldots,T$. The empirical probability matrix is constructed by
\begin{equation*}
    \hat{\probmatrix}(i,j) = \frac{\sum_{t=1}^T \mathbb{I}\kh{s^{\text{pickup}}_t=i,s^{\text{dropoff}}_t=j}}{\sum_{t}^T \mathbb{I}\kh{s^{\text{pickup}}_t=i}},\ \ \text{for}\ \  i,j\in\mdps,
\end{equation*}
where $\mathbb{I}$ is the indicator function of an event, $1$ for happening and $0$ for otherwise.

We implement ``$\manifold_h$-RGD" with $\omega=0.5$ to address \eqref{eq:markovaggregation_Mh}, running for $K=500$ iterations to obtain $x_K$ represented by $(H_K,V_K)\in \oblique(m,r)\times\stiefel(n,r)$. Each row of $H_K$ is then treated as a feature vector corresponding to a state, and the MATLAB function \texttt{kmeans} is used to cluster $\mdps$ based on these features, where the number of clusters is set equal to the rank parameter $r$. In \myfig\ref{fig:Markov_lowrank}, the clustering results are visualized via Google Maps, showing that the partition based on our approach agrees well with the geometry of this region.

\begin{figure}[htbp]
\begin{center}
\begin{minipage}{0.23\textwidth}
    \centering
    \includegraphics[width=1\linewidth]{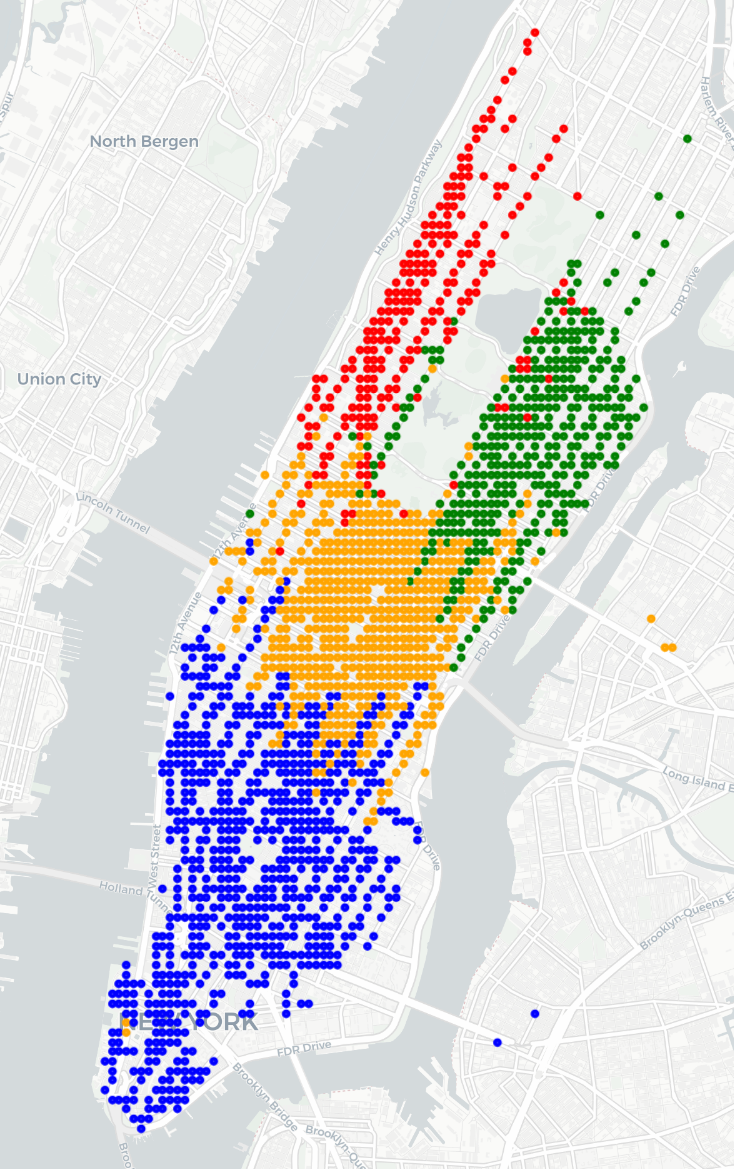}
    \vspace{-4mm}
    \begin{center}
        $r=4$
    \end{center}
\end{minipage}
\,
\begin{minipage}{0.23\textwidth}
    \centering
    \includegraphics[width=1\linewidth]{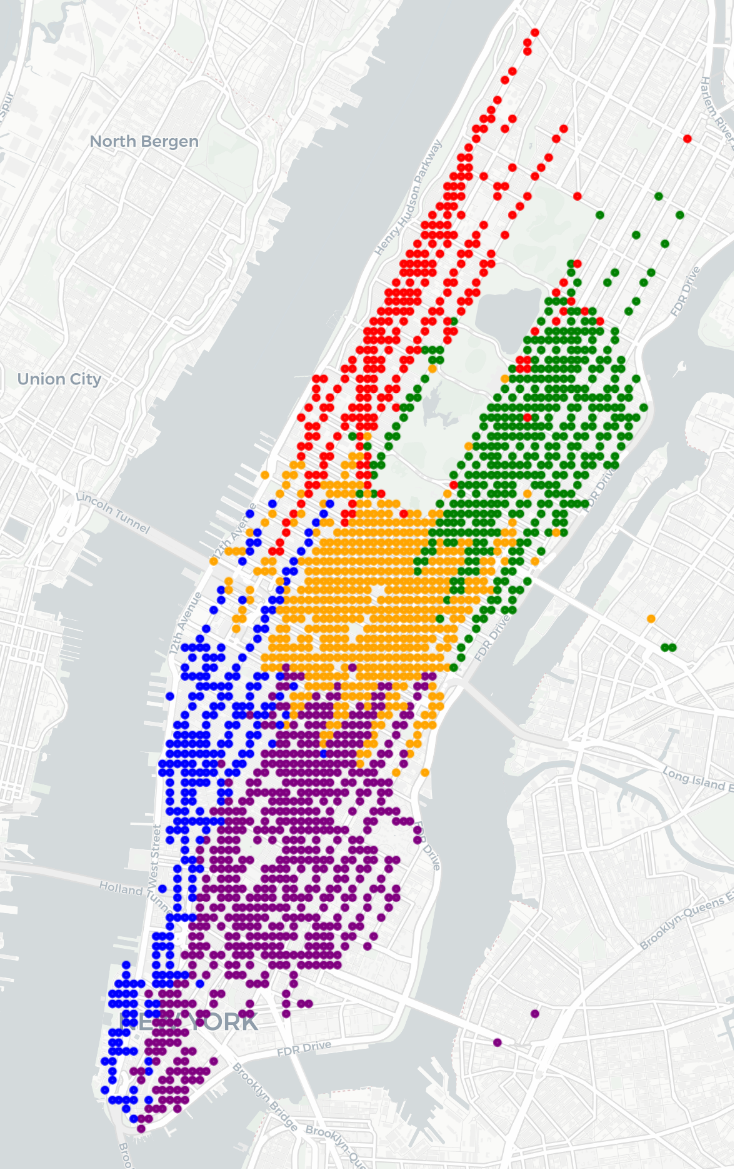}
    \vspace{-4mm}
    \begin{center}
        $r=5$
    \end{center}
\end{minipage}
\,
\begin{minipage}{0.23\textwidth}
    \centering
    \includegraphics[width=1\linewidth]{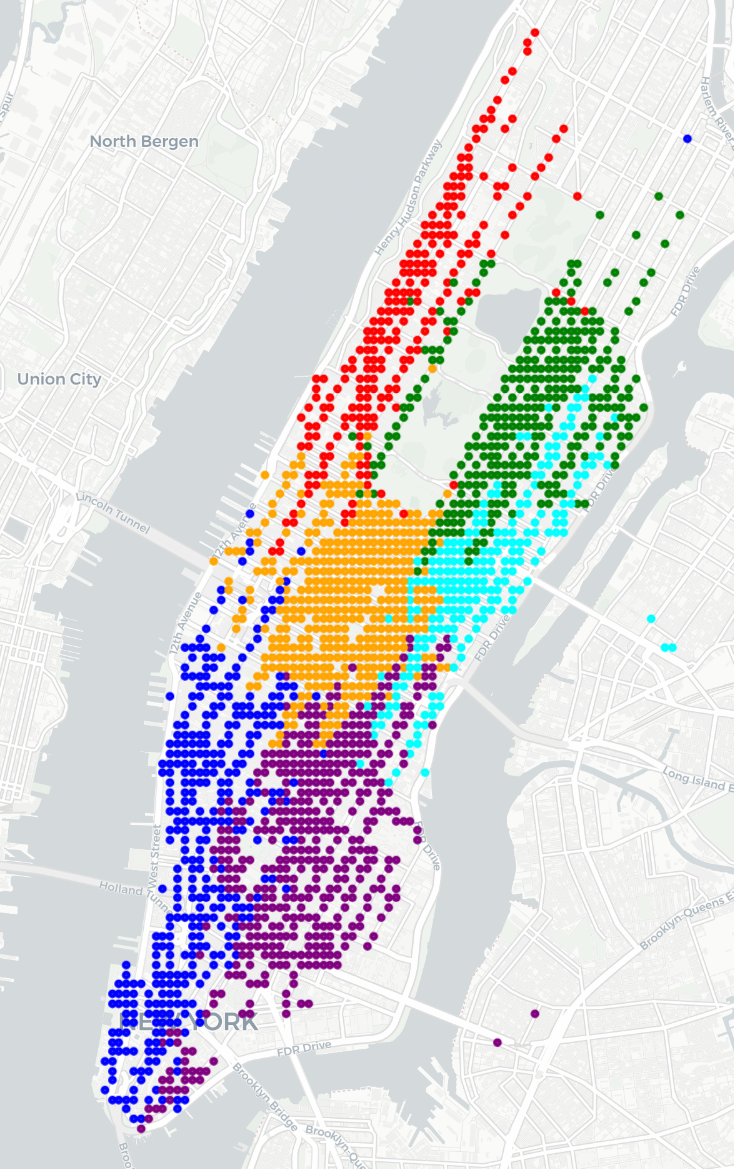}
    \vspace{-4mm}
    \begin{center}
        $r=6$
    \end{center}
\end{minipage}
\,
\begin{minipage}{0.23\textwidth}
    \centering
    \includegraphics[width=1\linewidth]{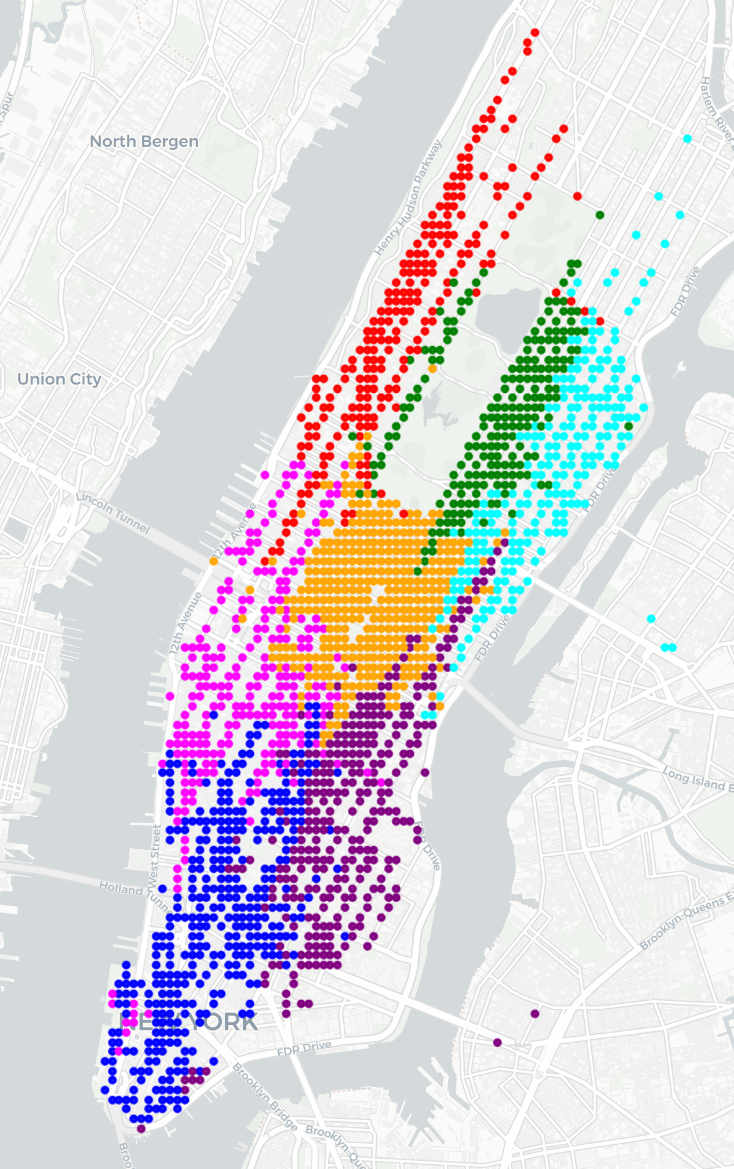}
    \vspace{-4mm}
    \begin{center}
        $r=7$
    \end{center}
\end{minipage}
\end{center}
\caption{State compression of the Markov process: partition of the Manhattan transportation network.}
\label{fig:Markov_lowrank}
\end{figure}

\subsection{Low-rank reinforcement learning}\label{sec:LRRL}
Recalling the proposed model \eqref{eq:lowrankRL}, we next turn to the following problem
\begin{equation}\label{eq:lowrankRL_Mh}
    \min_{x\in\manifold_h} -J(\pi(\phi(x))),
\end{equation}
which seeks an optimal low-rank policy. In this problem, $\manifold_h$ is the associated parameterization of $\mbR_{\le r}^{\abs{\mdps}\times\abs{\mdpa}}\cap\oblique(\abs{\mdps},\abs{\mdpa})$. The policy gradient $\nabla J$ can be derived via existing RL theory \cite{sutton2018RL}, and its Riemannian counterpart on $\manifold_h$ follows from the formula \eqref{eq:1st_Riegrad}. Implementing RGD to solve \eqref{eq:lowrankRL_Mh} is thus termed the \emph{low-rank Riemannian policy gradient} (LRRPG) method. 

We compare the proposed LRRPG with Q-learning and REINFORCE \cite{sutton2018RL}, both of which address RL tasks without exploiting low-rank structures in the environment. Specifically, REINFORCE updates the policy $\pi\in\mbR^{\abs{\mdps}\times\abs{\mdpa}}$ based on the policy gradient. Moreover, Q-learning maintains a variable $Q\in\mbR^{\abs{\mdps}\times\abs{\mdpa}}$ to estimate the optimal expected accumulative reward conditioned on state-action pairs, and then recover the policy by greedily picking the action with the maximal reward in each state. As summarized in Table~\ref{tab:RL_compare}, LRRPG requires far fewer parameters than the compared methods, thereby mitigating both the memory and computational burdens.

We test on two RL environments of the toolkit OpenAI Gym \cite{brockman2016Gym}---``pendulum" and ``mountain car"---illustrated in \myfig\ref{fig:illu_RL}. The evaluation is based on the \emph{timestep}, which counts the agent's interaction with the environment, and we report the cumulative reward and the episode's length as performance metrics. In the implementation, the learning rates for each algorithm are optimally chosen from $\{5\times10^i: i=-3,-2,-1,0,1\}$ in ``pendulum" and $\{3\times10^i: i=-4,-3,-2,-1,0\}$ in ``mountain car''. Further details and numerical results are discussed below.

\begin{figure*}[htbp]
\begin{center}
	\begin{minipage}{0.6\textwidth}
        \vspace{-2mm}
        \hspace{-8mm}
	\begin{tikzpicture}
            \pgfmathsetmacro{\ratio}{0.6/0.4} 
		
		\node[rounded corners=2mm, inner sep=5pt] (box) at (0, 0) {
            \begin{tikzpicture}
            \node (pen) at (0,0){\begin{minipage}{0.3\textwidth}
                \centering
                \includegraphics[width=1\linewidth]{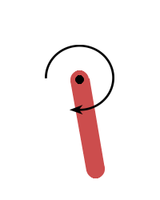}
            \end{minipage}
            };
            \node[right=1.5cm of pen] (car){
            \begin{minipage}{0.6\textwidth}
                \centering
                \includegraphics[width=1\linewidth]{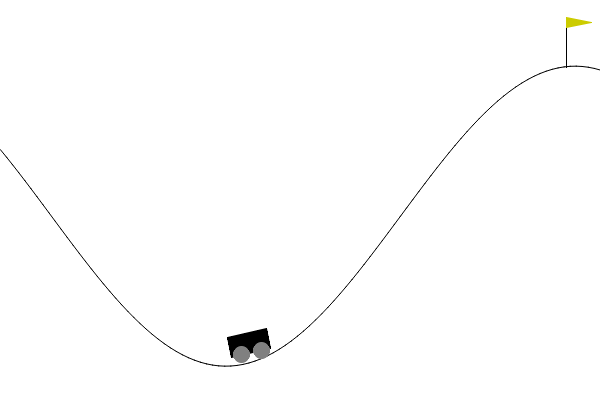}
            \end{minipage}
		};

            \node[above=-5pt of pen] {\textbf{Pendulum}};
            
            \node[above=-7pt of car] {\textbf{Mountain car}};

            \draw[-stealth] ([xshift=-\ratio*1.9cm, yshift=-\ratio*0.7cm]car.east) -- ([xshift=-\ratio*1.5cm, yshift=-\ratio*0.3cm]car.east);

            \end{tikzpicture}
            };
	\end{tikzpicture}
        \vspace{-5mm}
	\end{minipage}
	\caption{Illustration of RL environments. \boldt{Left:} a pendulum attached at one end to a fixed point, with the other end being free; \boldt{Right}: a car stuck in the sinusoidal valley, with the flag on top of the right hill being the destination.}
    \label{fig:illu_RL}
\end{center}
\end{figure*}

{\emph{Pendulum.}} In this scenario, the agent endeavors to keep an inverted pendulum upright by applying torque to its free end. The problem has two continuous state coordinates (angle and angular velocity) and one continuous action coordinate (torque), necessitating discretization of the state and action spaces. Specifically, by uniformly discretizing the Cartesian product of state coordinates, we generate a finite state space containing $\abs{\mdps}=2121$ elements. Similar discretization on the action coordinate yields $\abs{\mdpa}=41$. Moreover, a multi-objective $\text{reward}=-(\text{angle}^2+0.1\times\text{velocity}^2+0.001\times\text{torque}^2)$ is used, which promotes the pendulum to be upright and encourages small exerted action. The discount factor is set to $\gamma=0.9$. The episode is truncated if the pendulum deviates far away from the upright position, and the maximal episodic life is $100$ timesteps. We adopt the learning rate of $0.05$ for REINFORCE and Q-learning, and $0.005$ for LRRPG.

Parameter efficiency and convergence curves are reported in Table~\ref{tab:RL_compare} and \myfig\ref{fig:pendulum}, showing that in this case, LRRPG achieves the optimal reward and the maximal balanced time faster than REINFORCE. Additionally, the performance of LRRPG is on par with the baseline Q-learning. Increasing the rank enhances initial training, but the influence on the final result is small.

\begin{figure*}[htbp]
	\centering
        \begin{minipage}{0.9\textwidth}
		\centering
		\includegraphics[width=1\linewidth]{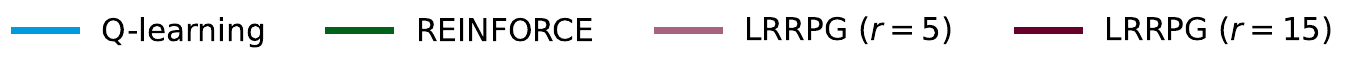}
	\end{minipage}
        \\[-2mm]
	\begin{minipage}{0.49\textwidth}
		\centering
		\includegraphics[width=1\linewidth]{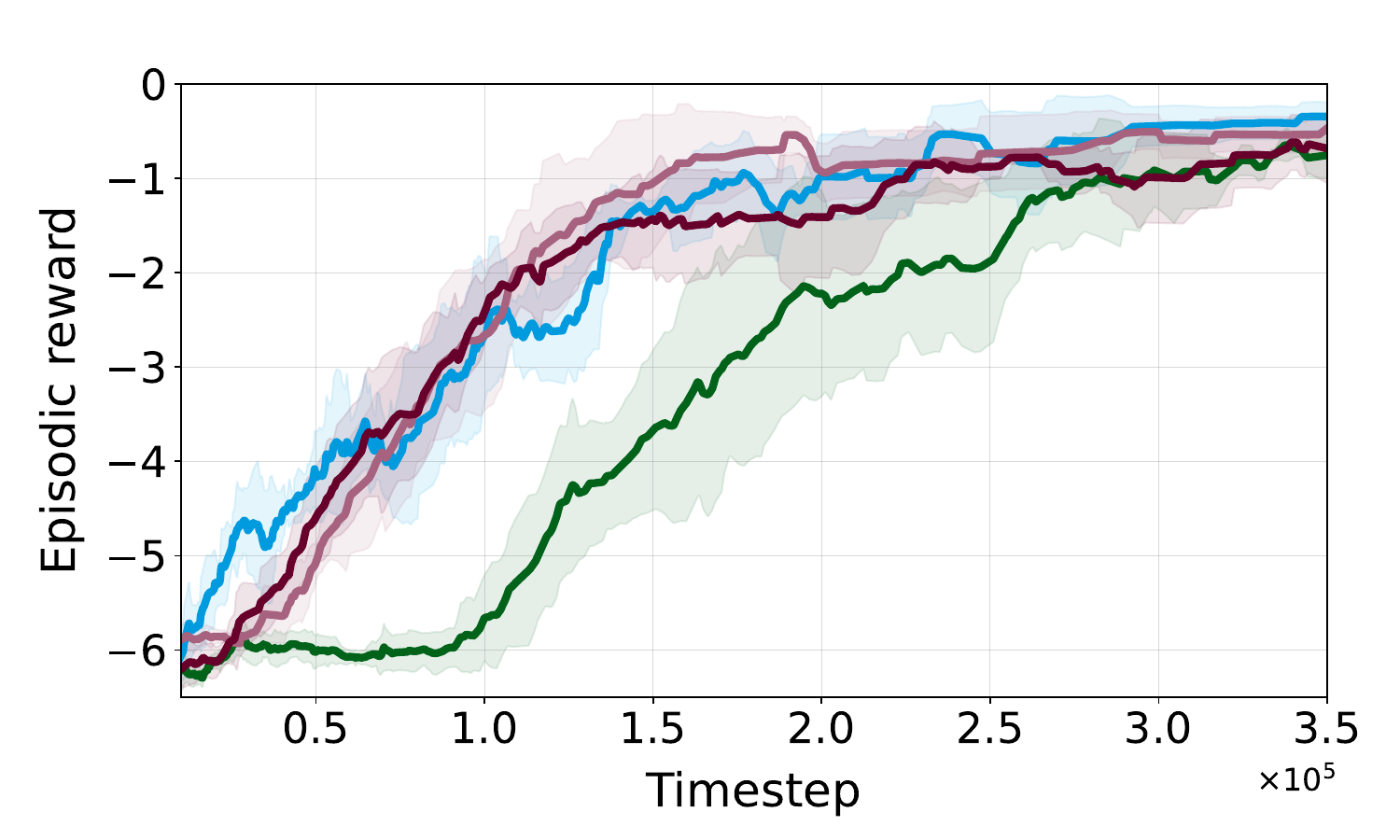}
	\end{minipage}
	\begin{minipage}{0.49\textwidth}
		\centering
		\includegraphics[width=1\linewidth]{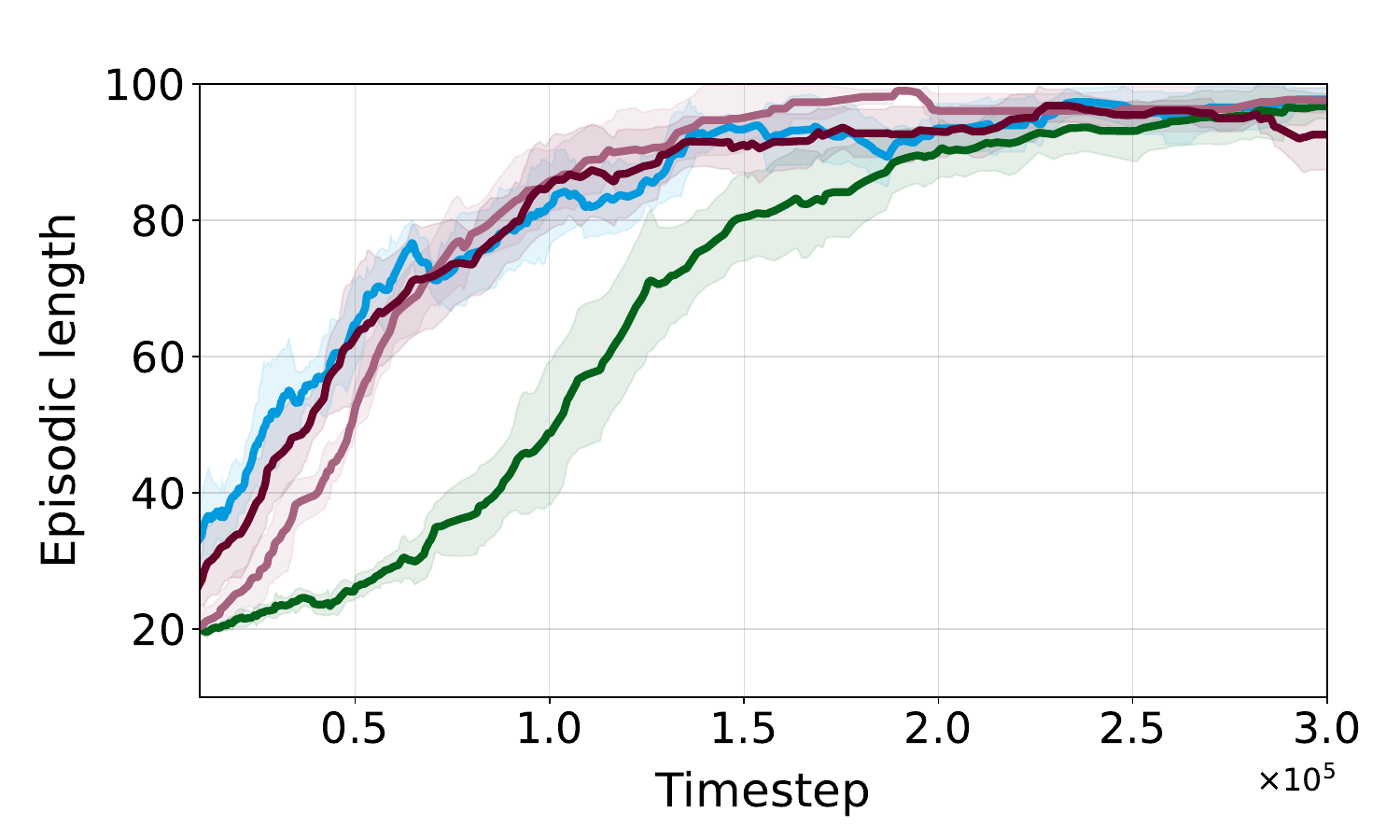}
	\end{minipage}
        \caption{Comparison of the RL algorithms in ``pendulum". The running average over 10 consecutive episodes is adopted for the presentation, and results are averaged over $10$ seeds. A~longer episodic length suggests that the agent is more proficient at balancing the pendulum.}
	\label{fig:pendulum}
\end{figure*}

\begin{table*}[htbp]
\caption{Parameter efficiency of the proposed method across two environments. The ratio of storage is defined as $\rho_{\mathrm{storage}}:=\text{Parameters}/\abs{\mdps}\abs{\mdpa}$. Episodic rewards are averaged over $10$ runs. The results are reported after $0.3$M and $12$M timesteps in the ``pendulum" and ``mountain car", respectively.}
\begin{center}
\setlength{\tabcolsep}{4pt}
\begin{tabular}{lcccccc}
\toprule
\multicolumn{1}{l}{\multirow{2}{*}{Algorithm}} & \multicolumn{3}{c}{Pendulum} & \multicolumn{3}{c}{Mountain car}\\ \cmidrule(l{2pt}r{2pt}){2-4} \cmidrule(l{2pt}r{2pt}){5-7}
& \multicolumn{1}{c}{Parameters} & \multicolumn{1}{c}{$\rho_{\mathrm{storage}}$} & \multicolumn{1}{c}{Reward} & \multicolumn{1}{c}{Parameters}& \multicolumn{1}{c}{$\rho_{\mathrm{storage}}$} & \multicolumn{1}{c}{Reward} \\
\midrule
REINFORCE   & $86,961$ & $1.00$ & $-0.94$ &  $582,096$ & $1.00$ & $96.99$ \\
Q-learning  & $86,961$ & $1.00$ & $-0.75$ &  $582,096$ & $1.00$ & $83.95$ \\
LRRPG ($r=5$)   & $10,810$ & $0.12$ & $-0.73$ &  $\phantom{0}15,485$ & $0.03$  & $88.31$ \\
LRRPG ($r=10$)   & $21,620$ & $0.25$ & $-0.58$ &  $\phantom{0}30,970$ & $0.05$ & $86.44$ \\
LRRPG ($r=15$)   & $32,430$ & $0.37$ & $-0.83$ &  $\phantom{0}46,455$ & $0.08$ & $78.23$ \\
\bottomrule
\end{tabular}
\end{center}
\label{tab:RL_compare}
\end{table*}

{\emph{{Mountain car.}}} In this environment, the decision process involves applying an appropriate directional force to help the car, initially stuck at the bottom of the valley, reach the goal state on top of the right mountain. A negative reward of $-0.1\times \text{force}^2$ is received at each timestep to penalize for taking actions of large magnitude. As an incentive, a positive reward of $+100$ will be added if the car reaches the goal. We discretize the original continuous state and action spaces to obtain finite ones with $\abs{\mdps}=2896$ and $\abs{\mdps}=201$. The discount factor is set to $\gamma=0.99$, and the maximal episodic life is $1000$ timesteps. The learning rates are chosen as $0.3$ for Q-learning, $0.03$ for REINFORCE, and $0.0003$ for LRRPG.

Although the low-rank method slightly lowers the final reward, it significantly improves storage efficiency while maintaining overall performance comparable to Q-learning and REINFORCE, as illustrated by \myfig\ref{fig:car}. Specifically, with ${r=5}$, LRRPG uses parameters amounting to only $2.66\%$ of the environment size $\abs{\mdps}\times\abs{\mdpa}$, yet it still learns a low-rank policy that successfully drives the car to the mountaintop; see Table~\ref{tab:RL_compare} for detailed comparison across different rank parameters.

\begin{figure*}[htbp]
	\centering
        \begin{minipage}{0.9\textwidth}
		\centering
		\includegraphics[width=1\linewidth]{fig/car_legend.pdf}
	\end{minipage}
        \\[-2mm]
	\begin{minipage}{0.49\textwidth}
		\centering
		\includegraphics[width=1\linewidth]{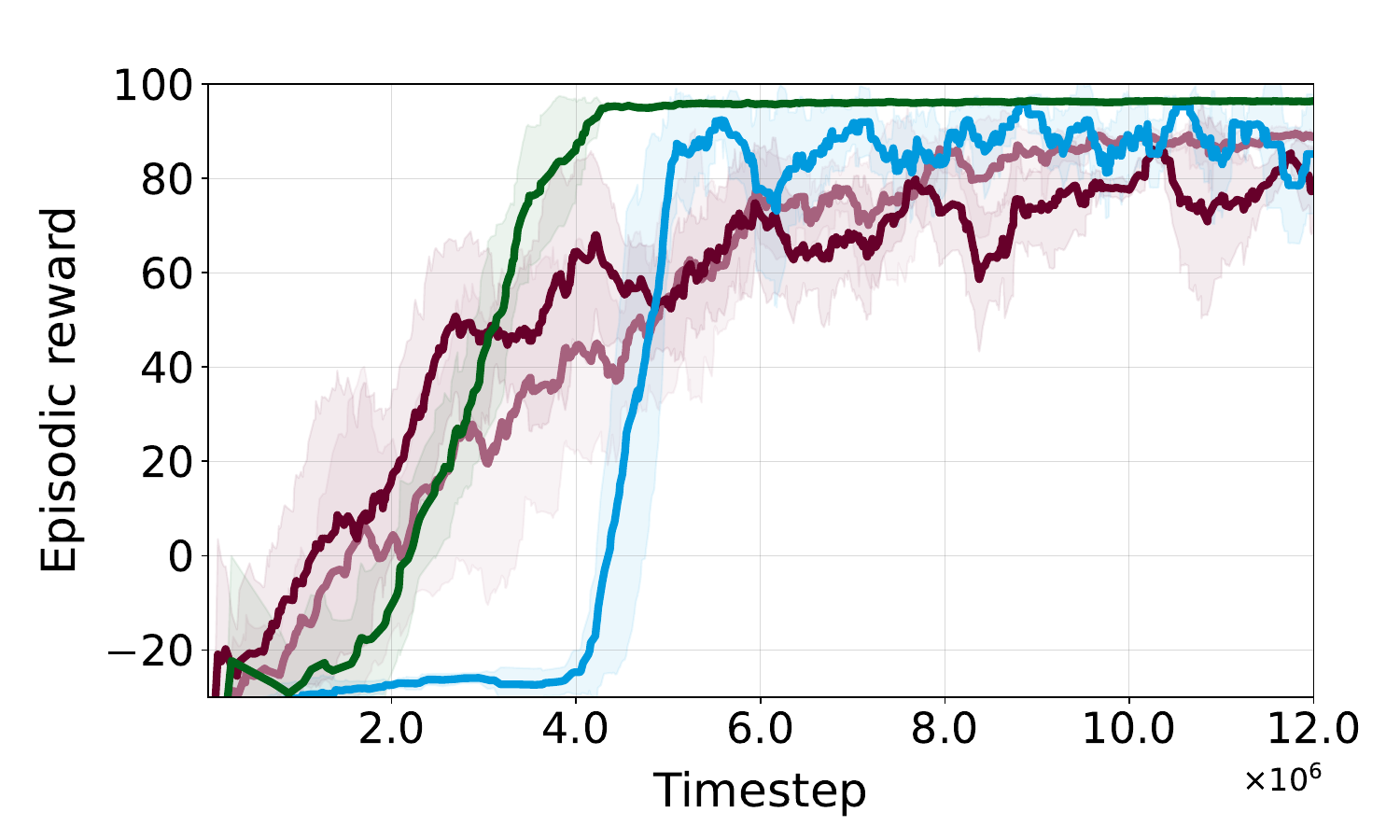}
	\end{minipage}
	\begin{minipage}{0.49\textwidth}
		\centering
		\includegraphics[width=1\linewidth]{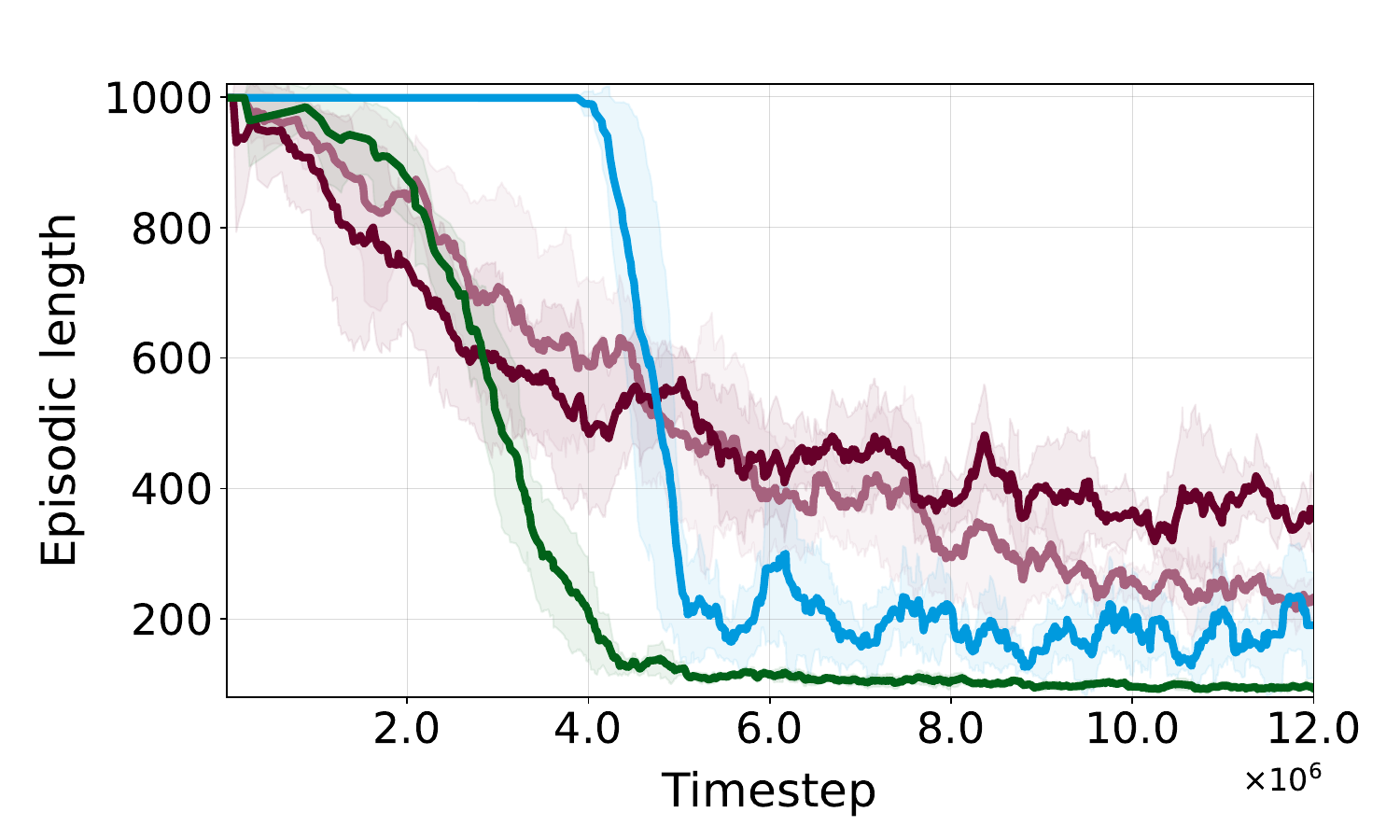}
	\end{minipage}
        \caption{Comparison of the RL algorithms in ``mountain car". The running average over 10 consecutive episodes is adopted for the presentation, and results are averaged over $10$ seeds. Shorter episodes mean the agent drives the car to its destination faster.}
	\label{fig:car}
\end{figure*}

\subsection{Low-rank neural network with weight normalization} \label{sec:LRNN}
To validate the effectiveness of combining the weight normalization technique (WN) and low-rank compression (LR) on neural networks, we solve the following parameterized version of \eqref{eq:lowrankNN},
\begin{equation}\label{eq:lowrankNN_Mh}
    \min_{x_i\in\manifold_{h_i},\,g_i}\ f(\{\phi(x_i),g_i\}_{i=1}^l).
\end{equation}
The proposed approach is tested on two benchmark classification datasets, MNIST \cite{lecun1998mnist} and CIFAR-10 \cite{krizhevsky2009CIFAR}.

\begin{table*}[htbp]
    \setlength{\tabcolsep}{5.5pt}
    \centering
    {
    \caption{Neural network training with different parameterizations.}
    \label{tab:NN_models}
    \begin{tabular}{lcccll}
        \toprule
        { Model\!\!\!} & {\!\!\!Weight norm.\!\!\!} & {\!\!Low rank\!} & \!Net. param.\! & \!Search space\! & Remark\\
        \midrule
        Vanilla\!\!& - & - & $\{W_i\}$ & $\mbR^{m_i\times n_i}$ & \ \ \ \ \ - \\
        WN & \checkmark & - & $\{X_i,g_i\}$ & $\oblique(m_i,n_i)\!\times\!\mbR^{m_i}$& \ \ \ \ \ -\\
        LR & - & \checkmark & $\{x_i,g_i\}$ & $\manifold_{h_i}\!\times\!\mbR^{m_i}\!\!\!$ & $h_i(\cdot)\equiv 0$\\
        WN+LR\!\!\! & \checkmark & \checkmark & $\{x_i,g_i\}$ & $\manifold_{h_i}\times \mbR^{m_i}$ & $h_i(X_i)=X_i X_i^\top\!-\!\mathbf{1}$
        \\
        \bottomrule	
    \end{tabular}
    }
\end{table*}

To assess the respective contributions of WN and LR, as well as their combined effect, we conduct an ablation study that compares the baseline neural network, the network with WN, the network with LR, and the network incorporating both WN and LR. These four models employ different parameterizations of the neural network, as presented in Table~\ref{tab:NN_models}.  Note that if only the low-rank compression is imposed, the model resembles \eqref{eq:lowrankNN_Mh}, but the search space simplifies to $\manifold_{h_i}$ with the associated mapping $h_i(\cdot)\equiv 0$. In the implementation, we use the stochastic (Riemannian) gradient descent method in the corresponding search space, assisted with an appropriate momentum. Implementation details and hyper-parameter specifications are provided in Appendix~\ref{app:network}.

Numerical outcomes are reported in \myfig\ref{fig:MNIST} and Table~\ref{tab:cifar10}, where the metric \emph{flops} is defined as floating point operations. Regarding the fully parameterized models, the results verify that imposing WN enhances the performance over the vanilla network. As a counterpart, for the low-rank models, ``WN+LR" achieves faster convergence on MNIST and delivers higher accuracy on CIFAR-10 than ``LR". Moreover, as shown in Table~\ref{tab:cifar10}, although the proposed ``WN+LR" trades a small amount of accuracy when classifying CIFAR-10, it offers considerable parameter efficiency and inference acceleration compared to fully parameterized models. In conclusion, the findings reveal that combining weight normalization and low-rank compression presents a promising direction for designing neural networks. 

\begin{figure*}[htbp]
	\centering
        \begin{minipage}{0.9\textwidth}
		\centering
		\includegraphics[width=1\linewidth]{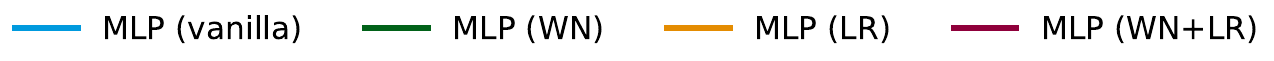}
	\end{minipage}
        \\[1mm]
	\begin{minipage}{0.48\textwidth}
		\centering
		\includegraphics[width=1\linewidth]{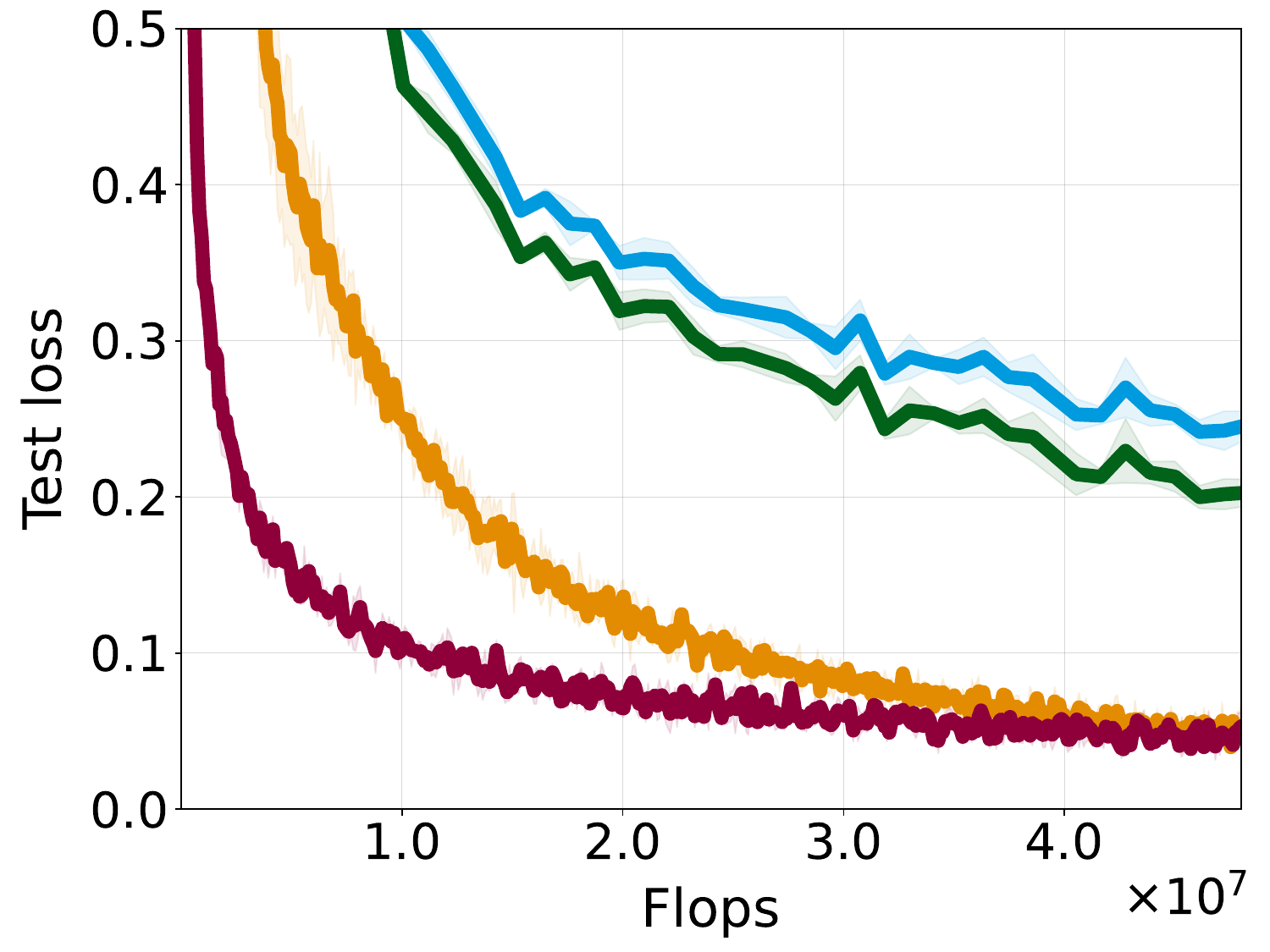}
	\end{minipage}
        \ 
	\begin{minipage}{0.48\textwidth}
		\centering
		\includegraphics[width=1\linewidth]{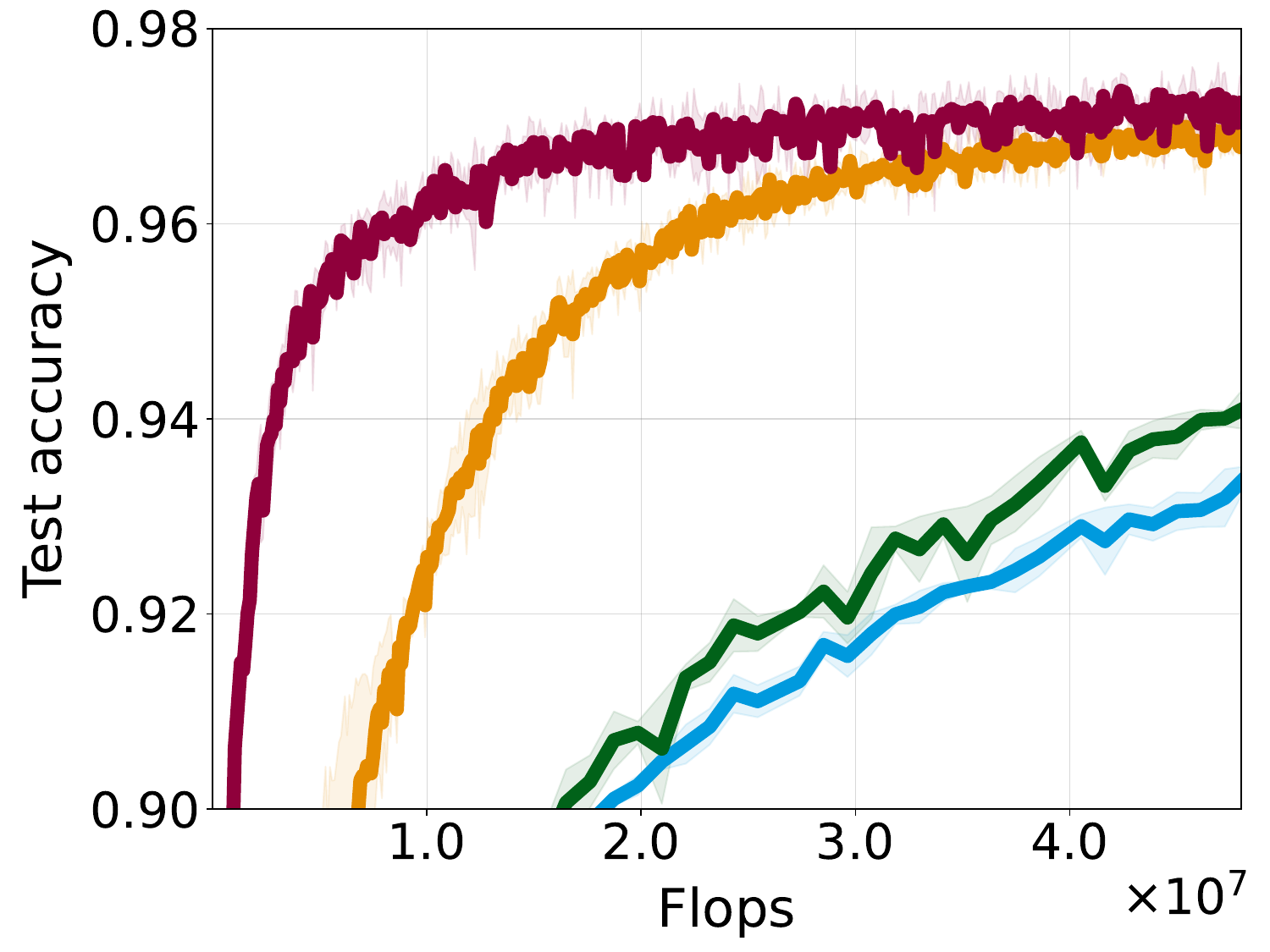}
	\end{minipage}
	\caption{Comparison of the network models on MNIST classification.}
	\label{fig:MNIST}
\end{figure*}

\begin{table*}[htbp]
    \setlength{\tabcolsep}{3.5pt}
    \centering
    {
    \caption{Comparison among the network models on CIFAR-10 classification. Reported metrics include the number of parameters, storage ratio, flops per inference and their ratio, and the top-1 training/test accuracy.}
    \label{tab:cifar10}
    \begin{tabular}{lcccccc}
        \toprule
        { Algorithm } & {Parameters} & { $\rho_{\mathrm{storage}}$ } & {Infer. flops} & {$\rho_{\mathrm{infer}}$} & {Train. acc. ($\%$)} & {Test acc. ($\%$)} \\
        \midrule
        CNN (vanilla) & $2.62$M  & $1.00$ & $565.50$M & $1.00$ & $\phantom{0}96.67$ & $87.80$ \\
        CNN (WN) & $2.62$M   & $1.00$ & $565.50$M & $1.00$ & $100.00$ & $90.31$ \\
        CNN (LR) & $0.38$M  & $0.14$ & $179.02$M & $0.32$ & $\phantom{0}90.01$ & $83.07$ \\
        CNN (WN+LR) & $0.38$M  & $0.14$ & $179.02$M & $0.32$ & $\phantom{0}95.39$ & $87.30$ \\
        \bottomrule	
    \end{tabular}
    }
\end{table*}

\section{Conclusions and perspectives}\label{sec:conclusion}
In this paper, we propose a space-decoupling framework for optimization problems on bounded-rank matrices with orthogonally invariant constraints. Specifically, we identify the tangent and normal cones of the coupled feasible set. Then, a smooth space-decoupling parameterization $\manifold_h$ is introduced, which reformulates the original nonsmooth problem as a smooth Riemannian optimization problem. Geometric tools for implementing Riemannian algorithms are developed, and our analysis demonstrates the equivalence between the reformulated and original problems. We conclude by offering several comments and outlining potential future directions inspired by this work.

{\emph{Generalization from right to left invariance.}} The proposed space-decoupling framework rests on the right orthogonally invariant property of $h$, i.e., $h(XQ)=h(X)$ for all $Q\in\orth(n)$. In essence, all results can be generalized to the case when $h(QX)=h(X)$ following from a parallel analysis. For example, the focused decomposition shifts from $X = HV^\top$ to $X = UH$, and the constraint in the definition of $\manifold_h$ (see~\eqref{eq:spacedecoupling_para}) is reversed from $XG=0$ to $GX = 0$.

{\emph{Discussion on the blanket assumption.}} The smoothness assumption for the mapping $h:\,\mathbb{R}^{m\times n}\rightarrow \mathbb{R}^{q}$ can be relaxed to twice continuous differentiability, while all the corresponding results in this paper remain valid. In fact, the full-rank condition in Assumption~\ref{assu:h} corresponds to the {linear independence constraint qualification} (LICQ) for the constraint $h(X)=0$.

{\emph{Another parameterization for the feasible region.}} It is worth noting that by defining the mapping $\psi: \mbR^{m\times r}\times \mbR^{n\times r}:(H,V)\mapsto HV^\top$, we can derive an alternative parameterization $(\hanifold^r\times \stiefel(n,r),\psi)$ in the sense that $\psi(\hanifold^r\times \stiefel(n,r))=\boundedrank\cap\hanifold$, and thus obtain an reformulated problem on $\hanifold^r\times \stiefel(n,r)$. Essentially, the equivalence between this Riemannian problem with the original problem~\eqref{eq:lowrank_orthinvar} has been answered by the proof of Theorem~\ref{pro:11_21_Mh}---similar ``$k\Rightarrow 1$" ($k=1,2$) properties hold true. 
The construction of geometric tools on the new parameterization mirrors the study of $\manifold_h$, which builds on the theoretical results developed in sections~\ref{sec:H} and~\ref{sec:OptCondition}.

{\emph{Extension of geometric methods on the bounded-rank variety.}} Theorem~\ref{the:tangent_intersection_rule} provides a closed-form characterization for the tangent cone to $\boundedrank\cap\hanifold$, which exhibits a parallel structure as that to $\boundedrank$. Notably, $\tangent_X\boundedrank$ plays a pivotal role in existing retraction-free algorithms \cite{schneider2015Lojaconvergence,olikier2023RFDR,olikier2024ERFDR} and the rank-adaptive mechanism \cite{gao2022Rieadap} for optimization on the bounded-rank set. Therefore, these techniques have the potential to be adapted to tackle problem~\eqref{eq:lowrank_orthinvar}, based on the results developed in this work.

\appendix
\section{Riemannian Hessian on $\manifold_h$}\label{app: Riemannian_hessian}
In this section, we present the computation of the Riemannian Hessian on $\manifold_h$. The following {corollary} will facilitate the computations in Proposition~\ref{pro:2rd_hessian}.
\begin{corollary}
    Given $X\in\mbR^{m\times n}$ with a decomposition $X=HV^\top$ where $H \in \mbR^{m\times s}$ and $V \in \stiefel(n, s)$, it holds that, for any $E\in\mbR^{m\times n}$ and $Y\in\mbR^{m\times s}$,
     \begin{align}
          \projection_{\tangent_X\hanifold}(EVV^\top)&=\projection_{\tangent_X\hanifold}(E)VV^\top,  \label{eq:projection_VVT}
          \\
           \projection_{\tangent_X\hanifold}(YV^\top)&=\zkh{\projection_{\tangent_H\hanifold^s}\kh{Y}}V^\top. \label{eq:projection_transform}
     \end{align}
\end{corollary}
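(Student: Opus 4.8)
The plan is to prove both identities \eqref{eq:projection_VVT} and \eqref{eq:projection_transform} by directly exploiting the decomposition of $\tangent_X\hanifold$ obtained in Proposition~\ref{pro:decompose_geo_H}, specifically the explicit projection formula \eqref{eq:proj_TXH}, namely $\projection_{\tangent_X\hanifold}(E) = \projection_{\tangent_H\hanifold^s}(EV)V^\top + EV_\bot V_\bot^\top$. The key observation is that this formula cleanly separates the action on the $V$-component (governed by $\hanifold^s$) from the action on the $V_\bot$-component (which is the identity). Both identities are then a matter of substituting the right input into this formula and simplifying using the orthogonality relations $V^\top V = I_s$, $V_\bot^\top V = 0$, and $V^\top V_\bot = 0$.

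First I would prove \eqref{eq:projection_transform}. Take $E = YV^\top$ for $Y \in \mbR^{m\times s}$ and plug into \eqref{eq:proj_TXH}. We compute $EV = YV^\top V = Y$ and $EV_\bot = YV^\top V_\bot = 0$. Hence $\projection_{\tangent_X\hanifold}(YV^\top) = \projection_{\tangent_H\hanifold^s}(Y)V^\top + 0 \cdot V_\bot^\top = [\projection_{\tangent_H\hanifold^s}(Y)]V^\top$, which is exactly \eqref{eq:projection_transform}.

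Next I would prove \eqref{eq:projection_VVT}. On the left side, apply \eqref{eq:proj_TXH} with input $EVV^\top$: we have $(EVV^\top)V = EV(V^\top V) = EV$ and $(EVV^\top)V_\bot = EV(V^\top V_\bot) = 0$, so $\projection_{\tangent_X\hanifold}(EVV^\top) = \projection_{\tangent_H\hanifold^s}(EV)V^\top$. On the right side, $\projection_{\tangent_X\hanifold}(E)VV^\top = \big(\projection_{\tangent_H\hanifold^s}(EV)V^\top + EV_\bot V_\bot^\top\big)VV^\top = \projection_{\tangent_H\hanifold^s}(EV)V^\top(VV^\top) + EV_\bot (V_\bot^\top V)V^\top = \projection_{\tangent_H\hanifold^s}(EV)V^\top + 0$. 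The two sides agree, establishing \eqref{eq:projection_VVT}. Alternatively, \eqref{eq:projection_VVT} follows immediately from \eqref{eq:projection_transform} by taking $Y = \projection_{\tangent_H\hanifold^s}(EV)$ together with the linearity of the projection onto the subspace $\tangent_H\hanifold^s$; I would present whichever is shorter.

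There is essentially no obstacle here: the statement is a routine corollary of the already-established projection formula \eqref{eq:proj_TXH}, and the only thing to be careful about is keeping track of which of $V, V_\bot$ is being hit on the right and invoking the orthogonality relations correctly. The mild subtlety worth a sentence of justification is that $\projection_{\tangent_H\hanifold^s}$ is a linear map (being orthogonal projection onto the linear subspace $\tangent_H\hanifold^s$, since $\hanifold^s$ is a smooth manifold by Proposition~\ref{pro:manifoldHs}), which legitimizes the alternative derivation of \eqref{eq:projection_VVT} from \eqref{eq:projection_transform}.
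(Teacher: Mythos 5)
Your proposal is correct and follows exactly the paper's own (much terser) argument: both identities are obtained by substituting the appropriate input into the projection formula~\eqref{eq:proj_TXH} and simplifying with the orthogonality relations $V^\top V=I_s$ and $V^\top V_\bot=0$. Your version merely spells out the routine computations that the paper leaves implicit.
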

\begin{proof}
    Applying \eqref{eq:proj_TXH} gives the first equality. Moreover, taking $E=YV^\top$ in \eqref{eq:proj_TXH} confirms the second equality.
\end{proof}

According to the computation of the Riemannian Hessian in~\eqref{eq:RieHess_2terms}, with $(H,V)$ representing $(X,G)\in\manifold_h$, we outline the subsequent quantities associated with $X$,
\begin{equation}\label{eq:4a}
    \begin{array}{ll}
     a_0(X)= \kh{2\omega I+ XX^\top}^{-1}, &\ \  a_1(X)= XX^\top a_0(X),
     \\
     a_2(X)= a_0(X)X,    &\ \   a_3(X)=X^\top X(2\omega I+X^\top X)^{-1}.
\end{array}
\end{equation}
Equivalently, these quantities can be computed via the representation $(H,V)$.
\begin{equation}\label{eq:4a_express}
    \begin{array}{ll}
     a_0(X):= \kh{2\omega I+ HH^\top}^{-1}, &\ \ a_1(X):= HM_{H,\omega}^{-1}H^\top,
     \\
     a_2(X):= HM_{H,\omega}^{-1}V^\top,    &\ \   a_3(X):=VM_{H,\omega}^{-1}V^\top+\frac{1}{2\omega}(I-VV^\top).
\end{array}
\end{equation}
To give $\nabla^2_{\manifold_h} \bar{f}(X,G)[\eta,\zeta]$ explicitly, we need to identify $\mathfrak{D}(E,Z)$ as the first step. 

\begin{lemma}\label{lem:compute_D}
    Given $(X,G)\in\manifold_h$ and $(\eta,\zeta)\in\tangent_{(X,G)}\manifold_h$ with representations $(H,V)$ and $(K,V_p)$, respectively, we denote $\projection_{(X,G)}(E,Z)=(P_1,P_2)$ and $\mathfrak{D}(E,Z)=(D_1,D_2)$. Then, they can be computed as follows,
    \[
        \begin{cases}
            P_1 = \projection_{\tangent_X\hanifold}\kh{E(I-G)} + a_1(X)EG - 2\omega a_2(X)ZG,
    	\\
            P_2 = 2\symac\kh{-GE^\top a_2(X) + 2\omega GZa_3(X) - GZG},
        \end{cases}
    \]
    \[
        \begin{cases}
            D_1 = \diff \kh{\tilde{X}\mapsto \projection_{\tangent_{\tilde{X}}\hanifold}(E(I-G))}(X)[\eta] - \projection_{\tangent_X\hanifold}(E\zeta) 
            \\
            \quad \quad\,\,\,\,+\,\diff a_1(X)[\eta]EG + a_1(X)E\zeta - 2\omega \diff a_2(X)[\eta]ZG - 2\omega a_2(X)Z\zeta,
            \\
            D_2 = 2\symac\big(-\zeta E^\top a_2(X)- GE^\top\diff a_2(X)[\eta]+2\omega \zeta Za_3(X)  
            \\
            \quad\quad\quad\quad\quad\ \ + 2\omega GZ\diff a_3(X)[\eta] - \zeta ZG - GZ\zeta\big).
        \end{cases} 
    \]
\end{lemma}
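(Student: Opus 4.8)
The plan is to compute both projections $\projection_{(X,G)}(E,Z)=(P_1,P_2)$ and its differential $\mathfrak{D}(E,Z)=(D_1,D_2)$ by rewriting the formulas from Proposition~\ref{pro:projection_to_TMh} in a representation-free form in terms of the quantities $a_0,a_1,a_2,a_3$ introduced in \eqref{eq:4a}, and then differentiating term by term along the curve induced by $(\eta,\zeta)$. First I would translate the representation-based expressions \eqref{eq:projection_eq} for $\bar K$ and $\bar V_p$ into the ambient space. Recalling $X=HV^\top$, $G=I-VV^\top$, and $M_{H,\omega}=2\omega I+H^\top H$, the term $\bar K V^\top$ becomes $\projection_{\tangent_H\hanifold^r}(EV)V^\top=\projection_{\tangent_X\hanifold}(EVV^\top)$ by \eqref{eq:projection_transform}, which in turn equals $\projection_{\tangent_X\hanifold}(E(I-G))$ since $VV^\top=I-G$; similarly $H\bar V_p^\top$ becomes $H M_{H,\omega}^{-1}(E^\top H-2\omega ZV)^\top G^\top = a_1(X)EG-2\omega a_2(X)ZG$ after using the identities \eqref{eq:4a_express} for $a_1,a_2$ and the symmetry of $G$. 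This gives the stated formula for $P_1$; the formula for $P_2=-\bar V_p V^\top-V\bar V_p^\top=2\symac(-\bar V_p V^\top)$ follows the same way, with the three summands matching $-GE^\top a_2(X)$, $2\omega GZ a_3(X)$, and $-GZG$ after inserting \eqref{eq:4a_express} and the definition of $a_3$.

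Next I would verify the auxiliary identities \eqref{eq:4a_express}: that the representation-based $a_i$ computed with $(H,V)$ coincide with the intrinsic $a_i(X)$ in \eqref{eq:4a}. For $a_0,a_1,a_2$ this is a direct consequence of $X=HV^\top$ and $V^\top V=I$, using the push-through identity $H(2\omega I+H^\top H)^{-1}=(2\omega I+HH^\top)^{-1}H$; for $a_3(X)=X^\top X(2\omega I+X^\top X)^{-1}$ with $X^\top X=VH^\top H V^\top$, one splits $\mbR^n=\range(V)\oplus\range(V)^\perp$ and checks that on $\range(V)$ the operator acts as $VM_{H,\omega}^{-1}V^\top\cdot(H^\top H)$ collapsed appropriately while on the complement $X^\top X=0$ so the factor reduces to $\frac{1}{2\omega}(I-VV^\top)$. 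These are routine linear-algebra verifications that I would not write out in full.

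Then for $\mathfrak D(E,Z)=(D_1,D_2)$, I would pick a smooth curve $t\mapsto(X(t),G(t))$ on $\manifold_h$ with $(X(0),G(0))=(X,G)$ and $(\dot X(0),\dot G(0))=(\eta,\zeta)$—concretely the one produced in the proof of Proposition~\ref{pro:Mh_tangent_space} from curves $H(t)\in\hanifold^r$, $V(t)\in\stiefel(n,r)$ with $H'(0)=K$, $V'(0)=V_p$—and differentiate $P_1(t)$ and $P_2(t)$ at $t=0$ by the product/chain rule. Since all fixed $E,Z$ are constant, every summand is a product of $t$-dependent factors among $\{G(t),\, E(I-G(t)),\, a_1(X(t)),\, a_2(X(t)),\, a_3(X(t)),\, \projection_{\tangent_{X(t)}\hanifold}(\cdot)\}$, so differentiation produces exactly the sums displayed: $\dot G(0)=\zeta$, $\frac{d}{dt}E(I-G(t))|_0=-E\zeta$, and the derivative of the projection splits as $\diff(\tilde X\mapsto\projection_{\tangent_{\tilde X}\hanifold}(E(I-G)))(X)[\eta]-\projection_{\tangent_X\hanifold}(E\zeta)$ because the projected argument $E(I-G(t))$ itself varies. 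For $D_2$ I would use that $\symac$ is linear to push the derivative inside, yielding the six terms listed. The one genuine subtlety—and the step I expect to be the main obstacle—is handling $\diff(\tilde X\mapsto\projection_{\tangent_{\tilde X}\hanifold}(\cdot))(X)[\eta]$ cleanly: this is the derivative of the tangent-space projector of the embedded manifold $\hanifold$, which is not itself one of the $a_i$ and must be carried symbolically (it would later be expanded using the Weingarten map / second fundamental form of $\hanifold$, equivalently via $\nabla^2_\hanifold$), so the bookkeeping of which factor is being differentiated in each term, and making sure the leftover constant factors are grouped exactly as in the statement, is where care is needed. Everything else is mechanical once the representation-free forms of $P_1,P_2$ and the identities \eqref{eq:4a_express} are in place.
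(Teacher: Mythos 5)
Your proposal is correct and follows essentially the same route as the paper's proof: rewrite $\bar K V^\top$ and $H\bar V_p^\top$ from \eqref{eq:projection_eq} in representation-free form via \eqref{eq:projection_transform} and the identities \eqref{eq:4a_express} to get $P_1,P_2$, then differentiate each summand along a curve realizing $(\eta,\zeta)$, splitting the derivative of the $\hanifold$-tangent projector into the base-point variation and the argument variation. The paper's proof is simply a terser version of the same computation (it treats \eqref{eq:4a_express} as given, since those identities are stated just before the lemma, and leaves the term-by-term differentiation implicit).
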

\begin{proof}
    Incorporate the projection \eqref{eq:projection_eq} into the formula~\eqref{eq:represent_eta},
    \begin{align*}
        P_1 =&\ \projection_{\tangent_H\hanifold^r}(EV)V^\top + HM_{H,\omega}^{-1}(H^\top E-2\omega V^\top Z)G
        \\
        \overset{(i)}{=}&\ \projection_{\tangent_X\hanifold}\kh{E(I-G)} + a_1(X)EG - 2\omega a_2(X)ZG,
        \\
        P_2 =&-2\symac\kh{G(E^\top H -2\omega ZV)M_{H,\omega}^{-1}V^\top}
        \\
        \overset{(ii)}{=}&\ 2\symac\kh{-GE^\top a_2(X) + 2\omega GZa_3(X) - GZG},
    \end{align*}
    where $(i),(ii)$ come from \eqref{eq:projection_transform} and \eqref{eq:4a_express}. Subsequently, taking the derivatives of $P_i$ with respect to $(X,G)$ along $(\eta,\zeta)$ yields $D_i\,(i=1,2)$.
\end{proof}

Finally, we give the proof of Proposition~\ref{pro:2rd_hessian} as follows.

\begin{proof}
    The rule \eqref{eq:RieHess_2terms} guides us to calculate $\mathfrak{D}(\nabla \bar{f}(X,G))$ and $\nabla^2 \bar{f}(X,G)[\eta,\zeta]$, and then project the sum of them onto $\tangent_{(X,G)}\manifold_h$ to derive $\nabla^2_{\manifold_h} \bar{f}(X,G)[\eta,\zeta]$. To begin with, we conduct the following computations,
    \begin{equation}\label{eq:preparation}
        \begin{aligned}
            \diff a_0(X)[\eta] =& -a_0(X)(\eta X^\top + X\eta^\top)a_0(X),
            \\
            \diff a_1(X)[\eta] =& \kh{\eta X^\top + X\eta^\top}a_0(X) + XX^\top \diff a_0(X)[\eta],
            \\
            \diff a_2(X)[\eta] =&\ \diff a_0(X)[\eta]X + a_0(X)\eta.
        \end{aligned}
    \end{equation}
    Taking $(E,Z)=\nabla \bar{f}(X,G)=(\nabla f(X), 0)$ in Lemma~\ref{lem:compute_D}, we obtain $\mathfrak{D}(\nabla \bar{f}(X,G))$ by
    \begin{align*}
        D_1 =&\ \diff \kh{\tilde{X}\mapsto \projection_{\tangent_{\tilde{X}}\hanifold}(\nabla f(X)(I-G))}(X)[\eta] - \projection_{\tangent_X\hanifold}(\nabla f(X)\zeta) 
        \\
        &+ \diff a_1(X)[\eta]\nabla f(X)G + a_1(X)\nabla f(X)\zeta,
        \\
        D_2 =&\ 2\symac\kh{-\zeta \nabla f(X)^\top a_2(X)- G\nabla f(X)^\top\diff a_2(X)[\eta]}.
    \end{align*}
    Additionally, note that $\nabla^2 \bar{f}(X,G)[\eta,\zeta]=\kh{\nabla^2 f(X)[\eta], 0}$. Through the lens of~\eqref{eq:RieHess_2terms}, it suffices to set $(E,Z)=(D_1+\nabla^2 f(X)[\eta], D_2)$ in \eqref{eq:projection_eq} to get the representation of $\nabla^2_{\manifold_h} \bar{f}(X,G)[\eta,\zeta]$. In detail,
    \begin{equation}\label{eq:barK_1}
        \bar{K} = \projection_{\tangent_H\hanifold^r}\kh{\kh{D_1+\nabla^2 f(X)[\eta]}V} = \zkh{\projection_{\tangent_X\hanifold}\kh{\kh{D_1+\nabla^2 f(X)[\eta]}VV^\top}} V,
    \end{equation}
    which results from $V\in\stiefel(n,r)$ and \eqref{eq:projection_transform}. To simplify the computation, consider the curves $H(t)\in\hanifold^r$ and $V(t)\in\stiefel(n,r)$ satisfying $H(0)=H,\,H^\prime(0)=K$ and $V(0)=V,\,V^\prime(0)=V_p$. Moreover, we turn to the differential of the curve $Y(t):=\projection_{\tangent_{\tilde{X}(t)}\hanifold}(\nabla f(X)V(t)V(t)^\top)$ at $t=0$, where $\tilde{X}(t):=H(t)V^\top(t)$. In fact, by \eqref{eq:projection_VVT}, we have 
    \[
    Y(t)=\projection_{\tangent_{\tilde{X}(t)}\hanifold}(\nabla f(X)V(t)V(t)^\top) = \projection_{\tangent_{\tilde{X}(t)}\hanifold}(\nabla f(X))V(t)V(t)^\top,
    \]
    and thus the differential can be computed from the following two ways,
    \begin{align}
        &Y^\prime(0) = \diff \kh{\tilde{X}\mapsto \projection_{\tangent_{\tilde{X}}\hanifold}(\nabla f(X)VV^\top)}(X)[\eta] - \projection_{\tangent_{{X}}\hanifold}(\nabla f(X)\zeta)        \label{eq:Yt_1}
        \\
        &Y^\prime(0) = \diff \kh{\tilde{X}\mapsto \projection_{\tangent_{\tilde{X}}\hanifold}(\nabla f(X))}(X)[\eta]VV^\top - \projection_{\tangent_{{X}}\hanifold}(\nabla f(X))\zeta     \label{eq:Yt_2}
    \end{align}
    Comparing the above two expressions, applying Proposition~\ref{pro:decompose_geo_H}, and considering the Riemannian Hessian of $X\mapsto f(X)$ on $\hanifold$, we find
    \begin{align}
        &\,\projection_{\tangent_X\hanifold}\kh{\zkh{\diff \kh{\tilde{X}\mapsto \projection_{\tangent_{\tilde{X}}\hanifold}(\nabla f(X)VV^\top)}(X)[\eta]+\nabla^2 f(X)[\eta]}VV^\top}  \nonumber
        \\
        =&\ \projection_{\tangent_X\hanifold}\kh{{\diff \kh{\tilde{X}\mapsto \projection_{\tangent_{\tilde{X}}\hanifold}(\nabla f(X))}(X)[\eta]+\nabla^2 f(X)[\eta]}}VV^\top    \nonumber
        \\
        =&\ \nabla^2_\hanifold f(X)[\eta]VV^\top.   \label{eq:hessianHf}
    \end{align}
    Substituting the expression of $D_1$, the result \eqref{eq:hessianHf}, and $\zeta=-V_pV^\top-VV^\top_p$ into \eqref{eq:barK_1}, we obtain
    \[
    \bar{K} = \nabla^2_\hanifold f(X)[\eta]V  + \projection_{\tangent_H{\hanifold^r}}\kh{\nabla f(X)V_p-a_1(X)\nabla f(X)V_p},
    \]
    and expand $a_1(X)$ to achieve the expression of $\bar{K}$ in \eqref{eq:hessian_expression}. 
    
    By \eqref{eq:projection_eq}, we next address the  term
    \begin{equation}\label{eq:compute_Vp}
        \bar{V}_p =\ G\kh{(D_1+\nabla^2 f(X)[\eta])^\top H - 2\omega D_2V}M^{-1}_{H,\omega}.
    \end{equation}
    According to expressions~\eqref{eq:Yt_1} and \eqref{eq:Yt_2}, it follows that
    \begin{align}
        \diff \kh{\tilde{X}\mapsto \projection_{\tangent_{\tilde{X}}\hanifold}(\nabla f(X)VV^\top}(X)[\eta]G =& \kh{\projection_{\tangent_{{X}}\hanifold}(\nabla f(X)\zeta)-\projection_{\tangent_{{X}}\hanifold}(\nabla f(X))\zeta}G   \nonumber
        \\
        =& \kh{-\nabla f(X)VV^\top_p
        +\projection_{\tangent_{{X}}\hanifold}(\nabla f(X))VV^\top_p}G      \nonumber
        \\
        =& -\projection_{\normal_H\hanifold^r}(\nabla f(X)V)V^\top_p,   \label{eq:P_normal}
    \end{align}
    where the second and last equalities hold from $\projection_{\tangent_X\hanifold}(\nabla f(X)\zeta)G = -\nabla f(X)VV_p^\top$ and Proposition~\ref{pro:decompose_geo_H}, respectively. Substituting the expressions of $D_1$, $D_2$, and \eqref{eq:P_normal} into~\eqref{eq:compute_Vp} yields
    \[
    \begin{aligned}
        \bar{V}_p{=}&-GV_p[\projection_{\normal_H\hanifold^r}(\nabla f(X)V)]^\top HM^{-1}_{H,\omega}  + G\nabla f(X)^\top \kh{\diff a_1(X)[\eta]H + 2\omega Da_2(X)[\eta]V  }M^{-1}_{H,\omega} 
        \\
        &+ G(\nabla^2 f(X)[\eta])^\top HM^{-1}_{H,\omega}+ G\zeta\nabla f(X)^\top\kh{a_1(X)H-H+2\omega a_2(X)V}M^{-1}_{H,\omega}
        \\
        {=}&-GV_p[\projection_{\normal_H\hanifold^r}(\nabla f(X)V)]^\top HM^{-1}_{H,\omega}+G\nabla f(X)^\top (I-HM^{-1}_{H,\omega}H^\top)KM^{-1}_{H,\omega} 
        \\
        &+ G(\nabla^2 f(X)[\eta])^\top HM^{-1}_{H,\omega},
    \end{aligned}
    \]
    where we employ \eqref{eq:4a}, \eqref{eq:4a_express} and \eqref{eq:preparation} to get the last equality.
\end{proof}

\section{Proof of Proposition~\ref{pro:2rd_retrac}}\label{app:2rd_retrac}
\begin{proof}
    The first step is to verify the map $\retrac$ is well-defined. To see this, we consider another representation $\kh{H_0,V_0}$ for $(X,G)$ and denote the induced quantities with a zero in the subscript. The proof of Proposition~\ref{pro:1st_retrac} shows $\kh{H_0,V_0}=(HQ,VQ)$ and $(K_0,V_{p,0})=(KQ,V_pQ)$ for some $Q\in\orth(r)$. Similarly, it holds that $L_0=LQ$ and $W_0=WQ$, due to the constructions of $L$ and $W$. Consequently, the value of $\retrac_{(X,G)}(\eta,\zeta)$ is independent of the choice of representations since $[\projection_{\hanifold^r}\kh{(X+\eta)W}]W^\top=[\projection_{\hanifold^r}\kh{(X+\eta)W_0}]W_0^\top$ {and} $WW^\top = W_0W_0^\top$. 
    
    Now, it remains to show that $\retrac$ is second-order, for which we outline the following curves in preparation,
    \[
    \begin{aligned}
        &L(t):=V+tV_p - t^2V_pK^\top HM^{-1}_{H,\omega},\ Z(t):=(L(t)^\top L(t))^{-1/2},\ W(t):=L(t)Z(t),
        \\
        &\beta(t):=(X+t\eta)W(t),\ X(t):= \zkh{\projection_{\hanifold^r}\kh{\beta(t)}}W(t)^\top,\ G(t):=I-W(t)W(t)^\top,
        \\
        &\gamma(t) := \kh{X(t),G(t)} =\retrac_{(X,G)}(t\eta,t\zeta) .
    \end{aligned}
    \]
    Take the derivative on both sides of the equation $Z(t)^2=(L(t)^\top L(t))^{-1}$,
    \begin{equation}\label{eq:diff_Z2}
        Z^\prime(t)Z(t) + Z(t)Z^\prime(t) = - Z(t)^2\kh{L^\prime(t)^\top L(t)+L(t)^\top L^\prime(t)}Z(t)^2. 
    \end{equation}
    Setting $t=0$ and substituting $Z(0)=I,\,L(0)^\top L^\prime(0)=0$ lead to $Z^\prime(0)=0$. Identifying the differential of $\projection_{\hanifold^r}$ at $H$ as $\projection_{\tangent_H\hanifold^r}$ (see \cite[Lemma 4]{absil2012projectionlike}) gives
    \[
        X^\prime(0)=\projection_{\tangent_H\hanifold^r}(\beta^\prime(0))W(0)^\top + \projection_{\hanifold^r}(H)W^\prime(0)^\top = KV^\top + H V_p^\top.
    \]
    By evaluating the equality $G^\prime(t)=-W^\prime(t)W(t)^\top-W(t)W^\prime(t)^\top$ at $t=0$, we have
    \[
        \begin{cases}
            \gamma(0) = (X(0),G(0)) = (X,G),
            \\
            \gamma^\prime(0) = \kh{X^\prime(0), G^\prime(0)} = \kh{KV^\top+HV^\top_p,-V_pV^\top-VV^\top_p}=(\eta,\zeta),
        \end{cases}
    \]
    which verifies $\retrac$ is a retraction. To proceed, define the curve $\alpha(t):=\projection_{\hanifold^r}(\beta(t))$ to be the projection of $\beta(t)$ onto $\hanifold^r$, and the operator $\projection(t)$ to be the projection onto $\tangent_{\alpha(t)}\hanifold^r$, which turns out to be a smooth and linear operator. Notice that $\beta(t)-\alpha(t)$ is orthogonal to $\tangent_{\alpha(t)}\hanifold^r$, i.e., $\projection(t)(\beta(t)-\alpha(t))=0$. Differentiate this equation twice,
    \[
        \ddot{\projection}(t)(\beta(t)-\alpha(t))+ 2\projection^{\prime}(t)(\beta^\prime(t)-\alpha^\prime(t)) + \projection(t)(\ddot{\beta}(t)-\ddot{\alpha}(t))=0.
    \]
    The observation $\alpha(0)=\beta(0)=H$ and $\alpha^\prime(0)=\beta^\prime(0)=K$ gives
    \begin{equation}\label{eq:ddalpha}
        \projection_{\tangent_H\hanifold^r}(\ddot{\alpha}(0)) =\projection_{\tangent_H\hanifold^r}(\ddot{\beta}(0)).
    \end{equation}
    Subsequently, we differentiate \eqref{eq:diff_Z2} at $t=0$ to obtain $\ddot{Z}(0)=-V_p^\top V_p$ and
    \[
        \ddot{W}(0)=\ddot{L}(0)Z(0)+2L^\prime(0)Z^\prime(0)+L(0)\ddot{Z}(0)    = -2V_pK^\top HM^{-1}_{H,\omega} - VV_p^\top V_p.
    \]
    Then the focus moves to the second-order extrinsic derivatives of $X(t)$ and $G(t)$,\!\!\!\!
    \[
        \begin{aligned}
            \ddot{X}(t)=&\ \ddot{\alpha}(t)W(t)^\top + 2\alpha^\prime(t)W^\prime(t)^\top + \alpha(t)\ddot{W}(t)^\top=\ddot{\alpha}(0)V^\top + 2KV_p^\top + H\ddot{W}(0)^\top,
            \\
            \ddot{G}(t)=&-\ddot{W}(t)W(t)^\top -2W^\prime(t)W^\prime(t)^\top - W(t)\ddot{W}(t)^\top
            \\
            =&\ 2V_pK^\top HM^{-1}_{H,\omega}V^\top + 2VM^{-1}_{H,\omega}H^\top KV^\top_p + 2VV_p^\top V_pV^\top - 2V_pV^\top_p.
        \end{aligned}
    \]
    To see $\projection_{(X,G)}(\ddot{X}(0), \ddot{G}(0))$, we invoke \eqref{eq:projection_eq} with $(E,Z)=(\ddot{X}(0), \ddot{G}(0))$,
    \[
    \begin{aligned}
        \bar{K} =\projection_{\tangent_H\hanifold^r}\kh{\ddot{X}(0)V} = \projection_{\tangent_H\hanifold^r}\kh{\ddot{\beta}(0)+H\ddot{W}(0)^\top V},
    \end{aligned}
    \]
    where the equality comes from \eqref{eq:ddalpha}. Employing the equation $\ddot{\beta}(t)=2\eta W^\prime(t) + X\ddot{W}(t)$ and recalling the expression of $\ddot{W}(0)$ produce
    \[
        \ddot{\beta}(0)+H\ddot{W}(0)^\top V = 2\eta W^\prime(0)+X\ddot{W}(0) + H\ddot{W}(0)^\top V = 0, 
    \]
    which means $\bar{K}=0$. Regarding $\bar{V}_p$, it follows from the rule \eqref{eq:projection_eq} that
    \[
        \begin{aligned}
             \bar{V}_p =&\ G\kh{\ddot{X}(0)^\top H-2\omega \ddot{G}(0)V} M^{-1}_{H,\omega}
            \\
            =&\ G\kh{2V_pK^\top H + \ddot{W}(0)H^\top H -4\omega V_pK^\top HM^{-1}_{H,\omega} } M^{-1}_{H,\omega}
            \\
            =&\ 2V_pK^\top H\kh{I -M^{-1}_{H,\omega}H^\top H -2\omega M^{-1}_{H,\omega} } M^{-1}_{H,\omega}
            \\
            =&\ 0.
        \end{aligned}
    \]
    Consequently, $\gamma^{\prime \prime}(0)\!=\!\projection_{(X,G)}(\ddot{X}(0), \ddot{G}(0))\!=\!0$ implies that $\retrac$ is a {second-order retraction}.
\end{proof}

\section{Implementation details of deep learning tasks}\label{app:network}
In this section, we introduce the implementation details and hyper-parameter specifications for the numerical experiments in section~\ref{sec:LRNN}.

{\emph{{Classification task on MNIST.}}} The MNIST dataset consists of grayscale images of handwritten digits from zero to nine, with $60,000$ images for training and $10,000$ images for testing. To classify input images, a multi-layer perception (MLP) is chosen as the baseline network architecture, with a ReLU appended to each linear layer. Specifically, the vanilla network has three layers, where the hidden layers contain $256$ and $64$ neurons, respectively, and the output layer contains $10$ neurons. For the models incorporating low-rank compression, we outline the rank configuration in Table~\ref{tab:network_arch}. Across all the implementations, a batch size of $256$, a learning rate of $0.01$, and a momentum of $0.9$ are adopted.

\begin{table*}[htbp]
    \setlength{\tabcolsep}{12pt}
    \centering
    {
    \caption{Network architectures for classification tasks and the rank configuration for the models incorporating low-rank compression, where ``-" means the original layer is retained without compression.}
    \label{tab:network_arch}
    \begin{tabular}{lcc}
        \toprule
        {Layer} & {Shape} & {Rank} \\
        \midrule
        \multicolumn{3}{c}{MLP} \\
        \midrule
        {linear1-ReLU} & $784\times 256$ &   16  \\
        {linear2-ReLU} & $256\times 64$ &  8 \\
        linear3-ReLU  & $64\times 10$ & -  \\
        \midrule
        \multicolumn{3}{c}{CNN} \\
        \midrule
        conv1-ReLU & $\phantom{00}3\times 128\times 3\times 3$ & - \\ 
        conv2-ReLU & $128\times 128\times 3\times 3$ & - \\ 
        conv3-ReLU & $128\times 128\times 3\times 3$ & $32$ \\ 
        MAX-pooling layer & $3\times 3$ & - \\
        conv4-ReLU & $128\times 256\times 3\times 3$ & $32$ \\ 
        conv5-ReLU & $256\times 256\times 3\times 3$ & $16$ \\ 
        conv6-ReLU & $256\times 256\times 3\times 3$ & $16$ \\ 
        MAX-pooling layer & $3\times 3$ & - \\ 
        conv7-ReLU & $256\times 320\times 3\times 3$ & $16$ \\ 
        conv8-ReLU & $320\times 320\times 3\times 3$ & $16$ \\ 
        conv9-ReLU & $320\times 10\times 1\times 1$ & - \\ 
        Average pooling layer& $5\times 5$ & - \\ 
        
        \bottomrule	
    \end{tabular}
    }
\end{table*}

\emph{{Classification task on CIFAR-10.}} The CIFAR-10 collection features a total of $60,000$ color images sized at $32\times32$ pixels ($50,000$ for training and $10,000$ for testing), spanning ten different categories. We use a convolutional neural network (CNN) as the baseline, with the architecture detailed in Table~\ref{tab:network_arch}. Note that a convolutional layer with $c_{in}$ input channels, $c_{out}$ output channels, and $d\times d$ spatial resolution has $c_{in}\times c_{out}\times d\times d$ parameters, and thus it can be viewed as a linear layer with the weight matrix sized at $c_{out}\times c_{in}d^2$ applied to appropriately reshaped input volumes. In this manner, a rank-$r$ approximation decomposes the weight into two smaller matrices of sizes $c_{out}\times r$ and $r\times c_{in}d^2$, respectively, and they work as two sequential convolutional layers: the first of the shape $r\times c_{in}\times d\times d$, followed by the second of the shape $c_{out}\times r\times 1\times 1$.

We train all the models for $200$ epochs with a batch size of $128$, utilizing a linear decay schedule to adjust the learning rate. Based on the top-1 test accuracy, we choose the optimal hyper-parameters via grid search with the following grid specifications: initial learning rates in $\{0.001+k\times 0.0005:\,k=0,1,\ldots,20\}$, final learning rates in $\{0.001,0.0001,0.00001\}$, weight decay in $\{0.01,0.001,0.0001\}$, and momentum in $\{0.90,0.95,0.98,0.99\}$.

\printbibliography

\end{document}